\titlespacing{\paragraph}{%
  0pt}{
0.25\baselineskip}{
1em}
\titlespacing{\section}{%
0pt}{
0.2cm}{
0em}
\titlespacing{\subsection}{%
0pt}{
0cm}{
0em}
\titlespacing{\subsubsection}{%
0pt}{
0cm}{
0em}
\DeclareMathAlphabet{\mathbbe}{U}{bbold}{m}{n}
\tikzset{string/.style={decorate, decoration={snake, segment length=3pt, amplitude=1pt}}}
\renewcommand{\@biblabel}[1]{[#1]\hfill}
\DeclareMathAlphabet{\pazocal}{OMS}{zplm}{m}{n}
\newtheoremstyle{thm}                                                           
{0.15cm}                                         
{0.15cm}                                         
{\itshape}      
{}                                      
{\bfseries}                             
{}                                      
{0.2cm}                                         
{\thmname{#1}~\thmnumber{#2}\thmnote{ (#3)}}%
\xpatchcmd{\proof}{\topsep6\p@\@plus6\p@\relax}{}{}{}
\newtheoremstyle{rmk}                                                           
{0.15cm}                                         
{0.15cm}                                         
{}      
{}                                      
{\bfseries}                             
{}                                      
{0.2cm}                                         
{}                                      
\theoremstyle{thm}
\newtheorem{theorem}[equation]{Theorem}
\newtheorem{corollary}[equation]{Corollary}
\newtheorem{lemma}[equation]{Lemma}
\newtheorem{proposition}[equation]{Proposition}
\newtheorem{definition}[equation]{Definition}
\theoremstyle{rmk}
\newtheorem{example}[equation]{Example}
\newtheorem{remark}[equation]{Remark}
\numberwithin{equation}{section}
\newlength{\@thlabel@width}%
\newcommand{\thmenumhspace}{\settowidth{\@thlabel@width}{(1)}\sbox{\@labels}{\unhbox\@labels\hspace{\dimexpr-\leftmargin+\labelsep+\@thlabel@width-\itemindent}}}
\newcommand{\Cov}{{\mathrm{Cov}}}
\newcommand{\GCov}{{\mathrm{GCov}}}
\newcommand{\Pl}{\operatorname{Pl}}
\newcommand{\Sing}{\operatorname{Sing}}
\newcommand{\Ad}{{\mathrm{Ad}}}
\newcommand{\Cc}{{\mathrm{Cc}}}
\newcommand{\cc}{{\mathrm{cc}}}
\newcommand{\Sp}{{\mathrm{Sp}}}
\newcommand{\Ex}{{\operatorname{Ex}}}
\newcommand{\rmh}{\mathrm{h}}
\newcommand{\rmB}{{\mathrm{B}}}
\newcommand{\rmD}{\mathrm{D}}
\newcommand{\rmC}{\mathrm{C}}
\newcommand{\Map}{\mathrm{Map}}
\newcommand{\ev}{\operatorname{ev}}
\newcommand{\dd}{\mathrm{d}}
\newcommand{\Plot}{\mathrm{Plot}}
\newcommand{\opp}{\mathrm{op}}
\newcommand{\hoRan}{\operatorname{hoRan}}
\newcommand{\holim}{\mathrm{holim}}
\newcommand{\hocolim}{{\mathrm{hocolim}}}
\newcommand{\colim}{{\mathrm{colim}}}
\newcommand{\coeq}{{\mathrm{coeq}}}
\renewcommand{\lim}{{\mathrm{lim}}}
\newcommand{\scI}{\mathscr{I}}
\newcommand{\scV}{{\mathscr{V}}}
\newcommand{\scK}{{\mathscr{K}}}
\newcommand{\scH}{\mathscr{H}}
\newcommand{\scJ}{\mathscr{J}}
\newcommand{\scC}{\mathscr{C}}
\newcommand{\scD}{\mathscr{D}}
\newcommand{\scE}{\mathscr{E}}
\newcommand{\scU}{{\mathscr{U}}}
\newcommand{\scY}{{\mathscr{Y}}}
\newcommand{\scM}{{\mathscr{M}}}
\newcommand{\CI}{\mathcal{I}}
\newcommand{\CJ}{\mathcal{J}}
\newcommand{\CW}{\mathcal{W}}
\newcommand{\bH}{{\mathbf{H}}}
\newcommand{\sfc}{\mathsf{c}}
\newcommand{\NN}{\mathbb{N}}
\newcommand{\bbA}{{\mathbb{A}}}
\newcommand{\RN}{\mathbb{R}}
\newcommand{\bbS}{\mathbb{S}}
\newcommand{\bbL}{\mathbb{L}}
\newcommand{\frg}{{\mathfrak{g}}}
\newcommand{\ul}[1]{\underline{#1}}
\newcommand{\cC}{{\check{C}}}
\newcommand{\Set}{{\mathscr{S}\mathrm{et}}}
\newcommand{\sSet}{{\mathscr{S}\mathrm{et}_{\hspace{-.03cm}\Delta}}}
\newcommand{\Fun}{{\mathscr{F}\hspace{-.03cm}\mathrm{un}}}
\newcommand{\Cat}{{\mathscr{C}\mathrm{at}}}
\newcommand{\Mfd}{{\mathscr{M}\mathrm{fd}}}
\newcommand{\Dfg}{{\mathscr{D}\mathrm{fg}}}
\newcommand{\CSS}{\mathscr{CSS}}
\newcommand{\SSp}{{\mathscr{SS}\hspace{-0.02cm}\mathrm{p}}}
\newcommand{\Cart}{{\mathscr{C}\mathrm{art}}}
\newcommand{\Top}{{\Delta\mathscr{T}\mathrm{op}}}
\newcommand{\Bun}{{\mathscr{B}\mathrm{un}}}
\newcommand{\weq}{\overset{\sim}{\longrightarrow}}
\newenvironment{myitemize}{\begin{itemize}[itemsep=-0.1cm, leftmargin=*, topsep=0cm]}{\end{itemize}}
\newenvironment{myenumerate}{\begin{enumerate}[itemsep=-0.1cm, leftmargin=*, topsep=-0.1cm, label=(\arabic*)]}{\end{enumerate}}
\newenvironment{mysubenumerate}{\begin{enumerate}[itemsep=-0.1cm, leftmargin=*, topsep=-0.1cm, label=(\roman*)]}{\end{enumerate}}
\newcommand{\qen}{\hfill$\triangleleft$}
\newcommand{\qandq}{\quad \text{and} \quad}
\newcommand{\qqandqq}{\qquad \text{and} \qquad}
\newcommand{\bbDelta}{{\mathbbe{\Delta}}}
\newlength{\Displayskip}
\begin{document}

\setlength{\abovedisplayskip}{\Displayskip}
\setlength{\belowdisplayskip}{\Displayskip}

\begin{flushright}
\small
\textsf{Hamburger Beiträge zur Mathematik Nr.\,857}\\
{\sf ZMP--HH/20-13} 
\end{flushright}


\begin{center}
\LARGE{\textbf{The $\RN$-Local Homotopy Theory of Smooth Spaces}}
\end{center}
\begin{center}
\large Severin Bunk
\end{center}

\begin{abstract}
\noindent
Simplicial presheaves on cartesian spaces provide a general notion of smooth spaces.
There is a corresponding smooth version of the singular complex functor, which maps smooth spaces to simplicial sets.
We consider the localisation of the (projective or injective) model category of smooth spaces at the morphisms which become weak equivalences under the singular complex functor.
We prove that this localisation agrees with a motivic-style $\RN$-localisation of the model category of smooth spaces.
Further, we exhibit the singular complex functor for smooth spaces as one of several Quillen equivalences between model categories for spaces and the above $\RN$-local model category of smooth spaces.
In the process, we show that the singular complex functor agrees with the homotopy colimit functor up to a natural zig-zag of weak equivalences.
We provide a functorial fibrant replacement in the $\RN$-local model category of smooth spaces and use this to compute mapping spaces in terms of singular complexes.
Finally, we explain the relation of our fibrant replacement to the concordance sheaf construction introduced recently by Berwick-Evans, Boavida de Brito and Pavlov.
\end{abstract}

\tableofcontents

\section{Introduction and overview}
\label{sec:Intro}

Topological spaces and simplicial sets can be used to construct the same homotopy theory.
This is made rigorous by the fact that the singular complex and geometric realisation functors form a Quillen equivalence between the standard model structure on the category $\mathscr{T}\mathrm{op}$ of topological spaces and the Kan-Quillen model structure on the category $\sSet$ of simplicial sets.
Both of these model categories formalise what is often called the homotopy theory of spaces, or $\infty$-groupoids (which are the same according to Grothendieck's homotopy hypothesis).
The two models differ significantly in their features, though, in that topological spaces encode homotopies via the notion of continuity, while simplicial sets are inherently combinatorial.
Consequently, each of these two models for the homotopy theory of spaces has its own merits in different contexts and applications.

Apart from continuity or combinatorics, another important feature spaces can possess, and which is relevant in many problems in mathematics, is smoothness.
The prime example of a category of smooth spaces is the category $\Mfd$ of manifolds and smooth maps, which underlies much of geometry.
There exists a notion of smooth homotopy within the category $\Mfd$, and one can find smooth versions of many topological concepts, such as cohomology, which are invariant under these smooth homotopies.
However, the category $\Mfd$ is poorly behaved in many ways.
For instance, it is far from being complete or cocomplete, thus making it unable to admit a model structure in the sense of Quillen~\cite{Quillen:Ho_Algebra}.

The way to cure this is to weaken---and therefore to generalise---the concept of a manifold.
Here, we take the following approach to smooth spaces, with the main goal of simultaneously capturing the notions of manifold and (higher) stack.
We start from the category $\Cart$ of cartesian spaces: its objects are all smooth manifolds that are diffeomorphic to $\RN^n$ for any $n \in \NN_0$.
Its morphisms are all smooth maps between these manifolds.
We define a smooth space to be a simplicial presheaf on $\Cart$---informally, we understand the sections of a simplicial presheaf over $c \in \Cart$ as $c$-parameterised families of simplices in a space.
We denote the category of simplicial presheaves on $\Cart$ by $\scH$.
It contains many geometrically interesting objects that are not manifolds or even diffeological spaces~\cite{IZ:Diffeology} (for instance the presheaf of $k$-forms, or the simplicial presheaf of $G$-bundles with connection, for any Lie group $G$).
The category of manifolds, the category of diffeological spaces and the category of simplicial sets each include fully faithfully into $\scH$.

The category $\scH$ carries two natural model structures, namely the projective and the injective model structure on functors $\Cart^\opp \to \sSet$ (where the category $\sSet$ of simplicial sets carries the Kan-Quillen model structure).
We denote the projective and injective model categories by $\scH^p$ and $\scH^i$, respectively, and we write $\scH^{p/i}$ to refer to either of these model structures simultaneously.
The projective and injective model structures are canonically Quillen equivalent via the identity functors $\scH^p \rightleftarrows \scH^i$, but they are not Quillen equivalent to $\sSet$.
In that sense, the model structures $\scH^{p/i}$ do not yet define smooth versions of the homotopy theory of spaces.
To achieve that, one needs a weaker notion of equivalence in $\scH$.

There exist (at least) two candidates for such weakened versions of equivalences in $\scH$.
First, in~\cite{MW:Mumford,GMTW} a notion of weak equivalence has been introduced on sheaves on $\Mfd$ as follows:
let $\Delta_e^k \cong \RN^k$ denote the smooth extended (affine) $k$-simplex.
Extending the usual face and degeneracy maps of the topological standard simplices, this gives rise to a cosimplicial cartesian space $\Delta_e \colon \bbDelta \to \Cart \subset \Mfd$.
Via precomposition, this induces a functor from (pre)sheaves on manifolds to simplicial sets.
In~\cite{MW:Mumford,GMTW}, a morphism of sheaves is considered a weak equivalence of (pre)sheaves whenever it becomes a weak equivalence of simplicial sets under this functor.
We adapt this to our set-up as follows:
for technical reasons, we work with presheaves on cartesian spaces rather than manifolds, and we work with \textit{simplicial} (pre)sheaves instead of ordinary (pre)sheaves.
Let $\delta \colon \bbDelta \to \bbDelta \times \bbDelta$ denote the diagonal functor.
We define the \emph{smooth singular complex functor} as the composite
\begin{equation}
\label{eq:intro S_e}
	S_e \colon \scH \xrightarrow{\Delta_e^*} s\sSet \xrightarrow{\delta^*} \sSet\,,
\end{equation}
where the first functor evaluates $F \in \scH$ on the extended simplices to obtain a bisimplicial set, of which the second functor then takes the diagonal.
Let $S_e^{-1}(W_\sSet)$ denote the class of morphisms in $\scH$ that are mapped to a weak equivalence by $S_e$.
The generalisation to \textit{simplicial} presheaves of the homotopy theory from~\cite{MW:Mumford,GMTW} is then the left Bousfield localisation
\begin{equation}
	L_{S_e^{-1}(W_\sSet)} \scH^{p/i}\,.
\end{equation}

The second notion of weak equivalence in $\scH$ is motivated by motivic homotopy theory (see~\cite{Voevodsky:A1_HoThy,MV:A1-HoThy_of_Schemes,DLORV:Motivic_HoThy}, for instance).
Let $I$ denote the class of all morphisms in $\scH$ of the form $c \times \RN \to c$, where $c$ ranges over all objects in $\Cart$, and where the morphism is the identity on $c$ and collapses $\RN$ to the point.
The left Bousfield localisation
\begin{equation}
	\scH^{p/i\, I} \coloneqq L_I \scH^{p/i}
\end{equation}
is then a version in smooth geometry of motivic localisation.
We call $\scH^{p/i\, I}$ the \emph{$\RN$-local model category} of simplicial presheaves on $\Cart$, or equivalently of smooth spaces.
This localisation has appeared before in~\cite{Schreiber:DCCT,Dugger:Universal_HoThys} and other works of these authors.
For presheaves with values in stable $\infty$-categories, this type of localisation has been investigated in~\cite{BNV--Diff_Coho_Spectra}.
Our first main result is:

\begin{theorem}
\label{st:intro HoThys agree}
There is an equality of model categories:
\begin{equation}
	\scH^{p/i\, I} = L_{S_e^{-1}(W_\sSet)} \scH^{p/i}\,.
\end{equation}
\end{theorem}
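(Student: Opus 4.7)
The plan is to show that both left Bousfield localisations share the same class of weak equivalences. Since both have the same cofibrations (those of $\scH_\infty^{p/i}$) and cofibrations plus weak equivalences determine a model structure, the localised model categories will then agree. Concretely, I will show $W(\scH_\infty^{p/i\, I}) = S_e^{-1}(W_\sSet)$ as a subclass of morphisms in $\scH_\infty$; this is stronger than needed and immediately yields Theorem~\ref{st:intro HoThys agree} via the universal property of Bousfield localisation.

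The first ingredient is that $I \subseteq S_e^{-1}(W_\sSet)$. For a representable $c \in \Cart$, the bisimplicial set $\Delta_e^* c$ is constant in one direction with value the smooth singular simplicial set $\Mfd(\Delta_e,c)$, so $S_e(c) \cong \Mfd(\Delta_e,c)$. By Theorem~\ref{st:intro Whitehead}, this is weakly equivalent to $\Sing(c)$, which is contractible since $c$ is diffeomorphic to some $\RN^n$. The same applies to $c \times \RN \in \Cart$, so the collapse $c \times \RN \to c$ lies in $S_e^{-1}(W_\sSet)$. The second ingredient is that $S_e$ preserves weak equivalences in $\scH_\infty^{p/i}$ (not merely between cofibrant objects). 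Indeed, both projective and injective weak equivalences are objectwise, so $\Delta_e^*$ sends such a map to an objectwise weak equivalence of bisimplicial sets; the classical fact that the diagonal functor $\delta^*$ converts objectwise weak equivalences of bisimplicial sets into weak equivalences of simplicial sets then does the rest.

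With these in hand, Theorem~\ref{st:intro S_e} supplies the core comparison. For any $f$ in $\scH_\infty$, taking a projective cofibrant replacement $Q^p f$ yields $\bbL S_e(f) \simeq S_e(Q^p f) \simeq S_e(f)$ in $\Ho(\sSet)$, where the second equivalence uses the preservation property just established. Since $S_e \colon \scH_\infty^{p/i\, I} \to \sSet$ is a Quillen equivalence, it detects weak equivalences at the level of homotopy categories, so $f \in W(\scH_\infty^{p/i\, I})$ if and only if $\bbL S_e(f)$ is a weak equivalence of simplicial sets, and hence if and only if $f \in S_e^{-1}(W_\sSet)$. (For the injective variant this is even more direct, as every object of $\scH_\infty^{i I}$ is cofibrant.)

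To conclude, we combine the inclusions via the universal property of Bousfield localisation. From $I \subseteq S_e^{-1}(W_\sSet) \subseteq W(L_{S_e^{-1}(W_\sSet)} \scH_\infty^{p/i})$ we obtain a left Quillen factorisation $\scH_\infty^{p/i\, I} \to L_{S_e^{-1}(W_\sSet)} \scH_\infty^{p/i}$ (the identity on underlying categories), giving one inclusion of weak equivalences; from $S_e^{-1}(W_\sSet) = W(\scH_\infty^{p/i\, I})$ the opposite factorisation $L_{S_e^{-1}(W_\sSet)} \scH_\infty^{p/i} \to \scH_\infty^{p/i\, I}$ gives the other. I expect the main technical obstacle to be Step~3: one must be careful that $\bbL S_e$ can be identified with $S_e$ on homotopy categories, which is exactly what the preservation of $\scH_\infty^{p/i}$-weak equivalences (and hence trivialisation of cofibrant replacement inside $S_e$) is designed to secure.
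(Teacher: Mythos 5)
Your argument is correct, but it takes a genuinely different route from the paper. The paper proves this statement (as Theorem~\ref{st:HoThy induced by S_e}) via Joyal's criterion (Theorem~\ref{st:criterion for coincidence of MoStrs}): both localisations have the same cofibrations, and the fibrant objects are compared directly, with the nontrivial inclusion (every $I$-local fibrant object is $S_e^{-1}(W_\sSet)$-local) deduced from the mapping-space identification $\Map_{\scH_\infty^{p/i\, I}}(F,G) \cong \Map_\sSet(S_eF,S_eG)$ of Theorem~\ref{st:map spaces via concordance}, i.e.\ from the concordance fibrant replacement machinery. You instead identify the weak equivalence classes, $W(\scH_\infty^{p/i\, I}) = S_e^{-1}(W_\sSet)$, using that $S_e$ is a left Quillen equivalence together with its preservation of objectwise weak equivalences; this identification is literally the paper's Corollary~\ref{st:S_e pres and reflects weqs}, which you could cite instead of re-deriving (and your appeal to the Whitehead theorem for $I \subseteq S_e^{-1}(W_\sSet)$ is heavier than needed: Proposition~\ref{st:S_e(c to *) is weq} gets it from smooth homotopies alone, with no risk of circularity either way). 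Your route is exactly the one the paper's Remark~\ref{rmk:HoThy induced by S_e} sketches and defers: it is available already after Section~\ref{sec:S_e and its properties} and avoids the concordance construction entirely, but it leans on formal properties of Bousfield localisation at a \emph{class} of maps (that the class is contained in its own local equivalences, and the universal property giving the two left Quillen identity factorisations), whereas the paper's proof only manipulates fibrant objects. One caution on your framing: equality $W(\scH_\infty^{p/i\, I}) = S_e^{-1}(W_\sSet)$ does not by itself ``immediately'' yield the theorem, since the weak equivalences of $L_{S_e^{-1}(W_\sSet)}\scH_\infty^{p/i}$ are the $S_e^{-1}(W_\sSet)$-\emph{local} equivalences, a priori a strictly larger class; your closing double-factorisation argument is precisely what closes this gap, so the proof stands, but the opening claim should be softened. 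Both your argument and the paper's implicitly presuppose that the localisation at this proper class exists with the standard description, so you are on equal footing there.
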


A large part of this paper is concerned establishing several explicit Quillen equivalences between these model categories and various model categories describing the homotopy theory of spaces.
Concretely, these are:
the Kan-Quillen model structure on simplicial sets $\sSet$, the model category of $\Delta$-generated topological spaces $\Top$ and the diagonal model structure on bisimplicial sets $(s\sSet)_{diag}$.
Further, we show in Proposition~\ref{st:L_J SSp = L_J CSS} that the model category $(s\sSet)_{diag}$ coincides with the following two model categories:
(1) the localisation $L_{\Delta^\bullet \boxtimes \Delta^0}s\sSet$ of the injective model structure on bisimplicial sets $s\sSet$ at the collapse morphisms $\Delta^n \to \Delta^0$, for $n \in \NN_0$ (where both are seen as simplicial diagrams of discrete simplicial sets), and
(2) the localisation $L_{\Delta^1 \boxtimes \Delta^0} \CSS$ of the model category of complete Segal spaces at the collapse morphism $\Delta^1 \to \Delta^0$.

To state our main theorem, we need the following notation:
we let $\sfc_\bbDelta \colon \sSet \to s\sSet$ denote the functor with sends a simplicial set to a constant simplicial diagram in $\sSet$.
Similarly, let $\tilde{\sfc} \colon \sSet \to \scH$ denote the functor sending a simplicial set to a constant simplicial presheaf on $\Cart$.
Finally, we have a functor
\begin{equation}
	Re \colon \scH^p \to \Top\,,
	\qquad
	F \longmapsto \int^{c \in \Cart^\opp} |F(c)| \times \rmD c\,,
\end{equation}
where $\rmD c$ denotes the underlying topological space of $c \in \Cart$, and where $|{-}| \colon \sSet \to \Top$ is the geometric realisation functor.
If $L : \scC \to \scD : R$ is a pair of adjoint functors, we also express this by the notation $L \dashv R$.
We prove:

\begin{theorem}
Each arrow in the following diagram is a Quillen equivalence (left or right as indicated)
\begin{equation}
\label{eq:The QEq diagram}
\begin{tikzcd}[column sep={3.5cm,between origins}, row sep=0.85cm]
	& \scH^{p \ell I}
	\ar[r, shift left=0.15cm, "1" {description, pos=0.75}, "\perp"' yshift=0.05cm]
	\ar[dd, equal]
	& \scH^{i \ell I}
	\ar[l, shift left=0.15cm, "1" {description, pos=0.75}]
	\ar[dd, equal]
	&
	\\
	L_{S_e^{-1}W} \scH^p
	\ar[dr, equal]
	\ar[ur, equal]
	& &
	& L_{S_e^{-1}W} \scH^i
	\ar[dl, equal]
	\ar[ul, equal]
	\\
	& \scH^{pI}
	\ar[r, shift left=0.15cm, "1" {description, pos=0.75}, "\perp"' yshift=0.05cm]
	\ar[dl, shift left=-0.15cm, "Re" {description, pos=0.75}, "\perp" yshift=0.15cm]
	\ar[dd, shift left=0.15cm, "\ev_*" {description, pos=0.75}]
	& \scH^{iI}
	\ar[l, shift left=0.15cm, "1" {description, pos=0.75}]
	\ar[dd, shift left=0.15cm, "\Delta_e^*" {description, pos=0.75}]
	\ar[dr, "S_e"' {description, pos=0.75}]
	&
	\\
	\Top
	\ar[ur, shift left=-0.15cm, "S" {description, pos=0.75}]
	\ar[dr, shift left=-0.15cm, "\Sing"' {pos=0.5}]
	& &
	& \sSet
	\ar[dl, shift left=-0.15cm, "\delta_!" {description, pos=0.75}, "\perp" yshift=0.15cm]
	\ar[ul, bend right=15, "L_e" {description, pos=0.75}, "\perp" yshift=0.1cm]
	\ar[ul, bend left=15, "R_e" {description, pos=0.75}, "\perp"' yshift=-0.1cm]
	\\
	& \sSet
	\ar[ul, shift left=-0.15cm, "|-|"' {pos=0.5}, "\perp" yshift=0.15cm]
	\ar[uu, shift left=0.15cm, "\tilde{\sfc}"' {description, pos=0.75}, "\dashv"' xshift=0.0cm]
	\ar[r, shift left=0.15cm, "\sfc_\bbDelta", "\perp"' yshift=0.05cm]
	& L_{\Delta^\bullet \boxtimes \Delta^0} \sSet
	\ar[l, shift left=0.15cm, "\ev_{[0]}"]
	\ar[uu, shift left=0.15cm, "\Delta_{e!}"' {description, pos=0.75}, "\dashv"' xshift=0.0cm]
	\ar[ur, shift left=-0.15cm, "\delta^*" {description, pos=0.75}]
	\ar[r, equal]
	& (s\sSet)_{diag}
\end{tikzcd}
\end{equation}
Equalities in this diagram indicate identities of model categories.
Moreover, omitting the functor $R_e$ in the bottom-right triangle, the diagram is a commutative (up to canonical natural isomorphisms) diagram of Quillen equivalences (i.e.~the left adjoints commute and the right adjoints commute).
\end{theorem}

The model structures in the top row are obtained by first localising $\scH^{p/i}$ at good open coverings of cartesian spaces and subsequently localising at the morphisms $c \times \RN \to c$.
The fact that these model structures are equal to the localisation of $\scH^{p/i}$ only at the morphisms $c \times \RN \to c$ holds true because each $c \in \Cart$ is contractible (see Corollary~\ref{st:coincidence of model structures}); it would not hold true if we were working with simplicial presheaves on the category $\Mfd$ of manifolds instead of $\Cart$.
This insight has many pleasant technical consequences.
We provide a comparison with the theory of simplicial sheaves on manifolds in Appendix~\ref{app:H^(pl) simeq wtH^(pl)}.

We remark that there also exists a further model category for a theory of smooth spaces: the $\RN$-local model category of enriched simplicial presheaves on a simplicial category explored in~\cite{HS:Enr_Spli_PShs_and_motivic_HoCat}%
\footnote{We thank the referee for making us aware of this work.}.
This model category is Quillen equivalent to $\scH^{p/i\, I}$ by~\cite[Thm.~2.4]{HS:Enr_Spli_PShs_and_motivic_HoCat} (see also~\cite[Sec.~2]{BEBdBP:Class_sp_of_oo-sheaves} for applications to $\RN$-local simplicial presheaves on the category of manifolds in particular).

Next, we prove various comparison results between the different functors in diagram~\eqref{eq:The QEq diagram}.
The most important one of these, stated below, underlines the significance of the functor $S_e \colon \scH \to \sSet$:
it is a model for the homotopy colimit of diagrams $\Cart^\opp \to \sSet$.

\begin{theorem}
\label{st:intro comparison}
Let $Q^p \colon \scH^p \to \scH^p$ be a cofibrant replacement functor for the projective model structure.
There is a zig-zag of natural weak equivalences
\begin{equation}
	|{-}| \circ S_e\ \overset{\sim}{\longleftarrow} |{-}| \circ S_e \circ Q^p\ \weq\ Re \circ Q^p\,.
\end{equation}
In particular, there is a natural isomorphism in $\rmh(\Top)$,
\begin{equation}
	|S_e F| \cong \hocolim(|F| \colon \Cart^\opp \to \Top)
\end{equation}
and a natural isomorphism in $\rmh(\sSet)$,
\begin{equation}
	S_e F \cong \hocolim(F \colon \Cart^\opp \to \sSet)\,.
\end{equation}
\end{theorem}

On a very formal level, it allows us to identify $S_e$ as a presentation of the left adjoint in the cohesive structure on the $\infty$-topos of presheaves of spaces on $\Cart$.
This has also been observed recently in~\cite{BEBdBP:Class_sp_of_oo-sheaves}.

Finally, we construct a fibrant replacement functor for $\scH^{p/i\, I}$, motivated by the \emph{concordance sheaves} introduced in~\cite{BEBdBP:Class_sp_of_oo-sheaves}, generalising concepts from~\cite{MW:Mumford}.
We thereby obtain explicit access to the mapping spaces in $\scH^{p/i\, I}$:

\begin{theorem}
\label{st:intro Representability}
Let $F,G \in \scH$ be any simplicial presheaves on $\Cart$.
Let $M \in \Mfd$ be any manifold, and define $\ul{M} \in \scH$ by setting $\ul{M}(c) = \Mfd(c,M)$ for any cartesian space $c \in \Cart$.
There are canonical isomorphisms
\begin{align}
	\Map_\sSet(S_e F, S_e G) &\cong \Map_{\scH^{p/i\, I}}(F,G)\,,
	\\
	\Map_\Top(M, |S_e G|) &\cong \Map_{\scH^{p/i\, I}}(\ul{M},G)
\end{align}
 in $\rmh \sSet$, the homotopy category of spaces.
\end{theorem}

Finally, we remark that during the revision of this paper, building on Theorem~\ref{st:intro Representability} and~\cite{BEBdBP:Class_sp_of_oo-sheaves}, Schreiber and Sati have enhanced this result to compute the homotopy type of the mapping space $F^{Q \ul{M}} \in \scH$, where $M$ is a manifold, $\ul{M} = \Mfd(-,M)$ is its associated sheaf, and $F \in \scH$ is a homotopy sheaf~\cite[Thm.~3.3.53]{SS:Equivariant_Pr-oo-bundles} (this problem goes back to a question by C.~Rezk and originally was answered by D.~Pavlov%
\footnote{See the two discussion threads \url{https://nforum.ncatlab.org/discussion/6816/the-shape-of-function-objects} and \url{https://nforum.ncatlab.org/discussion/11361/shape-via-cohesive-path-groupoid}; we thank the anonymous referee for pointing these out to us.}%
).
Moreover, the $I$-localisation of $\infty$-sheaves on manifolds and their fibrant replacement have also been treated extensively in~\cite{ADH:Differential_Cohomology} since the first version of this paper appeared.

\paragraph*{Outline.}
In Section~\ref{sec:L_I H and first spaces from presheaves} we define the $\RN$-localisations $\scH^{p/i\, I}$ of $\scH^{p/i}$.
We show that $\scH^{p/i\, I}$ can also be obtained as further localisations of the \v{C}ech local model structures with respect to differentiably good open coverings.
We define the functor $Re \colon \scH^{p I} \to \Top$ and show that it is a left Quillen equivalence.

In Section~\ref{sec:S_e and its properties}, we study the smooth singular complex functor $S_e \colon \scH \to \sSet$.
We first show that $S_e \colon \scH^{p/i\, I} \to \sSet$ is a left Quillen equivalence.
Subsequently, we establish $S_e \colon \scH^{i I} \to \sSet$ as a right Quillen equivalence.
As an intermediate step, we relate the model categories $\scH^{i I}$ and $\sSet$ to localisations of the model category of complete Segal spaces.

Section~\ref{sec:comparison of spaces from presheaves} is concerned with the comparison of different ways of extracting spaces from simplicial presheaves on $\Cart$.
The key concept is to extend the homotopy equivalence that embeds the topological standard simplices into the smooth extended simplices to obtain natural weak equivalences between functors from $\scH$ to $\sSet$ and $\Top$.
We show that, for each manifold $M$, the space $S_e \ul{M}$ assigned to its associated sheaf recovers the homotopy type of $M$ and prove Theorem~\ref{st:intro comparison}.

In Section~\ref{sec:Concordance} we construct a fibrant replacement functor for $\scH^{p/i\, I}$ and use it to prove Theorem~\ref{st:intro Representability}.
We spell out the relation of this theorem to~\cite{BEBdBP:Class_sp_of_oo-sheaves} and works of Dugger.
Then we apply Theorem~\ref{st:map spaces via concordance} to prove the coincidence of model structures from Theorem~\ref{st:intro HoThys agree}.

We include two appendices; Appendix~\ref{app:Injective fib rep} contains the explicit construction of a fibrant replacement functor for the injective model structure on $\scH$, which features in the proof of Theorem~\ref{st:intro Representability}.
Building on this, we provide a Quillen equivalence between model categories for homotopy sheaves on $\Cart$ and homotopy sheaves on $\Mfd$ in Appendix~\ref{app:H^(pl) simeq wtH^(pl)}.

\paragraph*{Notation and conventions}

\noindent
\begin{longtable}{p{2cm}@{\hspace{-0.5cm}}p{14cm}}
$\bbDelta$ & The simplex category
\\
$\sSet$ & The category of simplicial sets
\\
$s\sSet$ & The category of bisimplicial sets; when used as a model category, this will always be endowed with the Kan-Quillen model structure
\\
$\Top$ & The model category of $\Delta$-generated topological spaces
\\
$\ul{\scC}(-,-)$ & Simplicially enriched hom-functor in a simplicial category $\scC$
\\
$\Cart$ & The category of cartesian spaces
\\
$\scY_c$ & The image of a cartesian space $c \in \Cart$ under the Yoneda embedding
\\
$\Mfd$ & The category of smooth manifolds and smooth maps
\\
$\ul{M}$ & The (simplicially constant) simplicial presheaf on $\Cart$ associated to a manifold $M$
\\
$\Dfg$ & The category of diffeological spaces
\\
$\scH$ & The category of simplicial presheaves on $\Cart$
\\
$\scH^p$ & Projective model structure on $\scH$
\\
$\scH^i$ & Injective model structure on $\scH$
\\
$\scH^{p/i}$ & $\scH$ with either one of the model structures $\scH^p$ or $\scH^i$; statements where $\scH^{p/i}$ is used will always mean that the statement applies to \textit{both} model structures.
\\
$\scH^{p/i\,I}$ & $\RN$-localisation of $\scH^{p/i}$
\\
$\scH^{p/i\, \ell}$ & Left Bousfield localisation of $\scH^{p/i}$ at the (\v{C}ech nerves of) differentiably good open coverings in $\Cart$
\\
$\Delta_e^k$ & Extended $k$-simplex as an element of $\Cart$
\\
$\Delta_e^*$ & Evaluation of elements in $\scH$ on all extended simplices, produces objects in $s\sSet$
\\
$\delta$ & Diagonal functor $\bbDelta \to \bbDelta^2$
\\
$\delta^*$ & Takes the diagonal of a bisimplicial set
\\
$S_e$ & $= \delta^* \circ \Delta_e^*$ the smooth singular complex functor
\\
$L_e, R_e$ & Left and right adjoints to $S_e$, respectively
\\
$Re$ & Realisation functor $\scH \to \Top$
\\
$S$ & Right adjoint to $Re$
\\
$\Cc^{p/i}$ & Fibrant replacement functor in $\scH^{p/i\, I}$
\\
$Q'$ & Modified cofibrant replacement functor in $\scH^p$
\\
$R'$ & Right adjoint to $Q'$
\\
$\tilde{\sfc}$ & Takes a simplicial set to its constant simplicial presheaf on $\Cart$
\\
$\ev_*$ & Evaluates elements of $\scH$ on the final object $* = \RN^0 \in \Cart$
\\
$\rmD$ & Sends a diffeological space to the $\Delta$-generated topological space induced by the plots
\\
$\rmC$ & right adjoint to $\rmD$
\\
$\rmh (\scM)$ & Homotopy category of a model category $\scM$
\end{longtable}

\vspace{-0.4cm}
\begin{myitemize}
\item Given two categories $\scC, \scI$, we write $\Cat(\scI, \scC)$ or $\scC^\scI$ for the category of functors $\scI \to \scC$.

\item We will also be working with the category $s\sSet = \Cat(\bbDelta^\opp, \sSet)$ of bisimplicial sets.
Our convention is always to write a bisimplicial set as a functor
\begin{equation}
	X \colon \bbDelta^\opp \to \sSet\,,
	\quad
	[n] \mapsto X_n\,,
	\quad \text{with} \quad
	X_{n,k} \coloneqq \big( X[n] \big) [k]\,.
\end{equation}
\end{myitemize}

\vspace{-0.2cm}
\subsection*{Acknowledgements}

The author is grateful to Birgit Richter for her encouragement to carry out this project.
Further, the author would like to thank Walker Stern and Lukas Müller for numerous discussions, as well as Dan Christensen, Enxin Wu, Daniel Berwick-Evans, Pedro Boavida de Brito and Dmitri Pavlov for insightful comments on an earlier version of this paper.
Finally, the author is grateful to the anonymous referee, whose very careful reading and detailed comments improved this paper notably.
The author was partially supported by the Deutsche Forschungsgemeinschaft (DFG, German Research Foundation) under Germany's Excellence Strategy---EXC 2121 ``Quantum Universe''---390833306.

\section{$\RN$-local model structures and smooth spaces}
\label{sec:L_I H and first spaces from presheaves}

We start by setting up the model-categorical background used in this paper.
Partially following~\cite{Schreiber:DCCT} and~\cite{Dugger:HoSheaves}, we consider the category of simplicial presheaves on cartesian spaces with its canonical projective and injective model structures.
In analogy with $\bbA^1$-local homotopy theory, we localise this category at all the morphisms $c \times \RN \to c$, where $c$ is any cartesian space and where the morphism is the projection onto the factor $c$.
Extending ideas from~\cite{Dugger:HoSheaves}, we establish several Quillen equivalences of this localised model category with the categories of simplicial sets and topological spaces.

\subsection{$\RN$-local model structures on simplicial presheaves}
\label{sec:R-local MoStrs}

In this section we start by setting up the various model structures on simplicial presheaves that will play a role in this article.
We start by introducing the central mathematical objects.

\begin{definition}
We let $\Cart$ denote the (small) category of submanifolds of $\RN^\infty$ that are diffeomorphic to $\RN^n$, for any $n \in \NN_0$.
These manifolds are called \emph{cartesian spaces}.
The morphisms $c \to d$ in $\Cart$ are the smooth maps $c \to d$ between these manifolds.
\end{definition}

In other words, $\Cart$ is the full subcategory of the category $\Mfd$ of smooth manifolds and smooth maps on the cartesian spaces.

\begin{definition}
Let $\scH \coloneqq \Cat(\Cart^\opp, \sSet)$ denote the category of simplicial presheaves on $\Cart$, and let $c \mapsto \scY_c \in \Cat(\Cart^\opp, \Set) \subset \scH$ denote its Yoneda embedding.
There is a fully faithful inclusion
\begin{equation}
	\ul{({-})} \colon \Mfd \to \scH\,,
	\qquad
	\ul{M}(c) = \Mfd(c,M)\,.
\end{equation}
\end{definition}

We can view each $c \in \Cart$ as a manifold via the inclusion $\Cart \subset \Mfd$, and we have $\ul{c} = \scY_c$.
We view $\sSet$ as endowed with the Kan-Quillen model structure.
The category $\Cart$ carries a Grothendieck coverage $\tau$ of differentiably good open coverings---see~\cite[App.~A]{FSS:Cech_diff_char_classes_via_L_infty} for details.
A covering of $c \in \Cart$ in this coverage is a collection of morphisms $\{\iota_i \colon c_i \to c\}_{i \in \Lambda}$ in $\Cart$ such that each $\iota_i$ is an embedding of an open subset, the images of the maps $\iota_i$ cover $c$ (i.e.~each $x \in c$ lies in the image of some $\iota_i$), and every finite intersection
\begin{equation}
\label{eq:def C_(i_0...i_n)}
	C_{i_0 \ldots i_n} \coloneqq \bigcap_{a = 0}^n \iota_{i_a}(c_{i_a}) \subset c
\end{equation}
with $i_0, \ldots, i_n \in \Lambda$ is either empty or a cartesian space.
We let $\ell$ denote the class of \v{C}ech nerves in $\scH$ of differentiably good open coverings in $\Cart$.
Given a simplicial model category $\scM$ and a class $S$ of morphisms in $\scM$, we denote by $L_S \scM$ the left Bousfield localisation of $\scM$ at the morphisms in $S$ (see~\cite{Hirschhorn:MoCats} for more background).

\begin{remark}
The left Bousfield localisation of a simplicial, left proper, cellular model category is again a simplicial model category~\cite[Thm.~4.1.1(4)]{Hirschhorn:MoCats}.
The model category $\sSet$ is both left proper and cellular (see~\cite[Prop.~12.1.4]{Hirschhorn:MoCats} for the cellular part), and these properties carry over to the projective model structure on $\Fun(\scC, \sSet)$, where $\scC$ is any small category~\cite[Prop.~12.1.5]{Hirschhorn:MoCats}.
The same holds true for the injective model structure on simplicial presheaves~\cite[p.~180]{Jardine:Local_HoThy}.
In particular, left Bousfield localisations of $\scH^{p/i}$ are again simplicial.
Alternatively, one can obtain these localisations as simplicial model categories using Barwick's theory of enriched Bousfield localisations~\cite{Barwick:Enriched_B-Loc}.
\qen
\end{remark}

\begin{definition}
\label{def:H^pi and its locs}
We define the following model categories:
\begin{myenumerate}
\item $\scH^{p/i}$ is the projective (resp.~injective) model structure on $\scH$.
We also refer to $\scH^{p/i}$ as the model categories of \emph{smooth spaces}.

\item We define the left Bousfield localisations
\begin{equation}
	\scH^{p/i\, \ell} \coloneqq L_\ell \scH^{p/i}\,.
\end{equation}
This is the projective (resp.~injective) model structure for sheaves of $\infty$-groupoids on $\Cart$.

\item Let $I \coloneqq \{ \scY_c \times \scY_\RN \longrightarrow \scY_c\}_{c \in \Cart}$ be the set of morphisms obtained by taking the product of the collapse map $\RN \to *$ with all identities $\{1_c\}_{c \in \Cart}$.
We define the projective (resp.~injective) \emph{$\RN$-local model category of smooth spaces} as the left Bousfield localisation
\begin{equation}
		\scH^{p/i\, I} \coloneqq L_I \scH^{p/i}\,.
\end{equation}

\end{myenumerate}
\end{definition}

We can further form the localisations
\begin{equation}
		\scH^{p/i\, \ell I} \coloneqq L_I \scH^{p/i\, \ell} = L_\ell \scH^{p/i\, I}\,,
\end{equation}
which agree for purely abstract reasons (localisations commute).

\begin{remark}
We interpret the localisation $\scH^{p/i\, I}$ as an $\RN$-localisation of $\scH^{p/i}$ akin to motivic localisation (see, for example, \cite{Voevodsky:A1_HoThy,MV:A1-HoThy_of_Schemes,DLORV:Motivic_HoThy, Jardine:Local_HoThy}).
Though thinking of objects in $\scH$ as smooth spaces, we will mostly refer objects in $\scH$ by the more technically precise term of simplicial presheaves, and we will sometimes refer to fibrant objects in $\scH^{p/i\, I}$ as \emph{essentially constant} or \emph{$\RN$-local} simplicial presheaves (see Proposition~\ref{st:tensor and ev weqs in H^(p/i I)} for justification of this terminology).
\qen
\end{remark}

Note that we use the term \textit{symmetric monoidal} model category in the sense of~\cite[Def.~4.1.13, Def.~4.2.6]{Hovey:MoCats}.
In particular, a \textit{cartesian} model category is a symmetric monoidal model category whose monoidal product is the category-theoretic product.

\begin{proposition}
\label{st:basic props of H^pi and its locs}
All model structures in Definition~\ref{def:H^pi and its locs} are simplicial, left proper, tractable and symmetric monoidal.
\end{proposition}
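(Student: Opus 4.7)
The plan is to verify the four properties first for the unlocalised model structures $\scH_\infty^{p/i}$, and then transfer each property to the various localisations using standard preservation results together with a direct check of monoidal compatibility.

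For $\scH_\infty^{p/i}$, each of the four properties is a classical consequence of the corresponding property of the Kan-Quillen model structure on $\sSet$ and the smallness of $\Cart$. Concretely, the projective and injective model structures on functors from a small category into a combinatorial, simplicial, left proper, cartesian symmetric monoidal model category again possess all four properties. Tractability in the projective case is immediate because the generating (trivial) cofibrations have the form $\scY_c \times (\partial\Delta^n \hookrightarrow \Delta^n)$ and their trivial analogues, all of which have cofibrant domains; the injective case is handled via Lurie's general construction of the injective model structure on combinatorial model categories. Cartesian symmetric monoidal-ness follows from the pushout-product axiom on Kan-Quillen simplicial sets together with the crucial fact that $\Cart$ is closed under binary products, so that $\scY_c \times \scY_{c'} \cong \scY_{c \times c'}$ as simplicial presheaves; this reduces the pushout-product axiom for the projective cartesian product to the one for $\sSet$, while for the injective case one uses that cofibrations are monomorphisms.

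For the localisations, I would invoke the general theory of enriched left Bousfield localisation from~\cite{Barwick:Enriched_B-Loc}, which gives automatically that each of $\scH_\infty^{p/i\, \ell}$, $\scH_\infty^{p/i\, I}$, $\scH_\infty^{p/i\, \ell I}$, and $\scH_\infty^{p/i\, I \ell}$ is simplicial, left proper, and tractable. For the class $\ell$, I would first observe that since $\Cart$ is small, the class of \v{C}ech nerves of differentiably good open coverings is in fact represented by a small set of morphisms of simplicial presheaves, so that Barwick's theory applies verbatim.

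The remaining and more delicate point is symmetric monoidal-ness, for which I would use the standard criterion that a left Bousfield localisation of a cartesian symmetric monoidal model category remains cartesian symmetric monoidal provided the localising class is closed under cartesian product with representables up to local weak equivalence. For the set $I$, the product $\scY_{c'} \times (\scY_c \times \scY_\RN \to \scY_c)$ is canonically isomorphic to $\scY_{c' \times c} \times \scY_\RN \to \scY_{c' \times c}$, which lies in $I$ since $c' \times c \in \Cart$. For the class $\ell$, the product of the \v{C}ech nerve of a good open cover $\{c_i \to c\}$ with $\scY_{c'}$ is precisely the \v{C}ech nerve of the induced good open cover $\{c_i \times c' \to c \times c'\}$ of $c \times c'$. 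The iterated localisations $\scH_\infty^{p/i\, \ell I}$ and $\scH_\infty^{p/i\, I \ell}$ then follow by combining these two checks. The main obstacle in this scheme is exactly this monoidal compatibility step: while simplicial-ness, left properness, and tractability follow automatically from general preservation theorems, monoidal-ness requires the explicit closure properties of $I$ and $\ell$ under products with representables established above.
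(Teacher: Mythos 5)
Your proposal is correct, and for the bulk of the statement it runs parallel to the paper: simplicial-ness, left properness and tractability of all the localised structures are obtained from Barwick's theory of enriched left Bousfield localisation (the paper cites \cite[Thm.~4.46]{Barwick:Enriched_B-Loc} and does not re-verify the unlocalised cases separately as you do, but that is only a matter of packaging). Where you genuinely diverge is in the monoidality step. The paper verifies monoidality of $\scH_\infty^{p/i\, I}$ (and $\scH_\infty^{p/i\, \ell I}$) in the adjoint form: it uses that the $\scY_c$ are homotopy generators and checks, via the internal-hom adjunction and \cite[Prop.~4.47]{Barwick:Enriched_B-Loc}, that $F^{\scY_d}$ is again $I$-local, the key computation being $\scY_c \times \scY_\RN \times \scY_d \cong \scY_{c\times d}\times\scY_\RN \in I$ --- which is exactly your observation that $I$ is closed under product with representables, just read through the adjunction. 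For the \v{C}ech localisations, however, the paper takes a different route: monoidality of $\scH_\infty^{p/i\,\ell}$ is quoted from \cite[Thm.~4.58]{Barwick:Enriched_B-Loc}, and monoidality of $\scH_\infty^{p/i\, I\ell}$ is \emph{deferred} to Corollary~\ref{st:H^(pi I ell) is sym mon}, i.e.\ it is deduced only after proving that the three localisations coincide. Your direct check that the product of a \v{C}ech covering morphism of a differentiably good covering with $\scY_{c'}$ is again the \v{C}ech covering morphism of the induced good covering of $c\times c'$ handles $\ell$, $\ell I$ and $I\ell$ uniformly and avoids both the appeal to Barwick's Thm.~4.58 and the forward reference; that is a small but genuine simplification of the logical structure. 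The one point you should make explicit (the paper does so) is that checking against representables suffices because the $\scY_c$ form a set of (cofibrant) homotopy generators for both the projective and the injective structure --- for the injective case the generating cofibrations are not explicit, so the homotopy-generator form of the localisation criterion is what is actually being used.
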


\begin{proof}
Except for the claim that the model structures are symmetric monoidal, all assertions follow from~\cite[Thm.~4.7]{Barwick:Enriched_B-Loc}, building on results in~\cite{Beke:Sheafifiable_HoThys}.
The model structures for sheaves of $\infty$-groupoids are symmetric monoidal by~\cite[Thm.~4.58]{Barwick:Enriched_B-Loc}.
To see that $\scH^{p/i\, I}$ is symmetric monoidal, observe that the objects $\scY_c$, $c \in \Cart$, form a set of homotopy generators for $\scH^{p/i}$.
Let $F \in \scH^{p/i\, I}$ be a local object, and consider the internal hom object $F^{\scY_d}$ for any $d \in \Cart$.
For any of the morphisms $\scY_c \times \scY_\RN \to \scY_c$ in $I$, the internal hom adjunction yields a commutative diagram of simplicially enriched hom spaces
\begin{equation}
\begin{tikzcd}[row sep=0.75cm]
	\ul{\scH}(\scY_c, F^{\scY_d}) \ar[r] \ar[d, "\cong"']
	& \ul{\scH}(\scY_c \times \scY_\RN, F^{\scY_d}) \ar[d, "\cong"]
	\\
	\ul{\scH}(\scY_{c \times d}, F) \ar[r]
	& \ul{\scH}(\scY_{c \times d} \times \scY_\RN, F)
\end{tikzcd}
\end{equation}
Here we have used that $\Cart$ has finite products.
The bottom horizontal morphism is induced by the morphism $c \times d \times \RN \to c \times d$, which is an element of $I$.
Hence, the bottom morphism is a weak equivalence in $\sSet$.
Therefore, it follows from~\cite[Prop.~4.47]{Barwick:Enriched_B-Loc} that $\scH^{p/i\, I}$ is symmetric monoidal.
The exact same proof shows that $\scH^{p/i\, \ell I}$ is symmetric monoidal as well.
The fact that $\scH^{p/i\, I \ell}$ is symmetric monoidal will follow from Corollary~\ref{st:coincidence of model structures}.
\end{proof}

The injective case can also be found in~\cite{Rezk:Cartesian_presentation}.
We record the following two direct observations:

\begin{proposition}
\label{st:p/i Quillen equivalences}
There are commutative diagrams of simplicial Quillen adjunctions:
\begin{equation}
\begin{tikzcd}[row sep=0.75cm]
	\scH^p \ar[r, shift left=0.1cm] \ar[d, shift left=-0.1cm]
	& \scH^i \ar[l, shift left=0.1cm] \ar[d, shift left=-0.1cm]
	\\
	\scH^{p \ell} \ar[r, shift left=0.1cm] \ar[u, shift left=-0.1cm] \ar[d, shift left=-0.1cm]
	& \scH^{i \ell} \ar[l, shift left=0.1cm] \ar[u, shift left=-0.1cm] \ar[d, shift left=-0.1cm]
	\\
	\scH^{p \ell I} \ar[r, shift left=0.1cm] \ar[u, shift left=-0.1cm]
	& \scH^{i \ell I} \ar[l, shift left=0.1cm] \ar[u, shift left=-0.1cm]
\end{tikzcd}
\qandq
\begin{tikzcd}
	\scH^p \ar[r, shift left=0.1cm] \ar[d, shift left=-0.1cm]
	& \scH^i \ar[l, shift left=0.1cm] \ar[d, shift left=-0.1cm]
	\\
	\scH^{p I} \ar[r, shift left=0.1cm] \ar[u, shift left=-0.1cm] \ar[d, shift left=-0.1cm]
	& \scH^{i I} \ar[l, shift left=0.1cm] \ar[u, shift left=-0.1cm] \ar[d, shift left=-0.1cm]
	\\
	\scH^{p \ell I} \ar[r, shift left=0.1cm] \ar[u, shift left=-0.1cm]
	& \scH^{i \ell I} \ar[l, shift left=0.1cm] \ar[u, shift left=-0.1cm]
\end{tikzcd}
\end{equation}
where the rightwards and downwards arrows are the left adjoints.
All arrows are identity functors, and all horizontal arrows are Quillen equivalences.
\end{proposition}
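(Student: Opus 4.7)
The plan is to reduce the proposition to three ingredients: (i) the unlocalised identity-adjunction $\scH_\infty^p \rightleftarrows \scH_\infty^i$ is a simplicial Quillen equivalence; (ii) a Quillen equivalence between simplicial left proper tractable model categories descends to a Quillen equivalence between their enriched left Bousfield localisations at any chosen set of maps; and (iii) all vertical arrows are localisation identity Quillen adjunctions by construction.

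First I would verify (i). The underlying category is the same on both sides, and a morphism in $\scH_\infty$ is a projective (resp.\ injective) weak equivalence iff it is a levelwise Kan–Quillen weak equivalence, so the classes of weak equivalences coincide. Moreover, every projective cofibration is an injective cofibration since injective cofibrations are characterised as levelwise monomorphisms, and projective cofibrations are retracts of transfinite compositions of pushouts of generating projective cofibrations, each of which is plainly a levelwise monomorphism. Hence the identity functor $\id \colon \scH_\infty^p \to \scH_\infty^i$ preserves cofibrations and weak equivalences, so $(\id, \id)$ is a Quillen adjunction, and since the weak equivalences agree it is a Quillen equivalence. Simpliciality is automatic, as both model structures are simplicial with the same enrichment.

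Next I would invoke (ii) to propagate this equivalence to all the localisations. By Proposition~\ref{st:basic props of H^pi and its locs}, all model structures in Definition~\ref{def:H^pi and its locs} are simplicial, left proper and tractable, so the setup of~\cite{Barwick:Enriched_B-Loc} applies. The relevant formal statement (see~\cite[Thm.~4.46 and Prop.~4.47]{Barwick:Enriched_B-Loc}) is that if $F \colon \scM \rightleftarrows \scN : G$ is a simplicial Quillen adjunction between simplicial left proper tractable model categories and $S$ is a set of morphisms in $\scM$, then $F \colon L_S \scM \rightleftarrows L_{\bbL F(S)} \scN : G$ is a simplicial Quillen adjunction, and is a Quillen equivalence whenever $(F, G)$ is. Applying this with $F = \id$ and successively with $S = \ell$, $S = I$, or both (localising iteratively), one immediately obtains all six horizontal arrows as Quillen equivalences, since the identity sends the distinguished classes $\ell$ and $I$ to themselves. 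For the vertical arrows, the identity functor is tautologically a left Quillen functor from any model category to its enriched left Bousfield localisation, and localising once more (at $I$ after $\ell$, or conversely) is again the identity Quillen adjunction by the same principle.

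Finally, the commutativity of both diagrams is automatic, since every functor in sight is the identity on the underlying category $\scH_\infty$. I expect no real obstacle beyond correctly citing the functoriality of enriched left Bousfield localisation; the only mild subtlety is ensuring that the set $I$ (consisting of morphisms out of cofibrant objects $\scY_c \times \scY_\RN$) pulls back well under the identity, which is trivial, and that iterated localisations $L_I L_\ell \scH_\infty^{p/i}$ and $L_\ell L_I \scH_\infty^{p/i}$ are themselves simplicial, left proper, and tractable, which is already recorded in Proposition~\ref{st:basic props of H^pi and its locs}.
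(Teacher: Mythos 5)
Your proposal is correct and follows essentially the same route as the paper: the paper's proof is precisely the combination of the well-known identity Quillen equivalence $\scH_\infty^p \rightleftarrows \scH_\infty^i$ with a general statement that Quillen adjunctions/equivalences descend along (enriched) left Bousfield localisations, cited there as \cite[Prop.~3.13]{Bunk:Higher_Sheaves}. The only quibble is your attribution of the descent statement to \cite[Thm.~4.46, Prop.~4.47]{Barwick:Enriched_B-Loc} (the latter is the locality/monoidality criterion used elsewhere in the paper); the fact itself is standard, e.g.\ \cite[Thm.~3.3.20]{Hirschhorn:MoCats}, and your use of it is otherwise sound.
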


\begin{proposition}
\label{st:same weqs}
Each pair of model categories defined in Definition~\ref{def:H^pi and its locs}(1)--(4) (based on either the projective or the injective model structure), respectively, has the same weak equivalences.
That is, their respective underlying relative categories agree.
\end{proposition}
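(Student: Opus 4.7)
The plan is to handle the five pairs appearing in Definition~\ref{def:H^pi and its locs} case by case, namely $(\scH_\infty^{p\,\square}, \scH_\infty^{i\,\square})$ with $\square$ in $\{\emptyset, \ell, I, \ell I, I\ell\}$. The base case $\square = \emptyset$ is classical, since both the projective and injective model structures on the functor category $\Cat(\Cart^\opp, \sSet)$ have as weak equivalences precisely the object-wise Kan--Quillen weak equivalences.

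For each localised pair I would use the standard characterisation of weak equivalences in a simplicial left Bousfield localisation: a morphism $f$ is a weak equivalence in $L_S \scM$ if and only if, for every $S$-local $\scM$-fibrant object $W$ and every cofibrant replacement $\tilde f \colon \tilde X \to \tilde Y$ of $f$ in $\scM$, the map $\ul{\scH}_\infty(\tilde Y, W) \to \ul{\scH}_\infty(\tilde X, W)$ is a weak equivalence in $\sSet$. Proposition~\ref{st:p/i Quillen equivalences} shows that at every level the identity functors form a Quillen adjunction between the projective and injective variants, so every projective cofibration is an injective one and, dually, every injective fibration is a projective one; in particular every injectively fibrant object is projectively fibrant. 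The inclusion of the projective weak equivalences into the injective ones is then immediate: the class of test objects appearing in the injective characterisation is a subclass of that appearing in the projective one, and any projective cofibrant replacement is simultaneously an injective one.

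For the reverse inclusion, given an $\scH_\infty^{i\,\square}$-weak equivalence $f$ and an $S$-local projectively fibrant test object $W$, I would take a functorial injective fibrant replacement $W \to W'$ in $\scH_\infty^i$. The map $W \to W'$ is an object-wise weak equivalence between object-wise Kan complexes, and $S$-locality is invariant under such weak equivalences (being phrased in terms of derived mapping spaces), so $W'$ is both injectively fibrant and $S$-local, hence $\scH_\infty^{i\,\square}$-fibrant. Applying the hypothesis to $W'$ shows that $\ul{\scH}_\infty(\tilde Y, W') \to \ul{\scH}_\infty(\tilde X, W')$ is a weak equivalence for a projective cofibrant replacement $\tilde f$. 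Since $\scH_\infty^p$ is a simplicial model category and $W, W'$ are both projectively fibrant, the comparison maps $\ul{\scH}_\infty(\tilde X, W) \to \ul{\scH}_\infty(\tilde X, W')$ and $\ul{\scH}_\infty(\tilde Y, W) \to \ul{\scH}_\infty(\tilde Y, W')$ are also weak equivalences, and two-out-of-three in the evident commuting square gives the desired conclusion.

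The main subtlety is the interaction of derived mapping spaces with the two model structures. The argument is engineered so that the projective cofibrant replacement $\tilde f$ simultaneously serves as an injective one, and so that both $W$ and $W'$ are simultaneously projectively and injectively fibrant. This is what allows one to pass cleanly between the projective and injective characterisations of $S$-local equivalences without needing a separate comparison of derived hom-spaces computed with respect to the two simplicial model structures.
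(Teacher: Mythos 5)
Your argument is correct, but it follows a genuinely different route from the paper. The paper's proof is a short reduction: take a projective cofibrant replacement $Q$, note that $f$ is a local weak equivalence iff $Qf$ is (since $Q \weq 1$ is an objectwise weak equivalence), and then invoke Proposition~\ref{st:p/i Quillen equivalences}, namely that the identity functors form a Quillen \emph{equivalence} $\scH_\infty^{p\,\square} \rightleftarrows \scH_\infty^{i\,\square}$ for each localisation, together with the fact that a left Quillen equivalence preserves and reflects weak equivalences between cofibrant objects. You instead work directly with the characterisation of weak equivalences in a (simplicial) left Bousfield localisation via derived mapping spaces into local objects: one inclusion comes for free because injectively fibrant local objects are projectively fibrant local objects and a projective cofibrant replacement is also an injective one, and the other follows by replacing a projectively $S$-local test object by an injectively fibrant model and running two-out-of-three on the resulting square. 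The trade-off is clear: the paper's argument is shorter but leans on the nontrivial external input that the localised identity adjunctions are Quillen \emph{equivalences} (imported from earlier work), whereas yours is more self-contained, using only the Quillen \emph{adjunction} between the two structures plus standard Bousfield-localisation machinery. Two small points you should make explicit to be fully rigorous: (i) for the pairs in Definition~\ref{def:H^pi and its locs}(4) the iterated localisations $L_I L_\ell$ and $L_\ell L_I$ are again left Bousfield localisations of the base projective/injective structures (at the union of the two sets of maps), so your generic argument does apply to them; and (ii) your claim that the injective test objects form a subclass of the projective ones tacitly uses that the $S$-locality condition is the same in both structures --- this holds because the domains and codomains of the maps in $\ell$ and $I$ are projectively cofibrant (representables and coproducts thereof), so for an object fibrant in either structure the simplicial hom computes the derived mapping space in both; the same identification is needed when you conclude that your injectively fibrant replacement $W'$ is fibrant in $\scH_\infty^{i\,\square}$. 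Both are routine, but worth a sentence each.
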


The reason why we also refer to fibrant objects in $\scH^{p/i\, I}$ as \emph{essentially constant} simplicial presheaves is the following fact (the second statement is a generalisation of~\cite[Lemma~3.4.2]{Dugger:HoSheaves}):

\begin{proposition}
\label{st:tensor and ev weqs in H^(p/i I)}
Let $F \in \scH$.
The following statements hold true:
\begin{myenumerate}
\item The canonical morphism $F \otimes \scY_c \to F$ is a weak equivalence in $\scH^{p/i\, I}$, for every $c \in \Cart$.

\item Let $F \in \scH^{p/i}$ be fibrant.
The following are equivalent:
\begin{mysubenumerate}
\item $F$ is fibrant in $\scH^{p/i\, I}$.

\item The canonical map $F(*) \to F(c)$ is a weak equivalence in $\sSet$, for every $c \in \Cart$.

\item For any $f \colon c \to d$ in $\Cart$, the morphism $F(f) \colon F(d) \to F(c)$ is a weak equivalence in $\sSet$.
\end{mysubenumerate}
\end{myenumerate}
\end{proposition}

\begin{proof}
Ad~(1):
By Proposition~\ref{st:same weqs}, it suffices to show this for $\scH^{i I}$.
There, every object is cofibrant (since every object in $\sSet$ is cofibrant), so that the functor $F \otimes (-) \colon \scH^{i I} \to \scH^{i I}$ is left Quillen.
Thus, it suffices to show that the morphism $\scY_c \to *$ is a weak equivalence.
Since $c \cong \RN^n$ for some $n \in \NN_0$, we can reduce to the case where $c = \RN^n$.

We can write the collapse morphism $\RN^n \to *$ as a composition
\begin{equation}
	\RN^n \cong \RN^{n-1} \times \RN
	\longrightarrow \RN^{n-1} \cong \RN^{n-2} \times \RN
	\longrightarrow \cdots \longrightarrow *\,,
\end{equation}
where each arrow is an element of $I$.
Thus, the claim follows.

Ad~(2):
Condition~(ii) implies that $F$ is fibrant in $\scH^{p/i\, I}$, i.e.~condition (i):
since $\Cart$ has finite products, we have a commutative triangle
\begin{equation}
\begin{tikzcd}[column sep={1.5cm,between origins}, row sep=0.5cm]
	& F(*) \ar[dl, "\sim"'] \ar[dr, "\sim"] &
	\\
	F(c) \ar[rr] & & F(c \times \RN)
\end{tikzcd}
\end{equation}
for any $c \in \Cart$.
The fact that $F$ is $I$-local thus follows from the two-out-of-three property of weak equivalences in $\sSet$.

We now show that~(i) implies~(ii):
By part~(1) we know that for each $c \in \Cart$, the morphism $\scY_c \to *$ is a weak equivalence in $\scH^{p/i\, I}$.
The claim then follows from the enriched Yoneda Lemma:
the top arrow in the commutative diagram
\begin{equation}
\begin{tikzcd}
	\ul{\scH}(*, F) \ar[r] \ar[d, "\cong"'] & \ul{\scH}(\scY_c, F) \ar[d, "\cong"]
	\\
	F(*) \ar[r] & F(c)
\end{tikzcd}
\end{equation}
is a weak equivalence since $\scH^{p/i\, I}$ is simplicial (Proposition~\ref{st:basic props of H^pi and its locs}), so that $\ul{\scH}(-,F)$ is a right Quillen functor.
As representables are cofibrant in $\scH^{p/i\, I}$, it follows that the functor $\ul{\scH}(-,F)$ preserves the weak equivalence $\scY_c \weq *$.

Finally, it is clear that~(iii) implies~(ii), and the converse implication follows readily by combining the fact that any morphism $f$ in $\Cart$ fits into a commutative triangle
\begin{equation}
\begin{tikzcd}
	c \ar[rr, "f"] \ar[dr]
	& & d \ar[dl]
	\\
	& * &
\end{tikzcd}
\end{equation}
with the two-out-of-three property of weak equivalences.
\end{proof}

Our definitions of model structures on $\scH$ are redundant, by the following useful theorem:

\begin{theorem}
\label{st:criterion for coincidence of MoStrs}
\emph{\cite[Prop.~E.1.10]{Joyal:Thy_of_QCats_and_appls}}
Let $\scM$ and $\scM'$ be two model categories with the same underlying category.
Then, $\scM$ and $\scM'$ coincide as model categories if and only if they have the same cofibrations and the same fibrant objects.
\end{theorem}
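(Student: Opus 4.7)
The forward direction is immediate: identical model structures share their cofibrations and fibrant objects. For the converse, assume $\scM$ and $\scM'$ have the same underlying category, the same cofibrations, and the same fibrant objects. The plan is to recover, in order, the trivial fibrations, the weak equivalences, and the fibrations of the two structures.

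First, the trivial fibrations coincide: in any model category they are characterised as those maps with the right lifting property against all cofibrations, and by hypothesis these classes agree. The heart of the argument is showing that the weak equivalences coincide, and the idea is to express them using only cofibrations, trivial fibrations, and fibrant objects. Two ingredients are needed. (i) For a cofibrant object $A$, a cylinder object $A \sqcup A \rightarrowtail \mathrm{Cyl}(A) \weq A$ can be produced by factoring the codiagonal as a cofibration followed by a trivial fibration; since both classes coincide in $\scM$ and $\scM'$, the left-homotopy relation on maps with cofibrant source is the same in both structures. (ii) By the Whitehead theorem in model categories, a map between fibrant--cofibrant objects is a weak equivalence iff it is a homotopy equivalence---a notion depending only on the left-homotopy relation and on which objects are fibrant--cofibrant. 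Hence the weak equivalences between fibrant--cofibrant objects agree in $\scM$ and $\scM'$.

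To handle general maps, cofibrant replacement poses no difficulty: the factorization $\emptyset \rightarrowtail QX \weq X$ uses only cofibrations and trivial fibrations, and may be chosen identically in both structures. The delicate point is fibrant replacement, which the small-object argument constructs from trivial cofibrations---precisely the class we still wish to pin down. I would sidestep this by invoking the Yoneda-type characterisation that $f \colon X \to Y$ is a weak equivalence iff $f^* \colon [QY, Z] \to [QX, Z]$ is a bijection for every fibrant $Z$, where $[-,-]$ denotes left-homotopy classes of maps. This follows from the universal property of the homotopy category, since $\mathrm{Ho}(\scM)(X, Z) = [QX, Z]$ when $Z$ is fibrant and Yoneda detects isomorphisms. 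The right-hand side of this characterisation only involves cofibrations, trivial fibrations, and fibrant objects, so the weak equivalences in $\scM$ and $\scM'$ coincide.

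Finally, once the weak equivalences and cofibrations agree, the trivial cofibrations agree, and hence so do the fibrations, which are characterised by the right lifting property against trivial cofibrations. The main obstacle is the fibrant-replacement step: one must verify that the Yoneda-type characterisation of weak equivalences genuinely avoids circular use of trivial cofibrations, and this is where the substantive content of the theorem lives; everything on either side of that step is a clean lifting-property argument.
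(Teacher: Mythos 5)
The paper does not prove this statement itself—it is quoted from Riehl's book (Thm.~15.3.1, due to Joyal)—and your argument is correct and essentially the argument given in that cited source: trivial fibrations are recovered by lifting against the (common) cofibrations, weak equivalences are recovered via cofibrant replacement together with the criterion that a map between cofibrant objects is a weak equivalence if and only if it induces bijections on left-homotopy classes of maps into every fibrant object, and the fibrations are then determined by lifting against trivial cofibrations. The only point worth making explicit is that the left-homotopy relation is independent of the model structure precisely because your sources $QX$, $QY$ are cofibrant and your test objects $Z$ are fibrant, so that in either structure the relation is detected by the single chosen cylinder obtained by factoring the codiagonal as a cofibration followed by a trivial fibration—which is exactly the situation in which you use it, so there is no circularity in the fibrant-replacement-free characterisation of weak equivalences.
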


\begin{corollary}
\label{st:coincidence of model structures}
We have the following identities of model categories:
\begin{equation}
	\scH^{p/i\, \ell I}
	= \scH^{p/i\, I}\,.
\end{equation}
In particular, every \v{C}ech-local weak equivalence is an $I$-local weak equivalence.
\end{corollary}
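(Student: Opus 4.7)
The plan is to apply Joyal's criterion (Theorem~\ref{st:criterion for coincidence of MoStrs}). Since every $\sSet$-enriched left Bousfield localisation preserves the underlying cofibrations, all six model structures in play ($\scH_\infty^{p/i}$, $\scH_\infty^{p/i\,\ell}$, $\scH_\infty^{p/i\,I}$, $\scH_\infty^{p/i\,\ell I}$, $\scH_\infty^{p/i\,I\ell}$) share the same class of cofibrations. Therefore, to establish both equalities, it suffices to show that the three model structures $\scH_\infty^{p/i\,I}$, $\scH_\infty^{p/i\,\ell I}$, and $\scH_\infty^{p/i\,I\ell}$ have the same fibrant objects. The inclusions of fibrant objects coming from further localisation are automatic, so the only content is the reverse inclusion: every fibrant object of $\scH_\infty^{p/i\,I}$ is $\ell$-local (i.e.\ a homotopy sheaf with respect to differentiably good open coverings).

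For this, let $F$ be fibrant in $\scH_\infty^{p/i\,I}$. By Proposition~\ref{st:tensor and ev weqs in H^(p/i I)}(2), $F$ is fibrant in $\scH_\infty^{p/i}$ and the canonical map $F(*) \to F(c)$ is a weak equivalence for every $c \in \Cart$. I need to show that for every differentiably good open covering $\mathcal{U} = \{\iota_i \colon c_i \to c\}_{i \in \Lambda}$ with Čech nerve $\check{C}(\mathcal{U}) \in \scH_\infty$, the canonical map $\ul{\scH}_\infty(\scY_c, F) \to \ul{\scH}_\infty(\check{C}(\mathcal{U}), F)$ is a weak equivalence in $\sSet$. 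By the Yoneda lemma, the left-hand side is $F(c)$. Since $F$ is fibrant in $\scH_\infty^{p/i}$ and $\check{C}(\mathcal{U})$ is the homotopy colimit of its simplicial diagram of representables $\{\scY_{C_{i_0 \ldots i_n}}\}$, the right-hand side is computed as the homotopy limit
\begin{equation}
	\ul{\scH}_\infty(\check{C}(\mathcal{U}), F) \simeq \holim_{[n] \in \bbDelta} \prod_{i_0, \ldots, i_n \in \Lambda} F(C_{i_0 \ldots i_n})\,.
\end{equation}
Because every non-empty $C_{i_0 \ldots i_n}$ is itself a cartesian space, the hypothesis on $F$ gives $F(C_{i_0 \ldots i_n}) \simeq F(*)$ naturally. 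Hence, up to weak equivalence, the homotopy limit reduces to the cotensor of $F(*)$ with the geometric realisation $|N(\mathcal{U})|$ of the nerve simplicial set of the covering $\mathcal{U}$. By the nerve theorem (e.g.\ \cite[Thm.~A.3.1]{Lurie:HA}) applied to the good open covering $\mathcal{U}$ of the contractible manifold $c$, $|N(\mathcal{U})|$ is weakly equivalent to $c$, which is contractible. Therefore the cotensor collapses to $F(*) \simeq F(c)$, and the comparison map is a weak equivalence.

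For the final assertion, any Čech-local weak equivalence is by definition a weak equivalence in $\scH_\infty^{p/i\,\ell}$, hence also in the further localisation $\scH_\infty^{p/i\,\ell I}$; since $\scH_\infty^{p/i\,\ell I} = \scH_\infty^{p/i\,I}$ by what we have just proven, it is an $I$-local weak equivalence as well. The main obstacle in the above is the third-paragraph calculation: converting the $\ell$-locality check into a cotensor against the nerve and rigorously invoking the nerve theorem requires care about the cofibrancy of the Čech nerve so that the enriched hom into the fibrant object $F$ indeed computes the derived hom; this is the principal technical point of the argument, and it is here that the hypothesis that $\mathcal{U}$ is a \emph{differentiably good} covering, ensuring that all iterated intersections remain in $\Cart$, is essential.
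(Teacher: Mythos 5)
Your proposal is correct and follows essentially the same route as the paper: Joyal's criterion reduces both identities to showing that every fibrant object of $\scH_\infty^{p/i\, I}$ is $\ell$-local, local constancy collapses the Čech descent object to constant coefficients, and a Nerve-Theorem-type input for the differentiably good covering of the contractible space $c$ finishes the argument. The only (immaterial) difference is that you contract the pieces first and cotensor $F(*)$ against the discrete nerve $|N(\mathcal{U})|$, whereas the paper keeps the factors $\Sing(C_{i_0 \ldots i_n})$ and first invokes the comparison of the Čech homotopy colimit with $\Sing(c)$ before using contractibility of $c$.
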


\begin{proof}
By their construction as left Bousfield localisations, all of the above three model categories have the same cofibrations.
Thus, it suffices to check that their fibrant objects coincide.

An object $F \in \scH^{p/i\, \ell I}$ is fibrant if and only if it is fibrant in $\scH^{p/i\, \ell}$ and satisfies that the canonical map
\begin{equation}
	F(c) \cong \ul{\scH} (\scY_c, F)
	\longrightarrow \ul{\scH} (\scY_c \times \scY_\RN, F) \cong F(c \times \RN)
\end{equation}
is a weak equivalence in $\sSet$, for every $c \in \Cart$.
That is, an object in $\scH^{p/i\, \ell I}$ is fibrant precisely if it is fibrant in both $\scH^{p/i\, \ell}$ and in $\scH^{p/i\, I}$.
In particular, this implies that $F$ is fibrant also in $\scH^{p/i\, I}$.

To prove the converse, we first introduce some notation.
Given a set $S$, let $S^{[\cdot]} \in \sSet$ denote the simplicial set whose $n$-simplices are $n{+}1$-tuples $(i_0, \ldots, i_n) \in S^{n+1}$ of elements in $S$; its $i$-th face maps forget the respective $i$-th entry, and its $j$-th degeneracy maps duplicate the respective $j$-th entry of a tuple.
Consider an object $c \in \Cart$, and let $\scU = \{c_i \to c\}_{i \in \Lambda}$ be a differentiably good open covering of $c$.
Let $\Lambda_{ne}^n \subset \Lambda^{n+1}$ be the subset on those $n{+}1$-tuples $(i_0, \ldots, i_n) \in \Lambda^{n+1}$ such that $C_{i_0 \cdots i_n} \neq \emptyset$ (see~\eqref{eq:def C_(i_0...i_n)} for the notation).
One checks that this defines a simplicial subset \smash{$\Lambda_{ne} \subset \Lambda^{[\cdot]}$}.

Given a projectively fibrant simplicial presheaf $F \in \scH$, consider the maps of simplicial sets
\begin{align}
\label{eq:ascent morphism}
	F(c) \longrightarrow &\underset{\bbDelta}{\holim} \Big( \cdots \prod_{i_0, \ldots, i_n \in \Lambda} \ul{\scH}(Q \scY_{C_{i_0 \ldots i_n}}, F) \cdots \Big)
	\\*
	\weq &\underset{\bbDelta}{\holim} \Big( \cdots \prod_{(i_0, \ldots, i_n) \in \Lambda_{ne}^n} F(C_{i_0 \ldots i_n}) \cdots \Big)\,.
\end{align}
The second map is a weak equivalence because representables are already cofibrant.

Now, let $F \in \scH^{p/i\, I}$ be fibrant.
We need to check that $F$ satisfies descent with respect to the Grothendieck coverage $\tau$ of differentiably good open coverings on $\Cart$.
Since $F$ is essentially constant (i.e.~fibrant in $F \in \scH^{p/i\, I}$), by Proposition~\ref{st:tensor and ev weqs in H^(p/i I)} the collapse maps $c \to *$ induce weak equivalences $F(*) \weq F(c)$.
We thus have a commutative diagram
\begin{equation}
\begin{tikzcd}
	F(*) \ar[r] \ar[d, "\sim"']
	& \underset{\bbDelta}{\holim} \Big( \cdots \displaystyle{\prod_{i_0, \ldots, i_n \in \Lambda_{ne}^n}} F(*) \cdots \Big) \ar[d, "\sim"]
	\\
	F(c) \ar[r]
	& \underset{\bbDelta}{\holim} \Big( \cdots \displaystyle{\prod_{i_0, \ldots, i_n \in \Lambda_{ne}^n}} F(C_{i_0 \ldots i_n}) \cdots \Big)
\end{tikzcd}
\end{equation}
in $\sSet$.
We claim that the top morphism in this diagram is an equivalence:
to see this, we first note that, by assumption on the open covering $\scU$, the collapse morphism $\Sing(C_{i_0 \ldots i_n}) \to *$ is a weak equivalence in $\sSet$ for any $i_0, \ldots, i_n \in \Lambda$ such that $C_{i_0 \ldots i_n}$ is non-empty.
For any fibrant $K \in \sSet$, we thus obtain a weak equivalence
\begin{equation}
	K \cong \ul{\sSet}(*,K) \weq \ul{\sSet} \big( \Sing(C_{i_0 \ldots i_n}), K \big)\,.
\end{equation}
Since $F(*) \in \sSet$ is fibrant, the product of these morphisms indexed by $i_0, \ldots, i_n \in \Lambda$ is still a weak equivalence, so we obtain a commutative diagram
\begin{equation}
\label{eq:diag I-loc implies descent}
\begin{tikzcd}
	F(*) \ar[r] \ar[d, "\sim"']
	& \underset{\bbDelta}{\holim} \Big( \cdots \displaystyle{\prod_{i_0, \ldots, i_n \in \Lambda_{ne}^n}} F(*) \cdots \Big) \ar[d, "\sim"]
	\\
	\ul{\sSet} \big( \Sing(c),F(*) \big) \ar[r]
	&  \ul{\sSet} \Big( \underset{\bbDelta^\opp}{\hocolim} \Big( \cdots \displaystyle{\coprod_{i_0, \ldots, i_n \in \Lambda_{ne}^n}} \Sing(C_{i_0 \ldots i_n}) \cdots \Big), F(*) \Big)\,.
\end{tikzcd}
\end{equation}
It follows from an application of \cite[Thm.~A.3.1]{Lurie:HTT}, or by~\cite[Thm.~1.1]{DI:Top_hypercovers_and_A1} that the morphism
\begin{equation}
	\underset{\bbDelta^\opp}{\hocolim} \Big( \cdots \displaystyle{\coprod_{i_0, \ldots, i_n \in \Lambda}} \Sing(C_{i_0 \ldots i_n}) \cdots \Big)
	\longrightarrow\Sing(c)
\end{equation}
is a weak equivalence in $\sSet$.
This is preserved by the right Quillen functor $\ul{\sSet}(-,F(*))$, and thus the claim follows.
\end{proof}

\begin{remark}
Corollary~\ref{st:coincidence of model structures} fails if one considers simplicial (pre)sheaves on \textit{manifolds} rather than on cartesian spaces:
the proof we give above relies on the fact that  the left-hand vertical morphism in Diagram~\eqref{eq:diag I-loc implies descent} is a weak equivalence.
This is true because every $c \in \Cart$ has an underlying topological space which is contractible.
In contrast, consider a simplicial presheaf on manifolds, $G \colon \Mfd^\opp \to \sSet$, which is projectively fibrant and $I$-local, i.e.~it satisfies that the canonical morphism $G(M) \to G(M \times \RN)$ is a weak equivalence for every $M \in \Mfd$.
Then, $G$ does not necessarily satisfy descent with respect to open coverings of manifolds.
For instance, consider the presheaf $[-,\bbS^1]$, sending $M \in \Mfd$ to the set of homotopy classes of continuous (or smooth) maps from $M$ to $\bbS^1$.
This is projectively fibrant and $I$-local.
However, let $\scU = \{U_i\}_{i \in \Lambda}$ be an open covering of $M = \bbS^1$ such that each finite intersection $U_{i_0 \ldots i_n}$ is empty or a cartesian space.
Then,
\begin{equation}
	\holim_{\bbDelta} \prod_{i_0, \ldots, i_n \in \Lambda} [U_{i_0 \ldots i_n}, \bbS^1] \simeq *\,,
\end{equation}
but $[\bbS^1, \bbS^1] \not\simeq *$.
For more details on the relation between sheaves on $\Cart$ and sheaves on $\Mfd$, see Appendix~\ref{app:H^(pl) simeq wtH^(pl)}.
\qen
\end{remark}

\begin{proposition}
\label{st:equiv localisations of H}
Let $L_{\RN^\bullet} \scH^{p/i}$ denote the left Bousfield localisation of $\scH^{p/i}$ at the collapse morphisms $\{c \to *\}_{c \in \Cart}$.
Further, let $L_\Cart \scH^{p/i}$ denote the left Bousfield localisation of $\scH^{p/i}$ at all morphisms in $\Cart$.
We have the following identities of model categories:
\begin{equation}
	\scH^{p/i\, I}
	= L_{\RN^\bullet} \scH^{p/i}
	= L_\Cart \scH^{p/i}\,.
\end{equation}
\end{proposition}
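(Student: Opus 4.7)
The strategy is to apply Joyal's recognition theorem (Theorem~\ref{st:criterion for coincidence of MoStrs}): since all three model categories in question are simplicial left Bousfield localisations of $\scH_\infty^{p/i}$, they share the same cofibrations, so it suffices to identify their fibrant objects. The plan is therefore to characterise the fibrant objects of each localisation in terms of the derived-sections functor $F \mapsto F(c)$, and then match these three characterisations against one another.

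First, I would recall that by Proposition~\ref{st:tensor and ev weqs in H^(p/i I)}(2), the fibrant objects of $\scH_\infty^{p/i\, I}$ are precisely the $\scH_\infty^{p/i}$-fibrant simplicial presheaves $F$ for which $F(*) \to F(c)$ is a weak equivalence in $\sSet$ for every $c \in \Cart$. Next, I would identify the fibrant objects of $L_{\RN^\bullet}\scH_\infty^{p/i}$: by the general theory of enriched left Bousfield localisation, an object $F$ is fibrant iff it is $\scH_\infty^{p/i}$-fibrant and, for every collapse $\scY_c \to *$, the induced map on derived mapping spaces is a weak equivalence. Since each $\scY_c$ (and $*$) is cofibrant in $\scH_\infty^{p/i}$, this condition is the same as requiring $\ul{\scH}_\infty(*, F) \to \ul{\scH}_\infty(\scY_c, F)$ to be a weak equivalence, i.e., by the enriched Yoneda lemma, that $F(*) \to F(c)$ is a weak equivalence. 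This matches verbatim the characterisation of fibrant objects of $\scH_\infty^{p/i\, I}$, giving identity~(1).

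For identity~(2), I would argue analogously that $F$ is fibrant in $L_\Cart \scH_\infty^{p/i}$ iff $F$ is $\scH_\infty^{p/i}$-fibrant and sends every morphism $f \colon c \to d$ of $\Cart$ to a weak equivalence $F(d) \to F(c)$ in $\sSet$. One inclusion is immediate: a fibrant object in $L_\Cart\scH_\infty^{p/i}$ sends the collapse morphisms $c \to *$ (which live in $\Cart$, since $\{*\} \in \Cart$ is the one-point cartesian space) to weak equivalences, hence is fibrant in $L_{\RN^\bullet}\scH_\infty^{p/i}$. For the converse, let $F$ be fibrant in $L_{\RN^\bullet}\scH_\infty^{p/i}$ and let $f \colon c \to d$ be any morphism in $\Cart$. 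The commutative triangle
\begin{equation}
\begin{tikzcd}[column sep=0.5cm]
& F(*) \ar[dl, "\sim"'] \ar[dr, "\sim"] & \\
F(d) \ar[rr, "F(f)"'] & & F(c)
\end{tikzcd}
\end{equation}
obtained from the unique factorisations $c \to *$ and $c \to d \to *$, combined with the two-out-of-three property of weak equivalences in $\sSet$, forces $F(f)$ to be a weak equivalence.

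The argument is essentially mechanical; the only mild subtlety I anticipate is being careful that the collapse morphisms $\scY_c \to *$ genuinely lie among the morphisms of $\Cart$ (through Yoneda) so that the inclusion $L_\Cart \supseteq L_{\RN^\bullet}$ at the level of localisation sets is literally correct, which is true because $* = \scY_{\{*\}}$ with $\{*\} \in \Cart$. No genuinely hard step remains; everything else is a direct application of Joyal's recognition theorem together with the explicit fibrancy criterion of Proposition~\ref{st:tensor and ev weqs in H^(p/i I)}.
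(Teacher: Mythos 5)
Your proposal is correct and follows essentially the same route as the paper: identity (1) is obtained from Proposition~\ref{st:tensor and ev weqs in H^(p/i I)} together with Theorem~\ref{st:criterion for coincidence of MoStrs}, and identity (2) from the factorisation of any morphism $c \to d$ in $\Cart$ through the point combined with two-out-of-three. The only cosmetic difference is that the paper phrases (2) as an equality of the localised weak equivalences and cofibrations rather than matching fibrant objects via Joyal's criterion, but the underlying argument is the same.
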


\begin{proof}
This follows from Proposition~\ref{st:tensor and ev weqs in H^(p/i I)} and Theorem~\ref{st:criterion for coincidence of MoStrs}.
\end{proof}

\subsection{Evaluation on the point}
\label{sec:c -| ev_*}

Here we present the first of several ways of extracting a space from an object $F \in \scH$ and show that it provides a Quillen equivalence between $\scH^{p/i\, I}$ and the Kan-Quillen model category $\sSet$.

Consider the Quillen adjunction
\begin{equation}
\begin{tikzcd}
	\tilde{\sfc} : \sSet \ar[r, shift left=0.14cm, "\perp"' yshift=0.05cm]
	& \scH^{p/i} : \ev_*\,, \ar[l, shift left=0.14cm]
\end{tikzcd}
\end{equation}
whose left adjoint $\tilde{\sfc}$ sends a simplicial set $K$ to the constant simplicial presheaf with value $K$, and whose right adjoint evaluates a simplicial presheaf at the final object $* \in \Cart$.
(Indeed, the adjunction is Quillen for both targets $\scH^p$ and $\scH^i$; in the projective case, we readily see that $\ev_*$ is right Quillen, and in the injective case we see that $\tilde{\sfc}$ is left Quillen.)
Composing this with the localisation adjunction $\scH^{p/i} \rightleftarrows \scH^{p/i\, I}$, we obtain Quillen adjunctions
\begin{equation}
\label{eq:tc -| ev_*}
\begin{tikzcd}
	\tilde{\sfc} : \sSet \ar[r, shift left=0.14cm, "\perp"' yshift=0.05cm]
	& \scH^{p/i\, I} : \ev_*\,. \ar[l, shift left=0.14cm]
\end{tikzcd}
\end{equation}

\begin{lemma}
\label{st:ev of tc -| ev_* on fibrants}
Let $e \colon \tilde{\sfc} \circ \ev_* \to 1_{\scH}$ denote the counit of the adjunction~\eqref{eq:tc -| ev_*}.
The morphism $e_{|F} \colon \tilde{\sfc} \circ \ev_*(F) \longrightarrow F$ is an objectwise weak equivalence whenever $F \in \scH^{p/i\, I}$ is fibrant.
\end{lemma}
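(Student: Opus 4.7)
The plan is to unwind what the counit $e_{|F}$ looks like pointwise and then invoke the characterization of $I$-local fibrant objects from Proposition~\ref{st:tensor and ev weqs in H^(p/i I)}(2).

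First I would identify the source: by definition of $\tilde{\sfc}$, the simplicial presheaf $\tilde{\sfc}(\ev_* F)$ is the constant presheaf on $\Cart^\opp$ with value $F(*) \in \sSet$. Hence for any $c \in \Cart$ we have $\big( \tilde{\sfc} \circ \ev_*(F) \big)(c) = F(*)$. Tracing through the adjunction $\tilde{\sfc} \dashv \ev_*$, the counit component at the object $c$ is precisely the map
\begin{equation}
	F(*) \longrightarrow F(c)
\end{equation}
obtained by applying $F$ to the unique morphism $c \to *$ in $\Cart$ (equivalently, the image under $F$ of the terminal morphism, since $*$ is terminal in $\Cart$).

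Now I would apply Proposition~\ref{st:tensor and ev weqs in H^(p/i I)}(2): since $F$ is fibrant in $\scH_\infty^{p/i\, I}$, it is fibrant in $\scH_\infty^{p/i}$ and the canonical map $F(*) \to F(c)$ is a weak equivalence in $\sSet$ for every $c \in \Cart$. This is exactly the statement that $(e_{|F})_c$ is a weak equivalence for every $c \in \Cart$, i.e.\ that $e_{|F}$ is an objectwise weak equivalence.

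There is no real obstacle here; the content has already been isolated in Proposition~\ref{st:tensor and ev weqs in H^(p/i I)}(2), and this lemma is essentially its reformulation in terms of the counit of the adjunction $\tilde{\sfc} \dashv \ev_*$. The only thing to be careful about is to verify that the counit really is the map induced by $c \to *$; this is a direct check from the definitions of $\tilde{\sfc}$ (constant presheaf) and $\ev_*$ (evaluation at the terminal object of $\Cart$), together with the Yoneda identification $\ul{\scH}_\infty(\scY_c, F) \cong F(c)$.
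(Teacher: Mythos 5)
Your proof is correct and matches the paper's own argument: the paper likewise identifies the counit component at $c$ as the map between $F(*)$ and $F(c)$ induced by the collapse $c \to *$ and then cites Proposition~\ref{st:tensor and ev weqs in H^(p/i I)} to conclude it is a weak equivalence for fibrant $F$. Your extra care in checking the Yoneda/adjunction identification and in isolating part~(2) of that proposition is exactly the (brief) content of the paper's proof.
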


\begin{proof}
For any $F \in \scH$, the morphism $e_{|F}$ of simplicial presheaves is the morphisms $F(c) \to F(*)$ in $\sSet$ induced by the collapse maps $c \to *$.
It readily follows from Proposition~\ref{st:tensor and ev weqs in H^(p/i I)} that $e_{|F}$ is an objectwise weak equivalence whenever $F$ is fibrant.
\end{proof}

\begin{lemma}
\label{st:R^i c K is iI-fibrant}
Let $K \in \sSet$ be any simplicial set.
Let $R^i \colon \scH^i \to \scH^i$ denote a fibrant replacement functor in $\scH^i$ (see Appendix~\ref{app:Injective fib rep} for an explicit construction).
Then, the simplicial presheaf $R^i \tilde{\sfc}(K)$ is fibrant in $\scH^{i I}$.
\end{lemma}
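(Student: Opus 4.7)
The plan is to verify the two conditions given by Proposition~\ref{st:tensor and ev weqs in H^(p/i I)}(2): namely that $R^i \tilde{\sfc}(K)$ is fibrant in $\scH_\infty^i$ and that the canonical map $R^i \tilde{\sfc}(K)(*) \to R^i \tilde{\sfc}(K)(c)$ is a weak equivalence in $\sSet$ for every $c \in \Cart$. The first condition is immediate from the definition of $R^i$ as an injective fibrant replacement functor, so the entire argument reduces to checking the second condition.

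The key observation is that the constant simplicial presheaf $\tilde{\sfc}(K)$ already satisfies the local constancy condition trivially: for every $c \in \Cart$, the restriction map $\tilde{\sfc}(K)(*) \to \tilde{\sfc}(K)(c)$ is literally the identity on $K$. What I would use is that $R^i$ comes equipped with a natural weak equivalence $\tilde{\sfc}(K) \weq R^i \tilde{\sfc}(K)$ in $\scH_\infty^i$, which is by definition an objectwise weak equivalence of simplicial presheaves (since injective weak equivalences coincide with objectwise weak equivalences).

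Applying this naturality to the collapse morphism $c \to *$ in $\Cart$ yields a commutative square in $\sSet$:
\begin{equation}
\begin{tikzcd}
\tilde{\sfc}(K)(*) \ar[r, "\sim"] \ar[d, "="']
& R^i \tilde{\sfc}(K)(*) \ar[d]
\\
\tilde{\sfc}(K)(c) \ar[r, "\sim"]
& R^i \tilde{\sfc}(K)(c)
\end{tikzcd}
\end{equation}
in which the two horizontal arrows are weak equivalences and the left vertical arrow is the identity. By the two-out-of-three property of weak equivalences in $\sSet$, the right vertical arrow is a weak equivalence as well. This establishes condition~(ii) of Proposition~\ref{st:tensor and ev weqs in H^(p/i I)}(2), so $R^i \tilde{\sfc}(K)$ is fibrant in $\scH_\infty^{i I}$.

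There is no real obstacle here; the statement is essentially a bookkeeping consequence of the characterisation of $I$-local fibrant objects in Proposition~\ref{st:tensor and ev weqs in H^(p/i I)} together with the fact that injective weak equivalences are objectwise. The only thing to keep in mind is that we must use the injective model structure (rather than the projective one) to guarantee that the fibrant replacement map $\tilde{\sfc}(K) \weq R^i \tilde{\sfc}(K)$ is a weak equivalence at every $c \in \Cart$; this is precisely why the lemma is stated for $R^i$ rather than for a projective fibrant replacement.
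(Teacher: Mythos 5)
Your proof is correct and follows essentially the same route as the paper: injective fibrancy by construction, the objectwise weak equivalence $\tilde{\sfc}K \weq R^i\tilde{\sfc}(K)$, and two-out-of-three to get that $R^i\tilde{\sfc}(K)(*) \to R^i\tilde{\sfc}(K)(c)$ is a weak equivalence. The only cosmetic difference is that you conclude via the fibrancy criterion of Proposition~\ref{st:tensor and ev weqs in H^(p/i I)}(2) while the paper cites Proposition~\ref{st:equiv localisations of H}, which are interchangeable here.
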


\begin{proof}
The claim follows from Proposition~\ref{st:equiv localisations of H} since the morphism $\tilde{\sfc}K \to R^i \tilde{\sfc}K$ is an objectwise weak equivalence.
\end{proof}

We can now prove a version of~\cite[Thm.~3.4.3]{Dugger:HoSheaves} in the context of simplicial presheaves on cartesian spaces rather than on manifolds.
(There, the proof is outlined for simplicial presheaves on manifolds, where several additional steps are necessary.
Since we work over cartesian spaces, we can employ a slightly different strategy in our proof that allows us to avoid these additional steps.)
Let $\cc^{p/i} \colon 1_\scH \weq \Cc^{p/i}$ be a functorial fibrant replacement in $\scH^{p/i\, I}$ (we provide an explicit construction in Section~\ref{sec:Concordance}).
Note that once a fibrant replacement $\Cc^p$ is given, $\cc^i \colon 1_\scH \weq \Cc^i$ can be defined as the composition
\begin{equation}
\begin{tikzcd}
	1_\scH \ar[r, "\cc^p"]
	& \Cc^p \ar[r, "r^i"]
	& R^i \circ \Cc^p \eqqcolon \Cc^i
\end{tikzcd}
\end{equation}
where $r^i \colon 1_\scH \to R^i$ is a fibrant replacement in the injective model structure.
We always take $\Cc^i$ to be of this form.

\begin{lemma}
\label{st:cK -> Cc^p/i cK is objwise weq}
For each $K \in \sSet$, the morphism
\begin{equation}
	\cc^{p/i}_{\tilde{\sfc} K} \colon \tilde{\sfc} K \longrightarrow \Cc^{p/i} \tilde{\sfc} K
\end{equation}
is an objectwise weak equivalence.
\end{lemma}

\begin{proof}
Let $R$ denote a fibrant replacement functor in $\sSet$ (such as the $\Ex^\infty$-functor).
Consider the commutative diagram
\begin{equation}
\begin{tikzcd}[column sep=1.75cm, row sep=1cm]
	\tilde{\sfc} K \ar[r, "\cc^p_{\tilde{\sfc} K}"] \ar[d]
	& \Cc^p \tilde{\sfc} K \ar[d]
	\\
	\tilde{\sfc} R K \ar[r, "\cc^p_{\tilde{\sfc} R K}"]
	& \Cc^p \tilde{\sfc} R L
\end{tikzcd}
\end{equation}
The left vertical morphism is an objectwise weak equivalence.
The right vertical morphism is a weak equivalence in $\scH^{p I}$ between fibrant objects (in $\scH^{p I}$).
Thus, it is also a weak equivalence in $\scH^p$.
The bottom left object is fibrant in $\scH^{p I}$ by construction.
Hence, the bottom horizontal morphism, which is only a weak equivalence in $\scH^{p I}$ a priori, is even an objectwise weak equivalence.
Therefore, by the two-out-of-three property of weak equivalences in $\scH^p$, the top horizontal morphism is an objectwise weak equivalence as well.
The injective case follows since $\cc^i$ is the composition of this morphism by the objectwise weak equivalence $r^i_{\tilde{\sfc} K}$.
\end{proof}

\begin{theorem}
\label{st:tc -| ev_* is QEq}
The Quillen adjunction $\tilde{\sfc} \dashv \ev_*$ from~\eqref{eq:tc -| ev_*} is a Quillen equivalence.
\end{theorem}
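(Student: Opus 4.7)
The plan is to establish the injective case first, and then transfer to the projective case via a composition of Quillen equivalences. Concretely, Proposition~\ref{st:p/i Quillen equivalences} provides a Quillen equivalence $\scH_\infty^{pI} \rightleftarrows \scH_\infty^{iI}$ induced by the identity functors. The composition of the projective adjunction $\tilde{\sfc} \dashv \ev_*$ of~\eqref{eq:tc -| ev_*} with this identity Quillen equivalence is precisely its injective counterpart. By the two-out-of-three property for Quillen equivalences, it thus suffices to treat the case of $\scH_\infty^{iI}$.

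For the injective case, I will verify both the derived unit and derived counit conditions using the two lemmas just established. For the derived counit: take any $F \in \scH_\infty^{iI}$ fibrant. The object $\ev_*(F) = F(*) \in \sSet$ is automatically cofibrant, so the derived counit equals the ordinary counit $\tilde{\sfc}(F(*)) \to F$, which is $e_{|F}$. By Lemma~\ref{st:ev of tc -| ev_* on fibrants}, this is an objectwise weak equivalence of simplicial presheaves, hence in particular a weak equivalence in $\scH_\infty^{iI}$.

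For the derived unit: take $K \in \sSet$ (automatically cofibrant). By Lemma~\ref{st:R^i c K is iI-fibrant}, the simplicial presheaf $R^i \tilde{\sfc}(K)$ is fibrant in $\scH_\infty^{iI}$, and therefore serves as a fibrant replacement of $\tilde{\sfc}(K)$ already at the level of the injective model structure. The derived unit is then the value at $* \in \Cart$ of the objectwise weak equivalence $r^i_{\tilde{\sfc}(K)} \colon \tilde{\sfc}(K) \weq R^i \tilde{\sfc}(K)$, and since $\tilde{\sfc}(K)(*) = K$ by definition of $\tilde{\sfc}$, this yields the required weak equivalence $K \weq (R^i \tilde{\sfc}(K))(*)$ in $\sSet$.

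The main subtlety is structural rather than computational: both key Lemmas~\ref{st:ev of tc -| ev_* on fibrants} and~\ref{st:R^i c K is iI-fibrant} are tailor-made for the injective side (they exploit the fact that injective fibrant replacements are objectwise weak equivalences and that every simplicial set is cofibrant), so the composition-of-Quillen-equivalences argument is essential for handling the projective model structure uniformly, without having to construct an analogue of the explicit fibrant replacement $R^i$ on $\scH_\infty^p$.
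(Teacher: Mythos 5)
Your proof is correct, but it follows a different route from the paper's. The paper verifies Hovey's criterion in the form ``left adjoint reflects weak equivalences between cofibrant objects $+$ derived counit is a weak equivalence on fibrant objects'': the bulk of its argument is devoted to showing that $\tilde{\sfc}\colon \sSet \to \scH_\infty^{p/i\, I}$ still \emph{reflects} weak equivalences after localisation, which is done by testing against enriched hom-spaces into fibrant objects and then arguing separately in the two cases (in the projective case via surjectivity of $\ev_*$ onto Kan complexes, in the injective case via Lemma~\ref{st:R^i c K is iI-fibrant}); the counit condition is Lemma~\ref{st:ev of tc -| ev_* on fibrants}, as in your argument. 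You instead check the derived unit and derived counit directly in the injective case, using Lemma~\ref{st:R^i c K is iI-fibrant} not to produce enough fibrant objects in the image of $\ev_*$ but as an explicit $I$-local fibrant replacement of $\tilde{\sfc}K$ whose value at $*$ is visibly $K$ up to objectwise weak equivalence, and you then transfer to the projective case through the identity Quillen equivalence $\scH_\infty^{p I} \rightleftarrows \scH_\infty^{i I}$ of Proposition~\ref{st:p/i Quillen equivalences} and two-out-of-three for Quillen equivalences (both available at this point, so there is no circularity). Your route is shorter and avoids the reflection argument altogether; what the paper's route buys is the reflection property of $\tilde{\sfc}$ as a reusable intermediate statement, and in particular the explicit injective argument is recorded there precisely because it is recycled later (in the proof of Proposition~\ref{st:c_D -| ev_[0] is QEq}), whereas your projective case is handled only indirectly, via Proposition~\ref{st:same weqs}-type transfer rather than by a self-contained verification.
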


\begin{proof}
We will show that both the derived unit and derived counit of the Quillen adjunction $\tilde{\sfc} \dashv \ev_*$ are weak equivalences.
This implies the claim by~\cite[Prop.~1.3.13]{Hovey:MoCats}.
For the derived counit, let $F \in \scH^{p/i\, I}$ be fibrant.
Since all objects in $\sSet$ are cofibrant, it suffices to check that
\begin{equation}
\begin{tikzcd}
	\tilde{\sfc} \circ \ev_* (F) \ar[r, "e_{|F}"]
	& F\,,
\end{tikzcd}
\end{equation}
is a weak equivalence in $\sSet$, where $e_{|F}$ is the component at $F$ of the counit of $\tilde{\sfc} \dashv \ev_*$.
This holds true by Lemma~\ref{st:ev of tc -| ev_* on fibrants}.
For the derived unit, let $K \in \sSet$ and consider the composition
\begin{equation}
\begin{tikzcd}
	K \ar[r, "\eta_K"]
	& \ev_* \tilde{\sfc} K \ar[r, "\cc^{p/i}_{\tilde{\sfc} K}"]
	& \ev_* \Cc^{p/i} \tilde{\sfc} K\,,
\end{tikzcd}
\end{equation}
The morphism $\eta_K$ is an isomorphism, and the morphism $\cc^{p/i}_{\tilde{\sfc} K}$ is an objectwise weak equivalence by Lemma~\ref{st:cK -> Cc^p/i cK is objwise weq}.
\end{proof}

\begin{corollary}
Let $W_\sSet$ denote the weak equivalences in $\sSet$ and $W_I$ those in $\scH^{p/i\, I}$.
The functor $\tilde{\sfc}$ preserves and reflects weak equivalences as a functor of relative categories $(\sSet, W_\sSet) \longrightarrow (\scH, W_I)$.
\end{corollary}

\begin{example}
Let $G$ be a Lie group with Lie algebra $\frg$.
Consider the object $\Bun_{G,0}^\nabla \in \scH^p$, whose value on $c \in \Cart$ is the nerve of the following groupoid (in particular, $\Bun_{G,0}^\nabla$ is fibrant in $\scH^p$ by construction):
its objects are $\frg$-valued 1-forms $A \in \Omega^1(c,\frg)$ such that $\dd A + \frac{1}{2} [A,A] = 0$, and its morphisms $A \to A'$ are smooth maps $g \colon c \to G$ such that $A' = \Ad(g^{-1}) \circ A + g^* \mu_G$, where $\mu_G$ is the Maurer-Cartan form on $G$.
In other words, $A$ is a flat $G$-connection on a trivial principal $G$-bundle on $c$, and $g$ is equivalently a morphism of flat principal $G$-bundles on $c$.
In particular, any such morphism $g$ is actually a \emph{constant} map $g \colon c \to G$.
Observe that $\Bun_{G,0}^\nabla(*)$ is the nerve of the groupoid with one object and the group underlying $G$ as its morphisms.
It hence follows that the functor $\Bun_{G,0}^\nabla(*) \longrightarrow \Bun_{G,0}^\nabla(c)$ is fully faithful (on the underlying groupoids), for any $c \in \Cart$.
Since any flat $G$-bundle on $c$ is isomorphic to the trivial flat $G$-bundle (because $c \cong \RN^n$ for some $n \in \NN_0$), the functor $\Bun_{G,0}^\nabla(*) \longrightarrow \Bun_{G,0}^\nabla(c)$ is also essentially surjective.
Since the nerve of an equivalence of groupoids is an equivalence of Kan complexes, it follows that $\Bun_{G,0}^\nabla$ is a fibrant object in $\scH^{p I}$.
\qen
\end{example}

\subsection{Topological realisation}
\label{sec:Re -| S}

In this subsection we further build on and extend ideas from~\cite{Dugger:HoSheaves} to investigate a second way of obtaining a space from a simplicial presheaf on $\Cart$.
This time, we send a simplicial presheaf to a certain coend valued in topological spaces.

More precisely, we let $\Top$ denote the category of $\Delta$-generated topological spaces (see~\cite{Dugger:Delta-Top, Vogt:Conv_Spaces} for background).
We will be working with $\Top$ as our choice of category of topological spaces throughout; however, most of the theory in this paper also works with the category of Kelley spaces (also known as $k$-spaces; see, for instance,~\cite{Hovey:MoCats}), except for where we work explicitly with diffeological spaces (Lemma~\ref{st:Re = D on Dfg}, Remark~\ref{rmk:S_e neq Re for Dfg spaces}).

We provide some very compact background on $\Delta$-generated topological spaces.
A topological space $X$ is $\Delta$-generated precisely if its topology coincides with the final topology induced by all continuous maps $|\Delta^n| \to X$, for all $n \in \NN_0$.
(Here, $|\Delta^n|$ is the standard topological $n$-simplex.)
In particular, the category of $\Delta$-generated topological spaces and continuous maps, denoted $\Top$, is cartesian closed~\cite{Vogt:Conv_Spaces}.
The product in $\Top$, however, is not the usual product of topological spaces---one has to pass to the $\Delta$-generated topology after taking the usual product of topological spaces.

The category $\Top$ carries a cofibrantly generated model structure, having the same generating cofibrations and generating trivial cofibrations as the standard model structure on topological spaces~\cite{Dugger:Delta-Top, Haraguchi:Coreflective_subcats_of_MoCat, FR:Directed_HoThy, SYH:Ho_and_Coho_via_enr_BiFctrs}.
The geometric realisation functor $|{-}| \colon \sSet \to \Top$ takes values in $\Delta$-generated spaces, since $\Top$ is closed under colimits of topological spaces and contains $|\Delta^n|$ for each $n \in \NN_0$.
By construction of the model structure on $\Top$, the induced adjunction $|{-}| : \sSet \rightleftarrows \Top : \Sing$ is a Quillen adjunction ($|{-}|$ sends generating (trivial) cofibrations to (trivial) cofibrations).
Further, by the same proof as in~\cite[Lemma~3.18]{Hovey:MoCats}, it follows that $|{-}|$ preserves finite products.
Then, the proof of~\cite[Prop.~4.2.11]{Hovey:MoCats} applies as well, showing that $\Top$ is a symmetric monoidal (even cartesian) model category.
It also follows that $\Top$ is a simplicial model category and that $|{-}|$ is a monoidal left Quillen functor.
Finally, the Quillen adjunction $|{-}| \dashv \Sing$ is even a Quillen equivalence, since the inclusion of $\Top$ into Kelley spaces (or all topological spaces) is a Quillen equivalence~\cite{Dugger:Delta-Top}; the claim then follows from the two-out-of-three property of Quillen equivalences.

In this section, we provide a left Quillen equivalence $\scH^{p I} \to \Top$.
The main ideas for this section stem from~\cite{Dugger:HoSheaves}; there the full proof is technically rather involved.
Again, we circumvent these problems here by working over cartesian spaces rather than over the category of manifolds.

Let $\Dfg$ denote the category of diffeological spaces (see~\cite{Souriau:Groupes_Differentiels, IZ:Diffeology}); we use the conventions of~\cite{Bunk:Higher_Sheaves}, so that $\Dfg$ is the full subcategory of $\Cat(\Cart^\opp, \Set)$ on the concrete sheaves with respect to the Grothendieck coverage $\tau$ (see the beginning of Section~\ref{sec:R-local MoStrs}).
Concretely, a diffeological space can be defined as a pair $(X, \Plot_X)$, where $X \in \Set$, and where $\Plot_X$ assigns to every cartesian space $c \in \Cart$ a subset $\Plot_X(c) \subset \Set(c,X)$ of the maps from the underlying set of $c$ to $X$.
These maps are called \emph{plots of $X$} and have to satisfy that
\begin{myenumerate}
\item $\Plot_X(*) = X$ (every constant map is a plot),

\item for every $f \in \Cart(c,d)$ and every $g \in \Plot_X(d)$, we have that $g \circ f \in \Plot_X(c)$ (i.e.~$\Plot_X$ is a presheaf on $\Cart$), and

\item the presheaf $\Plot_X$ is a sheaf with respect to $\tau$.
\end{myenumerate}
We will often identify a diffeological space $(X,\Plot_X)$ with the sheaf it defines (see~\cite{Bunk:Higher_Sheaves} for more background), and we will denote this simply by $X$.

\begin{example}
For any manifold $M \in \Mfd$, the presheaf $\ul{M}$, given by $c \mapsto \Mfd(c,M)$, is a diffeological space.
In particular, this applies to every cartesian space $d \in \Cart$; for these, we have $\ul{d} = \scY_d$ as (pre)sheaves on $\Cart$.
\qen
\end{example}

\begin{definition}
\label{def:D}
Let $\rmD \colon \Dfg \to \Top$ be the functor defined as follows:
for $X \in \Dfg$, we let $\rmD X$ be the underlying set of the diffeological space $X \in \Dfg$, endowed with the final topology defined by its plots $c \to X$, where $c$ ranges over all cartesian spaces.
A morphism $f \in \Dfg(X,Y)$ is sent to the map it defines on the sets underlying $X$ and $Y$.
We call $\rmD$ the \emph{diffeological topology} functor and $\rmD(X)$ the \emph{underlying topological space of $X$}.
\end{definition}

The $\Delta$-generated topological spaces are in fact precisely those topological spaces that arise as the underlying topological spaces of diffeological spaces~\cite{SYH:Ho_and_Coho_via_enr_BiFctrs,CSW:D_Topology}.

\begin{proposition}
\emph{\cite{CW:HoThy_of_Dfg_Spaces,SYH:Ho_and_Coho_via_enr_BiFctrs}}
There exists an adjunction
\begin{equation}
\begin{tikzcd}
	\rmD : \Dfg \ar[r, shift left=0.14cm, "\perp"' yshift=0.05cm]
	& \Top : \rmC\,, \ar[l, shift left=0.14cm]
\end{tikzcd}
\end{equation}
where (under the embedding of $\Dfg$ into presheaves on $\Cart$) we have $\rmC(T)(c) = \Top(c, T)$ for any topological space $T$ and any cartesian space $c$.
\end{proposition}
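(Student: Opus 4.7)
The plan is to unwind the definitions and verify the adjunction directly from universal properties. First I would check that $\rmC(T)$, defined by $c \mapsto \Top(c,T)$, genuinely lies in $\Dfg$, i.e.\ that this assignment is a concrete sheaf on $\Cart$ with respect to the pretopology $\tau$ of differentiably good open coverings. Concreteness is immediate from the inclusion $\Top(c,T) \subset \Set(c,T)$. For the sheaf condition, let $\{\iota_i \colon c_i \hookrightarrow c\}_{i \in \Lambda}$ be such a covering. Since each $\iota_i$ is an open embedding and the images cover $c$, the topology on $c$ coincides with the final topology induced by the family $\{\iota_i\}$; hence continuous maps $c \to T$ glue uniquely from compatible families of continuous maps $c_i \to T$ agreeing on the pairwise intersections $C_{ij}$, which is precisely the sheaf condition. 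The constant-map axiom is automatic and simultaneously identifies the underlying set of $\rmC(T)$ with $\Plot_{\rmC T}(*) = \Top(*,T) \cong T$.

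Next I would establish the natural bijection
\begin{equation}
\Top(\rmD X, T) \;\cong\; \Dfg(X, \rmC T)\,.
\end{equation}
A morphism $f \in \Dfg(X, \rmC T)$ is, in the concrete presentation, a set map $\bar f \colon X \to T$ such that for every $c \in \Cart$ and every plot $p \in \Plot_X(c)$, the composite $\bar f \circ p$ lies in $\Plot_{\rmC T}(c) = \Top(c,T)$, i.e.\ is continuous. By Definition~\ref{def:D}, $\rmD X$ carries the final topology induced by the plots of $X$, so this is exactly the condition that $\bar f \colon \rmD X \to T$ be continuous. Naturality in $X$ and $T$ is immediate because on both sides the data are underlying set maps satisfying a pointwise compatibility condition preserved by pre- and post-composition, and functoriality of $\rmD$ and $\rmC$ is verified the same way (for $\rmD$ again via the universal property of the final topology, for $\rmC$ by post-composition with continuous maps).

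The only genuinely nontrivial ingredient is the sheaf condition for $\rmC(T)$, which hinges on the fact that differentiably good open coverings are effective epimorphisms in $\Top$; the rest is bookkeeping matching the universal property of the final topology on $\rmD X$ against the plot-preservation condition defining morphisms in $\Dfg$. I do not expect any obstacles beyond this.
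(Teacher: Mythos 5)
Your proof is correct: checking that $\rmC(T)$ is a concrete sheaf (gluing of continuity over open coverings) and matching the plot-preservation condition for morphisms of concrete sheaves against the universal property of the final topology on $\rmD X$ is exactly the standard argument. The paper itself gives no proof of this proposition—it is imported from the cited references \cite{CW:HoThy_of_Dfg_Spaces,SYH:Ho_and_Coho_via_enr_BiFctrs}—and your verification coincides with the proof given there, so there is nothing to compare beyond noting that your self-contained argument fills in what the paper only cites.
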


The following proposition consists of results that can already be found in~\cite{CSW:D_Topology}; we only include the proofs here for completeness.

\begin{proposition}
\label{st:D on mfds and products}
Let $(-) \times (-)$ denote the product in $\Top$, and let $(-) \times^t (-)$ denote the usual product of topological spaces.
The functor $\rmD$ has the following properties:
\begin{myenumerate}
\item For any manifold $M$, the space $\rmD(\ul{M})$ coincides with the underlying topological space of $M$.

\item For any manifolds $M, N \in \Mfd$, the canonical maps $\rmD(\ul{M} \times \ul{N}) \to \rmD \ul{M} \times \rmD \ul{N} \to \rmD \ul{M} \times^t \rmD \ul{N}$ are homeomorphisms.

\item $\rmD \colon \Dfg \to \Top$ preserves finite products.
\end{myenumerate}
\end{proposition}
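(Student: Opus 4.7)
The plan is to prove the three parts in the order (1), (3), (2); part (2) then follows by combining the first two. All three statements also appear in~\cite{CSW:D_Topology}, which I will invoke for a technical point in (3).

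For (1), I would unwind definitions. The plots of $\ul{M}$ are the smooth maps $c\to M$ for $c\in\Cart$, and $\rmD(\ul{M})$ is the underlying set of $M$ equipped with the final topology with respect to this family. Every smooth map is continuous for the manifold topology, so every manifold-open $U\subseteq M$ is open in the final topology. Conversely, if $U$ is open in the final topology, then for each $x\in U$ any smooth chart $\phi\colon\RN^n\to M$ centred at $x$ is a plot, so $\phi^{-1}(U)$ is open in $\RN^n$ and $\phi$ exhibits a manifold neighbourhood of $x$ contained in $U$. Hence the two topologies agree.

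For (3), the underlying-set functor $\Dfg\to\Set$ preserves products, so the canonical map $\rmD(X\times Y)\to\rmD(X)\times\rmD(Y)$ in $\Top$ induced by the projections is a continuous bijection. A function $c\to X\times Y$ is a plot iff both components are plots, so the final topology on $\rmD(X\times Y)$ is generated precisely by pairs of plots $c\to X$, $c\to Y$ with the same source. Continuity of the inverse amounts to showing that the $\Delta$-generated product topology on $\rmD(X)\times\rmD(Y)$ agrees with the final topology with respect to these pairs. This reconciliation---between the simplex-based $\Delta$-generation of the product in $\Top$ and the cartesian-space-based plot topology on the left-hand side---is the main technical obstacle, and is handled in~\cite{CSW:D_Topology}.

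For (2), I would combine (1) and (3). Since the Yoneda embedding $\Mfd\to\Dfg$ preserves finite products, $\ul{M\times N}\cong\ul{M}\times\ul{N}$ in $\Dfg$; applying (3) and (1) then yields the first homeomorphism $\rmD(M\times N)\to\rmD M\times\rmD N$. For the second arrow, by (1) the spaces $\rmD M$ and $\rmD N$ are the underlying topological spaces of the manifolds $M$ and $N$, whose topological product $\rmD M\times^t\rmD N$ is again the underlying space of a manifold and hence already $\Delta$-generated. Thus its $\Delta$-generated refinement $\rmD M\times\rmD N$ in $\Top$ coincides with $\rmD M\times^t\rmD N$.
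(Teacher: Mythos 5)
Your proposal is correct and runs essentially parallel to the paper's proof, with two differences in routing worth noting. For (2), the paper does not pass through (3) at all: since $M \times N$ is again a manifold, part (1) already identifies $\rmD(\ul{M \times N})$ with the usual product topology, and $\Delta$-generatedness of manifold topologies handles the comparison with the product in $\Top$; your derivation via (1) and (3) is equally valid (and not circular, since your (3) does not use (2)), just slightly less economical. For (3), the place where you say the reconciliation of the $\Delta$-generated product topology with the plotwise final topology ``is handled in \cite{CSW:D_Topology}'' is precisely where the paper does its only real work: the cited Lemma~4.1 of \cite{CSW:D_Topology} only yields that $\rmD(X \times Y) \to \rmD X \times \rmD Y$ is a homeomorphism when $\rmD X$ is locally compact Hausdorff, and the paper bootstraps from this special case to arbitrary $X, Y \in \Dfg$ by writing $X \cong \colim_{\Cart/X} \ul{c}$, using that $\rmD$ is a left adjoint (hence preserves colimits) and that both $\Dfg$ and $\Top$ are cartesian closed (so $(-) \times Y$ preserves colimits in each), which reduces everything to $\rmD(\ul{c} \times Y) \cong \rmD\ul{c} \times \rmD Y$ with $\rmD\ul{c}$ locally compact Hausdorff. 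Your appeal to the reference matches the paper's own attribution, but if you want the argument to be self-contained beyond quoting that lemma, this colimit reduction is the step you would need to supply.
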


\begin{proof}
Part~(1) is~\cite[Example~3.7]{CW:HoThy_of_Dfg_Spaces}:
it is clear that any subset $U \subset M$ which is open in the manifold topology is also open in the diffeological topology.
Conversely, if $U$ is open in $\rmD(\ul{M})$, then its intersection with all images of charts of $M$ must be open.
As these images form a basis for the manifold topology, $U$ is open in the manifold topology.

Part~(2) follows readily from Part~(1) together with the fact that $M \times N$ is again a manifold.

Part~(3) is merely~\cite[Lemma~4.1]{CSW:D_Topology} and the remarks following that lemma.
For completeness, we fill in the details omitted there.
In~\cite{CSW:D_Topology} it is proven that the natural map $\rmD(X \times Y) \to \rmD X \times \rmD Y$ is a homeomorphism whenever $\rmD X$ is locally compact Hausdorff.
Since $\rmD \ul{c}$ is locally compact Hausdorff for any $c \in \Cart$, and since $\rmD$ preserves colimits, we have the following canonical isomorphisms in $\Top$:
let $X, Y \in \Dfg$ be arbitrary.
Using that $\Dfg$ and $\Top$ are cartesian closed, we compute
\begin{align}
	\rmD(X \times Y)
	&\cong \rmD \big( (\underset{\Cart/X}{\colim}^\Dfg \ul{c}) \times Y \big)
	\\
	&\cong \underset{\Cart/X}{\colim}^\Top \rmD (\ul{c} \times Y)
	\\
	&\cong (\underset{\Cart/X}{\colim}^\Top \rmD \ul{c}) \times \rmD Y
	\\
	&\cong \rmD X \times \rmD Y\,.
\end{align}
In the third isomorphism we have used the above-mentioned result~\cite[Lemma~4.1]{CSW:D_Topology}.
\end{proof}

\begin{remark}
We point out that we only use manifolds without boundary or corners here.
For manifolds with boundary, part~(1) of Proposition~\ref{st:D on mfds and products} fails---see, for instance, \cite[Cor.~4.47]{CW:HoThy_of_Dfg_Spaces}.
\qen
\end{remark}

Since each cartesian space $c \in \Cart$ is diffeomorphic to $\RN^n$ for some $n \in \NN_0$, and since $\RN^n$ is (isomorphic to) a CW complex for any $n \in \NN_0$, it follows that $\rmD \ul{c}$ is cofibrant in $\Top$ for every $c \in \Cart$.
We have the following version of~\cite[Prop.~2.3]{Dugger:Universal_HoThys}:

\begin{theorem}
There exists a Quillen adjunction $Re \dashv S$, sitting inside a weakly commutative diagram
\begin{equation}
\label{eq:Re -| S defining diagram}
\begin{tikzcd}[column sep=1.5cm, row sep=1.25cm]
	\Cart \ar[d, "\scY"'] \ar[r, "\rmD"]
	& \Top \ar[dl, shift left=0.1cm, "S"]
	\\
	\scH^p \ar[ur, shift left=0.1cm, "Re"]
\end{tikzcd}
\end{equation}
Further, there is a canonical natural isomorphism $Re \circ \scY \cong \rmD$.
\end{theorem}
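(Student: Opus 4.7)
The plan is to invoke Dugger's universal property of simplicial presheaves~\cite{Dugger:Universal_HoThys}: the projective model structure $\scH_\infty^p$ is the universal simplicial model category built from the small category $\Cart$, in the sense that any functor $\gamma \colon \Cart \to \scM$ to a simplicial model category $\scM$ that takes values in cofibrant objects extends, uniquely up to natural isomorphism, to a left Quillen functor $\bar{\gamma} \colon \scH_\infty^p \to \scM$ equipped with a natural isomorphism $\bar{\gamma} \circ \scY \cong \gamma$. I would apply this with $\scM = \Top$ and $\gamma = \rmD \colon \Cart \to \Top$; the sole hypothesis to check is that every $\rmD c$ is cofibrant in $\Top$, and this holds because $\rmD \ul{c}$ is the underlying topological space of the manifold $c \cong \RN^n$ by Proposition~\ref{st:D on mfds and products}(1), and hence a CW complex.

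Concretely, I would exhibit the right adjoint explicitly by setting
\begin{equation}
	S(T)(c)_n = \Top(\rmD c \times |\Delta^n|, T)\,, \qquad T \in \Top,\ c \in \Cart,\ n \in \NN_0\,,
\end{equation}
so that $S(T)(c) = \ul{\Top}(\rmD c, T)$ is the simplicial mapping space. A standard coend/enriched-Yoneda manipulation, using $|{-}| \dashv \Sing$ and the cartesian closure of $\Top$, then identifies the left adjoint as the coend $Re(F) = \int^{c \in \Cart} |F(c)| \times \rmD c$, and the natural isomorphism $Re \circ \scY \cong \rmD$ is the Yoneda reduction of this coend.

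The only nontrivial step is to verify that $Re$ is left Quillen. Since $\scH_\infty^p$ is cofibrantly generated with generating (trivial) cofibrations of the form $\scY_c \otimes j$ for $j$ a generating (trivial) cofibration of $\sSet$, and $Re(\scY_c \otimes j) \cong \rmD c \times |j|$, this reduces to showing that multiplication by $\rmD c$ sends the image under $|{-}|$ of the generating (trivial) cofibrations of $\sSet$ to (trivial) cofibrations of $\Top$. These images are themselves (trivial) cofibrations of $\Top$ because $|{-}| \colon \sSet \to \Top$ is left Quillen, and multiplication by the cofibrant object $\rmD c$ then preserves (trivial) cofibrations by the pushout--product axiom of the monoidal model structure on $\Top$, applied to the cofibration $\emptyset \hookrightarrow \rmD c$. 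This completes the proof.
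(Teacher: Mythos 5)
Your proof is correct and follows essentially the same route as the paper: $Re$ is the enriched left Kan extension of $\rmD$ along $\scY$ (the coend formula), $S(T)(c) \cong \Sing\big(T^{\rmD \ul{c}}\big)$ is its right adjoint, the isomorphism $Re \circ \scY \cong \rmD$ is the co-Yoneda reduction, and the Quillen property rests on the cofibrancy of $\rmD \ul{c}$ in $\Top$. The only (immaterial) difference is that you verify that $Re$ sends the generating (trivial) cofibrations $\scY_c \otimes j$ to (trivial) cofibrations via the pushout--product axiom in $\Top$, whereas the paper dually checks that $S$ carries (trivial) fibrations to objectwise (trivial) fibrations using that $\Sing$ is right Quillen; the appeal to Dugger's universal property is not actually needed, since your explicit verification already does the work.
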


\begin{proof}
The functor $Re$ is defined as the (enriched) left Kan extension of $\rmD$ along $\scY$ in digram~\eqref{eq:Re -| S defining diagram}.
Explicitly, we can write
\begin{align}
\label{eq:Re S explicit}
	Re(F) &= \int^{c \in \Cart} F(c) \otimes \rmD \ul{c}
	\\*
	&= \int^{c \in \Cart} |F(c)| \times \rmD \ul{c}\,,
	\\[0.2cm]
	S(T)(c) &= \Top(|\Delta^\bullet| \times \rmD \ul{c}, T)
	\\*
	&\cong \Sing(T^{\rmD \ul{c}})\,.
\end{align}
Since $\rmD c$ is cofibrant in $\Top$ and $\Sing \colon \Top \to \sSet$ is right Quillen, it follows that $S$ maps fibrations (resp.~trivial fibrations) in $\Top$ to objectwise fibrations (resp.~trivial fibrations) in $\scH$.
Thus, $S$ is right Quillen.

For the second claim, we observe the canonical isomorphisms
\begin{align}
\label{eq:Re on representables}
	Re (\scY_d) = \int^{c \in \Cart} \scY_d(c) \otimes \rmD \ul{c}
	\cong \int^{c \in \Cart} \Cart(c,d) \times \rmD \ul{c}
	\cong \rmD \ul{d}\,,
\end{align}
for all $d \in \Cart$.
The statement now follows from Proposition~\ref{st:D on mfds and products}(1).
\end{proof}

\begin{lemma}
The adjunction $Re \dashv S$ has the following properties:
\begin{myenumerate}
\item It is a simplicial adjunction.

\item $S$ is monoidal.
\end{myenumerate}
\end{lemma}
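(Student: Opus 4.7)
For part (1), my plan is to verify that $Re$ commutes with the canonical $\sSet$-tensoring on $\scH_\infty$, i.e., that for every $F \in \scH_\infty$ and $K \in \sSet$ there is a natural isomorphism $Re(F \otimes K) \cong |K| \times Re(F)$ in $\Top$. This tensor compatibility is the standard equivalent characterisation of a left Quillen adjunction between simplicially enriched model categories being a \emph{simplicial} adjunction: once it holds, the right adjoint is automatically simplicially enriched and the ordinary adjunction isomorphism on sets of morphisms promotes to an isomorphism of simplicial mapping objects. Unpacking the coend formula~\eqref{eq:Re S explicit} and using the pointwise tensor structure $(F \otimes K)(c) = F(c) \times K$, the verification reduces to the chain
\begin{equation}
Re(F \otimes K) = \int^{c \in \Cart} |F(c) \times K| \times \rmD\ul{c} \cong \int^{c \in \Cart} \big(|F(c)| \times |K|\big) \times \rmD\ul{c} \cong |K| \times Re(F)\,,
\end{equation}
where the first isomorphism uses that the geometric realisation functor $|{-}| \colon \sSet \to \Top$ preserves finite products (as recalled just before the lemma), and the second uses that $|K| \times (-)$ preserves colimits, which follows from $\Top$ being cartesian closed.

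For part (2), my plan is to use the explicit formula $S(T)(c) \cong \Top(|\Delta^\bullet| \times \rmD\ul{c}, T)$ from~\eqref{eq:Re S explicit}. Since $\Top$ is cartesian closed, $\Top(X, -)$ preserves products for every $X$, so pointwise in $c \in \Cart$ and $[n] \in \bbDelta^\opp$ one obtains natural isomorphisms
\begin{equation}
S(T \times T')(c)_n \cong S(T)(c)_n \times S(T')(c)_n \qandq S(*) \cong *\,.
\end{equation}
Since products in $\scH_\infty$ are computed objectwise and degreewise, these assemble into natural isomorphisms $S(T \times T') \cong S(T) \times S(T')$ and $S(*) \cong *$ in $\scH_\infty$; compatibility with the symmetry and associativity constraints is automatic because both sides are cartesian products.

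The only substantive point is the interchange of the coend $\int^{c \in \Cart}$ with the functor $|K| \times (-)$ in the second displayed isomorphism of part~(1); this is the sole place where cartesian closedness of the target $\Top$ is really used, and it is also what forced us to restrict to $\Delta$-generated topological spaces rather than all topological spaces in the first place. Everything else in both parts is formal bookkeeping once this step is in hand.
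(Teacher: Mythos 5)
Your proposal is correct and follows essentially the same route as the paper: part (1) is exactly the paper's argument that $|{-}|$ preserves finite products and $|K|\times({-})\colon\Top\to\Top$ preserves colimits (being a left adjoint by cartesian closedness), packaged as the tensor-compatibility $Re(F\otimes K)\cong |K|\times Re(F)$; part (2) is the same observation that $S$ preserves products because everything is cartesian and hom-functors/right adjoints preserve them, which you merely spell out via the explicit formula for $S$.
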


\begin{proof}
Part~(1) holds true since geometric realisation preserves finite products of simplicial sets and since the functor $K \otimes (-) \colon \Top \to \Top$ is a left adjoint, for any $K \in \sSet$.
Part~(2) holds true since $S$ is right adjoint and $\Top$ is cartesian monoidal.
\end{proof}

\begin{lemma}
\label{st:Re = D on Dfg}
Consider the fully faithful inclusion $\iota \colon \Dfg \hookrightarrow \Cat(\Cart^\opp, \Set) \hookrightarrow \scH$.
The diagram
\begin{equation}
\begin{tikzcd}
	\Dfg \ar[r, hookrightarrow, "\iota"] \ar[dr, "\rmD"'] & \scH \ar[d, "Re"]
	\\
	& \Top
\end{tikzcd}
\end{equation}
commutes up to natural isomorphism.
In particular, for any manifold $M \in \Mfd$, $Re \ul{M}$ is homeomorphic to the underlying topological space of $M$.
\end{lemma}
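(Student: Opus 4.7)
The plan is to compute $Re(\iota X)$ explicitly for a diffeological space $X$ and identify it with $\rmD X$ via a common colimit presentation. Since $\iota X$ takes values in discrete simplicial sets, the realisation $|(\iota X)(c)| = |\Plot_X(c)|$ is the discrete topological space on the set $\Plot_X(c)$, so the coend formula~\eqref{eq:Re S explicit} specialises to
\begin{equation}
    Re(\iota X) = \int^{c \in \Cart} \Plot_X(c) \times \rmD\ul{c}\,.
\end{equation}
Rewriting this tensor coend with a set-valued left factor as a colimit over the category of elements, and identifying that category with the slice $\Cart/X$ (plots of $X$ on $c$ are exactly $\Dfg$-morphisms $\ul{c} \to X$), this becomes $\underset{\Cart/X}{\colim}^\Top \rmD\ul{c}$.

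Next I would invoke the density presentation of a diffeological space by its plots, namely $X \cong \underset{\Cart/X}{\colim}^\Dfg \ul{c}$. This is standard: it holds for the corresponding presheaf as the Yoneda colimit formula, and since $\Dfg$ is a reflective subcategory of (concrete) sheaves on $\Cart$, the reflection produces the analogous formula in $\Dfg$. Because $\rmD \dashv \rmC$ makes $\rmD$ a left adjoint, it preserves this colimit, giving
\begin{equation}
    \rmD X \cong \underset{\Cart/X}{\colim}^\Top \rmD\ul{c}\,,
\end{equation}
which coincides with the colimit expression for $Re(\iota X)$ computed above. The resulting isomorphism $Re \circ \iota \cong \rmD$ is natural in $X$ because both sides arise as the same functorial colimit over $\Cart/X$, with structure maps induced by the canonical inclusions $\rmD\ul{c} \to \rmD X$ and $\rmD\ul{c} \to Re(\iota X)$ associated to each plot. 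For the final sentence, specialising to $X = \ul{M}$ for a manifold $M$ and combining with Proposition~\ref{st:D on mfds and products}(1), which identifies $\rmD \ul{M}$ with the underlying topological space of $M$, gives the claim.

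The main technical point to articulate is the identification of $\rmD X$ with the colimit $\underset{\Cart/X}{\colim}^\Top \rmD\ul{c}$: one must verify that the final topology on $X$ used to define $\rmD X$ agrees with the colimit topology inherited from this coequaliser in $\Top$. On underlying sets this is immediate, because every point of $X$ is hit by the constant plot $\ast \to X$ at that point, so the colimit surjects onto $X$; on topologies it follows from the coincidence of the universal property of the final topology with the universal property of the colimit in $\Top$, together with the fact that $\Top$ is closed under such colimits by the discussion of $\Delta$-generated spaces preceding the definition of $\rmD$. Given this, no further subtle point arises, and the isomorphism $Re \circ \iota \cong \rmD$ is forced by the uniqueness of colimits.
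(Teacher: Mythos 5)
Your argument is correct and follows essentially the same route as the paper: both sides are identified with the canonical colimit (coend) of $\rmD\ul{c}$ over the plots of $X$, using the density presentation of $X$ (resp.\ of $\iota X$ by representables) and the fact that $Re$ and $\rmD$ are left adjoints, with Proposition~\ref{st:D on mfds and products}(1) supplying the final statement for manifolds. Your extra care in justifying the formula $X \cong \colim_{\Cart/X}^{\Dfg}\ul{c}$ and the agreement of the final and colimit topologies is welcome but not a departure from the paper's proof, which uses the same presentation (also in the proof of Proposition~\ref{st:D on mfds and products}(3)).
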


\begin{proof}
For $X \in \Dfg$, we have canonical natural isomorphisms
\begin{equation}
	Re \circ \iota(X) \cong Re \int^{c \in \Cart} \iota(X)(c) \otimes \scY_c
	\cong \int^{c \in \Cart} \iota(X)(c) \otimes \rmD \ul{c}
\end{equation}
and
\begin{equation}
	\rmD X \cong \rmD \int^{c \in \Cart} \iota(X)(c) \otimes \ul{c}
	\cong \int^{c \in \Cart} \iota(X)(c) \otimes \rmD\ul{c}\,.
\end{equation}
Combining this with Proposition~\ref{st:D on mfds and products} completes the proof.
\end{proof}

\begin{proposition}
\label{st:Re -| S on H^pI}
The pair $Re \dashv S$ induces a Quillen adjunction
\begin{equation}
\begin{tikzcd}
	Re : \scH^{p I} \ar[r, shift left=0.14cm, "\perp"' yshift=0.05cm]
	& \Top : S\,. \ar[l, shift left=0.14cm]
\end{tikzcd}
\end{equation}
\end{proposition}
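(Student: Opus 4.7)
My plan is to invoke the universal property of the enriched left Bousfield localisation (see~\cite{Barwick:Enriched_B-Loc}): since $Re \dashv S$ is already a simplicial Quillen adjunction $\scH_\infty^p \rightleftarrows \Top$, it descends to a Quillen adjunction on $\scH_\infty^{pI} = L_I \scH_\infty^p$ as soon as $Re$ sends each morphism in the localising set $I$ to a weak equivalence in $\Top$ (equivalently, $S$ sends fibrant objects of $\Top$ to $I$-local fibrant objects of $\scH_\infty^p$). So the entire task reduces to checking this single condition on the generators of $I$.

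First, I would fix a morphism $\scY_c \times \scY_\RN \to \scY_c$ in $I$. Since the Yoneda embedding preserves finite products and $\Cart$ is closed under products (both $c$ and $c \times \RN$ are cartesian spaces), we may identify this morphism with the image under $\scY$ of the projection $c \times \RN \to c$ in $\Cart$. Applying $Re$ and using the natural isomorphism $Re \circ \scY \cong \rmD$ established in the theorem above, the morphism becomes
\begin{equation}
    \rmD\ul{(c \times \RN)} \longrightarrow \rmD\ul{c}\,.
\end{equation}
By Proposition~\ref{st:D on mfds and products}(1) and~(2), this is naturally homeomorphic to the projection $\rmD\ul{c} \times \rmD\ul{\RN} \to \rmD\ul{c}$ in $\Top$.

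Finally, $\rmD\ul{\RN}$ is the underlying topological space of $\RN$, which is contractible, so the projection is a homotopy equivalence and in particular a weak equivalence in $\Top$. Therefore $Re$ sends every morphism in $I$ to a weak equivalence, and the desired Quillen adjunction $Re : \scH_\infty^{pI} \rightleftarrows \Top : S$ follows. No step here is really an obstacle: the only thing one has to be slightly careful about is that the product computation $\rmD(c \times \RN) \cong \rmD c \times \rmD \RN$ uses the product in $\Top$ (not the ordinary topological product), which is precisely what Proposition~\ref{st:D on mfds and products} provides.
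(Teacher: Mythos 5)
Your argument is correct, and it verifies the descent criterion on the opposite side of the adjunction from the paper. The paper's proof invokes the same localisation criterion (via \cite[Prop.~3.1.6, Prop.~3.3.18]{Hirschhorn:MoCats}) but checks the \emph{right} adjoint: it shows that $S$ sends fibrant $T \in \Top$ to $I$-local objects, using Proposition~\ref{st:equiv localisations of H} to reduce to the maps $S(T)(*) \to S(T)(c)$ and then the explicit formula $S(T)(c) \cong \Sing\big(T^{\rmD\ul{c}}\big)$ together with the fact that $\Sing$ is right Quillen and $\rmD\ul{c} \to *$ is a weak equivalence between cofibrant objects. You instead check the \emph{left} adjoint on the generators of $I$: identifying $\scY_c \times \scY_\RN \to \scY_c$ with $\scY_{c\times\RN} \to \scY_c$, using $Re \circ \scY \cong \rmD$ and Proposition~\ref{st:D on mfds and products}(1),(2) to compute its image as the projection $\rmD\ul{c} \times \rmD\ul{\RN} \to \rmD\ul{c}$, which is a homotopy equivalence. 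Both routes are legitimate; yours is exactly the pattern the paper itself uses for $S_e$ in Corollary~\ref{st:ev_Delta adjunction descends}, and it has the small advantage of not needing Proposition~\ref{st:equiv localisations of H}, while the paper's check on $S$ gives the fibrancy statement for $S(T)$ directly, which is reused later. The only point you should make explicit is that the morphisms in $I$ have (projectively) cofibrant domain and codomain---here automatic, since $\scY_c \times \scY_\RN \cong \scY_{c\times\RN}$ is representable---so that applying $Re$ itself, rather than its left derived functor, suffices for the criterion.
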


\begin{proof}
By~\cite[Props.~3.1.6,~3.3.18]{Hirschhorn:MoCats}, it suffices to check that $Re$ sends the morphisms $\scY_c \times \scY_\RN \to \scY_c$ to weak equivalences.
Since $Re$ preserves products of representables (by Proposition~\ref{st:D on mfds and products} and~\eqref{eq:Re -| S defining diagram}), it even suffices to check that $Re$ sends the morphism $\scY_\RN \to *$ to a weak equivalence.
That it does so is evident from~\eqref{eq:Re -| S defining diagram}.
\end{proof}

\begin{proposition}
The functor $Re$ from diagram~\eqref{eq:Re -| S defining diagram} has the following properties:
\begin{myenumerate}
\item If $F \in \scH^p$ is cofibrant, $Re$ sends the morphism $F \times \scY_\RN \to F$ to a weak equivalence in $\Top$.

\item For each differentiably good open covering $\scU = \{c_a \to c\}_{a \in A}$ in $\Cart$, the functor $Re$ sends the \v{C}ech nerve $\cC \scU \to \scY_c$ to a weak equivalence in $\Top$.
\end{myenumerate}
\end{proposition}

\begin{proof}
Ad~(1):
The morphism is a weak equivalence in $\scH^{p I}$ between cofibrant objects by Proposition~\ref{st:tensor and ev weqs in H^(p/i I)}.
Therefore, the claim follows from Proposition~\ref{st:Re -| S on H^pI}.

Ad~(2):
Let $\cC\scU \to \scY_c$ denote the \v{C}ech nerve of the open covering $\scU$.
We view this as a morphism from a simplicial presheaf $\cC\scU$ to a simplicially constant presheaf $\scY_c$.
Since $\scU$ is a differentiably good open covering, $\cC\scU$ is levelwise a coproduct of representable presheaves on $\Cart$; hence, $\cC\scU$ is cofibrant in $\scH^p$.
By construction of the \v{C}ech model structure $\scH^{p\ell}$, the morphism $\cC\scU \to \scY_c$ is a weak equivalence in $\scH^{p \ell}$.
By Corollary~\ref{st:coincidence of model structures}, this is also a weak equivalence in $\scH^{p I}$.
The result now follows from Proposition~\ref{st:Re -| S on H^pI} and since both $\cC\scU$ and $\scY_c$ are cofibrant.
\end{proof}

We now prove an important property of the model categories $\scH^{p/i\, I}$ which allows us to detect $I$-local weak equivalences.
Dugger calls this property \emph{rigidity} in~\cite{Dugger:HoSheaves}.

\begin{proposition}
\label{st:rigidity}
\emph{\cite[Lemma~3.4.4]{Dugger:HoSheaves}}
Let $F,G \in \scH^{p/i\, I}$ be fibrant.
Then, a morphism $\psi \colon F \to G$ is an $I$-local weak equivalence if and only if $\psi_{|*} \colon F(*) \to G(*)$ is a weak equivalence in $\sSet$.
\end{proposition}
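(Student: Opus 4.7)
The plan is to prove both directions using the Quillen equivalence $\tilde{\sfc} \dashv \ev_*$ from Theorem~\ref{st:tc -| ev_* is QEq} together with the fact, established in Lemma~\ref{st:ev of tc -| ev_* on fibrants}, that the counit $e_{|F}\colon\tilde{\sfc}\circ\ev_*(F)\to F$ is an objectwise weak equivalence for every fibrant $F\in\scH_\infty^{p/i\,I}$.

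For the forward direction, observe that the right Quillen functor $\ev_*\colon\scH_\infty^{p/i\,I}\to\sSet$ sends weak equivalences between fibrant objects to weak equivalences in $\sSet$ by Ken Brown's Lemma. Since $F$ and $G$ are fibrant by assumption, if $\psi$ is an $I$-local weak equivalence then $\psi_{|*}=\ev_*(\psi)$ is a weak equivalence in $\sSet$.

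For the converse, suppose $\psi_{|*}\colon F(*)\to G(*)$ is a weak equivalence in $\sSet$. Naturality of the counit of the adjunction~\eqref{eq:tc -| ev_*} gives a commutative square
\begin{equation}
\begin{tikzcd}[column sep=1.3cm, row sep=1cm]
\tilde{\sfc}\bigl(F(*)\bigr) \ar[r, "\tilde{\sfc}(\psi_{|*})"] \ar[d, "e_{|F}"']
& \tilde{\sfc}\bigl(G(*)\bigr) \ar[d, "e_{|G}"]
\\
F \ar[r, "\psi"']
& G
\end{tikzcd}
\end{equation}
in $\scH_\infty^{p/i\,I}$. In the proof of Theorem~\ref{st:tc -| ev_* is QEq} it was shown that $\tilde{\sfc}\colon\sSet\to\scH_\infty^{p/i\,I}$ preserves and reflects weak equivalences, so $\tilde{\sfc}(\psi_{|*})$ is an $I$-local weak equivalence. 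By Lemma~\ref{st:ev of tc -| ev_* on fibrants}, the vertical morphisms $e_{|F}$ and $e_{|G}$ are objectwise weak equivalences, hence in particular weak equivalences in $\scH_\infty^{p/i\,I}$. The two-out-of-three property then forces $\psi$ to be an $I$-local weak equivalence.

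No step in this argument appears to be subtle: everything is a direct consequence of the previously established Quillen equivalence and the explicit description of its counit. The only point that warrants care is to ensure the counit formula used here genuinely applies to both the projective and injective cases simultaneously, but this is immediate since Theorem~\ref{st:tc -| ev_* is QEq} and Lemma~\ref{st:ev of tc -| ev_* on fibrants} are both proved uniformly for $\scH_\infty^{p/i\,I}$.
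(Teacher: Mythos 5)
Your proof is correct and follows essentially the same route as the paper: the converse is the counit-naturality square whose $c$-components are exactly the squares the paper uses, together with Lemma~\ref{st:ev of tc -| ev_* on fibrants} and two-out-of-three. The only cosmetic difference is the forward direction, where you invoke Ken Brown's Lemma for the right Quillen functor $\ev_*$ while the paper notes directly that a map between local objects is an $I$-local weak equivalence iff it is an objectwise one; both are fine.
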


\begin{proof}
Since $\psi$ is a morphism between local objects in a left Bousfield localisation of $\scH^{p/i}$, it is an equivalence in $\scH^{p/i\, I}$ if and only if it is an objectwise weak equivalence in $\scH^{p/i}$.
For each $c \in \Cart$, the morphism $c \to * = \RN^0$ induces a commutative square
\begin{equation}
\begin{tikzcd}[column sep=1.cm, row sep=0.75cm]
	F(*) \ar[r, "e_{|F}"] \ar[d, "\psi_{|*}"']
	& F(c) \ar[d, "\psi_{|c}"]
	\\
	G(*) \ar[r, "e_{|G}"']
	& G(c)
\end{tikzcd}
\end{equation}
Since $F$ and $G$ are $\RN$-local, the claim now follows from Proposition~\ref{st:tensor and ev weqs in H^(p/i I)}.
\end{proof}

\begin{theorem}
\label{st:Re -| S is QEq}
There is a commutative diagram of simplicial Quillen equivalences
\begin{equation}
\label{eq:tc Re |-| diagram}
\begin{tikzcd}[column sep={1.75cm,between origins}, row sep={2.cm,between origins}]
	& \scH^{p I} \ar[rd, shift left=0.1cm, "Re"] \ar[ld, shift left=0.1cm, "\ev_*"] &
	\\
	\sSet \ar[rr, shift left=0.1cm, "|{-}|"] \ar[ur, shift left=0.1cm, "\tilde{\sfc}"]
	& & \Top \ar[ll, shift left=0.1cm, "\Sing"] \ar[ul, shift left=0.1cm, "S"]
\end{tikzcd}
\end{equation}
where $\tilde{\sfc}$, $Re$, and $|{-}|$ are the left adjoints.
\end{theorem}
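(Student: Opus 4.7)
The plan is to reduce everything to a commutativity check together with the $2$-out-of-$3$ property for Quillen equivalences.

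First, I would verify that the diagram commutes, which is easiest to see on the right adjoints. Using the explicit description
\begin{equation}
	\ev_* \circ S(T) = S(T)(*) = \Top(|\Delta^\bullet| \times \rmD\ul{*}, T) \cong \Top(|\Delta^\bullet|, T) = \Sing(T)\,,
\end{equation}
naturally in $T \in \Top$ (since $\rmD\ul{*} = *$), one obtains a natural isomorphism $\ev_* \circ S \cong \Sing$. By uniqueness of adjoints, this is equivalent to a natural isomorphism $Re \circ \tilde{\sfc} \cong |{-}|$ of left adjoints, which can alternatively be verified directly from the coend formula~\eqref{eq:Re S explicit} once one observes that $\tilde{\sfc}(K)$ is the constant simplicial presheaf at $K$.

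Next I would collect the Quillen equivalences already at our disposal: the adjunction $\tilde{\sfc} \dashv \ev_*$ is a Quillen equivalence by Theorem~\ref{st:tc -| ev_* is QEq}, and the adjunction $|{-}| \dashv \Sing$ is a Quillen equivalence by the discussion at the beginning of Section~\ref{sec:Re -| S} (which reduces it to the known Quillen equivalence between $\Delta$-generated spaces and Kelley spaces via the two-out-of-three property). Moreover, $Re \dashv S$ is a Quillen adjunction by Proposition~\ref{st:Re -| S on H^pI}.

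With commutativity established, I would then invoke the $2$-out-of-$3$ property for Quillen equivalences: given a composable pair of Quillen adjunctions between model categories for which the composite and one factor are Quillen equivalences, the other factor is too. Applying this to the composite $Re \circ \tilde{\sfc} \cong |{-}|$, with $\tilde{\sfc}$ and $|{-}|$ both Quillen equivalences, yields that $Re \colon \scH_\infty^{pI} \to \Top$ is a Quillen equivalence as well. (A version of the required $2$-out-of-$3$ statement is provided in Appendix~\ref{app:Quillen equivalences}.) The simplicial enrichment of each adjunction has already been verified in the preceding subsections, so nothing additional is required on that front.

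The only genuinely new content is the commutativity check, which is essentially immediate from the definitions; the main obstacle, if any, is bookkeeping to ensure that the natural isomorphism $\ev_* \circ S \cong \Sing$ is compatible with the adjunction units and counits so that $2$-out-of-$3$ applies cleanly, but this follows from uniqueness of left adjoints to a given right adjoint.
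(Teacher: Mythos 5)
Your proposal is correct and follows essentially the same route as the paper: check commutativity of the triangle directly from the explicit formulas for $Re$ and $S$ (equivalently on the right adjoints, $\ev_* \circ S \cong \Sing$), then combine the known Quillen equivalences $\tilde{\sfc} \dashv \ev_*$ and $|{-}| \dashv \Sing$ with the two-out-of-three property and Corollary~\ref{st:transfer of QEq property}. The only difference is that you spell out the commutativity check and the adjoint-uniqueness bookkeeping that the paper leaves implicit.
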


\begin{proof}
It is well-established that the pair $|{-}| \dashv \Sing$ is a simplicial Quillen equivalence (see e.g.~\cite{Hovey:MoCats}).
We have also seen in Theorem~\ref{st:tc -| ev_* is QEq} that the adjunction $\tilde{\sfc} \dashv \ev_*$ is a simplicial Quillen equivalence.
The commutativity of~\eqref{eq:tc Re |-| diagram} follows from the definitions~\eqref{eq:Re S explicit} of the functors $Re$ and $S$, which use $|{-}|$ and $\Sing$, respectively.
The fact that $Re \dashv S$ is a Quillen equivalence then follows from the two-out-of-three property of Quillen equivalences.
\end{proof}

\begin{remark}
A slightly different version of Theorem~\ref{st:Re -| S is QEq} has been found previously in~\cite{Dugger:Universal_HoThys,Dugger:HoSheaves}, working over $\Mfd$ instead of $\Cart$.
We found that $\Cart$ has several technical advantages (in particular due to Corollary~\ref{st:coincidence of model structures}) and provides a sufficiently large category of parameter spaces to describe geometric and topological structures, as Theorem~\ref{st:Re -| S is QEq} shows (see also~\cite{Schreiber:DCCT} for various applications of this formalism).
\qen
\end{remark}

\section{The singular complex of a simplicial presheaf}
\label{sec:S_e and its properties}

In this section we introduce the \emph{smooth singular complex}, sometimes also called the \emph{concordance space}, of a simplicial presheaf on $\Cart$.
We investigate its homotopical properties---for instance, it sends smooth homotopies to simplicial homotopies---and we establish it both as a left Quillen equivalence $\scH^{p/i\, I} \to \sSet$ and as a right Quillen equivalence $\scH^{i I} \to \sSet$.

\subsection{Extended simplices and the smooth singular complex}
\label{sec:L_e -| S_e -| R_e construction}

In a fashion similar to motivic homotopy theory (see e.g.~\cite{MV:A1-HoThy_of_Schemes,Voevodsky:A1_HoThy,DLORV:Motivic_HoThy}), we consider the extended affine simplices in order to build our smooth singular complex functor.
However, we purely rely on the smooth manifold structure of the affine cartesian simplices rather than on their function algebras.

\begin{definition}
The \emph{extended $n$-simplex} is the cartesian space
\begin{equation}
	\Delta_e^n \coloneqq \Big\{ (t^0, \ldots, t^n) \in \RN^{n+1}\, \Big| \, \sum_{i = 0}^n t^i = 1 \Big\} \subset \RN^{n+1}\,.
\end{equation}
Face and degeneracy maps are defined as the affine linear extensions of the face and degeneracy maps of the standard simplices $|\Delta^n|$.
The extended simplices thus define a functor $\Delta_e \colon \bbDelta \to \Cart$.
\end{definition}

By construction, the topological standard simplex
\begin{equation}
	|\Delta^n| = \Big\{t \in \RN^{n+1}\, \Big| \, \sum_{i = 0}^n t^i = 1,\, 0 \leq t^i \leq 1\ \forall i = 0, \ldots, n \Big\}
\end{equation}
is a subset of the extended simplex $\Delta_e^n$, for any $n \in \NN_0$.
This inclusion $|\Delta^n| \hookrightarrow \Delta_e^n$ is compatible with the face and degeneracy maps.
Recalling the functor $\rmD \colon \Dfg \to \Top$ from Definition~\ref{def:D}, we see that there is a morphism
\begin{equation}
	\iota \colon |\Delta| \to \rmD \Delta_e
\end{equation}
of functors $\bbDelta \to \Top$.
In particular, the diagram
\begin{equation}
\label{eq:iota is natural}
\begin{tikzcd}[column sep=1.5cm, row sep=1.25cm]
	{|\Delta^n|} \ar[r, "\iota^n"] \ar[d, "|\Delta|(\sigma)"']
	& \rmD \Delta_e^n \ar[d, "\rmD \Delta_e(\sigma)"]
	\\
	{|\Delta^k|} \ar[r, "\iota^k"']
	& \rmD \Delta_e^k
\end{tikzcd}
\end{equation}
in $\Top$ commutes for every morphism $\sigma \in \bbDelta([n],[k])$.

The extended simplices functor $\Delta_e$ induces a Quillen adjunction
\begin{equation}
\begin{tikzcd}[column sep=1.5cm]
	\scH^p \ar[r, shift left=0.14cm, "1", "\perp"' yshift=0.05cm]
	& \scH^i \ar[r, shift left=0.14cm, "\Delta_e^*", "\perp"' yshift=0.05cm] \ar[l, shift left=0.14cm, "1"]
	& (\sSet^{\bbDelta^\opp})^i = (\sSet^{\bbDelta^\opp})^{Reedy}\,. \ar[l, shift left=0.14cm, "(\Delta_e)_*"]
\end{tikzcd}
\end{equation}
Here we have made use of~\cite[Thm.~15.8.7]{Hirschhorn:MoCats}, which implies that the injective model structure on bisimplicial sets agrees with the Reedy model structure.
We recall

\begin{theorem}
\emph{\cite[Thm.~5.2.3]{Riehl:Cat_HoThy}}
Let $\scM$ be a simplicial model category.
Then, the realisation functor
\begin{equation}
	|{-}|_\scM \colon \scM^{\bbDelta^\opp} \to \scM\,,
	\qquad
	X_\bullet \longmapsto \int^{[n] \in \bbDelta^\opp} \Delta^n \otimes X_n
\end{equation}
is a left Quillen functor with respect to the Reedy model category structure on $\scM^{\bbDelta^\opp}$.
\end{theorem}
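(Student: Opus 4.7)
The plan is to recognise the realisation functor as a left adjoint arising from the simplicial tensoring, and then to check the left Quillen property by reducing to the pushout--product axiom for the $\sSet$-enrichment of $\scM$. First, I would exhibit a right adjoint: since $\scM$ is simplicial, every object $M \in \scM$ admits a cotensor $M^K \in \scM$ for $K \in \sSet$, and setting $R_\scM(M)_n \coloneqq M^{\Delta^n}$ defines a functor $\scM \to \scM^{\bbDelta^\opp}$ which is right adjoint to $|{-}|_\scM$ by the standard coend--end calculation. So $|{-}|_\scM$ is automatically cocontinuous; what needs to be verified is that it preserves cofibrations and trivial cofibrations with respect to the Reedy structure on $\scM^{\bbDelta^\opp}$ and the given structure on $\scM$.

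For the key step, I would observe that the cosimplicial object $\Delta^\bullet \colon \bbDelta \to \sSet$ is \emph{Reedy cofibrant}, since its $n$-th matching map is the boundary inclusion $\partial \Delta^n \hookrightarrow \Delta^n$, which is a cofibration in $\sSet$. Given a Reedy cofibration $f \colon X_\bullet \to Y_\bullet$, I would build $|f|_\scM$ by the standard skeletal induction: writing $\Sk^n |Y|_\scM$ for the image of $\int^{[k] \in \bbDelta_{\leq n}^\opp} \Delta^k \otimes Y_k$, one obtains pushout squares
\begin{equation}
\begin{tikzcd}[column sep=1.2cm]
	\partial \Delta^n \otimes Y_n \underset{\partial \Delta^n \otimes L_n Y_\bullet}{\cup} \Delta^n \otimes L_n Y_\bullet \ar[r] \ar[d]
	& \Sk^{n-1} |Y_\bullet|_\scM \ar[d]
	\\
	\Delta^n \otimes Y_n \ar[r]
	& \Sk^n |Y_\bullet|_\scM
\end{tikzcd}
\end{equation}
and analogous pushouts for $|f|_\scM$ involving the relative latching maps $L_n Y \cup_{L_n X} X_n \to Y_n$. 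The left-hand vertical arrow is precisely the pushout--product of the cofibration $\partial \Delta^n \hookrightarrow \Delta^n$ in $\sSet$ with the relative latching map of $f$, which is a cofibration in $\scM$ by the definition of Reedy cofibration. The $\sSet$-enriched pushout--product axiom for the simplicial model category $\scM$ then makes this a cofibration; since cofibrations are closed under pushouts and transfinite composition, $|f|_\scM$ is a cofibration.

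For trivial cofibrations, I would run the same induction but note that a trivial Reedy cofibration has relative latching maps that are trivial cofibrations in $\scM$, and apply the second half of the pushout--product axiom. The resulting maps at each skeletal stage are trivial cofibrations, hence so is the transfinite composite $|f|_\scM$. The main obstacle is purely bookkeeping: one has to verify the precise form of the skeletal pushouts (in particular that the relevant ``boundary'' object is computed by the indicated pushout of simplicial tensors with latching objects and boundary simplices), which is a direct but fiddly coend manipulation using the co-Reedy decomposition of $\bbDelta^\opp$. Once that is checked, the pushout--product axiom for $\scM$ over $\sSet$ does all of the work, and no feature of $\scM$ beyond being a simplicial model category is used.
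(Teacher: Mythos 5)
Your argument is correct, but note that the paper does not prove this statement at all: it is quoted verbatim from Riehl's book (Thm.~5.2.3 of \emph{Categorical Homotopy Theory}), so the only comparison available is with that cited proof. Your route is essentially the standard one and matches the content of the cited argument: you identify the right adjoint $M \mapsto (M^{\Delta^n})_{n}$ via the cotensoring, observe that the cosimplicial object $\Delta^\bullet \colon \bbDelta \to \sSet$ is Reedy cofibrant, and then verify preservation of (trivial) cofibrations by the skeletal filtration, where the attaching map at stage $n$ is the pushout--product of $\partial\Delta^n \hookrightarrow \Delta^n$ with the (relative) latching map, so that SM7 for $\scM$ does the work. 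The difference is one of packaging: Riehl proves once and for all that the functor tensor product $(-) \otimes_{\bbDelta} (-) \colon \sSet^{\bbDelta} \times \scM^{\bbDelta^\opp} \to \scM$ is a left Quillen bifunctor for the Reedy structures (exactly your skeletal/latching computation, done in general), and then specialises at the Reedy cofibrant weight $\Delta^\bullet$; your version inlines that computation for this particular weight, which is fine but means you carry the ``fiddly'' coend bookkeeping yourself, and it is precisely the relative filtration $|X| = A_{-1} \to A_0 \to \cdots$ with $A_n = |X| \cup_{\Sk^n|X|} \Sk^n|Y|$ that you should spell out to make the transfinite-composition step airtight. One small slip: the Reedy cofibrancy of $\Delta^\bullet$ is detected by its \emph{latching} maps, not its matching maps (matching maps govern Reedy fibrancy); the map you write down, $\partial\Delta^n \hookrightarrow \Delta^n$, is indeed the $n$-th latching map, so the substance is right and only the terminology needs correcting.
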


\begin{proposition}
Let $\delta \colon \bbDelta \to \bbDelta \times \bbDelta$ be the diagonal functor.
There exists a canonical isomorphism
\begin{equation}
	|{-}|_\sSet \cong \delta^*\,,
\end{equation}
of functors $s\sSet \to \sSet$, where $\delta^*(X)_n = X_{n,n}$ is the pullback along the diagonal functor.
\end{proposition}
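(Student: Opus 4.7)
The plan is to exploit the fact that both $|{-}|_\sSet \colon s\sSet \to \sSet$ and $\delta^* \colon s\sSet \to \sSet$ preserve all colimits, and then check that they agree (naturally) on the external products of standard simplices, which form a dense subcategory of $s\sSet$.

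First I would observe that $|{-}|_\sSet$ preserves colimits since it is the left adjoint in a Quillen adjunction (by the theorem just cited), and $\delta^*$ preserves colimits because it is the restriction along $\delta^\opp$ of a functor between presheaf categories; equivalently, $\delta^*$ admits both a left and a right Kan extension along $\delta$, hence commutes with all small (co)limits. Next, write the external product $\Delta^{p,q} \coloneqq \Delta^p \boxtimes \Delta^q \in s\sSet$, defined by $\Delta^{p,q}_{n,k} = \Delta^p_n \times \Delta^q_k$; viewed as a functor $\bbDelta^\opp \to \sSet$, this sends $[n]$ to the simplicial set $\Delta^p_n \times \Delta^q$. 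By the coend formula for bisimplicial sets, every $X \in s\sSet$ admits a canonical colimit presentation
\begin{equation}
    X \cong \int^{[p],[q] \in \bbDelta} X_{p,q} \cdot \Delta^{p,q}\,,
\end{equation}
so it suffices to exhibit an isomorphism $|\Delta^{p,q}|_\sSet \cong \delta^*(\Delta^{p,q})$, natural in $[p]$ and $[q]$, and then invoke colimit preservation.

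For $\delta^*$ this is immediate: $\delta^*(\Delta^{p,q})_n = \Delta^p_n \times \Delta^q_n = (\Delta^p \times \Delta^q)_n$, so $\delta^*(\Delta^{p,q}) \cong \Delta^p \times \Delta^q$ as simplicial sets, naturally in $[p],[q]$. For $|{-}|_\sSet$, using the tensoring $K \otimes L = K \times L$ in $\sSet$, one computes
\begin{equation}
    |\Delta^{p,q}|_\sSet
    = \int^{[n] \in \bbDelta^\opp} \Delta^n \times \Delta^{p,q}_n
    \cong \Delta^q \times \int^{[n]} \Delta^n \times \Delta^p_n
    \cong \Delta^q \times \Delta^p\,,
\end{equation}
where in the second step we have used that $\Delta^q$ does not depend on $n$, and in the third the co-Yoneda isomorphism $K \cong \int^{[n]} K_n \cdot \Delta^n$ applied to $K = \Delta^p$. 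Combining these two identifications yields the required natural isomorphism on the generators $\Delta^{p,q}$.

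Finally I would assemble these pieces: applying both sides to the canonical colimit presentation of an arbitrary $X \in s\sSet$ and commuting the functors with the coend, one obtains
\begin{equation}
    |X|_\sSet
    \cong \int^{[p],[q]} X_{p,q} \cdot (\Delta^p \times \Delta^q)
    \cong \delta^*(X)\,,
\end{equation}
with all isomorphisms natural in $X$. The only place that needs care is keeping track of naturality through the coend manipulations and confirming that the cosimplicial object used by $|{-}|_\sSet$ in $\sSet^{\bbDelta^\opp}$ is indeed $[n] \mapsto \Delta^n$; once these bookkeeping steps are verified the argument goes through cleanly, and I do not anticipate a real obstacle beyond this bookkeeping.
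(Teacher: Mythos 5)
Your proof is correct, and it differs in organisation from what the paper does. The paper's proof is the direct levelwise coend computation: since colimits, and hence coends, of simplicial sets are computed degreewise, one has
\begin{equation}
	\big( |X|_\sSet \big)_k
	\;=\; \int^{[n]} \Delta^n_k \times X_{n,k}
	\;=\; \int^{[n]} \bbDelta([k],[n]) \times X_{n,k}
	\;\cong\; X_{k,k}
	\;=\; \big( \delta^* X \big)_k\,,
\end{equation}
where the isomorphism is the co-Yoneda (density) formula applied to the presheaf $[n] \mapsto X_{n,k}$; naturality in $[k]$ and in $X$ is immediate from the naturality of co-Yoneda, which is all that is meant by ``a straightforward application of the Yoneda Lemma in the (co)end calculus''. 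Your route instead establishes cocontinuity of both functors, presents an arbitrary $X$ as the canonical coend of external products $\Delta^p \boxtimes \Delta^q$, and checks agreement on these generators (where co-Yoneda enters in computing $\int^{[n]} \Delta^p_n \cdot \Delta^n \cong \Delta^p$). Each step you use is correct: $|{-}|_\sSet$ is a left adjoint, $\delta^*$ has both Kan-extension adjoints $\delta_! \dashv \delta^* \dashv \delta_*$, the external products are dense, and two cocontinuous functors on a presheaf category that agree naturally on representables are naturally isomorphic (both being left Kan extensions of their restriction to representables -- this is exactly the bookkeeping point you flag, and it does go through). What the density argument buys is a statement-level explanation of why the generators $\Delta^p \boxtimes \Delta^q$ realise to $\Delta^p \times \Delta^q$; what the direct computation buys is brevity and the elimination of any appeal to cocontinuity or to the density presentation, since the single application of co-Yoneda in each simplicial degree already produces the isomorphism together with its naturality.
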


\begin{proof}
This is a standard application of the Yoneda Lemma in the (co)end calculus.
\end{proof}

\begin{corollary}
The diagonal functor is a left Quillen functor
\begin{equation}
	\delta^* \colon \big( \sSet^{(\bbDelta^\opp)} \big)^i \longrightarrow \sSet\,.
\end{equation}
In particular, it is homotopical, i.e.~it preserves all weak equivalences.
\end{corollary}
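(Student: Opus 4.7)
The plan is to chain together the three facts established immediately above the statement. First, applying the quoted realisation theorem (the cited Riehl theorem) with $\scM = \sSet$, I would conclude that the realisation
\[
	|{-}|_\sSet \colon (\sSet^{\bbDelta^\opp})^{Reedy} \longrightarrow \sSet
\]
is a left Quillen functor with respect to the Reedy model structure on $\sSet^{\bbDelta^\opp} = s\sSet$. Next, using the identification mentioned just before the theorem, namely \cite[Thm.~15.8.7]{Hirschhorn:MoCats}, which states that on $s\sSet$ the Reedy and the injective model structures coincide, the same functor is then left Quillen as a map $(\sSet^{\bbDelta^\opp})^i \to \sSet$. Finally, the preceding proposition provides a natural isomorphism $|{-}|_\sSet \cong \delta^*$, and transporting the left Quillen property along this isomorphism yields the first claim.

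For the second claim (that $\delta^*$ is homotopical), I would invoke Ken Brown's lemma: a left Quillen functor between model categories preserves weak equivalences between cofibrant objects. But in $(\sSet^{\bbDelta^\opp})^i$ every object is cofibrant, since injective cofibrations are just monomorphisms and every bisimplicial set admits an evident monomorphism from the empty bisimplicial set (equivalently, cofibrancy in the injective model structure on a diagram category into $\sSet$ is automatic, as every simplicial set is cofibrant). Consequently $\delta^*$ preserves all weak equivalences.

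I do not anticipate any real obstacle here: once the proposition $|{-}|_\sSet \cong \delta^*$ is in hand and one remembers that the Reedy and injective structures agree on bisimplicial sets, everything is formal. The only mildly non-trivial point is verifying that every object of $(\sSet^{\bbDelta^\opp})^i$ is cofibrant, which is needed to upgrade the left Quillen conclusion to a homotopical one; but this is standard and follows from the injective model structure being characterised by monomorphisms and termwise weak equivalences.
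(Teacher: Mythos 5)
Your argument is exactly the chain the paper relies on: the cited realisation theorem applied with $\scM = \sSet$, the identification of the Reedy and injective model structures on $s\sSet$ via \cite[Thm.~15.8.7]{Hirschhorn:MoCats}, and the natural isomorphism $|{-}|_\sSet \cong \delta^*$ from the preceding proposition, with the homotopical statement following from Ken Brown's lemma since every object of $(\sSet^{\bbDelta^\opp})^i$ is cofibrant. This matches the paper's (implicit) proof, so there is nothing to add.
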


Consequently, we can define a left Quillen functor as the composition
\begin{equation}
\label{eq:def ev_(D_e)}
	S_e \coloneqq \delta^* \circ \Delta_e^* \colon \scH^{p/i} \longrightarrow \sSet\,.
\end{equation}
Consider a complete and cocomplete category $\scE$, two categories $\scC, \scD$, and a functor $F \colon \scC \to \scD$.
Recall that, in this situation, the functor $F^* \colon \Cat(\scD,\scE) \longrightarrow \Cat(\scC, \scE)$ has a left adjoint $F_!$ and a right adjoint $F_*$, which are given by the left and the right Kan extension along $F$.
By the construction of $S_e$ as a composition of pullback functors which act on categories of simplicial presheaves, we infer:

\begin{proposition}
\label{st:L_e -| S_e -| R_e}
The functor $S_e = \delta^* \circ \Delta_e^*$ has both adjoints.
We thus obtain a triple of adjunctions $L_e \dashv S_e \dashv R_e$, where $L_e$ and $R_e$ are given by the compositions
\begin{equation}
	L_e = \Delta_{e!} \circ \delta_!\,,
	\qqandqq
	R_e = \Delta_{e*} \circ \delta_*\,.
\end{equation}
The adjunction $S_e \dashv R_e$ is a simplicial Quillen adjunction.
\end{proposition}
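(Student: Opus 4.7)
The plan is to deduce both claims from completely formal properties of presheaf categories and Kan extensions, since all the nontrivial work has already been done. For the existence of $L_e$ and $R_e$, I would invoke the standard fact that for any functor $F\colon\scC\to\scD$ between small categories, the restriction functor $F^*\colon\Cat(\scD,\sSet)\to\Cat(\scC,\sSet)$ admits a left adjoint $F_!$ and a right adjoint $F_*$, given by pointwise left and right Kan extension, because $\sSet$ is complete and cocomplete. Applying this to $\delta\colon\bbDelta\to\bbDelta\times\bbDelta$ and to $\Delta_e\colon\bbDelta\to\Cart$ (viewing $\scH_\infty$ as $\Cat(\Cart^\opp,\sSet)$) yields both adjoints of each pullback functor separately; composing the two pairs of adjunctions then produces the triple $\Delta_{e!}\circ\delta_!\dashv S_e\dashv \Delta_{e*}\circ\delta_*$, which is exactly the claim about the formulas for $L_e$ and $R_e$.

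For the Quillen property, I would simply note that $S_e$ has already been identified just before the statement as the composition of two left Quillen functors, so it is itself left Quillen from $\scH_\infty^{p/i}$ to $\sSet$; by adjointness, $R_e$ is automatically right Quillen and $S_e\dashv R_e$ is a Quillen adjunction. For the simplicial enrichment, the argument is again formal: each pullback functor $F^*$ between $\sSet$-valued presheaf categories is canonically $\sSet$-enriched and preserves both tensors and cotensors, since these are computed objectwise in the presheaf category. Hence each adjunction $F^*\dashv F_*$ is a simplicial adjunction, and the composite $S_e\dashv R_e$ inherits this structure. Combined with the Quillen property established above, this gives a simplicial Quillen adjunction.

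I do not expect any real obstacle in this proof. The only point that deserves a sentence of verification is that the simplicial enrichments on $\scH_\infty^{p/i}$ and $\sSet$ assumed by the statement are indeed the ones coming from the $\sSet$-enriched presheaf structure (and the standard self-enrichment of $\sSet$), so that the abstract simplicial adjunction obtained from Kan extension theory is a simplicial adjunction of the relevant simplicial model categories. This is true by construction, so the argument is complete.
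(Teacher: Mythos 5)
Your argument is exactly the one the paper uses: the proposition is stated as an immediate consequence of the preceding paragraph, namely the formal fact that restriction functors $F^*$ on $\sSet$-valued presheaf categories admit left and right Kan extension adjoints, combined with the identification of $S_e=\delta^*\circ\Delta_e^*$ as a composite of left Quillen functors established just before. Your additional remark that the adjunction is simplicial because the pullback functors preserve the (objectwise) tensors is the standard justification of the enrichment claim, which the paper leaves implicit, so the proposal is correct and matches the paper's route.
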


\begin{definition}
\label{def:sm sing cplx}
We call the functor $S_e \colon \scH^{p/i} \to \sSet$ the \emph{smooth singular complex functor}.
For $F \in \scH$, the simplicial set $S_e F$ is called the \emph{smooth singular complex of $F$}.
\end{definition}

\subsection{$S_e$ as a left Quillen equivalence}
\label{sec:S_e is QEq}

We further investigate the homotopical properties of the smooth singular complex functor $S_e$.
So far, we know that the adjunction $S_e : \scH^{p/i} \rightleftarrows \sSet : R_e$ is Quillen.
Our goal here is to show that this Quillen adjunction descends to the localisation $\scH^{p/i\, I}$ and that there it even forms a Quillen equivalence.

\begin{definition}
Let $F,G \in \scH$ be two simplicial presheaves on $\Cart$, and let $f_0, f_1 \colon F \to G$ be a pair of morphisms.
A \emph{smooth homotopy from $f_0$ to $f_1$} is a commutative diagram
\begin{equation}
\label{eq:smooth homotopy}
\begin{tikzcd}[column sep=1.25cm, row sep=1cm]
	F \times \Delta^{\{0\}} \ar[d, hookrightarrow] \ar[dr, "f_0"] &
	\\
	F \times \ul{\RN} \ar[r, "h" description] & G
	\\
	F \times \Delta^{\{1\}} \ar[u, hookrightarrow] \ar[ur, "f_1"'] &
\end{tikzcd}
\end{equation}
in $\scH$, where the vertical inclusions are induced by the maps $* \to \RN$, given by $* \mapsto 0$ and $* \mapsto 1$.
\end{definition}

\begin{lemma}
\label{st:S_e: smooth and spl homotopies}
The functor $S_e \colon \scH \to \sSet$ maps smoothly homotopic morphisms to simplicially homotopic morphisms.
\end{lemma}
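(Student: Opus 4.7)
The plan is to lift the smooth homotopy $h\colon F\times\ul{\RN}\to G$ to an explicit simplicial homotopy between $S_e f_0$ and $S_e f_1$ by exploiting the fact that $\Delta_e^1$ is essentially $\RN$, and that the vertices of $\Delta_e^1$ sit at $0$ and $1$. A simplicial homotopy between $S_e f_0$ and $S_e f_1$ is a map $H\colon S_e F\times \Delta^1\to S_e G$ with the correct restrictions at the two vertices of $\Delta^1$, so our task reduces to producing a natural auxiliary map $\eta\colon \Delta^1\to S_e\ul{\RN}$ whose vertices are the simplicial $0$-simplices picking out $0,1\in\RN$, and then composing with $S_e h$.

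First, I would record that $S_e=\delta^*\circ\Delta_e^*$ preserves finite products, since both $\delta^*$ and $\Delta_e^*$ are pullback functors between presheaf categories. In particular there is a canonical isomorphism
\begin{equation}
	S_e F\times S_e\ul{\RN}\;\cong\;S_e(F\times\ul{\RN})\,.
\end{equation}
Next, I would construct $\eta\colon \Delta^1\to S_e\ul{\RN}$. An $n$-simplex of $S_e\ul{\RN}$ is an element of $\Cart(\Delta_e^n,\RN)$. Given $\sigma\in\Delta^1_n=\bbDelta([n],[1])$ with indicator $a_i=\sigma(i)\in\{0,1\}$, define $\eta(\sigma)\colon\Delta_e^n\to\RN$ by $(t^0,\dots,t^n)\mapsto\sum_{i:\,a_i=1}t^i$. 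Naturality in the simplicial direction follows from the affine-linear structure of the maps $\Delta_e(\tau)$ between extended simplices, so $\eta$ is indeed a map of simplicial sets. At the $0$-simplices $0,1\in\Delta^1$, $\eta$ produces the constant smooth functions $\Delta_e^n\to\RN$ with values $0$ and $\sum_i t^i=1$, respectively, which are exactly the images under $S_e$ of the points $0,1\colon *\to\ul{\RN}$ in $\scH_\infty$.

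Then I would define the simplicial homotopy as the composition
\begin{equation}
	H\colon S_e F\times\Delta^1\xrightarrow{\;\id\times\eta\;}S_e F\times S_e\ul{\RN}\;\cong\;S_e(F\times\ul{\RN})\xrightarrow{\;S_e h\;}S_e G\,.
\end{equation}
Pulling $H$ back along $\Delta^{\{0\}}\hookrightarrow\Delta^1$ gives $S_e$ applied to the composite $F\xrightarrow{\id\times 0}F\times\ul{\RN}\xrightarrow{h}G$, which by the defining diagram \eqref{eq:smooth homotopy} equals $S_e f_0$. The analogous argument on $\Delta^{\{1\}}$ yields $S_e f_1$, so $H$ is the desired simplicial homotopy from $S_e f_0$ to $S_e f_1$.

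The only mildly subtle point is verifying that $\eta$ is simplicial and that its restrictions to the two vertices of $\Delta^1$ really match the inclusions used in the definition of the smooth homotopy; this is essentially just the observation that the cosimplicial structure maps of $\Delta_e$ are the affine-linear extensions of those of $|\Delta^\bullet|$, so there are no obstacles once the conventions are pinned down.
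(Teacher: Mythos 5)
Your proof is correct and follows essentially the same route as the paper: both arguments build a map $\Delta^1 \to S_e\ul{\RN}$ whose two vertices are the constant maps $0$ and $1$, use that $S_e$ preserves finite products, and compose with $S_e h$ to get the simplicial homotopy. The only difference is cosmetic: the paper specifies this map by sending the nondegenerate $1$-simplex to the projection diffeomorphism $\Delta_e^1 \cong \RN$, whereas you write it out on all simplices and check simpliciality directly.
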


\begin{proof}
The projection $(t^0,t^1) \mapsto t^0$ yields a diffeomorphism $\psi \colon \Delta_e^1 \to \RN$ of cartesian spaces.
Observe that there is a morphism of simplicial sets
\begin{equation}
	\nu \colon \Delta^1 \longrightarrow S_e \ul{\RN} = \Cart(\Delta_e^\bullet, \RN)\,,
\end{equation}
defined by sending the generating non-degenerate 1-simplex of $\Delta^1$ to the 1-simplex $\psi$.
Hence, using the fact that $S_e$ preserves products, we apply $S_e$ to diagram~\eqref{eq:smooth homotopy} and augment it using $\nu$ to obtain a commutative diagram
\begin{equation}
\begin{tikzcd}[column sep=3cm, row sep=1cm]
	& S_e F \times \Delta^{\{0\}} \ar[d, hookrightarrow] \ar[dr, "S_e f_0"] \ar[dl, hookrightarrow] &
	\\
	S_e F \times \Delta^1 \ar[r, "1 \times \nu"]
	& S_e F \times S_e\RN \ar[r, "S_e h" description]
	& S_e G
	\\
	& S_e F \times \Delta^{\{1\}} \ar[u, hookrightarrow] \ar[ur, "S_e f_1"'] \ar[ul, hookrightarrow] &
\end{tikzcd}
\end{equation}
This establishes a simplicial homotopy $S_e h \circ (1_X \times \nu)$ from $S_e f_0$ to $S_e f_1$.
\end{proof}

Lemma~\ref{st:S_e: smooth and spl homotopies} can be seen as a generalisation of~\cite[Lemma~4.10]{CW:HoThy_of_Dfg_Spaces} from diffeological spaces to simplicial presheaves.
Indeed, the composition
\begin{equation}
\begin{tikzcd}
	\Dfg \ar[r, hookrightarrow, "\iota"] & \scH \ar[r, "S_e"] & \sSet
\end{tikzcd}
\end{equation}
is precisely the \emph{smooth singular functor} from~\cite{CW:HoThy_of_Dfg_Spaces}.

\begin{proposition}
\label{st:S_e(c to *) is weq}
For any $c \in \Cart$, the functor $S_e$ sends the collapse morphism $\sfc \colon \scY_c \to *$ to a weak equivalence in $\sSet$.
\end{proposition}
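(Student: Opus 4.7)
The plan is to reduce the statement to a smooth homotopy equivalence between $\scY_c$ and $*$ and then appeal to Lemma~\ref{st:S_e: smooth and spl homotopies}. First, I would observe that since $*$ is the terminal object of $\scH_\infty$, i.e.\ the constant presheaf at $\Delta^0$, the bisimplicial set $\Delta_e^*(*)$ is constant at $\Delta^0$ in both directions, so $S_e(*) = \delta^* \Delta_e^*(*) \cong \Delta^0$. Thus the question is whether the unique map $S_e(\scY_c) \to \Delta^0$ is a weak equivalence, i.e.\ whether $S_e(\scY_c)$ is weakly contractible.

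Next I would exhibit a smooth homotopy equivalence $\scY_c \simeq *$ in $\scH_\infty$. Since $c \in \Cart$, a diffeomorphism $c \cong \RN^n$ is available; fix the origin as a basepoint $p \in c$, let $\sfc \colon \scY_c \to *$ be the collapse, and let $s \colon * \to \scY_c$ be the map classifying $p$. Then $\sfc \circ s = 1_*$ tautologically, and the smooth map
\begin{equation}
	h \colon c \times \RN \to c\,,
	\qquad
	(x,t) \mapsto t\,x\,,
\end{equation}
satisfies $h(x,0) = p$ and $h(x,1) = x$, hence it yields (by representability) a smooth homotopy $h \colon \scY_c \times \ul{\RN} \to \scY_c$ in the sense of~\eqref{eq:smooth homotopy} from $s \circ \sfc$ to $1_{\scY_c}$.

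Finally, applying Lemma~\ref{st:S_e: smooth and spl homotopies} to $h$, the two maps $S_e(s \circ \sfc) = S_e(s) \circ S_e(\sfc)$ and $1_{S_e \scY_c}$ are simplicially homotopic, while $S_e(\sfc) \circ S_e(s) = 1_{S_e(*)}$ holds on the nose. Therefore $S_e(\sfc)$ is a simplicial homotopy equivalence between Kan complexes (in fact between $S_e \scY_c$ and $\Delta^0$), and in particular a weak equivalence in $\sSet$. There is no real obstacle here; the only point requiring care is to correctly interpret the smooth homotopy as a morphism in $\scH_\infty$ built out of $\ul{\RN}$ and $\scY_c = \ul{c}$, so that Lemma~\ref{st:S_e: smooth and spl homotopies} applies directly.
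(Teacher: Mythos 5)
Your proof is correct and follows essentially the same route as the paper: exhibit the smooth homotopy equivalence $* \rightleftarrows \scY_c$ (the paper simply cites the inclusion of a point, you make the contraction $h(x,t)=tx$ explicit) and then apply Lemma~\ref{st:S_e: smooth and spl homotopies} to get a simplicial homotopy equivalence, hence a weak equivalence. The aside that the objects are Kan complexes is unnecessary (and $S_e\scY_c$ need not be Kan), but harmless, since simplicial homotopy equivalences are weak equivalences in $\sSet$ regardless.
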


\begin{proof}
Let $c \in \Cart$, and let $x \in c$ be any point.
The inclusion $x \colon * \to c$ induces a smooth homotopy equivalence $* \rightleftarrows c$.
The functor $S_e$ maps this to a simplicial homotopy equivalence according to Lemma~\ref{st:S_e: smooth and spl homotopies}.
\end{proof}

\begin{corollary}
\label{st:ev_Delta adjunction descends}
The functor $S_e$ induces Quillen adjunctions
\begin{equation}
\begin{tikzcd}
	S_e : \scH^{p/i\, I} \ar[r, shift left=0.14cm, "\perp"' yshift=0.05cm]
	& \sSet : R_e\,. \ar[l, shift left=0.14cm]
\end{tikzcd}
\end{equation}
\end{corollary}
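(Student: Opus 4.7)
The plan is to apply the standard criterion for descent of Quillen adjunctions to left Bousfield localisations: by~\cite[Prop.~3.3.18]{Hirschhorn:MoCats} (already invoked in the paper, e.g.\ in the proof of Proposition~\ref{st:Re -| S on H^pI}), since $S_e \dashv R_e$ is already known to be a Quillen adjunction on $\scH_\infty^{p/i}$ (Proposition~\ref{st:L_e -| S_e -| R_e}), it descends to a Quillen adjunction on the enriched left Bousfield localisation $\scH_\infty^{p/i\, I}$ if and only if the left adjoint $S_e$ sends every morphism in the localising class $I$ to a weak equivalence in $\sSet$.

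The morphisms in $I$ are of the form $\scY_c \times \scY_\RN \to \scY_c$, projecting out the $\RN$-factor, with $c \in \Cart$. The first step is to observe that $S_e$ preserves finite products: both constituent functors $\Delta_e^* \colon \scH_\infty \to s\sSet$ and $\delta^* \colon s\sSet \to \sSet$ are pullback functors between presheaf categories, and pullback functors commute with all limits, in particular with finite products. Hence for every $c \in \Cart$ the canonical map
\begin{equation}
    S_e(\scY_c \times \scY_\RN) \longrightarrow S_e(\scY_c) \times S_e(\scY_\RN)
\end{equation}
is an isomorphism of simplicial sets, and $S_e$ applied to the collapse morphism in $I$ is identified with the projection $S_e(\scY_c) \times S_e(\scY_\RN) \to S_e(\scY_c)$.

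Next, I would apply Proposition~\ref{st:S_e(c to *) is weq} to the collapse $\scY_\RN \to *$, which gives that $S_e(\scY_\RN)$ is weakly contractible in $\sSet$. Since $\sSet$ (with the Kan-Quillen model structure) is a cartesian closed monoidal model category and every simplicial set is cofibrant, the functor $S_e(\scY_c) \times (-)$ preserves weak equivalences; applying it to $S_e(\scY_\RN) \weq *$ shows that the projection
\begin{equation}
    S_e(\scY_c) \times S_e(\scY_\RN) \longrightarrow S_e(\scY_c) \times * \cong S_e(\scY_c)
\end{equation}
is a weak equivalence in $\sSet$. This verifies the hypothesis of the descent criterion, so $S_e \dashv R_e$ descends to a Quillen adjunction on $\scH_\infty^{p/i\, I}$.

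There is no serious obstacle here beyond bookkeeping, as the essential content is already contained in Proposition~\ref{st:S_e(c to *) is weq} together with the fact that $S_e$ is built from pullback functors and therefore preserves products; the only thing one must be slightly careful about is invoking the product-preservation property of weak equivalences in $\sSet$, which follows from its being a monoidal model category. The argument works uniformly in the projective and injective settings because $S_e$ is left Quillen in both cases (Proposition~\ref{st:L_e -| S_e -| R_e}) and the class $I$ is the same in both localisations.
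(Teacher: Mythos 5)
Your proposal is correct and follows essentially the same route as the paper: both reduce via the localisation criterion of~\cite[Prop.~3.3.18]{Hirschhorn:MoCats} to checking that $S_e$ sends each morphism of $I$ to a weak equivalence, then use that $S_e$ preserves finite products, Proposition~\ref{st:S_e(c to *) is weq} for the collapse $\scY_\RN \to *$, and the fact that $\sSet$ is a monoidal model category with all objects cofibrant. The only detail worth making explicit (as the paper does) is that the morphisms in $I$ are maps between cofibrant objects of $\scH_\infty^{p/i}$, which is what licenses checking the underived $S_e$ on $I$ directly.
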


\begin{proof}
Each morphism $\scY_c \times \scY_\RN \to \scY_c$ in $I$ is a morphism between cofibrant objects in $\scH^{p/i}$.
Therefore, by~\cite[Prop.~3.3.18]{Hirschhorn:MoCats} it suffices to show that $S_e$ sends each morphism in $I$ to a weak equivalence in $\sSet$.
Since $S_e$ preserves products, it suffices to check that $S_e(\scY_\RN \to *)$ is a weak equivalence in $\sSet$.
The claim then follows from Proposition~\ref{st:S_e(c to *) is weq}.
\end{proof}

We call a functor $(\scC, \CW_\scC) \to (\scD, \CW_\scD)$ between relative categories \textit{homotopical} if it preserves weak equivalences (see also~\cite{DHKS:Holims_and_homotopical_Cats, Shulman:Ho(co)lims_and_enriched_HoThy, Riehl:Cat_HoThy} for more background on homotopical categories and functors).

\begin{proposition}
The functors $S_e \colon \scH^{p/i\, I} \to \sSet$ are homotopical.
\end{proposition}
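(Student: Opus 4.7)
The plan is to reduce the projective case to the injective case, where every object is cofibrant, and then apply Ken Brown's Lemma together with Proposition~\ref{st:same weqs}.

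First I would observe that in the injective model structure $\scH_\infty^i$ every object is cofibrant, because the cofibrations there are exactly the monomorphisms of simplicial presheaves, and hence the same is true in the left Bousfield localisation $\scH_\infty^{i I}$, which by Proposition~\ref{st:basic props of H^pi and its locs} has the same cofibrations as $\scH_\infty^i$. By Corollary~\ref{st:ev_Delta adjunction descends}, the functor $S_e \colon \scH_\infty^{i I} \to \sSet$ is a left Quillen functor, and Ken Brown's Lemma then implies that it preserves weak equivalences between cofibrant objects. Since every object of $\scH_\infty^{i I}$ is cofibrant, this already shows that $S_e$ is homotopical on $\scH_\infty^{i I}$.

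For the projective case, I would invoke Proposition~\ref{st:same weqs}, which states that the model categories $\scH_\infty^{p I}$ and $\scH_\infty^{i I}$ have the same underlying category and precisely the same class of weak equivalences. Since $S_e$ is the same underlying functor in both cases and already preserves all weak equivalences of $\scH_\infty^{i I}$, it equally preserves all weak equivalences of $\scH_\infty^{p I}$, which proves the claim.

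There is essentially no obstacle here: the key input is the structural fact that the $I$-localised projective and injective model structures share their weak equivalences, together with the elementary observation that injective cofibrancy is universal. If one wanted an argument internal to the projective structure (without appealing to Proposition~\ref{st:same weqs}), the alternative would be to take a projective cofibrant replacement $q \colon QF \weq F$ and to show directly that $S_e(q)$ is a weak equivalence; this amounts to showing that $S_e$ sends projective trivial fibrations to weak equivalences on all (not only cofibrant) objects, which can also be obtained by factoring through the Quillen equivalence $\scH_\infty^p \rightleftarrows \scH_\infty^i$ from Proposition~\ref{st:p/i Quillen equivalences} and then running the injective argument above.
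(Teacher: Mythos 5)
Your proof is correct and follows essentially the same route as the paper: the injective case via left Quillen-ness (Corollary~\ref{st:ev_Delta adjunction descends}), Ken Brown's Lemma, and cofibrancy of all objects in $\scH_\infty^{i I}$, and the projective case via Proposition~\ref{st:same weqs}. The paper's proof is just a more compressed version of exactly this argument.
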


\begin{proof}
$S_e \colon \scH^{i I} \to \sSet$ is homotopical because it is left Quillen and every object in $\scH^{i I}$ is cofibrant.
The corresponding statement for the projective model structure now follows from Proposition~\ref{st:same weqs}.
\end{proof}

Note, in particular, that by Proposition~\ref{st:coincidence of model structures} the functor $S_e$ also sends weak equivalences in the \v{C}ech local model structures $\scH^{p/i\, \ell}$ to weak equivalences in $\sSet$.

\begin{theorem}
\label{st:S_e -| R_e is QEq}
The Quillen adjunctions
\begin{equation}
\begin{tikzcd}
	S_e : \scH^{p/i\, I} \ar[r, shift left=0.14cm, "\perp"' yshift=0.05cm]
	& \sSet : R_e\, \ar[l, shift left=0.14cm]
\end{tikzcd}
\end{equation}
are Quillen equivalences.
\end{theorem}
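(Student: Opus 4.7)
My plan is to deduce the theorem from the Quillen equivalence $\tilde{\sfc} \dashv \ev_*$ of Theorem~\ref{st:tc -| ev_* is QEq} together with the two-out-of-three property for Quillen equivalences (as recorded in Appendix~\ref{app:Quillen equivalences}). The idea is that $S_e$ differs only from $\ev_*$ by ``detecting more,'' and the composite $S_e \circ \tilde{\sfc}$ should collapse back to the identity on~$\sSet$.

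First I would verify the identification $S_e \circ \tilde{\sfc} \cong \mathrm{id}_\sSet$ at the level of functors. For any $K \in \sSet$, the simplicial presheaf $\tilde{\sfc}(K)$ has constant value $K$, so for every $n \in \NN_0$ the simplicial set $\Delta_e^*\tilde{\sfc}(K)_n = \tilde{\sfc}(K)(\Delta_e^n) = K$. Hence $\Delta_e^*\tilde{\sfc}(K) \in s\sSet$ is constant in the ``extended simplex'' direction, and applying $\delta^*$ one computes
\begin{equation}
	\big( S_e \tilde{\sfc}(K) \big)_n
	= \big( \Delta_e^* \tilde{\sfc}(K) \big)_{n,n}
	= K_n\,,
\end{equation}
naturally in $[n] \in \bbDelta^\opp$ and in $K$. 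This gives the desired natural isomorphism.

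Next I would combine this with known Quillen equivalences. By Theorem~\ref{st:tc -| ev_* is QEq}, $\tilde{\sfc} \colon \sSet \to \scH_\infty^{p/i\, I}$ is a left Quillen equivalence. By Corollary~\ref{st:ev_Delta adjunction descends}, $S_e \colon \scH_\infty^{p/i\, I} \to \sSet$ is left Quillen. The composite
\begin{equation}
	S_e \circ \tilde{\sfc} \cong \mathrm{id}_\sSet \colon \sSet \longrightarrow \sSet
\end{equation}
is trivially a Quillen equivalence, so the two-out-of-three property for left Quillen equivalences (Corollary~\ref{st:transfer of QEq property}) implies that $S_e$ is a left Quillen equivalence in both the projective and the injective cases.

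The argument has no real obstacle: the key content is the combinatorial identification $S_e \circ \tilde{\sfc} \cong \mathrm{id}$, which is immediate from the definitions of $\tilde{\sfc}$, $\Delta_e^*$, and $\delta^*$, while the passage to Quillen equivalences is formal. The mild subtlety worth checking is that the conventions in the paper (writing $X_{n,k} = (X[n])[k]$ and $\delta^*(X)_n = X_{n,n}$) make the computation of $S_e \circ \tilde{\sfc}$ come out to the identity rather than a transposed variant, but this is exactly set up for us in Section~\ref{sec:L_e -| S_e -| R_e construction}.
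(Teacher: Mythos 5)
Your proposal is correct and follows essentially the same route as the paper: the paper also observes that $S_e \circ \tilde{\sfc}$ is the identity on $\sSet$, invokes Theorem~\ref{st:tc -| ev_* is QEq} for the Quillen equivalence $\tilde{\sfc} \dashv \ev_*$, and concludes via the two-out-of-three property together with Corollary~\ref{st:transfer of QEq property}. Your explicit verification of the identification $S_e \circ \tilde{\sfc} \cong \mathrm{id}_\sSet$ (which the paper states as ``readily seen'') is accurate under the paper's conventions.
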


\begin{proof}
We have Quillen adjunctions
\begin{equation}
\begin{tikzcd}
	\sSet \ar[r, shift left=0.14cm, "\perp"' yshift=0.05cm, "\tilde{\sfc}"]
	& \scH^{p/i\, I} \ar[r, shift left=0.14cm, "\perp"' yshift=0.05cm, "S_e"] \ar[l, shift left=0.14cm, "\ev_*"]
	& \sSet \ar[l, shift left=0.14cm, "R_e"]
\end{tikzcd}
\end{equation}
and we know from Theorem~\ref{st:tc -| ev_* is QEq} that the Quillen adjunction $\tilde{\sfc} \dashv \ev_*$ is even a Quillen equivalence.
We readily see that $S_e \circ \tilde{\sfc}$ is the identity functor on $\sSet$.
It follows that, $S_e \dashv R_e$ is a Quillen equivalence.
\end{proof}

\begin{corollary}
\label{st:S_e pres and reflects weqs}
The functor $S_e$ both preserves and reflects weak equivalences as a functor of relative categories $(\scH, W_I) \longrightarrow (\sSet, W_\sSet)$.
\end{corollary}

\begin{proof}
The left Quillen equivalence $S_e \colon \scH^{iI} \to \sSet$ preserves and reflects weak equivalences since every object in $\scH^{iI}$ is cofibrant (see e.g.~\cite[Prop.~1.3.16]{Hovey:MoCats}).
The statement on the level of relative categories then follows from Proposition~\ref{st:same weqs}.
\end{proof}

\begin{corollary}
\label{st:smooth hoeqs are weqs}
Any smooth homotopy equivalence in $\scH$ is a weak equivalence in $\scH^{p/i\, I}$.
\end{corollary}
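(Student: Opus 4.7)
The plan is to combine the two key facts that have just been established about the smooth singular complex functor: that it carries smooth homotopies to simplicial homotopies (Lemma \ref{st:S_e: smooth and spl homotopies}), and that it reflects weak equivalences in $\scH_\infty^{p/i\, I}$ (Corollary \ref{st:S_e pres and reflects weqs}). The argument is essentially a two-line formal deduction, with the substance sitting in those earlier results.

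Concretely, suppose $f \colon F \to G$ is a smooth homotopy equivalence, witnessed by some $g \colon G \to F$ together with smooth homotopies $g \circ f \sim 1_F$ and $f \circ g \sim 1_G$ in the sense of diagram \eqref{eq:smooth homotopy}. First, I apply $S_e$ to both $f$ and $g$. Since $S_e$ is a functor, $S_e g \circ S_e f = S_e(g \circ f)$ and $S_e f \circ S_e g = S_e(f \circ g)$. By Lemma \ref{st:S_e: smooth and spl homotopies}, the smooth homotopies $g \circ f \sim 1_F$ and $f \circ g \sim 1_G$ are sent to simplicial homotopies $S_e g \circ S_e f \sim 1_{S_e F}$ and $S_e f \circ S_e g \sim 1_{S_e G}$. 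Hence $S_e f$ is a simplicial homotopy equivalence in $\sSet$, and in particular a weak equivalence with respect to the Kan-Quillen model structure.

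Finally, I invoke Corollary \ref{st:S_e pres and reflects weqs}, which states that $S_e \colon \scH_\infty^{p/i\, I} \to \sSet$ reflects weak equivalences. Since $S_e f$ is a weak equivalence in $\sSet$, it follows that $f$ is a weak equivalence in $\scH_\infty^{p/i\, I}$, as required. There is no genuine obstacle here: the entire content has already been packaged in Lemma \ref{st:S_e: smooth and spl homotopies} and in the chain of Quillen equivalences culminating in Theorem \ref{st:S_e -| R_e is QEq}; this corollary merely records the consequence for smooth homotopy equivalences.
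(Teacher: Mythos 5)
Your argument is exactly the paper's proof: apply Lemma~\ref{st:S_e: smooth and spl homotopies} to turn the smooth homotopy equivalence into a simplicial homotopy equivalence (hence a weak equivalence in $\sSet$), then conclude via the reflection of weak equivalences in Corollary~\ref{st:S_e pres and reflects weqs}. No gaps; your write-up just spells out the functoriality details that the paper leaves implicit.
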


\begin{proof}
By Proposition~\ref{st:S_e: smooth and spl homotopies}, $S_e$ sends smooth homotopy equivalences to simplicial homotopy equivalences, which are, in particular, weak equivalences in $\sSet$.
Thus, the claim follows from Corollary~\ref{st:S_e pres and reflects weqs}.
\end{proof}

\begin{remark}
\label{rmk:HoThy induced by S_e}
Let $W_\sSet$ denote the class of weak equivalences in $\sSet$, and let $S_e^{-1}(W_\sSet)$ denote the class of morphisms in $\scH$ whose image under $S_e$ is in $W_\sSet$.
Corollary~\ref{st:S_e pres and reflects weqs} lets us suspect that there is an equivalence of model categories
\begin{equation}
	\scH^{p/i\, I} \simeq L_{S_e^{-1}(W_\sSet)} \scH^{p/i}\,.
\end{equation}
Using properties of local weak equivalences in Bousfield localisations should allow us to prove that conjecture here already, but instead we give a very direct proof later in Theorem~\ref{st:HoThy induced by S_e}.
\qen
\end{remark}

\subsection{$S_e$ as a right Quillen equivalence}

The goal of this subsection is to establish the smooth singular functor as a right Quillen functor $S_e \colon \scH^{i I} \to \sSet$.
Apart from having convenient technical implications on the functor $S_e \colon \scH^{i I} \to \sSet$, the appearance of several intermediate model structures of bisimplicial sets sheds additional light on the functor $S_e$.
We already know from Proposition~\ref{st:L_e -| S_e -| R_e} that $S_e = \delta^* \circ \Delta_e^*$ has a left adjoint $L_e = 
\Delta_{e!} \circ \delta_!$.
We will show that both its constituting functors $\Delta_{e!}$ and $\delta_!$ are left Quillen functors.

\subsubsection{Model structures for $\infty$-groupoids on the category of bisimplicial sets}

We start by analysing the functor $\delta_!$ in more detail.
Let
\begin{equation}
\begin{tikzcd}
	\iota_n \colon \Sp_n \coloneqq \Delta^1 \underset{\Delta^0}{\sqcup} \cdots \underset{\Delta^0}{\sqcup} \Delta^1
	\ar[r, hookrightarrow]
	& \Delta^n
\end{tikzcd}
\end{equation}
denote the \emph{spine-inclusion} of the $n$-simplex $\Delta^n$, for $n \geq 1$.
(Note that for $n = 1$ the morphism $\iota_1$ is an isomorphism.)

We write $s\sSet = \Cat(\bbDelta^\opp, \sSet)$ for the category of bisimplicial sets.
There exists a bifunctor
\begin{equation}
	\boxtimes \colon \sSet \times \sSet \longrightarrow s\sSet\,,
	\qquad
	(K \boxtimes L)_{m,n} \coloneqq K_m \times L_n.
\end{equation}
We view a bisimplicial set $X$ as a simplicial diagram $m \mapsto X_{m,\bullet}$ in $\sSet$.
Let
\begin{equation}
\Sp \coloneqq \{\iota_n \boxtimes \Delta^0 \colon \Sp_n \boxtimes \Delta^0 \hookrightarrow \Delta^n \boxtimes \Delta^0\, | \, n \geq 1\}
\end{equation}
denote the set of all spine inclusions, viewed as maps of vertically constant bisimplicial sets.
Let $J \in \sSet$ denote the nerve of the groupoid with two objects and a unique isomorphism between them.
The following definitions are taken from~\cite{Rezk:HoTheory_of_HoTheories,Barwick:infty-n-Cat_as_closed_MoCat,Horel:MoStrs_on_internal_Cats}.

\begin{definition}
We define the following model structures on the category $s\sSet$ of bisimplicial sets:
\begin{myenumerate}
\item We view $s\sSet = \Cat(\bbDelta^\opp, \sSet)$ as endowed with the injective model structure.
Recall that this coincides with the Reedy model structure~\cite{Hirschhorn:MoCats}.

\item We let $\SSp \coloneqq L_\Sp s\sSet$ be the left Bousfield localisation of $s\sSet$ at the spine inclusions.
This is the \emph{model category for Segal spaces}.

\item
The \emph{model category for complete Segal spaces} is the localisation $\CSS \coloneqq L_{J \boxtimes \Delta^0} \SSp$.
\end{myenumerate}
\end{definition}

Let $L_{\Delta^\bullet \boxtimes \Delta^0} s\sSet$ denote the left Bousfield localisation of the injective model category of bisimplicial sets at all collapse morphisms $\{\Delta^n \boxtimes \Delta^0 \to \Delta^0 \boxtimes \Delta^0\, | \, n \in \NN_0\}$.
Let $L_{\bbDelta \boxtimes \Delta^0} s\sSet$ denote the left Bousfield localisation of $s\sSet$ at all morphisms $\{\varphi \boxtimes 1_{\Delta^0} \colon \Delta^n \boxtimes \Delta^0 \to \Delta^m \boxtimes \Delta^0\, | \, n,m \in \NN_0\}$.
(Compare these localisations to those in Proposition~\ref{st:equiv localisations of H}.)
Finally, let $L_{\Delta^1 \boxtimes \Delta^0} \SSp$ and $L_{\Delta^1 \boxtimes \Delta^0} \CSS$ denote the left Bousfield localisations of $\SSp$ and $\CSS$, respectively, at the morphism $\Delta^1 \boxtimes \Delta^0 \to \Delta^0 \boxtimes \Delta^0$.
We will mostly be using the model category $L_{\Delta^\bullet \boxtimes \Delta^0} s\sSet$, but for conceptual clarity and for an interpretation as model categories for $\infty$-groupoids, we include the following proposition.

\begin{proposition}
\label{st:L_J SSp = L_J CSS}
The following left Bousfield localisations yield identical model categories:
\begin{equation}
	L_{\bbDelta \boxtimes \Delta^0} s\sSet
	\overset{(1)}{=} L_{\Delta^\bullet \boxtimes \Delta^0} s\sSet
	\overset{(2)}{=} L_{\Delta^1 \boxtimes \Delta^0} \SSp
	\overset{(3)}{=} L_{\Delta^1 \boxtimes \Delta^0} \CSS\,.
\end{equation}
\end{proposition}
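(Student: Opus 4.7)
The approach is Joyal's criterion (Theorem~\ref{st:criterion for coincidence of MoStrs}): all four categories are left Bousfield localisations of $s\sSet^i$, so they share the underlying category and the cofibrations. It therefore suffices to show that their fibrant objects coincide, equivalently that each localising set of morphisms consists of local weak equivalences for the others. Identity~(1) is completely parallel to Proposition~\ref{st:equiv localisations of H}: any morphism $\sigma \colon \Delta^n \boxtimes \Delta^0 \to \Delta^m \boxtimes \Delta^0$ sits in a commuting triangle with the collapse maps to the terminal bisimplicial set $\Delta^0$, so two-out-of-three makes $\sigma$ a weak equivalence exactly when the collapses $\Delta^n \boxtimes \Delta^0 \to \Delta^0$ are.

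For identity~(2), the spine $\Sp_n = \Delta^1 \sqcup_{\Delta^0} \cdots \sqcup_{\Delta^0} \Delta^1$ is an iterated pushout along cofibrations in $\sSet$. Since $({-}) \boxtimes \Delta^0$ preserves colimits and sends monomorphisms to injective cofibrations, $\Sp_n \boxtimes \Delta^0$ is the corresponding homotopy pushout in the left proper model category $s\sSet^i$. Whenever $\Delta^1 \boxtimes \Delta^0 \to \Delta^0$ is a local weak equivalence, this homotopy pushout collapses to $\Delta^0$, and two-out-of-three against either $\Delta^n \boxtimes \Delta^0 \to \Delta^0$ or against the Segal spine inclusion $\Sp_n \boxtimes \Delta^0 \hookrightarrow \Delta^n \boxtimes \Delta^0$ yields mutual containment of the two localising classes.

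For identity~(3), since $\CSS = L_{J \boxtimes \Delta^0} \SSp$, it suffices to show that $J \boxtimes \Delta^0 \to \Delta^0$ is already a weak equivalence in $L_{\Delta^1 \boxtimes \Delta^0} \SSp$, which equals $L_{\Delta^\bullet \boxtimes \Delta^0} s\sSet$ by~(2). Choose a vertex $s \colon \Delta^0 \hookrightarrow J$ with collapse $c \colon J \to \Delta^0$; since $J$ is the nerve of a contractible groupoid, there is a simplicial homotopy $H \colon \Delta^1 \times J \to J$ from $s \circ c$ to $1_J$. The functor $({-}) \boxtimes \Delta^0$ is strong monoidal for cartesian product, so applying it to $H$ yields a morphism $(\Delta^1 \boxtimes \Delta^0) \times (J \boxtimes \Delta^0) \to J \boxtimes \Delta^0$ witnessing a homotopy between $(s \circ c) \boxtimes \Delta^0$ and $1_{J \boxtimes \Delta^0}$ parameterised by $\Delta^1 \boxtimes \Delta^0$. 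Since $L_{\Delta^\bullet \boxtimes \Delta^0} s\sSet$ is symmetric monoidal (verified as in Proposition~\ref{st:basic props of H^pi and its locs}) and $\Delta^1 \boxtimes \Delta^0 \to \Delta^0$ is a local weak equivalence, the two endpoint restrictions agree in the homotopy category; combined with $c \circ s = 1_{\Delta^0}$, this makes $c \boxtimes \Delta^0$ a homotopy equivalence, hence a weak equivalence.

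The main technical obstacle is verifying monoidality of $L_{\Delta^\bullet \boxtimes \Delta^0} s\sSet$ used in~(3). By Barwick's criterion~\cite[Prop.~4.47]{Barwick:Enriched_B-Loc} this reduces to showing that $(\Delta^n \boxtimes \Delta^0) \times (\Delta^p \boxtimes \Delta^q) \to \Delta^p \boxtimes \Delta^q$ is a local weak equivalence for all $n,p,q$; unpacking the product as $(\Delta^n \times \Delta^p) \boxtimes \Delta^q$ and resolving $\Delta^n \times \Delta^p$ as an Eilenberg--Zilber colimit of standard simplices reduces the check to~(1).
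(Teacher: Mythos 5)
Your overall strategy is the paper's: invoke Theorem~\ref{st:criterion for coincidence of MoStrs}, note the cofibrations agree, and compare fibrant objects, equivalently show each localising set consists of local weak equivalences for the other. Identity~(1) is argued exactly as in the paper. For identity~(2) you work on the side of the localising morphisms, exhibiting $\Sp_n \boxtimes \Delta^0$ as an iterated homotopy pushout of copies of $\Delta^1 \boxtimes \Delta^0$ (legitimate: every object of $s\sSet$ is cofibrant and the localisations are left proper with the same cofibrations) and then using two-out-of-three; the paper instead checks fibrant objects directly, verifying the Segal condition via the square comparing $X_n \to X_1 \times_{X_0} \cdots \times_{X_0} X_1$ with the degeneracies and using Reedy fibrancy to see the strict pullbacks are homotopy pullbacks. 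Both are sound; your version trades the Segal-space bookkeeping for a gluing argument.

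The gap is in identity~(3), specifically in the input your interval-homotopy argument needs, namely that $(\Delta^1 \boxtimes \Delta^0) \times (J \boxtimes \Delta^0) \cong (\Delta^1 \times J)\boxtimes\Delta^0 \to J \boxtimes \Delta^0$ is a weak equivalence in $L_{\Delta^\bullet \boxtimes \Delta^0}\, s\sSet$ (granting this, both endpoint inclusions are sections of a weak equivalence, hence agree in the homotopy category, and your conclusion that $c \boxtimes \Delta^0$ becomes invertible, hence is a weak equivalence, is fine). You propose to get this from monoidality of the localisation, reduced via Barwick to the maps $(\Delta^n \times \Delta^p)\boxtimes\Delta^q \to \Delta^p \boxtimes \Delta^q$, and then assert that an Eilenberg--Zilber decomposition of $\Delta^n \times \Delta^p$ ``reduces the check to~(1)''. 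That last step does not work as stated: identity~(1) only concerns objects of the form $\Delta^a \boxtimes \Delta^0$, whereas after decomposing the first factor you are left with pieces $\Delta^m \boxtimes \Delta^q$ with $q > 0$, and showing that $\Delta^m \boxtimes \Delta^q \to \Delta^0 \boxtimes \Delta^q$ is a local equivalence is precisely the monoidality-type statement you are trying to prove, so the reduction is circular; moreover, even for $q=0$ you would still need to know that the colimit decomposition is a homotopy colimit and can be tested against local objects. A repair along your lines: prove only the statement you actually use, that $K \boxtimes \Delta^0 \to K' \boxtimes \Delta^0$ is a local equivalence for every weak homotopy equivalence $K \to K'$, by mapping into a local injective fibrant $W$, using that $K \boxtimes \Delta^0 \cong \colim_{\bbDelta \downarrow K}(\Delta^m \boxtimes \Delta^0)$ is a homotopy colimit and that $\ul{s\sSet}(\Delta^m \boxtimes \Delta^0, W) \cong W_m \simeq W_0$, so that $\ul{s\sSet}(K \boxtimes \Delta^0, W) \simeq \ul{\sSet}(K, W_0)$ naturally in $K$; alternatively cite the identification with the diagonal model structure as in Remark~\ref{rmk:Diagonal MoStr and localisations}. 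Note the paper's own proof of~(3) avoids all of this by analysing the subspace $X_{weq} \subseteq X_1$ of homotopy equivalences: it is a union of connected components, so if $s_0 \colon X_0 \to X_1$ is a weak equivalence it meets every component, forcing completeness by two-out-of-three.
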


\begin{proof}
By Theorem~\ref{st:criterion for coincidence of MoStrs} it suffices to check that all four model categories have the same cofibrations and fibrant objects.
For cofibrations, this is trivial since each of the model categories is a left Bousfield localisation of $s\sSet$.
It thus remains to check that the fibrant objects of the three model categories coincide.

Identity (1) is a direct consequence of the two-out-of-three property of weak equivalences.

For identity (2), let $X \in L_{\Delta^\bullet \boxtimes \Delta^0} s\sSet$ be fibrant.
That is, $X$ is injective fibrant in $s\sSet$ and the canonical map $X_0 \to X_n$ is a weak equivalence in $\sSet$ for any $n \in \NN_0$.
We have to show that $X$ satisfies the Segal condition, i.e.~that for every $n \geq 2$ the spine inclusion $\Sp_n \hookrightarrow \Delta^n$ induces a weak equivalence
\begin{equation}
	X_n \longrightarrow X_1 \underset{X_0}{\times} \cdots \underset{X_0}{\times} X_1\,.
\end{equation}
(As pointed out in~\cite{Rezk:HoTheory_of_HoTheories}, the strict pullback is a homotopy pullback here because $X$ is Reedy fibrant.)
Consider the commutative diagram
\begin{equation}
\label{eq:Segal cond. in Delta localisation}
\begin{tikzcd}[column sep=2cm, row sep=1.25cm]
	X_n \ar[r]
	& X_1 \underset{X_0}{\times} \cdots \underset{X_0}{\times} X_1
	\\
	X_0 \ar[u, "s"] \ar[r, "{(1_{X_0}, \ldots, 1_{X_0})}"', "\cong"]
	& X_0 \underset{X_0}{\times} \cdots \underset{X_0}{\times} X_0 \ar[u, "s_0 \times \cdots \times s_0"']
\end{tikzcd}
\end{equation}
Since $X$ is Reedy fibrant, the pullbacks on the right-hand side are homotopy pullbacks.
Therefore, both vertical maps in~\eqref{eq:Segal cond. in Delta localisation} are weak equivalences.
It follows by the commutativity of the diagram that $X$ satisfies the Segal condition.
Then, $X$ is fibrant in $L_{\Delta^1 \boxtimes \Delta^0} \SSp$ since, by assumption, the morphism $X_0 \to X_1$ is a weak equivalence.

Conversely, if $X$ is fibrant in $L_{\Delta^1 \boxtimes \Delta^0} \SSp$, then the top horizontal morphism in diagram~\eqref{eq:Segal cond. in Delta localisation} is a weak equivalence because $X$ satisfies the Segal condition, and the right-hand vertical morphism is a weak equivalence because $X$ is injective fibrant and $X$ is local with respect to $\Delta^1 \boxtimes \Delta^0 \to \Delta^0 \boxtimes \Delta^0$.
It thus follows by the commutativity of the diagram that also the left vertical morphism is an equivalence, for any $n \geq 2$, so that $X$ is fibrant in $L_{\Delta^\bullet \boxtimes \Delta^0} s\sSet$.

For identity (3), recall that in any Segal space $X$ there is a notion of when a morphism $f \in X_1$ is invertible (or a `homotopy equivalence' in the language of~\cite{Rezk:HoTheory_of_HoTheories}).
One defines the \emph{space $X_{weq}$ of homotopy equivalences} in $X$ to be the union of those connected components of $X_1$ that contain invertible morphisms (by~\cite[Lemma~5.8]{Rezk:HoTheory_of_HoTheories}, if $X \in \SSp$ is fibrant, then a connected component of $X_1$ contains a homotopy equivalence if and only if it consists purely of homotopy equivalences).
For any Segal space, the degeneracy morphism $s_0 \colon X_0 \to X_1$ factors as
\begin{equation}
\label{eq:completeness cond diag}
\begin{tikzcd}
	& X_{weq} \ar[d, hookrightarrow, "\iota_X"]
	\\
	X_0 \ar[ur, "\widehat{s_0}"] \ar[r, "s_0"'] & X_1
\end{tikzcd}
\end{equation}

Since every fibrant object in $\CSS$ is also fibrant in $\SSp$, this implies that every fibrant object in $L_{\Delta^1 \boxtimes \Delta^0}\CSS$ is fibrant in $L_{\Delta^1 \boxtimes \Delta^0}\SSp$.

Conversely, let $Y \in \SSp$ be fibrant.
Then, $Y$ is local in $L_{\Delta^1 \boxtimes \Delta^0}\SSp$ precisely if the morphism $s_0 \colon Y_0 \to Y_1$ is a weak equivalence.
We need to show that, in that case, the morphism $\widehat{s_0} \colon Y_0 \to Y_{weq}$ is a weak equivalence in $\sSet$.
However, since $\iota_Y \colon Y_{weq} \to Y$ is the inclusion of a union of connected components of $Y_1$, diagram~\eqref{eq:completeness cond diag} and the fact that $s_0$ is a weak equivalence imply that $\iota_Y$ hits every connected component of $Y_1$.
Therefore, $\iota_Y$ is a weak equivalence; it follows from the two-out-of-three property that $\widehat{s_0} \colon Y_0 \to Y_{weq}$ is a weak equivalence as well.
\end{proof}

\begin{remark}
Let $X \in \SSp$ be fibrant.
Since $\iota_X \colon X_{weq} \to X_1$ is the inclusion of a union of connected components of $X_1$, it follows that $\iota_X$ is a weak equivalence precisely if it is an isomorphism, i.e.~precisely if $X_{weq} = X_1$.
In other words, a fibrant object in both $L_{\Delta^1 \boxtimes \Delta^0} \SSp$ and $L_{\Delta^1 \boxtimes \Delta^0} \CSS$ is a complete Segal space with all 1-morphisms invertible.
In that sense, the fibrant objects in these model categories are $\infty$-groupoids.
The model category $L_{\bbDelta \boxtimes \Delta^0} s\sSet$ can be seen as the model category of essentially constant simplicial diagrams of spaces, in analogy to how $L_\Cart \scH^{p/i}$ describes essentially constant simplicial presheaves.
\qen
\end{remark}

\begin{remark}
\label{rmk:Diagonal MoStr and localisations}
The model structures in Proposition~\ref{st:L_J SSp = L_J CSS} further agree with the \emph{diagonal model structure} on bisimplicial sets (also known as the Moerdijk model structure): these model categories have the same underlying categories, the same cofibrations, and they also have the same fibrant objects (see, for instance,~\cite[Sec.~IV.3.3]{GJ:Spl_HoThy}).
Thus, the claimed equality follows from Theorem~\ref{st:criterion for coincidence of MoStrs}.
\qen
\end{remark}

We recall the following classical fact (see, for example, ~\cite[Sec.~IV.3.3]{GJ:Spl_HoThy}); 

\begin{proposition}
\label{st:delta_! -| delta^* is QAd}
The diagonal $\delta^* \colon s\sSet \to \sSet$ induces a Quillen adjunction
\begin{equation}
\begin{tikzcd}
	\delta_! : \sSet \ar[r, shift left=0.14cm, "\perp"' yshift=0.05cm]
	& L_{\Delta^\bullet \boxtimes \Delta^0} s\sSet : \delta^*\,. \ar[l, shift left=0.14cm]
\end{tikzcd}
\end{equation}
\end{proposition}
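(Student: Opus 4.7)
The plan is to verify directly that $\delta_!$ satisfies the two left Quillen conditions into $L_{\Delta^\bullet \boxtimes \Delta^0} s\sSet$. Note that cofibrations in the Bousfield localisation coincide with those of the injective (= Reedy) model structure on $s\sSet$, namely monomorphisms; and by Remark~\ref{rmk:Diagonal MoStr and localisations} the weak equivalences in $L_{\Delta^\bullet \boxtimes \Delta^0} s\sSet$ coincide with those of the diagonal model structure, i.e.\ the maps $f$ such that $\delta^*(f)$ is a weak equivalence in $\sSet$. I note in passing that $\delta_!$ is \emph{not} a Quillen left adjoint into the non-localised $s\sSet^i$; it is exactly the passage to the localisation that makes the condition on trivial cofibrations tractable.

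For the cofibration condition, since $\delta_!$ is cocontinuous and monomorphisms in each presheaf topos form a class stable under pushouts and transfinite composition, it suffices to check that $\delta_!$ sends each boundary inclusion $\partial \Delta^n \hookrightarrow \Delta^n$ to a monomorphism $\delta_! \partial \Delta^n \hookrightarrow \delta_! \Delta^n = \Delta^n \boxtimes \Delta^n$. Using the coend formula for the left Kan extension,
\begin{equation*}
    (\delta_! K)_{m,k} \;=\; \int^{[\ell] \in \bbDelta} \bbDelta([m],[\ell]) \times \bbDelta([k],[\ell]) \times K_\ell,
\end{equation*}
this is a direct combinatorial verification (essentially, each equivalence class in the coend admits a canonical ``jointly surjective'' representative, under which the inclusion $K \hookrightarrow L$ induces an injection on classes).

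For the trivial cofibration condition, since $\delta_!$ already preserves cofibrations it suffices to show that $\delta_!$ preserves weak equivalences. By the characterisation of weak equivalences in the target, this reduces to showing that the endofunctor $\delta^* \circ \delta_! \colon \sSet \to \sSet$ is homotopical. Using cocontinuity of both $\delta^*$ and $\delta_!$ together with $\delta_!(\Delta^n) = \Delta^n \boxtimes \Delta^n$ and $\delta^*(\Delta^n \boxtimes \Delta^n) = \Delta^n \times \Delta^n$, one identifies
\begin{equation*}
    \delta^* \delta_! K \;\cong\; K \otimes_{\bbDelta} (\Delta^\bullet \times \Delta^\bullet),
\end{equation*}
the coend of $K$ with the cosimplicial simplicial set $[n] \mapsto \Delta^n \times \Delta^n$. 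The first-factor projection $\Delta^\bullet \times \Delta^\bullet \to \Delta^\bullet$ is a levelwise weak equivalence between Reedy cofibrant cosimplicial objects in $\sSet$, so coending with the cofibrant simplicial set $K$ produces a natural weak equivalence $\delta^* \delta_! K \weq K \otimes_\bbDelta \Delta^\bullet \cong K$. Two-out-of-three then yields that $\delta^* \delta_!$ is homotopical.

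I expect the main obstacle to lie in the trivial cofibration step: the identification $\delta^* \delta_! K \cong K \otimes_\bbDelta (\Delta^\bullet \times \Delta^\bullet)$ is formal, but the homotopy-invariance conclusion rests on the Reedy cofibrancy of $\Delta^\bullet \times \Delta^\bullet$, which has to be extracted from the pushout-product axiom applied to the Reedy cofibrant cosimplicial object $\Delta^\bullet$ in the cartesian model category $\sSet$. Once this is in place, the argument is essentially a clean application of the fact that tensoring a cofibrant simplicial set with a levelwise weak equivalence between Reedy cofibrant cosimplicial objects is a weak equivalence.
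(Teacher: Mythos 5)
Your proposal is correct in substance, but it takes a genuinely different route from the paper: the paper disposes of this proposition in one line, combining Remark~\ref{rmk:Diagonal MoStr and localisations} with the citation \cite[Thm.~2.4]{Rasekh:Yoneda_for_spl_spaces}, which asserts exactly that $\delta_!\dashv\delta^*$ is a Quillen pair (indeed an equivalence) for the diagonal model structure. You keep the Remark (to know that the cofibrations of the localisation are the monomorphisms and that its weak equivalences are the maps inverted by $\delta^*$), but then re-prove the content of Rasekh's theorem by hand: monomorphism-preservation of $\delta_!$ checked on the generating cofibrations, and preservation of weak equivalences via a natural weak equivalence $\delta^*\delta_!K\cong K\otimes_\bbDelta(\Delta^\bullet\times\Delta^\bullet)\weq K$ and two-out-of-three. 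What the citation buys is brevity; what your argument buys is a self-contained proof using exactly the machinery the paper already deploys in Appendix~\ref{app:Injective fib rep} (the functor tensor product as a left Quillen bifunctor for Reedy structures, \cite[Thm.~14.3.1]{Riehl:Cat_HoThy}, plus co-Yoneda), and your side remark that $\delta_!$ is \emph{not} left Quillen into the unlocalised $s\sSet$ correctly isolates why the localisation is needed.

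Two sub-steps need repair or sharper justification. First, in the monomorphism check, a ``jointly surjective'' representative of a coend class is \emph{not} unique as stated: for $K=\Delta^0$ the pairs $(\mathrm{id}_{[0]},\mathrm{id}_{[0]})$ over $[0]$ and $(d^1,d^0)$ over $[1]$ are both jointly surjective and represent the same element of $(\delta_!\Delta^0)_{0,0}=(\Delta^0\boxtimes\Delta^0)_{0,0}$. The correct normal form (a bisimplicial Eilenberg--Zilber argument) additionally requires the $K$-simplex to be nondegenerate; alternatively, for the generating cofibrations you can avoid normal forms entirely: write $\partial\Delta^n$ as the coequaliser of its faces along their pairwise intersections, so that $\delta_!\partial\Delta^n$ is the corresponding coequaliser of the subobjects $d^i\Delta^{n-1}\boxtimes d^i\Delta^{n-1}\subset\Delta^n\boxtimes\Delta^n$, whose pairwise intersections are again of this form; hence the comparison map onto their union is a monomorphism. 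Second, Reedy cofibrancy of $\Delta^\bullet\times\Delta^\bullet$ is true but does not follow from the pushout-product axiom; it follows from a direct latching computation: the latching category at $[n]$ is the poset of nonempty proper subsets $S\subset[n]$, the latching object is the colimit of the subobjects $\Delta^S\times\Delta^S\subset\Delta^n\times\Delta^n$, and since $(\Delta^S\times\Delta^S)\cap(\Delta^T\times\Delta^T)=\Delta^{S\cap T}\times\Delta^{S\cap T}$ this family is closed under (nonempty) intersections, so the latching map is the inclusion of the union and in particular a monomorphism. With these two points filled in, your argument is complete.
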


From now on, we will understand the adjunction $\delta_! \dashv \delta^*$ as the above Quillen adjunction.
There is another Quillen adjunction that relates $L_{\Delta^\bullet \boxtimes \Delta^0} s\sSet$ to the model category of simplicial sets, in analogy with Theorem~\ref{st:tc -| ev_* is QEq}.

\begin{proposition}
\label{st:c_D -| ev_[0] is QEq}
Consider the adjoint pair $\sfc_\bbDelta : \sSet \rightleftarrows s\sSet : \ev_{[0]}$, where $\sfc_\bbDelta = \Delta^0 \boxtimes (-)$, and where $\ev_{[0]}(X) = X_{0,\bullet}$.
This satisfies:
\begin{myenumerate}
\item $\sfc_\bbDelta \dashv \ev_{[0]}$ is a Quillen adjunction $\sSet \rightleftarrows s\sSet$.

\item Composing the Quillen adjunction from~(1) with the localisation adjunction $s\sSet \rightleftarrows L_{\Delta^\bullet \boxtimes \Delta^0} s\sSet$ yields a Quillen equivalence
\begin{equation}
\begin{tikzcd}
	\sfc_\bbDelta : \sSet \ar[r, shift left=0.14cm, "\perp"' yshift=0.05cm]
	& L_{\Delta^\bullet \boxtimes \Delta^0} s\sSet : \ev_{[0]}\,. \ar[l, shift left=0.14cm]
\end{tikzcd}
\end{equation}
\end{myenumerate}
\end{proposition}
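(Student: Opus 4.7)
The plan is to prove (1) by verifying directly that $\sfc_\bbDelta$ preserves cofibrations and trivial cofibrations, and to prove (2) by checking that both the derived unit and the derived counit of the composed adjunction are weak equivalences, relying on the characterisation of fibrant objects of $L_{\Delta^\bullet \boxtimes \Delta^0} s\sSet$ established inside the proof of Proposition~\ref{st:L_J SSp = L_J CSS}.

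For (1), I first note that $(\Delta^0 \boxtimes K)_{m,n} = K_n$ with outer face and degeneracy maps acting as the identity on $K_n$, so $\sfc_\bbDelta K$ is nothing but the constant simplicial object in $\sSet^{\bbDelta^\opp}$ with value $K$. A monomorphism $K \hookrightarrow L$ in $\sSet$ therefore induces a levelwise monomorphism of bisimplicial sets, hence an injective cofibration; trivial cofibrations are preserved analogously, since weak equivalences in the injective model structure on $s\sSet$ are levelwise Kan--Quillen equivalences. This establishes the Quillen adjunction $\sfc_\bbDelta \dashv \ev_{[0]}$ between $\sSet$ and $s\sSet$, and composing with the localisation adjunction $s\sSet \rightleftarrows L_{\Delta^\bullet \boxtimes \Delta^0} s\sSet$ produces the Quillen adjunction claimed in~(2).

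For the equivalence in~(2) I will verify the characterisation of Quillen equivalences via the derived unit and counit (cf.~\cite[Cor.~1.3.16]{Hovey:MoCats}). Every $K \in \sSet$ is cofibrant; choosing a Kan--Quillen fibrant replacement $K \weq R^\sSet K$, the bisimplicial set $\sfc_\bbDelta R^\sSet K$ is Reedy fibrant (levelwise a Kan complex, with all matching maps identities) and trivially satisfies the locality condition of Proposition~\ref{st:L_J SSp = L_J CSS} because each of its outer structure maps is the identity on $R^\sSet K$. Since $\sfc_\bbDelta$ preserves levelwise weak equivalences, $\sfc_\bbDelta R^\sSet K$ serves as a fibrant replacement of $\sfc_\bbDelta K$ in the localisation, and the derived unit becomes the equivalence $K \weq R^\sSet K$. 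For the derived counit at a fibrant $X$, the object $\ev_{[0]} X = X_0$ is cofibrant, so the derived counit coincides with the ordinary counit $\sfc_\bbDelta X_0 \to X$, whose component at $[n]$ is the map $X_0 \to X_n$ induced by the unique morphism $[n] \to [0]$ in $\bbDelta$, i.e.\ the iterated degeneracy $s_0^n$. By the characterisation of local objects in Proposition~\ref{st:L_J SSp = L_J CSS}, each of these maps is a weak equivalence in $\sSet$, so the counit is a levelwise weak equivalence and hence a weak equivalence in the localisation.

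The main subtlety lies in identifying the component of the counit at each level $[n]$ with exactly the canonical map $X_0 \to X_n$ whose being a weak equivalence characterises local objects; this requires a careful tracking of the variance in the functor $X \colon \bbDelta^\opp \to \sSet$ together with Yoneda. Once this identification is in place, both criteria for the Quillen equivalence follow immediately, and the rest of the argument is formal.
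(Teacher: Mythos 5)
Part~(1) and your treatment of the derived counit are fine, but the derived-unit step contains a genuine error: the constant simplicial object $\sfc_\bbDelta K = \Delta^0 \boxtimes K$ on a Kan complex $K$ is \emph{not} Reedy (equivalently injective) fibrant in general, and its matching maps are not identities. For a simplicial object $X \in \sSet^{\bbDelta^\opp}$, the matching object at level $1$ is the limit over the two vertex inclusions $[0] \hookrightarrow [1]$, so $M_1(\sfc_\bbDelta K) \cong K \times K$ and the matching map is the diagonal $K \to K \times K$. The diagonal is a Kan fibration only if any two $1$-simplices of $K$ sharing a horn coincide; this already fails for the Kan complex $K = N(\ZN/2)$ (nerve of the one-object groupoid on $\ZN/2$). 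Hence $\sfc_\bbDelta R^\sSet K$ is not fibrant in $s\sSet$, let alone in $L_{\Delta^\bullet \boxtimes \Delta^0} s\sSet$, and it cannot play the role of the fibrant replacement in your computation of the derived unit; the remaining matching maps are harmless (at level $0$ one gets $K \to *$, and for $n \geq 2$ the index category of proper faces is connected, so the matching map is an identity), but level $1$ breaks the argument.

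The gap is repairable, and the repair is essentially what the paper does. Take an injective fibrant replacement $r \colon \sfc_\bbDelta K \weq R^i(\sfc_\bbDelta K)$; since $r$ is an objectwise weak equivalence and all outer structure maps of $\sfc_\bbDelta K$ are identities, two-out-of-three shows that the canonical maps $\big( R^i(\sfc_\bbDelta K) \big)_0 \to \big( R^i(\sfc_\bbDelta K) \big)_n$ are weak equivalences, so $R^i(\sfc_\bbDelta K)$ is fibrant in $L_{\Delta^\bullet \boxtimes \Delta^0} s\sSet$ (this is the analogue of Lemma~\ref{st:R^i c K is iI-fibrant}); applying $\ev_{[0]}$ then identifies the derived unit with the weak equivalence $K \weq \big( R^i(\sfc_\bbDelta K) \big)_0$. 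The paper itself sidesteps the derived unit altogether: it verifies the criterion of \cite[Cor.~1.3.16]{Hovey:MoCats} by showing that $\sfc_\bbDelta \colon \sSet \to L_{\Delta^\bullet \boxtimes \Delta^0} s\sSet$ reflects weak equivalences---via mapping spaces into fibrant objects, using the fact (proved again by injective fibrant replacement) that every Kan complex is weakly equivalent to one in the image of $\ev_{[0]}$ restricted to fibrant objects---and then checks the counit composite $\sfc_\bbDelta Q^\sSet \ev_{[0]}(X) \to \sfc_\bbDelta \ev_{[0]}(X) \to X$ exactly as you do.
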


\begin{proof}
It is straightforward to see that $\sfc_\bbDelta \colon \sSet \to s\sSet$ preserves cofibrations and further preserves as well as reflects weak equivalences.
This proves claim~(1).
Part~(2) proceeds entirely in parallel to the proofs of Lemma~\ref{st:cK -> Cc^p/i cK is objwise weq} and Theorem~\ref{st:tc -| ev_* is QEq}.
\end{proof}

\begin{corollary}
The functor $\sfc_\bbDelta \colon \sSet \longrightarrow L_{\Delta^\bullet \boxtimes \Delta^0} s\sSet$ induces a functor on the underlying relative categories and detects weak equivalences.
\end{corollary}

\subsubsection{The functors $\Delta_{e!}$ and $L_e$}

Next, we show that $\Delta_{e!} \colon s\sSet \to \scH^{i I}$ is left Quillen, and that the Quillen adjunction $\Delta_{e!} \dashv \Delta_{e*}$ descends to the localisation $L_{\bbDelta \boxtimes \Delta^0} s\sSet = L_{\Delta^\bullet \boxtimes \Delta^0} s\sSet$.
The functor $\Delta_{e!}$ acts as
\begin{equation}
	\Delta_{e!}(X) = \int^n X_{n,\bullet} \otimes \Delta_e^n\,.
\end{equation}
In particular, for bisimplicial sets in the image of $(-) \boxtimes (-)$ we find that
\begin{align}
\label{eq:D_e!(K bt L)}
	\Delta_{e!}(K \boxtimes L) &= \int^n K_n \otimes L \otimes \Delta_e^n
	\\
	&\cong \Big( \int^n K_n \otimes \Delta_e^n \Big) \otimes L
	\\
	&\cong \big( \Delta_{e!}(K \boxtimes \Delta^0) \big) \otimes L\,.
\end{align}
It follows that
\begin{equation}
\label{eq:D_e!(D^0 bt L)}
	\Delta_{e!}(\Delta^0 \boxtimes L) \cong \tilde{\sfc}L
\end{equation}
for any $L \in \sSet$, where $\tilde{\sfc} \colon \sSet \to \scH$ is the constant-presheaf functor.

\begin{lemma}
\label{st:pD_e^n to D_e^n is inj cof}
For any $n \in \NN_0$, the morphism $\Delta_{e!}(\partial \Delta^n \boxtimes \Delta^0 \longrightarrow \Delta^n \boxtimes \Delta^0)$ is a cofibration in $\scH^i$.
\end{lemma}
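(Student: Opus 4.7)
The plan is to show that the map $\Delta_{e!}(\partial\Delta^n \boxtimes \Delta^0) \to \Delta_{e!}(\Delta^n \boxtimes \Delta^0)$ is a monomorphism of simplicial presheaves, since monomorphisms coincide with cofibrations in the injective model structure $\scH_\infty^i$. First I unwind the left Kan extension: using~\eqref{eq:D_e!(D^0 bt L)} and co-Yoneda, the target is identified with $\scY_{\Delta_e^n}$, and for any simplicial set $K$, cartesian space $c \in \Cart$, and level $k \in \NN_0$,
\begin{equation*}
    \Delta_{e!}(K \boxtimes \Delta^0)(c)_k \;=\; \int^{m} K_m \times \Cart(c, \Delta_e^m),
\end{equation*}
which is independent of $k$, so both source and target are levelwise discrete simplicial presheaves. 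Consequently it suffices to check that for every $c \in \Cart$ the induced map
\begin{equation*}
    \int^m (\partial\Delta^n)_m \times \Cart(c, \Delta_e^m) \longrightarrow \Cart(c, \Delta_e^n), \qquad [(x,f)] \longmapsto \Delta_e(x) \circ f,
\end{equation*}
is injective on underlying sets.

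To prove this, I reduce each coend class to a canonical form. Factor each simplex $x \colon [m] \to [n]$ of $\partial\Delta^n$ through its image $S := \mathrm{im}(x) \subsetneq \{0,\ldots,n\}$ as $x = \iota_S \circ \pi_x$, where $\iota_S \colon [|S|-1] \hookrightarrow [n]$ is the order-preserving face inclusion and $\pi_x$ is surjective. The coend relation (applied to $\sigma = \pi_x$) then yields $(x, f) \sim (\iota_S, \Delta_e(\pi_x) \circ f)$, so every class has a representative of the form $(\iota_S, h)$ with $S$ proper and $h \colon c \to \Delta_e^{|S|-1}$ smooth. Now suppose $(\iota_{S_1}, h_1)$ and $(\iota_{S_2}, h_2)$ both map to the same $g \in \Cart(c, \Delta_e^n)$. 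The image of $g$ lies in the intersection of the two affine faces corresponding to $S_1$ and $S_2$, which is itself the affine face corresponding to $S := S_1 \cap S_2$; since $c$ is nonempty this forces $S \neq \emptyset$. The affine embeddings $\Delta_e^{|S|-1} \hookrightarrow \Delta_e^{|S_i|-1} \hookrightarrow \Delta_e^n$ then let each $h_i$ be factored uniquely as $h_i = \Delta_e(\iota_S^{S_i}) \circ \tilde{h}_i$ for some smooth $\tilde{h}_i \colon c \to \Delta_e^{|S|-1}$, and injectivity of $\Delta_e(\iota_S)$ gives $\tilde{h}_1 = \tilde{h}_2 =: \tilde{h}$. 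A final application of the coend relation identifies both $(\iota_{S_i}, h_i) \sim (\iota_S, \tilde{h})$, so the two classes coincide.

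The main obstacle is the careful bookkeeping in the coend---tracking the variance of the equivalence relation, applying epi-mono factorizations of morphisms in $\bbDelta$, and justifying that a smooth map into $\Delta_e^n$ whose image lies in an affine face factors uniquely as a smooth map through that face (which holds because the face inclusions among the $\Delta_e^k$ are closed affine embeddings of cartesian spaces). Once the canonical-form reduction is in place, the geometric injectivity step is elementary.
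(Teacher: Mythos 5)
Your argument is correct, and it reaches the paper's conclusion by a closely related but not identical route. The paper presents $\partial\Delta^n$ as the coequaliser $\coprod_{0\le i<j\le n}\Delta^{n-2}\rightrightarrows\coprod_{0\le k\le n}\Delta^{n-1}$, pushes this through the colimit-preserving functor $\Delta_{e!}$ to exhibit $\partial\Delta_e^n$ as the corresponding objectwise coequaliser of representables, and then deduces objectwise injectivity of $\iota_e^n\colon\partial\Delta_e^n\to\Delta_e^n$ from injectivity on underlying point-sets $\partial\Delta_e^n(*)\hookrightarrow\Delta_e^n(*)$, using that a section over $c$ is an identification class of a smooth map into a codimension-one face and is determined by its underlying map of points. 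You instead keep the full left Kan extension coend over $\bbDelta$, normalise every class via the epi--mono factorisation in $\bbDelta$ to a canonical representative $(\iota_S,h)$ with $S$ a proper subset, and prove injectivity directly by factoring two representatives with the same image smoothly through the face indexed by $S_1\cap S_2$ (nonempty since $c\neq\emptyset$, because two affine faces of $\Delta_e^n$ meet in the affine face of the intersection). The geometric content is the same in both proofs---smooth maps into faces of $\Delta_e^n$ that agree in $\Delta_e^n$ factor uniquely and smoothly through the intersection face, since the face inclusions are split affine embeddings---but your version buys a more self-contained treatment of the colimit (no restriction to codimension-one and -two faces, and no appeal to sections being determined by their underlying points), at the cost of the coend bookkeeping, whereas the paper's version is shorter but leans on the concreteness of the objects involved.
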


\begin{proof}
For $n=0,1$ this is straightforward.
Consider the presentation of $\partial \Delta^n$ as a coequaliser,
\begin{equation}
	\partial \Delta^n \cong \coeq \bigg( \coprod_{0 \leq i < j \leq n} \Delta^{n-2} \rightrightarrows \coprod_{0 \leq k \leq n} \Delta^{n-1} \bigg)\,.
\end{equation}
Since $\Delta_{e!}$ preserves colimits, we obtain
\begin{equation}
\label{eq:def partial Delta_e^n}
	\partial \Delta_e^n \coloneqq \Delta_{e!}(\partial \Delta^n \boxtimes \Delta^0)
	\cong \coeq \bigg( \coprod_{0 \leq i < j \leq n} \Delta_e^{n-2} \rightrightarrows \coprod_{0 \leq k \leq n} \Delta_e^{n-1} \bigg)\,.
\end{equation}
The colimit is taken in $\scH$ (not in $\Dfg$, even though all $\Delta_e^k$ are diffeological spaces), and so we have
\begin{equation}
	\partial \Delta_e^n(c)
	\cong \coeq \bigg( \coprod_{0 \leq i < j \leq n} \Delta_e^{n-2}(c) \rightrightarrows \coprod_{0 \leq k \leq n} \Delta_e^{n-1}(c) \bigg)
\end{equation}
for any $c \in \Cart$.
In particular, any section of $\partial \Delta_e^n$ over $c \in \Cart$ comes from some section $f \in \Delta_e^{n-1}(c)$ of a face of $\Delta_e^n$.
Two such sections $f,g \in \Delta_e^{n-1}(c)$ are identified precisely if they factor through the copy of $\Delta_e^{n-2}$ that joins the respective faces of $\Delta_e^n$ and if, further, $f$ and $g$ agree as maps $c \to \Delta_e^{n-2}$.

Let $f,g \in \partial \Delta_e^n(c)$ be any two elements, and assume that $\iota_e^n \circ f = \iota_e^n \circ g$, where $\iota_e^n \colon \partial \Delta_e^n \longrightarrow \Delta_e^n$ is the canonical morphism.
Observe that $\iota_e^n$ is injective as a map on the underlying sets $\partial \Delta_e^n(*) \hookrightarrow \Delta_e^n(*)$.
Since every section $f \colon \scY_c \to \partial \Delta_e^n$ is, in particular, a map $\scY_c(*) \to \partial \Delta_e^n(*)$ of the underlying sets, and analogously a section $\scY_c \to \Delta_e^n$ is, in particular, a map $\scY_c(*) \to \Delta_e^n(*)$ of underlying sets, it follows that $\iota_e^n \colon \partial \Delta_e^n \to \Delta_e^n$ is an objectwise monomorphism.
\end{proof}

\begin{remark}
We point out that $\partial \Delta_e^n$, as defined in~\eqref{eq:def partial Delta_e^n}, is not a diffeological space for $n \geq 2$.
For instance, consider a differentiably good open covering $c = c_0 \cup c_1$ of a cartesian space $c$.
We denote the intersection $c_0 \cap c_1$ by $c_{01} \in \Cart$.
Let $f_i \colon c_i \to \Delta_e^{n-1}$ be smooth maps, for $i = 0,1$, to adjacent faces of $\Delta_e^n$, such that $f_{i|c_{01}} \colon c_{01} \to \Delta_e^{n-2}$ factors through the $n{-}2$-simplex which joins the two faces.
These data to not lift to a section $f \in \partial \Delta_e^n(c)$, since such an $f$ must factor through only \textit{one} of the faces $\partial \Delta_e^{n-1}$.
That is, $\partial \Delta_e^n$ does not satisfy the sheaf condition.
\qen
\end{remark}

Let $\scC, \scD, \scE$ be categories.
Recall the notion of an \emph{adjunction of two variables} $\scC \times \scD \to \scE$ (see, for example, \cite[Def.~4.1.12]{Hovey:MoCats}).
We will denote an adjunction of two variables only by its tensor functor $\otimes \colon \scC \times \scD \to \scE$.
If $\scE$ has pushouts, then there is an induced \emph{pushout product}, or \emph{box product}, on morphisms:
given morphisms $f \colon A \to B$ in $\scC$ and $g \colon X \to Y$ in $\scD$, their pushout product (relative to $\otimes$) is the induced morphism in $\scE$ given by 
\begin{equation}
\begin{tikzcd}
	A \otimes Y \underset{A \otimes X}{\sqcup} B \otimes X
	\ar[r, "f \square g"]
	& B \otimes Y\,.
\end{tikzcd}
\end{equation}
We recall following definitions:

\begin{definition}
\emph{\cite[Def.~4.2.1]{Hovey:MoCats}}
Let $\scC, \scD, \scE$ be model categories, and let $\otimes \colon \scC \times \scD \to \scE$ be an adjunction of two variables.
Then, $\otimes$ is a \emph{Quillen adjunction of two variables} if the induced pushout product $f,g \mapsto f \square g$ satisfies the \emph{pushout-product axiom}:
\begin{myenumerate}
\item if both $f$ and $g$ are cofibrations, then so is $f \square g$, and

\item if, \emph{in addition}, $f$ or $g$ is a weak equivalence, then so is $f \square g$.
\end{myenumerate}
\end{definition}

\begin{definition}
\label{def:monoidal MoCat}
\emph{\cite[Def.~4.2.6]{Hovey:MoCats}}
A \emph{(symmetric) monoidal model category} is a closed (symmetric) monoidal category $(\scC, \otimes)$ together with a model structure on the underlying category $\scC$ such that:
\begin{myenumerate}
\item the closed monoidal structure is a Quillen adjunction of two variables $\otimes \colon \scC \times \scC \to \scC$.

\item Let $u \in \scC$ be the monoidal unit, and let $q_u \colon Q^\scC u \weq u$ be a cofibrant replacement.
Then, tensoring with any cofibrant object from the left or the right sends $q_u$ to a weak equivalence.
\end{myenumerate}
\end{definition}

\begin{example}
The model category $s\sSet$ with the injective model structure is cartesian (i.e.~symmetric monoidal with $\otimes = \times$).
Similarly, each of the model categories $\scH^{p/i}$, $\scH^{p/i\, \ell}$, and $\scH^{p/i\, I}$ is cartesian by Proposition~\ref{st:basic props of H^pi and its locs}.
In each of the monoidal model structures we encounter here, the monoidal unit is already cofibrant, so that the second axiom of Definition~\ref{def:monoidal MoCat} is trivially satisfied.
\qen
\end{example}

The injective model structure on $s\sSet$ is cofibrantly generated (see e.g.~\cite{Rezk:HoTheory_of_HoTheories}), with generating cofibrations
\begin{equation}
	\CI = \big\{ (\partial \Delta^n \boxtimes \Delta^0 \hookrightarrow \Delta^n \boxtimes \Delta^0)
	\square (\Delta^0 \boxtimes \partial \Delta^m \hookrightarrow \Delta^0 \boxtimes \Delta^m)
	\, \big| \, n,m \in \NN_0 \big\}
\end{equation}
and generating trivial cofibrations
\begin{equation}
	\CJ = \big\{ (\partial \Delta^n \boxtimes \Delta^0 \hookrightarrow \Delta^n \boxtimes \Delta^0)
	\square (\Delta^0 \boxtimes \Lambda^m_k \hookrightarrow \Delta^0 \boxtimes \Delta^m)
	\, \big| \, m,n \in \NN_0,\, 0 \leq k \leq m \big\}\,.
\end{equation}

\begin{proposition}
There is a Quillen adjunction
\begin{equation}
\begin{tikzcd}
	\Delta_{e!} : s\sSet \ar[r, shift left=0.14cm, "\perp"' yshift=0.05cm]
	& \scH^i : \Delta_e^*\,. \ar[l, shift left=0.14cm]
\end{tikzcd}
\end{equation}
\end{proposition}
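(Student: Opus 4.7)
The plan is to reduce the claim to a computation on generating cofibrations, using the fact that the injective (equivalently Reedy) model structure on $s\sSet$ is cofibrantly generated with generators $\CI$ and $\CJ$ given by pushout products, as recalled immediately before the statement. Since $\Delta_{e!}$ is a left adjoint (Proposition~\ref{st:L_e -| S_e -| R_e}), it preserves colimits, so it suffices to show that $\Delta_{e!}$ sends each element of $\CI$ to a cofibration and each element of $\CJ$ to a trivial cofibration in $\scH_\infty^i$.

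First, I would unpack how $\Delta_{e!}$ acts on box products. Using formula~\eqref{eq:D_e!(K bt L)}, one has $\Delta_{e!}(K \boxtimes L) \cong \tilde{\sfc}L \times \Delta_{e!}(K \boxtimes \Delta^0)$, where the product is taken in $\scH_\infty$; this is the pointwise statement $(L \otimes \Delta_{e!}(K \boxtimes \Delta^0))(c) = L \times \Delta_{e!}(K \boxtimes \Delta^0)(c) = \tilde{\sfc}L(c) \times \Delta_{e!}(K \boxtimes \Delta^0)(c)$. Combined with the preservation of pushouts, this yields the identification
\begin{equation}
	\Delta_{e!} \big( (A \boxtimes \Delta^0 \to B \boxtimes \Delta^0) \square (\Delta^0 \boxtimes X \to \Delta^0 \boxtimes Y) \big)
	\cong \Delta_{e!}(A \boxtimes \Delta^0 \to B \boxtimes \Delta^0) \,\square\, \tilde{\sfc}(X \to Y)\,,
\end{equation}
where the right-hand $\square$ is the pushout product relative to the cartesian product in $\scH_\infty$. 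In particular, for generators we obtain
\begin{equation}
	\Delta_{e!}(i_n \square j_m) \cong (\partial \Delta_e^n \hookrightarrow \Delta_e^n) \square \tilde{\sfc}(\partial \Delta^m \hookrightarrow \Delta^m)\,,
\end{equation}
and analogously with $\partial \Delta^m \hookrightarrow \Delta^m$ replaced by $\Lambda^m_k \hookrightarrow \Delta^m$ for the generating trivial cofibrations.

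Next, I would verify that each factor in these pushout products is a (trivial) cofibration in $\scH_\infty^i$. The morphism $\partial \Delta_e^n \hookrightarrow \Delta_e^n$ is an injective cofibration by Lemma~\ref{st:pD_e^n to D_e^n is inj cof}. For the second factor, the functor $\tilde{\sfc} \colon \sSet \to \scH_\infty^i$ is left Quillen (it was observed in Section~\ref{sec:c -| ev_*} that $\tilde{\sfc}$ is left Quillen into both $\scH_\infty^p$ and $\scH_\infty^i$), hence sends cofibrations to cofibrations and trivial cofibrations to trivial cofibrations in $\scH_\infty^i$.

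To conclude, I would invoke the fact that $\scH_\infty^i$ is a symmetric monoidal model category (Proposition~\ref{st:basic props of H^pi and its locs}), so its pushout product satisfies the pushout-product axiom: the box product of two cofibrations is a cofibration, and the box product of a cofibration with a trivial cofibration is a trivial cofibration. Applying this to the identifications above shows that $\Delta_{e!}$ carries $\CI$ into cofibrations and $\CJ$ into trivial cofibrations of $\scH_\infty^i$, which establishes the Quillen adjunction. The main conceptual step — and the only place where a real geometric input is used — is the identification of the first factor $\Delta_{e!}(\partial\Delta^n \boxtimes \Delta^0 \to \Delta^n \boxtimes \Delta^0)$ with $\partial \Delta_e^n \hookrightarrow \Delta_e^n$ and verifying it is objectwise injective, which is precisely the content of Lemma~\ref{st:pD_e^n to D_e^n is inj cof}; everything else is formal manipulation of adjoints, coends, and the pushout-product axiom.
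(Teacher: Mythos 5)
Your proposal is correct and follows essentially the same route as the paper: identify $\Delta_{e!}$ of the two factors of the generating pushout products via Lemma~\ref{st:pD_e^n to D_e^n is inj cof} and Equations~\eqref{eq:D_e!(K bt L)}, \eqref{eq:D_e!(D^0 bt L)}, then use that $\Delta_{e!}$ preserves pushouts together with the pushout-product axiom for the symmetric monoidal model category $\scH_\infty^i$ to conclude on generators. Your slightly more explicit spelling-out of the compatibility of $\Delta_{e!}$ with the box/pushout products is exactly what the paper leaves implicit, so there is no substantive difference.
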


\begin{proof}
We have already seen in Lemma~\ref{st:pD_e^n to D_e^n is inj cof} that $\Delta_{e!}$ sends the morphism $\partial \Delta^n \boxtimes \Delta^0 \hookrightarrow \Delta^n \boxtimes \Delta^0$ to the injective cofibration $\partial \Delta_e^n \hookrightarrow \Delta_e^n$ in $\scH^i$.
Further, it follows from Equations~\eqref{eq:D_e!(K bt L)} and~\eqref{eq:D_e!(D^0 bt L)} that
\begin{equation}
	\Delta_{e!}(\Delta^0 \boxtimes \partial \Delta^m \hookrightarrow \Delta^0 \boxtimes \Delta^m)
	= \big( \tilde{\sfc}\partial \Delta^m \longrightarrow \tilde{\sfc} \Delta^m \big)\,,
\end{equation}
which is an injective cofibration, and that
\begin{equation}
	\Delta_{e!}(\Delta^0 \boxtimes \Lambda^m_k \hookrightarrow \Delta^0 \boxtimes \Delta^m)
	= \big( \tilde{\sfc}\Lambda^m_k \longrightarrow \tilde{\sfc} \Delta^m \big)\,,
\end{equation}
which is an injective trivial cofibration.

It suffices the check that $\Delta_{e!}$ sends the generating (trivial) cofibrations of $s\sSet$ to (trivial) cofibrations in $\scH^i$ (see, for instance,~\cite[Lemma~2.1.20]{Hovey:MoCats}).
For $n, m \in \NN_0$, the pushout-product
\begin{equation}
	(\partial \Delta^n \boxtimes \Delta^0 \hookrightarrow \Delta^n \boxtimes \Delta^0)
	\square (\Delta^0 \boxtimes \partial \Delta^m \hookrightarrow \Delta^0 \boxtimes \Delta^m)
\end{equation}
is the canonical morphism
\begin{equation}
\begin{tikzcd}
	\big( (\partial \Delta^n \boxtimes \Delta^0) \otimes (\Delta^0 \boxtimes \Delta^m ) \big)
	\hspace{-1cm} \underset{\big( (\partial \Delta^n \boxtimes \Delta^0) \otimes (\Delta^0 \boxtimes \partial \Delta^m) \big) }{\sqcup}
	\hspace{-1cm} \big( (\Delta^n \boxtimes \Delta^0) \otimes (\Delta^0 \boxtimes \partial \Delta^m) \big)
	\ar[r]
	& (\Delta^n \boxtimes \Delta^0) \otimes (\Delta^0 \boxtimes \Delta^m)
\end{tikzcd}
\end{equation}
Note that, for $A, B, C, D \in \sSet$, there is a canonical natural isomorphism
\begin{equation}
	(A \boxtimes B) \otimes (C \boxtimes D)
	\cong (A \otimes C) \boxtimes (B \otimes D)
\end{equation}
in $s\sSet$.
Thus, the above pushout-product is canonically isomorphic to to morphism
\begin{equation}
\begin{tikzcd}
	(\partial \Delta^n \boxtimes \Delta^m)
	\underset{(\partial \Delta^n \boxtimes \partial \Delta^m)}{\sqcup}
	(\Delta^n \boxtimes \partial \Delta^m)
	\ar[r]
	& (\Delta^n \boxtimes \Delta^m)
\end{tikzcd}
\end{equation}
Since the functor $\Delta_{e!}$ preserves pushouts, and using~\eqref{eq:D_e!(K bt L)}, this is sent to the morphism
\begin{equation}
\begin{tikzcd}
	(\partial \Delta^n_e \otimes \tilde{\sfc}\Delta^m)
	\underset{(\partial \Delta^n_e \otimes \tilde{\sfc} \partial \Delta^m)}{\sqcup}
	(\Delta^n_e \otimes \partial \tilde{\sfc} \Delta^m)
	\ar[r]
	& (\Delta^n_e \otimes \tilde{\sfc} \Delta^m)\,,
\end{tikzcd}
\end{equation}
which coincides with the pushout product $(\partial \Delta^n_e \hookrightarrow \Delta^n_e) \square (\tilde{\sfc} \partial \Delta^m \hookrightarrow \tilde{\sfc} \Delta^m)$ in $\scH^i$.
By Lemma~\ref{st:pD_e^n to D_e^n is inj cof} and the fact that the model category $\scH^i$ is symmetric monoidal (even cartesian), this pushout-product is a cofibration in $\scH^i$.
Thus, $\Delta_{e!}$ sends the generating cofibrations of $\sSet$ to cofibrations.
An analogous argument shows that it also sends the generating trivial cofibrations to trivial cofibrations.
\end{proof}

Composing with the localisation $\scH^i \leftrightarrows \scH^{i I}$, we obtain a Quillen adjunction
\begin{equation}
\begin{tikzcd}
	\Delta_{e!} : s\sSet \ar[r, shift left=0.14cm, "\perp"' yshift=0.05cm]
	& \scH^{i I} : \Delta_e^*\,. \ar[l, shift left=0.14cm]
\end{tikzcd}
\end{equation}

\begin{proposition}
\label{st:D_e! -| D_e^* is QAd}
The Quillen adjunction $\Delta_{e!} : s\sSet \leftrightarrows \scH^{i I} : \Delta_e^*$ descends to a Quillen adjunction
\begin{equation}
\begin{tikzcd}
	\Delta_{e!} : L_{\Delta^\bullet \boxtimes \Delta^0} s\sSet \ar[r, shift left=0.14cm, "\perp"' yshift=0.05cm]
	& \scH^{i I} : \Delta_e^*\,. \ar[l, shift left=0.14cm]
\end{tikzcd}
\end{equation}
\end{proposition}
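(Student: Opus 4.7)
The plan is to invoke the standard descent criterion for left Bousfield localisations (\cite[Prop.~3.3.18]{Hirschhorn:MoCats}), which was already used in the proof of Corollary~\ref{st:ev_Delta adjunction descends}. Since both model structures involved are left Bousfield localisations of model structures for which $\Delta_{e!} \dashv \Delta_e^*$ is already known to be Quillen, it suffices to check that $\Delta_{e!}$ sends each generating morphism of the localising class $\{\Delta^n \boxtimes \Delta^0 \to \Delta^0 \boxtimes \Delta^0\}_{n \in \NN_0}$ to a weak equivalence in $\scH_\infty^{i I}$, provided its source and target are cofibrant in $s\sSet$.

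First I would note that $\Delta^n \boxtimes \Delta^0$ and $\Delta^0 \boxtimes \Delta^0$ are both injectively cofibrant bisimplicial sets (indeed every simplicial set is cofibrant, and $(-) \boxtimes \Delta^0$ embeds $\sSet$ as a subcategory of $s\sSet$ on injectively cofibrant objects, which also follows from the pushout product description of the generating cofibrations of $s\sSet$). Next I would compute $\Delta_{e!}$ on these objects using the coend formula: by~\eqref{eq:D_e!(K bt L)} and the Yoneda lemma for coends,
\begin{equation}
\Delta_{e!}(\Delta^n \boxtimes \Delta^0) \cong \int^{[m] \in \bbDelta} \bbDelta([m],[n]) \otimes \Delta_e^m \cong \scY_{\Delta_e^n},
\end{equation}
and $\Delta_{e!}(\Delta^0 \boxtimes \Delta^0) \cong \scY_{\Delta_e^0} \cong *$. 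Under this identification, the collapse morphism $\Delta^n \boxtimes \Delta^0 \to \Delta^0 \boxtimes \Delta^0$ is sent to the collapse $\scY_{\Delta_e^n} \to *$.

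Since $\Delta_e^n$ is diffeomorphic to $\RN^n$, it is an object of $\Cart$. Hence Proposition~\ref{st:tensor and ev weqs in H^(p/i I)}(1) (applied with $F = *$, or rather its proof, which shows directly that $\scY_c \to *$ is a weak equivalence in $\scH_\infty^{i I}$ for every $c \in \Cart$ via iterated collapse of factors of $\RN$) guarantees that $\scY_{\Delta_e^n} \to *$ is a weak equivalence in $\scH_\infty^{i I}$. This completes the verification of the descent criterion.

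I do not expect any serious obstacle here; the only substantive input is the identification $\Delta_{e!}(\Delta^n \boxtimes \Delta^0) \cong \scY_{\Delta_e^n}$, which is an immediate consequence of the coend formulas~\eqref{eq:D_e!(K bt L)}--\eqref{eq:D_e!(D^0 bt L)} already established. All other ingredients (cofibrancy of source and target, contractibility of cartesian spaces in the $\RN$-localisation, and the descent principle for left Quillen functors) are either formal or already proven earlier in the excerpt.
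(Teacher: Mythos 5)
Your proposal is correct and follows essentially the same route as the paper: the paper's proof likewise reduces to the identification $\Delta_{e!}(\Delta^n \boxtimes \Delta^0) \cong \Delta_e^n$ and then cites Proposition~\ref{st:tensor and ev weqs in H^(p/i I)} to conclude that the collapse $\Delta_e^n \to *$ is a weak equivalence in $\scH_\infty^{i I}$, the descent criterion for left Bousfield localisations being used implicitly just as in Corollary~\ref{st:ev_Delta adjunction descends}. Your additional remarks on cofibrancy of the localising morphisms and the coend computation merely make explicit what the paper leaves tacit.
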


\begin{proof}
This is a direct consequence of the fact that $\Delta_{e!}(\Delta^n \boxtimes \Delta^0) \cong \Delta_e^n$.
The collapse morphism $\Delta_e^n \to *$ is a weak equivalence in $\scH^{i I}$ by Proposition~\ref{st:tensor and ev weqs in H^(p/i I)}.
\end{proof}

\begin{corollary}
The adjunction $L_e : \sSet \rightleftarrows \scH^{i I} : S_e$ is a Quillen adjunction.
\end{corollary}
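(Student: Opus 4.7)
The plan is to exhibit the adjunction $L_e \dashv S_e$ as the composition of two Quillen adjunctions that have just been established. Recall from Proposition~\ref{st:L_e -| S_e -| R_e} that $S_e = \delta^* \circ \Delta_e^*$ and $L_e = \Delta_{e!} \circ \delta_!$. Hence the claim will follow by stacking the Quillen adjunction
\begin{equation}
\delta_! : \sSet \rightleftarrows L_{\Delta^\bullet \boxtimes \Delta^0} s\sSet : \delta^*
\end{equation}
from Proposition~\ref{st:delta_! -| delta^* is QAd} on top of the Quillen adjunction
\begin{equation}
\Delta_{e!} : L_{\Delta^\bullet \boxtimes \Delta^0} s\sSet \rightleftarrows \scH_\infty^{i I} : \Delta_e^*
\end{equation}
from Proposition~\ref{st:D_e! -| D_e^* is QAd}. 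Since left (resp.\ right) Quillen functors compose to left (resp.\ right) Quillen functors, and since the composed adjunction is canonically $(\Delta_{e!} \circ \delta_!) \dashv (\delta^* \circ \Delta_e^*) = L_e \dashv S_e$, the claim follows immediately.

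The only thing to double-check is that the intermediate model category appearing on both sides is literally the same, namely $L_{\Delta^\bullet \boxtimes \Delta^0} s\sSet$; this is manifest from the statements of the two cited propositions. No further verification on generating cofibrations or on preservation of weak equivalences is needed at this stage, because all such work has been done in the lemmata leading up to Propositions~\ref{st:delta_! -| delta^* is QAd} and~\ref{st:D_e! -| D_e^* is QAd}. There is essentially no obstacle: the proof is a one-line composition argument, and the remark worth making is that this simultaneously justifies viewing $L_e$ as a composite realisation functor, first expanding a simplicial set into a constant simplicial direction via $\delta_!$ and then realising the resulting bisimplicial set against the extended affine simplices via $\Delta_{e!}$.
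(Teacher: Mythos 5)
Your argument is exactly the paper's: the corollary is obtained by composing the Quillen adjunction $\delta_! \dashv \delta^*$ of Proposition~\ref{st:delta_! -| delta^* is QAd} with the Quillen adjunction $\Delta_{e!} \dashv \Delta_e^*$ of Proposition~\ref{st:D_e! -| D_e^* is QAd}, using $L_e = \Delta_{e!} \circ \delta_!$ and $S_e = \delta^* \circ \Delta_e^*$ from Proposition~\ref{st:L_e -| S_e -| R_e}. The composition argument is correct and complete, so nothing further is needed.
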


\begin{proof}
This is a direct consequence of Proposition~\ref{st:delta_! -| delta^* is QAd} and Proposition~\ref{st:D_e! -| D_e^* is QAd}.
\end{proof}

\begin{theorem}
\label{st:D_e! -| D_e^* is QEq}
There is a commutative triangle of Quillen equivalences
\begin{equation}
\label{eq:}
\begin{tikzcd}[column sep={2cm,between origins}, row sep={2.5cm,between origins}]
	& L_{\Delta^\bullet \boxtimes \Delta^0} s\sSet \ar[rd, shift left=0.1cm, "\Delta_{e!}"] \ar[ld, shift left=0.1cm, "\ev_{[0]}"] &
	\\
	\sSet \ar[rr, shift left=0.1cm, "\tilde{\sfc}"] \ar[ur, shift left=0.1cm, "\sfc_\bbDelta"]
	& & \scH^{i I} \ar[ll, shift left=0.1cm, "\ev_*"] \ar[ul, shift left=0.1cm, "\Delta_e^*"]
\end{tikzcd}
\end{equation}
where $\sfc_\bbDelta$, $\Delta_{e!}$, and $\tilde{\sfc}$ are the left adjoints.
\end{theorem}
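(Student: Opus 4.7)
The proof will mirror the strategy used for Theorem~\ref{st:Re -| S is QEq}: establish commutativity of the triangle of left adjoints (up to natural isomorphism), then invoke two-out-of-three for Quillen equivalences via Corollary~\ref{st:transfer of QEq property}.

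First I would verify commutativity of the diagram of left adjoints. Starting from $\sSet$, applying $\sfc_\bbDelta$ yields $\Delta^0 \boxtimes K$, and then applying $\Delta_{e!}$ gives, by the computation in equation~\eqref{eq:D_e!(D^0 bt L)},
\begin{equation}
	\Delta_{e!} \circ \sfc_\bbDelta(K) = \Delta_{e!}(\Delta^0 \boxtimes K) \cong \tilde{\sfc}K\,,
\end{equation}
naturally in $K \in \sSet$. Hence $\Delta_{e!} \circ \sfc_\bbDelta \cong \tilde{\sfc}$, and passing to right adjoints gives $\ev_{[0]} \circ \Delta_e^* \cong \ev_*$ (which is also immediate from the definitions, since $(\Delta_e^* F)_{0,\bullet} = F(\Delta_e^0) = F(*)$).

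Next I would invoke the already-established Quillen equivalences in the triangle: the adjunction $\tilde{\sfc} \dashv \ev_*$ is a Quillen equivalence by Theorem~\ref{st:tc -| ev_* is QEq}, and $\sfc_\bbDelta \dashv \ev_{[0]}$ is a Quillen equivalence by Proposition~\ref{st:c_D -| ev_[0] is QEq}(2). We also know from Proposition~\ref{st:D_e! -| D_e^* is QAd} that $\Delta_{e!} \dashv \Delta_e^*$ is a Quillen adjunction $L_{\Delta^\bullet \boxtimes \Delta^0} s\sSet \rightleftarrows \scH_\infty^{iI}$. Since the composition $\Delta_{e!} \circ \sfc_\bbDelta \cong \tilde{\sfc}$ of the two left adjoints along the top of the triangle agrees with the left adjoint of a Quillen equivalence, Corollary~\ref{st:transfer of QEq property} (two-out-of-three for Quillen equivalences) immediately yields that $\Delta_{e!} \dashv \Delta_e^*$ is itself a Quillen equivalence.

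No step should present a genuine obstacle here: the only mild point is to make sure the natural isomorphism $\Delta_{e!} \circ \sfc_\bbDelta \cong \tilde{\sfc}$ really lives over the localised model category $L_{\Delta^\bullet \boxtimes \Delta^0} s\sSet$ rather than merely over the injective structure on $s\sSet$, but this is automatic since the underlying categories agree and the localisation adjunctions involved are identity-on-objects. Once commutativity is in place, the statement is a direct application of the Quillen-equivalence cancellation lemma from Appendix~\ref{app:Quillen equivalences}.
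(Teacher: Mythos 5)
Your proposal is correct and follows essentially the same route as the paper: note that the triangle commutes (the paper phrases this via the right adjoints, $\ev_{[0]} \circ \Delta_e^* = \ev_*$, while you verify it via $\Delta_{e!} \circ \sfc_\bbDelta \cong \tilde{\sfc}$, which is equivalent), then combine the Quillen equivalences of Theorem~\ref{st:tc -| ev_* is QEq} and Proposition~\ref{st:c_D -| ev_[0] is QEq} with two-out-of-three for Quillen equivalences. No gaps.
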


\begin{proof}
We know from Theorem~\ref{st:tc -| ev_* is QEq} that the bottom adjunction is a Quillen equivalence, and we know from Proposition~\ref{st:c_D -| ev_[0] is QEq} that the left diagonal adjunction is a Quillen equivalence.
Further, it is evident that the diagram of right adjoints commutes strictly.
Thus, the claim follows from the two-out-of-three property of Quillen equivalences.
\end{proof}

\begin{lemma}
\label{st:nat trafo ev_* -> S_e}
Consider the functors $\ev_*, S_e \colon \scH \to \sSet$.
\begin{myenumerate}
\item There is a canonical natural transformation $\gamma \colon \ev_* \to S_e$.

\item The right derived natural transformation of $\gamma$ is a natural weak equivalence.
\end{myenumerate}
\end{lemma}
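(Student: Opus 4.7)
The plan is to construct the natural transformation $\gamma$ from the collapse morphisms $p_n \colon \Delta_e^n \to *$ and then recognise the restriction of $\gamma$ to fibrant objects as the image under $\delta^*$ of a levelwise weak equivalence of bisimplicial sets, invoking homotopicity of $\delta^*$ together with Proposition~\ref{st:tensor and ev weqs in H^(p/i I)}.

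For part~(1), I would first construct a morphism in the category $s\sSet$ of bisimplicial sets. Given $F \in \scH_\infty$, the collection of collapse maps $\{p_m \colon \Delta_e^m \to *\}_{m \in \NN_0}$ induces, upon applying $F$, a morphism of simplicial objects in $\sSet$:
\begin{equation}
    \sfc_\bbDelta F(*) \longrightarrow \Delta_e^* F\,,
\end{equation}
where at bisimplicial level $(m,k)$ the map is $(p_m^*)_k \colon F(*)_k \to F(\Delta_e^m)_k$. This is natural in $F$ by functoriality, and it is compatible with the external simplicial structure because for any $\sigma \colon [n] \to [m]$ in $\bbDelta$ the identity $p_m \circ \Delta_e(\sigma) = p_n$ holds in $\Cart$. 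I would then define $\gamma_F \colon F(*) \to S_e(F)$ as the image of this morphism under $\delta^*$, using the canonical identification $\delta^* \sfc_\bbDelta \cong 1_\sSet$. Unwinding, $\gamma_F$ is the map of simplicial sets whose $n$-th component is $(p_n^*)_n \colon F(*)_n \to F(\Delta_e^n)_n = S_e(F)_n$. Naturality of $\gamma$ in $F$ is immediate from the functoriality of the construction.

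For part~(2), suppose $F \in \scH_\infty^{p/i\,I}$ is fibrant. By Proposition~\ref{st:tensor and ev weqs in H^(p/i I)}, the canonical morphism $F(*) \to F(c)$ is a weak equivalence in $\sSet$ for every $c \in \Cart$. Applying this with $c = \Delta_e^m$, the induced morphism $\sfc_\bbDelta F(*) \to \Delta_e^* F$ is a levelwise weak equivalence in $\sSet^{\bbDelta^\opp}$, and in particular a weak equivalence in the injective (equivalently, Reedy) model structure on $s\sSet$. Since $\delta^* \colon (\sSet^{\bbDelta^\opp})^i \to \sSet$ is homotopical (see the corollary preceding~\eqref{eq:def ev_(D_e)}), applying $\delta^*$ produces a weak equivalence of simplicial sets, which is precisely $\gamma_F$.

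There is no substantial obstacle in this argument; the only subtlety worth checking carefully is the coherent naturality of the bisimplicial map $\sfc_\bbDelta F(*) \to \Delta_e^* F$ in the external direction, since one must ensure that compositions with the faces and degeneracies of the cosimplicial cartesian space $\Delta_e$ intertwine correctly with the simplicial structure of $F(*)$. This, however, reduces to the strict identity $p_m \circ \Delta_e(\sigma) = p_n$ for every $\sigma \in \bbDelta([n],[m])$, which holds because the point $*$ is terminal in $\Cart$.
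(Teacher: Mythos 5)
Your proposal is correct and follows essentially the same route as the paper: construct the bisimplicial map $\sfc_\bbDelta F(*) \to \Delta_e^* F$ from the collapse maps (naturality in the external direction being automatic since $*$ is terminal in $\Cart$), apply $\delta^*$ to obtain $\gamma$, and for fibrant $F$ use Proposition~\ref{st:tensor and ev weqs in H^(p/i I)} to see that the bisimplicial map is a levelwise weak equivalence, which the homotopical functor $\delta^*$ sends to a weak equivalence in $\sSet$.
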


\begin{proof}
Consider first the functors $\Delta_e^*, \sfc_\bbDelta \circ \ev_* \colon \scH^{p/i\, I} \longrightarrow s\sSet$.
For any $F \in \scH$, the collapse map $\Delta_e^n \to *$ induces a morphism $\widehat{\gamma}_{|F,n} \colon F(*) \to F(\Delta_e^n)$ of simplicial sets.
Since $* \in \Cart$ is final, this induces a natural transformation $\widehat{\gamma} \colon \sfc_\bbDelta \circ \ev_* \longrightarrow \Delta_e^*$.
Applying the diagonal functor $\delta^*$ to this natural transformation, we obtain a natural transformation
\begin{equation}
	\gamma \colon \ev_* = \delta^* \circ \sfc_\bbDelta \circ \ev_* \longrightarrow \delta^* \circ \Delta_e^* = S_e\,.
\end{equation}
This shows part~(1).
Part~(2) then follows from the fact that, whenever $F \in \scH^{p/i\, I}$ is fibrant, the morphism $F(*) \to F(\Delta_e^n)$ is a weak equivalence for every $n \in \NN_0$.
Therefore, if $F$ is fibrant, then $\widehat{\gamma}_{|F} \colon \sfc_\bbDelta(F(*)) \longrightarrow \Delta_e^*F$ is an objectwise weak equivalence in $s\sSet$.
The claim now follows from the fact that the diagonal functor $\delta^*$ is homotopical.
\end{proof}

\begin{theorem}
\label{st:L_e -| S_e is QEq}
The Quillen adjunction
\begin{equation}
\begin{tikzcd}
	L_e : \sSet \ar[r, shift left=0.14cm, "\perp"' yshift=0.05cm]
	& \scH^{i I} : S_e\,. \ar[l, shift left=0.14cm]
\end{tikzcd}
\end{equation}
is a Quillen equivalence.
\end{theorem}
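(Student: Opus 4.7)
The plan is to show that the derived right adjoint $\mathbb{R} S_e \colon \Ho(\scH_\infty^{i I}) \to \Ho(\sSet)$ of the Quillen adjunction $L_e \dashv S_e$ is an equivalence of categories, since this is equivalent to the claim that $L_e \dashv S_e$ is a Quillen equivalence. The key observation is that $S_e$ simultaneously plays two roles: as the right adjoint in $L_e \dashv S_e$ and as the left adjoint in $S_e \dashv R_e$, and, by Corollary~\ref{st:S_e pres and reflects weqs}, it is homotopical in the sense that it takes every weak equivalence in $\scH_\infty^{i I}$ to a weak equivalence in $\sSet$.

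The first step is to exploit this homotopicality to compare the two derived versions of $S_e$. For any $F \in \scH_\infty^{i I}$ and any choice of cofibrant and fibrant replacements $QF \weq F \weq RF$, homotopicality yields a zig-zag of weak equivalences $S_e(QF) \weq S_e(F) \weq S_e(RF)$ in $\sSet$. These are natural in $F$ and therefore descend to natural isomorphisms
\begin{equation}
	\mathbb{L} S_e \cong S_e \cong \mathbb{R} S_e
\end{equation}
as functors $\Ho(\scH_\infty^{i I}) \to \Ho(\sSet)$, where $\mathbb{L} S_e$ is the total left derived functor of $S_e$ computed with respect to the Quillen adjunction $S_e \dashv R_e$, and $\mathbb{R} S_e$ is the total right derived functor computed with respect to $L_e \dashv S_e$.

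From Theorem~\ref{st:S_e -| R_e is QEq} we already know that $S_e \dashv R_e$ is a Quillen equivalence, which means precisely that $\mathbb{L} S_e$ is an equivalence of homotopy categories. Composing with the natural isomorphism $\mathbb{L} S_e \cong \mathbb{R} S_e$ just established, we conclude that $\mathbb{R} S_e$ is an equivalence as well, and hence that $L_e \dashv S_e$ is a Quillen equivalence.

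The only subtlety to be checked is the naturality of the zig-zag $S_e(QF) \weq S_e(F) \weq S_e(RF)$ on the homotopy category, but this is a standard consequence of the fact that functorial (co)fibrant replacements in $\scH_\infty^{i I}$ come with natural weak equivalences to/from the identity, combined with homotopicality of $S_e$. As an alternative route that avoids invoking homotopicality directly, one could instead factor $L_e = \Delta_{e!} \circ \delta_!$ and $S_e = \delta^* \circ \Delta_e^*$, note that $\Delta_{e!} \dashv \Delta_e^*$ is a Quillen equivalence by Theorem~\ref{st:D_e! -| D_e^* is QEq}, and reduce to verifying that $\delta_! \dashv \delta^*$ is a Quillen equivalence between $\sSet$ and $L_{\Delta^\bullet \boxtimes \Delta^0} s\sSet$, which is immediate from Remark~\ref{rmk:Diagonal MoStr and localisations} together with the classical fact that the diagonal detects weak equivalences in the diagonal model structure on bisimplicial sets.
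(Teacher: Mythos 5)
Your argument is correct, but it takes a different comparison than the paper's. The paper also proves the statement by showing that the total right derived functor $\bbR S_e$ is an equivalence, but it does so by comparing $S_e$ with $\ev_*$: the natural transformation $\gamma \colon \ev_* \to S_e$ of Lemma~\ref{st:nat trafo ev_* -> S_e} is a weak equivalence on fibrant objects, so it induces a natural isomorphism $\bbR \ev_* \cong \bbR S_e$, and $\bbR \ev_*$ is an equivalence by Theorem~\ref{st:tc -| ev_* is QEq}. You instead compare the two derived functors of $S_e$ itself: since every object of $\scH_\infty^{i I}$ is cofibrant and $S_e$ is homotopical, $\bbL S_e \cong \rmh S_e \cong \bbR S_e$, and $\bbL S_e$ is an equivalence because $S_e \dashv R_e$ is a Quillen equivalence (Theorem~\ref{st:S_e -| R_e is QEq}). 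This is logically sound and non-circular: the homotopicality of $S_e$ and Corollary~\ref{st:S_e pres and reflects weqs} are established from Theorem~\ref{st:S_e -| R_e is QEq}, which does not depend on the present statement. What your route buys is that it bypasses Lemma~\ref{st:nat trafo ev_* -> S_e} entirely, at the cost of leaning on the left Quillen equivalence property of $S_e$ rather than on the more elementary evaluation functor; ultimately both arguments trace back to Theorem~\ref{st:tc -| ev_* is QEq}. One caveat on your alternative sketch at the end: in the paper, the statement that $\delta_! \dashv \delta^*$ is a Quillen equivalence (Corollary~\ref{st:delta_! -| delta^* is QEq}) is \emph{deduced from} the present theorem, so that route is only non-circular if you import the classical fact about the diagonal model structure from the literature, as you indicate, rather than from this paper.
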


\begin{proof}
Lemma~\ref{st:nat trafo ev_* -> S_e} provides a natural weak equivalence between the right derived functors of $\ev_*$ and $S_e$.
By Theorem~\ref{st:tc -| ev_* is QEq}, the functor $\ev_*$ is a right Quillen equivalence, and it follows that so is $S_e$.
\end{proof}

\begin{corollary}
\label{st:delta_! -| delta^* is QEq}
The Quillen adjunction 
\begin{equation}
\begin{tikzcd}
	\delta_! : \sSet \ar[r, shift left=0.14cm, "\perp"' yshift=0.05cm]
	& L_{\Delta^1 \boxtimes \Delta^0} s\sSet : \delta^*\,. \ar[l, shift left=0.14cm]
\end{tikzcd}
\end{equation}
is a Quillen equivalence.
\end{corollary}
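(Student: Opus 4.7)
The plan is to apply the two-out-of-three property for Quillen equivalences to the factorisation $L_e = \Delta_{e!} \circ \delta_!$ and $S_e = \delta^* \circ \Delta_e^*$ recorded in Proposition~\ref{st:L_e -| S_e -| R_e}.

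First I would reconcile notation. By Proposition~\ref{st:L_J SSp = L_J CSS}, the four model categories $L_{\bbDelta \boxtimes \Delta^0} s\sSet$, $L_{\Delta^\bullet \boxtimes \Delta^0} s\sSet$, $L_{\Delta^1 \boxtimes \Delta^0} \SSp$, and $L_{\Delta^1 \boxtimes \Delta^0} \CSS$ all coincide. Under this identification, the Quillen adjunction $\delta_! \dashv \delta^*$ in the statement agrees with the one established in Proposition~\ref{st:delta_! -| delta^* is QAd}.

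Next I would assemble the factorisation. Composing the Quillen adjunction $\delta_! \dashv \delta^*$ from Proposition~\ref{st:delta_! -| delta^* is QAd} with the Quillen adjunction $\Delta_{e!} \dashv \Delta_e^*$ from Proposition~\ref{st:D_e! -| D_e^* is QAd} yields a chain
\begin{equation}
\begin{tikzcd}
	\sSet \ar[r, shift left=0.14cm, "\delta_!"]
	& L_{\Delta^\bullet \boxtimes \Delta^0} s\sSet \ar[l, shift left=0.14cm, "\delta^*"] \ar[r, shift left=0.14cm, "\Delta_{e!}"]
	& \scH_\infty^{i I} \ar[l, shift left=0.14cm, "\Delta_e^*"]
\end{tikzcd}
\end{equation}
whose total Quillen adjunction is exactly $L_e = \Delta_{e!} \circ \delta_! \,\dashv\, \delta^* \circ \Delta_e^* = S_e$.

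Finally, Theorem~\ref{st:L_e -| S_e is QEq} says that the composite $L_e \dashv S_e$ is a Quillen equivalence, and Theorem~\ref{st:D_e! -| D_e^* is QEq} says that the right-hand factor $\Delta_{e!} \dashv \Delta_e^*$ is already a Quillen equivalence. The two-out-of-three property for Quillen equivalences, as packaged in Corollary~\ref{st:transfer of QEq property}, then forces the remaining factor $\delta_! \dashv \delta^*$ to be a Quillen equivalence as well. The argument is essentially formal: the substantive work was done in proving Theorems~\ref{st:L_e -| S_e is QEq} and~\ref{st:D_e! -| D_e^* is QEq}, and no further obstacle arises here beyond the notational identification via Proposition~\ref{st:L_J SSp = L_J CSS}.
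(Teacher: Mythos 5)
Your argument is correct and is essentially the paper's own proof: the paper likewise deduces the claim from the factorisation $L_e = \Delta_{e!}\circ\delta_!$, the fact that $L_e \dashv S_e$ and $\Delta_{e!} \dashv \Delta_e^*$ are Quillen equivalences (Theorems~\ref{st:L_e -| S_e is QEq} and~\ref{st:D_e! -| D_e^* is QEq}), and the two-out-of-three property for Quillen equivalences. Your explicit notational reconciliation via Proposition~\ref{st:L_J SSp = L_J CSS} is a harmless (and welcome) extra remark, not a difference in method.
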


\begin{proof}
This follows from the fact that $L_e = \Delta_{e!} \circ \delta_!$, together with Theorem~\ref{st:D_e! -| D_e^* is QEq} and the two-out-of-three property for Quillen equivalences.
\end{proof}

Corollary~\ref{st:delta_! -| delta^* is QEq} becomes particularly interesting in light of Proposition~\ref{st:L_J SSp = L_J CSS}:
it establishes a Quillen equivalence between the Kan-Quillen model structure on simplicial sets and each of the model structures in Proposition~\ref{st:L_J SSp = L_J CSS}.
In other words, Corollary~\ref{st:delta_! -| delta^* is QEq} shows that each of the model categories from Proposition~\ref{st:L_J SSp = L_J CSS} is a model category for $\infty$-groupoids.

\section{Comparison of spaces constructed from simplicial presheaves}
\label{sec:comparison of spaces from presheaves}

In Sections~\ref{sec:L_I H and first spaces from presheaves} and~\ref{sec:S_e and its properties} we have seen several ways of extracting a space from a simplicial presheaf on $\Cart$.
The main goal of Section~\ref{sec:comparison results} is to establish comparisons between the resulting spaces.
We use these to show that $S_e$ is a model for the homotopy colimit of $\Cart^\opp$-shaped diagrams of simplicial sets in Section~\ref{sec:S_e as hocolim}.
In Section~\ref{sec:Approx Thm for mfds} we show that applying $S_e$ to the presheaf associated to a manifold reproduces the homotopy type of the topological space underlying the manifold.

\subsection{Comparison results}
\label{sec:comparison results}

The right adjoint of $S_e = \delta^* \circ \Delta_e^*$ is given as $R_e = \Delta_{e*} \circ \delta_*$.
We start by making this functor more explicit:
consider a simplicial set $K \in \sSet$ and a cartesian space $c \in \Cart$.
Since the adjunction $S_e \dashv R_e$ is simplicial, there are natural isomorphisms
\begin{align}
\label{eq:R_e explicit}
	(R_e K)(c) &\cong \ul{\scH}(\scY_c, R_e K)
	\\*
	&\cong \ul{\sSet}(S_e \scY_c, K)
	\\*
	&\cong K^{S_e (\scY_c)}\,.
\end{align}
The following lemma is then immediate:

\begin{lemma}
\label{st:ev_* S = 1}
There exist canonical natural isomorphisms
\begin{equation}
	S_e \circ \tilde{\sfc} \cong 1_\sSet\,,
	\qqandqq
	\ev_* \circ R_e \cong 1_\sSet\,.
\end{equation}
\end{lemma}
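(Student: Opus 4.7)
The plan is to establish the two natural isomorphisms by direct computation from the definitions, exploiting the fact that the terminal cartesian space $*$ makes evaluation of $\Delta_e$ trivial and that $\tilde{\sfc}$ lands in presheaves that are constant on $\Cart^\opp$.

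First I would verify $S_e \circ \tilde{\sfc} \cong 1_\sSet$. Since $\tilde{\sfc}K$ is the constant presheaf with value $K \in \sSet$, the bisimplicial set $\Delta_e^* \tilde{\sfc} K$ has $n$-th simplicial component $(\tilde{\sfc}K)(\Delta_e^n) = K$, with all face and degeneracy maps in the outer simplicial direction given by the identity on $K$. That is, $\Delta_e^* \tilde{\sfc} K \cong \Delta^0 \boxtimes K$ as bisimplicial sets, and applying the diagonal yields $\big( \delta^*(\Delta^0 \boxtimes K) \big)_m = K_m$. The resulting isomorphism $S_e \tilde{\sfc} K \cong K$ is manifestly natural in $K$.

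For $\ev_* \circ R_e \cong 1_\sSet$, the cleanest route is to invoke uniqueness of adjoints. We have the two adjunctions $\tilde{\sfc} \dashv \ev_*$ from~\eqref{eq:tc -| ev_*} and $S_e \dashv R_e$ from Proposition~\ref{st:L_e -| S_e -| R_e}, so that $\ev_* \circ R_e$ is right adjoint to $S_e \circ \tilde{\sfc} \cong 1_\sSet$; since $1_\sSet$ is its own right adjoint, a canonical isomorphism $\ev_* \circ R_e \cong 1_\sSet$ follows. If one prefers a direct computation, the explicit formula~\eqref{eq:R_e explicit} gives
\begin{equation}
	\ev_* R_e K \;=\; (R_e K)(*) \;\cong\; K^{S_e(\scY_*)}\,,
\end{equation}
and since $* \in \Cart$ is terminal we have $\scY_* \cong *$ in $\scH_\infty$; hence $S_e(\scY_*) \cong *$, either by the first isomorphism applied to $\tilde{\sfc}(*) = \scY_*$ or directly from $\Delta_e^*(\scY_*)_n = *$ for every $n$, yielding $K^* \cong K$ naturally in $K$.

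There is essentially no technical obstacle here; both assertions reduce to the triviality of evaluating the extended simplices at the terminal cartesian space, combined with the constancy of $\tilde{\sfc}$.
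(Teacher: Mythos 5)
Your proposal is correct and follows essentially the same route the paper intends: the paper states the lemma as ``immediate'' right after the explicit formula~\eqref{eq:R_e explicit}, and your direct computations --- $\Delta_e^*\tilde{\sfc}K$ is the constant bisimplicial set $\Delta^0 \boxtimes K$ whose diagonal is $K$, and $(R_e K)(*) \cong K^{S_e(\scY_*)} \cong K$ since $S_e(\scY_*) \cong \Delta^0$ --- are exactly the omitted details. The additional observation that $\ev_* \circ R_e$ must be isomorphic to $1_\sSet$ by uniqueness of right adjoints to $S_e \circ \tilde{\sfc} \cong 1_\sSet$ is a valid and tidy alternative, but not a substantively different argument.
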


It follows that there exists a natural isomorphism $\ev_* \circ R_e \circ \Sing \cong \Sing$.

\begin{lemma}
\label{st:Sing and exponential}
There is an isomorphism
\begin{equation}
	\Sing(T)^K \cong \Sing \big( T^{|K|} \big)\,,
\end{equation}
natural in both $T \in \Top$ and in $K \in \sSet$.
\end{lemma}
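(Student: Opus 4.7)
The plan is to prove the isomorphism by testing against representables and chaining together standard adjunctions. By the Yoneda lemma applied to $\sSet$, it suffices to exhibit a natural isomorphism
\begin{equation}
	\sSet(\Delta^n, \Sing(T)^K) \cong \sSet(\Delta^n, \Sing(T^{|K|}))
\end{equation}
for every $n \in \NN_0$, natural in $T$, $K$, and $n$. The isomorphism of the full simplicial sets will then follow from the Yoneda embedding being fully faithful.

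To produce this bijection, I would chain together the following natural isomorphisms. First, cartesian closedness of $\sSet$ gives $\sSet(\Delta^n, \Sing(T)^K) \cong \sSet(\Delta^n \times K, \Sing T)$. Next, the adjunction $|{-}| \dashv \Sing$ converts this to $\Top(|\Delta^n \times K|, T)$. The key input, already recorded in the background on $\Top$ at the start of Section~\ref{sec:Re -| S} (and proved there by the argument of~\cite[Lemma~3.18]{Hovey:MoCats}), is that $|{-}| \colon \sSet \to \Top$ preserves finite products; this yields $\Top(|\Delta^n| \times |K|, T)$. Cartesian closedness of $\Top$ then gives $\Top(|\Delta^n|, T^{|K|})$, and a final application of $|{-}| \dashv \Sing$ delivers $\sSet(\Delta^n, \Sing(T^{|K|}))$. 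Each bijection is manifestly natural in all three variables, so composing them proves the claim.

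The only non-formal ingredient is the preservation of finite products by geometric realisation, but this has already been invoked in the preceding material, so the remaining content is a routine chase of adjunctions. There is no substantive obstacle. An alternative (equivalent) formulation would be to note that $|{-}| \dashv \Sing$ is a simplicially enriched, monoidal Quillen adjunction, so that $\Sing$ automatically preserves cotensors: for $T \in \Top$ and $K \in \sSet$ one has $\Sing(T)^K = \Sing(T)^{\Sing|K|}$ transported across the enrichment, which by the general formula $R(X^{FY}) \cong (RX)^Y$ for a monoidal right adjoint $R$ with strong monoidal left adjoint $F$ yields the stated isomorphism.
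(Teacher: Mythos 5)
Your argument is correct and is essentially the paper's proof: the paper simply runs the same chain at the level of enriched hom objects, writing $\Sing(T)^K = \ul{\sSet}(K,\Sing T) \cong \ul{\Top}(|K|,T) = \Sing(T^{|K|})$ using that $|{-}|\dashv\Sing$ is simplicial (because $|{-}|$ preserves finite products) and that $\Top$ is cartesian closed, which is exactly your levelwise Yoneda chase and, even more directly, your closing "alternative formulation".
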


\begin{proof}
Since the adjunction $|{-}| \dashv \Sing$ is simplicial (because $|{-}|$ preserves finite products), we have binatural isomorphisms
\begin{align}
	\Sing(T)^K &= \ul{\sSet} \big( K, \Sing(T) \big)
	\\*
	&\cong \ul{\Top}(|K|, T)
	\\*
	&= \Sing(T^{|K|})\,.
\end{align}
Here we have used that $\Top$ is cartesian closed and simplicially enriched.
\end{proof}

Recall the diffeological topology functor from Definition~\ref{def:D}.

\begin{lemma}
\label{st:mps S_e -> Sing D, |-| S_e -> D}
For every manifold $M \in \Mfd$ there exist morphisms
\begin{equation}
	\varphi_M \colon S_e \ul{M} \longrightarrow \Sing (\rmD \ul{M})
	\qqandqq
	\psi_M \colon |{-}| \circ S_e (\ul{M}) \longrightarrow \rmD \ul{M}
\end{equation}
of simplicial sets and of topological spaces, respectively.
These assemble into natural transformations
\begin{equation}
	\varphi \colon S_e \longrightarrow \Sing \circ \rmD
	\qqandqq
	\psi \colon |{-}| \circ S_e \longrightarrow \rmD
\end{equation}
of functors $\Mfd \to \sSet$ and $\Mfd \to \Top$, respectively.
\end{lemma}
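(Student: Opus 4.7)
The plan is to construct $\varphi_M$ first in simplicial level $n$ and then obtain $\psi_M$ by transposing under the Quillen adjunction $|{-}| \dashv \Sing$. Unpacking definitions, $(S_e \ul{M})_n = (\delta^* \Delta_e^* \ul{M})_n = \ul{M}(\Delta_e^n) = \Mfd(\Delta_e^n, M)$, viewed as a discrete simplicial set, while $\Sing(\rmD M)_n = \Top(|\Delta^n|, \rmD M)$. The evident way to produce a continuous map from a smooth one is to pass through $\rmD$ and then restrict along the canonical inclusion $\iota^n \colon |\Delta^n| \hookrightarrow \rmD \Delta_e^n$. Concretely, I would set
\begin{equation}
  \varphi_{M,n} \colon \Mfd(\Delta_e^n, M) \longrightarrow \Top\bigl(|\Delta^n|, \rmD M\bigr),
  \qquad f \longmapsto \rmD(f) \circ \iota^n.
\end{equation}

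Next I would check that the collection $\{\varphi_{M,n}\}_{n \in \NN_0}$ defines a morphism of simplicial sets. Given $\sigma \in \bbDelta([n],[k])$ and $f \in \Mfd(\Delta_e^k, M)$, the $\sigma$-action on $(S_e \ul{M})_k$ sends $f$ to $f \circ \Delta_e(\sigma)$, and thus
\begin{equation}
  \varphi_{M,n}\bigl(f \circ \Delta_e(\sigma)\bigr)
  = \rmD f \circ \rmD \Delta_e(\sigma) \circ \iota^n,
\end{equation}
while the $\sigma$-action on $\Sing(\rmD M)_k$ sends $\varphi_{M,k}(f) = \rmD f \circ \iota^k$ to $\rmD f \circ \iota^k \circ |\Delta|(\sigma)$. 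These two expressions coincide precisely because of the naturality square~\eqref{eq:iota is natural}, which says $\rmD \Delta_e(\sigma) \circ \iota^n = \iota^k \circ |\Delta|(\sigma)$. So $\varphi_M$ is well-defined as a map of simplicial sets.

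For naturality in $M$, given $g \in \Mfd(M,N)$ I would observe that both $\Sing(\rmD g) \circ \varphi_M$ and $\varphi_N \circ S_e(\ul{g})$ send a smooth $f \colon \Delta_e^n \to M$ to $\rmD g \circ \rmD f \circ \iota^n$, which agrees by functoriality of $\rmD$. Once $\varphi$ is established as a natural transformation $S_e \circ \ul{({-})} \Rightarrow \Sing \circ \rmD$ of functors $\Mfd \to \sSet$, I would then define
\begin{equation}
  \psi_M \colon |S_e \ul{M}| \longrightarrow \rmD M
\end{equation}
as the image of $\varphi_M$ under the adjunction isomorphism $\Top(|{-}|, \rmD M) \cong \sSet({-}, \Sing(\rmD M))$; naturality of $\psi$ in $M$ is then automatic from naturality of $\varphi$ and of the counit of $|{-}| \dashv \Sing$.

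There is no real obstacle here: the content of the lemma is essentially a bookkeeping exercise with the inclusion $\iota \colon |\Delta| \to \rmD \Delta_e$ already introduced in~\eqref{eq:iota is natural}, combined with functoriality of $\rmD$ and the standard $|{-}| \dashv \Sing$ adjunction. The only point requiring care is the simplicial compatibility check, which reduces immediately to the cosimplicial naturality of $\iota$.
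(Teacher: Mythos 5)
Your construction is exactly the paper's: you define $\varphi_{M,n}(f) = \rmD(f) \circ \iota^n$, verify simplicial and $M$-naturality via the cosimplicial naturality of $\iota$ and functoriality of $\rmD$, and obtain $\psi_M$ as the adjoint transpose of $\varphi_M$ (the paper writes this as $e_M \circ |\varphi_M|$, which is the same thing). The argument is correct and matches the paper's proof essentially step for step.
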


\begin{proof}
Let $n \in \NN_0$ and consider the set $\ul{M}(\Delta_e^n) = \Mfd(\Delta_e^n, M)$; it is the set of all smooth maps $\Delta_e^n \to M$ of manifolds.
Recall the morphism $\iota^\bullet \colon |\Delta^\bullet| \longrightarrow \rmD \Delta_e^\bullet$ of cosimplicial topological spaces from~\eqref{eq:iota is natural}.
If $f \colon \Delta_e^n \to M$ is any smooth map, then the composition $f \circ \iota^n \colon |\Delta^n| \to \rmD \ul{M}$ is continuous.
Here we have used that $\rmD \ul{M}$ coincides with the underlying topological space of the manifold $M$ (see Proposition~\ref{st:D on mfds and products}).
This provides a map
\begin{equation}
	\varphi_{M|n} \colon \ul{M}(\Delta_e^n) = \Mfd(\Delta_e^n, M)
	\overset{\rmD}{\longrightarrow} \Top(\rmD \ul{\Delta}_e^n, \rmD \ul{M})
	\overset{(\iota_n)^*}{\longrightarrow} \Top( |\Delta^n|, \rmD \ul{M})\,.
\end{equation}
Since $\iota^\bullet$ is a morphism of cosimplicial topological spaces, and since the maps $\varphi_{M|n}$ are defined by precomposition by $\iota^n$, it readily follows that $\varphi_{M|n}$ is natural in both $M \in \Mfd$ and $n \in \bbDelta$.
Thus, we obtain the desired morphism of simplicial sets
\begin{equation}
	\varphi_M \colon S_e \ul{M} \longrightarrow \Sing (\rmD \ul{M})\,.
\end{equation}
The composition
\begin{equation}
\label{eq:psi_M}
	\psi_M \colon |S_e \ul{M}| \xrightarrow{|\varphi_M|} |\Sing (\rmD \ul{M})|
	\xrightarrow[\sim]{e_M} \rmD \ul{M}
\end{equation}
then defines the morphism $\psi_M$, where $e \colon |{-}| \circ \Sing \weq 1_\Top$ is the counit of the Quillen equivalence $|{-}| \dashv \Sing$.
\end{proof}

\begin{lemma}
\label{st:nat weqs |-| S_e sim D on Cart}
The restrictions of $\varphi$ and $\psi$ to $\Cart \subset \Mfd$ are natural weak equivalences of functors $\Cart \to \sSet$ and $\Cart \to \Top$, respectively.
\end{lemma}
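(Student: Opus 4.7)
My plan is to reduce the statement to the fact that both the source and the target of $\varphi_c$ are weakly contractible for every $c \in \Cart$. First I fix $c \in \Cart$ and note that $c \cong \RN^n$ for some $n \in \NN_0$, so by Proposition~\ref{st:D on mfds and products}(1) the underlying topological space $\rmD c$ is (homeomorphic to) $\RN^n$ and thus contractible. Since $|{-}| \dashv \Sing$ is a Quillen equivalence and every topological space is fibrant, this gives $\Sing(\rmD c) \simeq *$ in $\sSet$. On the other side, Proposition~\ref{st:S_e(c to *) is weq} shows that the collapse map $S_e \scY_c \to *$ is a weak equivalence in $\sSet$, i.e.\ $S_e \ul{c} \simeq *$ as well. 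Thus both vertices of $\varphi_c$ are weakly contractible, and a two-out-of-three argument applied to the diagram $S_e \ul{c} \to \Sing(\rmD c) \to *$ shows that $\varphi_c$ is itself a weak equivalence in $\sSet$. Naturality in $c$ is already included in the statement of Lemma~\ref{st:mps S_e -> Sing D, |-| S_e -> D}.

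The statement for $\psi$ will then follow formally from the construction~\eqref{eq:psi_M}, which factors $\psi_c$ as the composition $|S_e \ul{c}| \xrightarrow{|\varphi_c|} |\Sing(\rmD c)| \xrightarrow{e_c} \rmD c$. The first arrow is a weak equivalence because $|{-}| \colon \sSet \to \Top$ is a left Quillen functor and every simplicial set is cofibrant, so $|{-}|$ preserves all weak equivalences; hence $|\varphi_c|$ is a weak equivalence by the paragraph above. The second arrow is the counit of the Quillen equivalence $|{-}| \dashv \Sing$ evaluated at $\rmD c$; since $\Sing(\rmD c)$ is cofibrant and $\rmD c$ is fibrant, $e_c$ is a weak equivalence in $\Top$. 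Applying two-out-of-three to the composition~\eqref{eq:psi_M} concludes that $\psi_c$ is a weak equivalence in $\Top$.

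I do not anticipate a serious obstacle: the only real input is the contractibility of every $c \in \Cart$, together with results already stated in the paper (Propositions~\ref{st:S_e(c to *) is weq} and~\ref{st:D on mfds and products}(1)) and the standard Quillen equivalence $|{-}| \dashv \Sing$. The one point to be careful about is that the argument genuinely uses the restriction to $\Cart$: on a general manifold $M$, neither $\rmD M$ nor $S_e \ul{M}$ need be contractible, so one cannot conclude $\varphi_M$ is a weak equivalence by such a direct contractibility argument—this is precisely what makes the Whitehead Approximation Theorem (Theorem~\ref{st:intro Whitehead}) non-trivial and motivates the later sections.
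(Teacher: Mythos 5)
Your proposal is correct and follows essentially the same route as the paper: the paper's proof likewise observes that for every $c \in \Cart$ both $|S_e \scY_c|$ (resp.\ $S_e\scY_c$) and $\rmD c$ (resp.\ $\Sing \rmD c$) are weakly contractible and concludes by two-out-of-three. Your extra details (citing Proposition~\ref{st:S_e(c to *) is weq} and Proposition~\ref{st:D on mfds and products}(1), and deducing the statement for $\psi$ from the factorisation~\eqref{eq:psi_M} via the counit of $|{-}| \dashv \Sing$) are accurate and consistent with what the paper leaves implicit.
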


\begin{proof}
This follows readily from the observation that, for any cartesian space $c \in \Cart$, both $|S_e \scY_c|$ and $\rmD (\scY_c)$ are weakly equivalent to $* \in \Top$.
Hence, by the two-out-of-three property of weak equivalences any morphism $|S_e \scY_c| \longrightarrow \rmD (\scY_c)$ is a weak equivalence.
\end{proof}

\begin{proposition}
\label{st:weq S --> R_e o Sing}
There exists a natural weak equivalence
\begin{equation}
\begin{tikzcd}[column sep=2cm]
	\Top
	\ar[r, bend left=20, "R_e \circ \Sing", ""{name=U, inner sep=0pt, below, pos=.5}]
	\ar[r, bend right=20, "S"', ""{name=V, inner sep=0pt, above, pos=.5}]
	& \scH^{p/i\, I}\,.
	\arrow[Rightarrow, from=V, to=U, "\eta", "\sim"', shorten <= 2, shorten >= 2]
\end{tikzcd}
\end{equation}
\end{proposition}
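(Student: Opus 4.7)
The plan is to construct $\eta$ from the natural transformation $\psi \colon |{-}| \circ S_e \longrightarrow \rmD$ of Lemma~\ref{st:mps S_e -> Sing D, |-| S_e -> D}. For $T \in \Top$ and $c \in \Cart$, the explicit descriptions~\eqref{eq:Re S explicit} and~\eqref{eq:R_e explicit}, combined with Lemma~\ref{st:Sing and exponential}, give natural identifications
\begin{equation}
	S(T)(c) \cong \Sing\bigl( T^{\rmD \ul{c}} \bigr)
	\qquad\text{and}\qquad
	(R_e \circ \Sing)(T)(c) \cong \Sing(T)^{S_e \scY_c} \cong \Sing\bigl( T^{|S_e \scY_c|} \bigr).
\end{equation}
Under these identifications I would define
\begin{equation}
	\eta_{T}(c) \coloneqq \Sing\bigl( T^{\psi_c} \bigr) \colon
	\Sing\bigl( T^{\rmD \ul{c}} \bigr) \longrightarrow \Sing\bigl( T^{|S_e \scY_c|} \bigr).
\end{equation}
Naturality in $c$ is the commutativity of a square that, after stripping away the functors $\Sing$ and $T^{(-)}$, reduces to the naturality of $\psi$ as a natural transformation of functors $\Cart \to \Top$ (Lemma~\ref{st:mps S_e -> Sing D, |-| S_e -> D}). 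Naturality in $T$ is immediate from the contravariant functoriality of the exponential and the functoriality of $\Sing$.

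To conclude that $\eta_T$ is a weak equivalence in $\scH_\infty^{p/i\, I}$, I would prove the stronger statement that $\eta_T$ is an objectwise weak equivalence in $\scH_\infty$; such morphisms are weak equivalences in $\scH_\infty^{p/i}$ and hence also in its left Bousfield localisation $\scH_\infty^{p/i\, I}$. Fix $c \in \Cart$. By Lemma~\ref{st:nat weqs |-| S_e sim D on Cart}, the map $\psi_c \colon |S_e \scY_c| \to \rmD \ul{c}$ is a weak equivalence in $\Top$ between cofibrant objects: geometric realisations of simplicial sets are cofibrant in $\Top$, and $\rmD \ul{c} \cong \RN^n$ admits a CW structure. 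Since $\Top$ is a symmetric monoidal model category in which every object is fibrant, the internal hom $T^{(-)}$ sends weak equivalences between cofibrant objects to weak equivalences in $\Top$ (a consequence of the pushout-product axiom together with Ken Brown's lemma); thus $T^{\psi_c}$ is a weak equivalence. Applying $\Sing$, which preserves all weak equivalences in $\Top$ because every object of $\Top$ is fibrant, yields the weak equivalence $\eta_T(c)$ in $\sSet$.

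The main obstacle I anticipate is purely bookkeeping: ensuring that the chain of identifications above is compatible with the simplicial-presheaf structure on both $S(T)$ and $R_e(\Sing T)$, so that $\eta_T$ really defines a morphism in $\scH_\infty$ rather than merely a compatible family on sections. This is resolved by noting that each identification is simplicially enriched and natural in $c$, so that the only non-formal input---the commutativity of the naturality square for $\eta_T$ in $c$---is precisely the naturality of $\psi$ established in Lemma~\ref{st:mps S_e -> Sing D, |-| S_e -> D}.
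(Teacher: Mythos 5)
Your construction of $\eta$ is exactly the paper's: both use the identifications $S(T)(c) \cong \Sing\bigl(T^{\rmD\ul{c}}\bigr)$ and $(R_e\circ\Sing)(T)(c)\cong\Sing\bigl(T^{|S_e\scY_c|}\bigr)$ and define $\eta_T=\Sing\bigl(T^{\psi}\bigr)$, with naturality coming from the naturality of $\psi$. Where you diverge is in the verification that $\eta_T$ is a weak equivalence, and your route is valid. The paper does not argue objectwise: it notes that $S(T)$ and $R_e\circ\Sing(T)$ are both fibrant in $\scH_\infty^{p\,I}$, computes that $\ev_*$ applied to $\eta_T$ is the identity on $\Sing(T)$ (since $\psi_{|*}$ is the identity), and then invokes the rigidity statement (Proposition~\ref{st:rigidity}), i.e.\ that $\ev_*$ reflects weak equivalences between $I$-local fibrant objects; the injective case is then obtained from Proposition~\ref{st:same weqs}. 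You instead prove the stronger claim that $\eta_T$ is an \emph{objectwise} weak equivalence: $\psi_c$ is a weak equivalence between cofibrant objects of $\Top$ for every $c\in\Cart$, the functor $T^{(-)}$ sends such maps to weak equivalences because $\Top$ is a monoidal model category with every object fibrant (right-Quillen-ness of $\Hom(-,T)$ plus Ken Brown), and $\Sing$ preserves all weak equivalences of $\Top$. This is correct and buys a uniform treatment of the projective and injective cases and a conclusion independent of the $I$-localisation machinery (no fibrancy of $S(T)$ or $R_e\Sing(T)$ in $\scH_\infty^{p\,I}$, no rigidity); the paper's argument buys brevity, needing only the single value at $*\in\Cart$, where $\psi_{|*}$ is literally the identity, rather than cofibrancy and monoidal-model-category input in $\Top$ at every $c$. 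Note also that the two conclusions are consistent: an $I$-local equivalence between $I$-local fibrant objects is automatically objectwise, so your stronger statement is what the paper implicitly obtains as well.
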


\begin{proof}
It suffices to consider the projective case since $\scH^{p I}$ and $\scH^{i I}$ have the same weak equivalences (Proposition~\ref{st:same weqs}).
Given a topological space $T \in \Top$, by Equation~\eqref{eq:R_e explicit} and Lemma~\ref{st:Sing and exponential} we have
\begin{equation}
	R_e \circ \Sing (T)(c) \cong \Sing \big( T^{|S_e \scY_c|} \big)
	\qqandqq
	S(T)(c) \cong \Sing \big( T^{\rmD \ul{c}} \big)\,.
\end{equation}
The natural morphisms $\psi$ from Lemma~\ref{st:mps S_e -> Sing D, |-| S_e -> D} induce a morphism
\begin{equation}
	\eta_{|T} \coloneqq \Sing \big( T^{\psi} \big) \colon S(T) \longrightarrow R_e \circ \Sing (T)
\end{equation}
in $\scH$, which is natural in $T \in \Top$.

In order to see that the morphism $\Sing (T^{\psi})$ is a weak equivalence in $\scH^{p I}$, we first observe that both $S(T)$ and $R_e \circ \Sing(T)$ are fibrant in $\scH^{p I}$.
We have that
\begin{equation}
	\ev_* \big( S(T) \big) = \Sing(T)
	\qqandqq
	\ev_* \big( R_e \circ \Sing(T) \big)
	= \Sing \big( T^{|S_e (*)|} \big)
	= \Sing (T)\,.
\end{equation}
Further, the morphism $\psi_{|*} \colon |S_e(*)| = * \to * = \rmD(*)$ is the identity, so that also the morphism
\begin{equation}
	\Big( \Sing \big( T^{\psi} \big) \Big)_{|*} \colon S(T)(*) = \Sing(T) \longrightarrow \Sing(T) = R_e \circ \Sing (T)(*)
\end{equation}
is the identity.
The fact that $\eta$ is a natural weak equivalence now follows from the fact that the right Quillen equivalence $\ev_* \colon \scH^{p I} \to \sSet$ reflects weak equivalences between fibrant objects (which was also the content of Proposition~\ref{st:rigidity}).
\end{proof}

\begin{corollary}
There is a natural weak equivalence
\begin{equation}
\begin{tikzcd}[column sep=2cm]
	\Top
	\ar[r, bend left=25, "\Sing", ""{name=U, inner sep=0pt, below, pos=.5}]
	\ar[r, bend right=25, "S_e \circ S"', ""{name=V, inner sep=0pt, above, pos=.5}]
	& \sSet\,.
	\arrow[Rightarrow, from=V, to=U, "\eta'", "\sim"', shorten <= 2, shorten >= 2]
\end{tikzcd}
\end{equation}
\end{corollary}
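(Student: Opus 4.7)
My plan is to obtain $\eta'$ by adjoining the natural weak equivalence $\eta \colon S \to R_e \circ \Sing$ constructed in the preceding proposition across the simplicial Quillen adjunction $S_e \dashv R_e$ (Proposition~\ref{st:L_e -| S_e -| R_e}). Concretely, for $T \in \Top$ I would set
\begin{equation}
	\eta'_T \;\coloneqq\; \varepsilon_{\Sing T} \circ S_e(\eta_T) \colon S_e S(T) \longrightarrow S_e R_e \Sing(T) \longrightarrow \Sing(T),
\end{equation}
where $\varepsilon \colon S_e \circ R_e \Rightarrow 1_{\sSet}$ is the counit of $S_e \dashv R_e$. Naturality in $T$ follows from naturality of $\eta$ together with naturality of the counit.

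To verify that each $\eta'_T$ is a weak equivalence in $\sSet$, I would invoke the natural transformation $\gamma \colon \ev_* \Rightarrow S_e$ from Lemma~\ref{st:nat trafo ev_* -> S_e}. Since $\Sing(T)$ is a Kan complex, the object $S(T)$ is fibrant in $\scH_\infty^{p/i\, I}$ by Proposition~\ref{st:Re -| S on H^pI}, so $\gamma_{S(T)} \colon \ev_* S(T) \to S_e S(T)$ is a weak equivalence. Using the canonical identification $\ev_* S(T) = S(T)(*) = \Sing(T^{\rmD(*)}) \cong \Sing(T)$, this produces a natural weak equivalence $\Sing \Rightarrow S_e \circ S$. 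The strategy is then to show that $\eta'_T \circ \gamma_{S(T)} = \id_{\Sing T}$, which, combined with two-out-of-three, forces $\eta'_T$ to be a weak equivalence as well. For this triangle identity I would unwind both sides: by naturality of $\gamma$, $S_e(\eta_T) \circ \gamma_{S(T)} = \gamma_{R_e \Sing T} \circ \ev_*(\eta_T)$, and $\ev_*(\eta_T)$ was already identified in the proof of the preceding proposition as $\id_{\Sing T}$ (since $\psi_*\colon |S_e(*)|=* \to *=\rmD(*)$ is the identity). So it remains to check the universal identity $\varepsilon_{\Sing T} \circ \gamma_{R_e \Sing T} = \id_{\Sing T}$, which follows from the triangle identities of the adjunction $S_e \dashv R_e$ together with the definition of $\gamma$ from the collapse maps $\Delta_e^n \to *$.

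The main obstacle is this last verification, because $\gamma$ was built out of the cosimplicial cartesian space $\Delta_e$ while $\varepsilon$ is a formal adjunction datum, so one must reconcile the two. If this combinatorial computation turns out to be awkward, there is a more direct alternative that I would fall back on: describe $\eta'_T$ explicitly in simplicial degree $n$ as the map
\begin{equation}
	\Top\bigl(|\Delta^n| \times \rmD \Delta_e^n,\, T\bigr) \longrightarrow \Top\bigl(|\Delta^n|,\, T\bigr), \qquad f \longmapsto \bigl(x \mapsto f(x, \iota^n(x))\bigr),
\end{equation}
using the natural map $\iota^n \colon |\Delta^n| \to \rmD \Delta_e^n$ from~\eqref{eq:iota is natural}. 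In this form one sees directly that $\eta'_T \circ \gamma_{S(T)} = \id_{\Sing T}$ since $\gamma_{S(T)}$ is given in degree $n$ by the pullback along the projection $|\Delta^n| \times \rmD \Delta_e^n \to |\Delta^n|$, completing the argument via two-out-of-three.
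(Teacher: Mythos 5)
Your construction of $\eta'$ is exactly the paper's: the component $\eta'_T = \varepsilon_{\Sing T} \circ S_e(\eta_T)$ is the same composite $S_e \circ S \Rightarrow S_e \circ R_e \circ \Sing \Rightarrow \Sing$. Where you diverge is the verification. The paper argues in two lines: $S_e(\eta_T)$ is a weak equivalence because $S_e$ is homotopical and $\eta$ is a weak equivalence in $\scH_\infty^{p/i\, I}$, and $\varepsilon_{\Sing T}$ is a weak equivalence because $S_e \dashv R_e$ is a Quillen equivalence (Theorem~\ref{st:S_e -| R_e is QEq}), every object of $\scH_\infty^{i I}$ is cofibrant, and $\Sing T$ is fibrant. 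You instead exhibit a natural section: $\gamma_{S(T)} \colon \Sing T \cong \ev_* S(T) \to S_e S(T)$ is a weak equivalence by Lemma~\ref{st:nat trafo ev_* -> S_e} since $S(T)$ is fibrant, you reduce $\eta'_T \circ \gamma_{S(T)} = \id$ via naturality of $\gamma$ and $\ev_*(\eta_T) = \id$ to the identity $\varepsilon_K \circ \gamma_{R_e K} = \id_K$, and conclude by two-out-of-three; your fallback degree-$n$ formula $f \mapsto \bigl(x \mapsto f(x,\iota^n(x))\bigr)$ is indeed what $\eta'_T$ unwinds to (the counit evaluates at the tautological simplex $\id_{\Delta_e^n}$, and $\psi_{\Delta_e^n}\circ|\mathrm{taut}_n| = \iota^n$), so the retraction identity checks out. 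What each route buys: the paper's is purely formal and shorter, leaning on the already-established Quillen equivalence; yours does not need Theorem~\ref{st:S_e -| R_e is QEq} nor the fact that $\eta$ is a weak equivalence at all, at the price of an explicit simplex-level computation, and it produces an explicit inverse-up-to-homotopy $\gamma_{S(T)}$, which is additional useful information. One small imprecision: Proposition~\ref{st:Re -| S on H^pI} only gives that $S(T)$ is fibrant in the \emph{projective} localisation $\scH_\infty^{p I}$ (not in $\scH_\infty^{p/i\, I}$ as you write), but that is all you need for $\gamma_{S(T)}$ to be a weak equivalence of simplicial sets, so the argument stands.
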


\begin{proof}
The functor $S_e \colon \scH^{p/i\, I} \to \sSet$ is homotopical.
Applying it to the natural weak equivalence from Proposition~\ref{st:weq S --> R_e o Sing}, we obtain a natural weak equivalence
\begin{equation}
	S_e \eta \colon S_e \circ S \weq S_e \circ R_e \circ \Sing\,.
\end{equation}
Let $e \colon S_e \circ R_e \longrightarrow 1_\sSet$ denote the counit of the adjunction $S_e \dashv R_e$.
The fact that every object in $\sSet$ is cofibrant, together with the fact that $S_e \dashv R_e$ is a Quillen equivalence imply that the morphism $e_{|K} \colon S_e \circ R_e(K) \longrightarrow K$ is a weak equivalence in $\sSet$ for every fibrant simplicial set $K$.
Since $\Sing \colon \Top \to \sSet$ takes values in fibrant simplicial sets, it follows that the composition
\begin{equation}
	\eta' \colon S_e \circ S \xrightarrow[\sim]{S_e \eta} S_e \circ R_e \circ \Sing \xrightarrow[\sim]{e_{\Sing}} \Sing
\end{equation}
is a natural weak equivalence.
\end{proof}

\begin{corollary}
Let $Q^p \colon \scH^{p I} \to \scH^{p I}$ be a cofibrant replacement functor, with associated natural weak equivalence $q^p \colon Q^p \to 1_{\scH}$.
There is a zig-zag of natural weak equivalences
\begin{equation}
\label{eq:S sim Sing Re Q zig-zag}
	S_e \ \xleftarrow[\sim]{S_e q^p} \ S_e \circ Q^p \ \xrightarrow[\sim]{\eta''} \ \Sing \circ Re \circ Q^p
\end{equation}
of functors $\scH^{p I} \longrightarrow \sSet$.
\end{corollary}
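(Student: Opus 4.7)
My plan addresses the two natural weak equivalences in the zig-zag separately, and in both cases the work reduces to facts already established in the excerpt.

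For the leftward arrow, by construction of the cofibrant replacement $q^p \colon Q^p \to 1_{\scH_\infty}$ is an objectwise weak equivalence, hence in particular a weak equivalence in $\scH_\infty^{p I}$. Since $S_e \colon \scH_\infty^{p I} \to \sSet$ is homotopical (established right before Theorem~\ref{st:S_e -| R_e is QEq}), the whiskering $S_e q^p \colon S_e \circ Q^p \to S_e$ is a natural weak equivalence.

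For the rightward arrow, I propose to define $\eta''$ as the composite
\begin{equation}
\eta''_F \colon S_e (Q^p F) \xrightarrow{\ S_e(\varepsilon_{Q^p F})\ } S_e \circ S \circ Re (Q^p F) \xrightarrow{\ \eta'_{Re(Q^p F)}\ } \Sing \circ Re (Q^p F),
\end{equation}
where $\varepsilon \colon 1_{\scH_\infty} \to S \circ Re$ is the unit of the adjunction $Re \dashv S$ and $\eta'$ is the natural weak equivalence $S_e \circ S \weq \Sing$ furnished by the preceding corollary. Naturality in $F$ is inherited from the naturality of $\varepsilon$ and $\eta'$. To check the weak-equivalence property I examine the two factors: $\eta'_{Re(Q^pF)}$ is a weak equivalence by the preceding corollary. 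For the first factor, $Q^pF$ is cofibrant in $\scH_\infty^{p I}$, while $Re(Q^pF)\in \Top$ is both cofibrant (since $Re$ is left Quillen, Proposition~\ref{st:Re -| S on H^pI}) and fibrant (every object in the model category $\Top$ of $\Delta$-generated spaces is fibrant, the fibrations being Serre fibrations). Since $Re \dashv S$ is a Quillen equivalence (Theorem~\ref{st:Re -| S is QEq}), the unit $\varepsilon_{Q^pF}$ between the cofibrant object $Q^pF$ and $S$ applied to the fibrant object $Re(Q^pF)$ coincides with the derived unit and is therefore a weak equivalence in $\scH_\infty^{p I}$. Applying the homotopical functor $S_e$ preserves this.

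Nothing in this plan requires new techniques; the argument is a careful assembly of already-proved results. The only subtlety is the observation that $Re(Q^pF)$ is automatically fibrant in $\Top$, which is what licenses replacing the derived unit by the strict one. A more geometric alternative would take the adjoint under $|{-}| \dashv \Sing$ of a natural transformation $|{-}| \circ S_e \to Re$ extending the representable-level map $\psi$ of Lemma~\ref{st:mps S_e -> Sing D, |-| S_e -> D} by cocontinuity of both functors; that route would need a separate homotopy-coend argument to check the weak equivalence on cofibrant $F$, which would be the main obstacle there.
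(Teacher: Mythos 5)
Your proposal is correct and follows essentially the same route as the paper: the left arrow is handled by the homotopicality of $S_e$, and $\eta''$ is defined as $S_e$ applied to the unit of $Re \dashv S$ at $Q^p F$ followed by $\eta'$ at $Re(Q^p F)$, with the unit being a weak equivalence on cofibrant objects because $Re \dashv S$ is a Quillen equivalence and every object of $\Top$ is fibrant. The only cosmetic difference is that you additionally note cofibrancy of $Re(Q^pF)$, which is not needed for the argument.
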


\begin{proof}
The left-facing natural transformation is a weak equivalence since $S_e$ is homotopical.
The right-facing morphism is the composition
\begin{equation}
	\eta'' \colon S_e \circ Q^p \xrightarrow{S_e u_{Q^p}} S_e \circ S \circ Re \circ Q^p
	\xrightarrow[\sim]{(\eta')_{Re Q^p}} \Sing \circ Re \circ Q^p\,,
\end{equation}
where $u \colon 1_\Top \longrightarrow S \circ Re$ is the unit morphism of the adjunction $Re \dashv S$.
Since this adjunction is a Quillen equivalence and since every object in $\Top$ is fibrant, it follows that $u_F \colon F \to S \circ Re(F)$ is a weak equivalence for every cofibrant object $F \in \scH^{p I}$.
\end{proof}

\begin{proposition}
\label{st:|-| S sim Re Q zig-zag}
There exists a zig-zag of natural weak equivalences
\begin{equation}
\label{eq:|-| S_e sim Re Q zig-zag}
	|{-}| \circ S_e \ \xleftarrow[\sim]{|S_e (q^p)|} \ |{-}| \circ S_e \circ Q^p \ \xrightarrow[\sim]{\eta'''} \ Re \circ Q^p
\end{equation}
of functors $\scH^{p I} \longrightarrow \Top$.
In particular, there exists a natural isomorphism of total left derived functors between homotopy categories
\begin{equation}
\begin{tikzcd}[column sep=2cm]
	\rmh \scH^{p I}
	\ar[r, bend left=25, "\bbL Re", ""{name=U, inner sep=0pt, below, pos=.5}]
	\ar[r, bend right=25, "\rmh |{-}| \circ \rmh S_e"', ""{name=V, inner sep=0pt, above, pos=.5}]
	& \rmh \Top\,.
	\arrow[Rightarrow, from=V, to=U, "\eta''' ", "\cong"', shorten <= 2, shorten >= 2]
\end{tikzcd}
\end{equation}
\end{proposition}

Observe that $S_e$ and $|{-}|$ are already homotopical, so that we do not need to precompose them by a cofibrant replacement in order to obtain their total left derived functors.

\begin{proof}
We readily obtain a zig-zag as in~\eqref{eq:|-| S_e sim Re Q zig-zag} by applying the functor $|{-}|$ to the zig-zag~\eqref{eq:S sim Sing Re Q zig-zag} and then postcomposing by the counit $e \colon |{-}| \circ \Sing \weq 1_\Top$.
\end{proof}

\begin{remark}
There is an alternative way of obtaining a zig-zag as in~\eqref{eq:|-| S_e sim Re Q zig-zag} directly and explicitly, when $Q^p$ is Dugger's cofibrant replacement functor for $\scH^p$~\cite{Dugger:Universal_HoThys}.
It sends a simplicial presheaf $F$ to the two-sided bar construction
\begin{equation}
	Q^p F = B^{\scH}(F,\Cart,\scY)\,,
\end{equation}
in the notation of~\cite{Riehl:Cat_HoThy}.
(The superscript indicates in which simplicial category we are forming the bar construction.)
Using that $Re$ is simplicial and commutes with colimits, and that there is a natural isomorphism $Re \circ \scY_c \cong \rmD \ul{c}$ for any cartesian space $c \in \Cart$ (cf.~Lemma~\ref{st:Re = D on Dfg}), we obtain a canonical isomorphism
\begin{equation}
	Re \circ Q^p (F) \cong B^\Top (F, \Cart, \rmD)\,.
\end{equation}
Now we use that the morphism $\psi$ from Lemma~\ref{st:mps S_e -> Sing D, |-| S_e -> D} induces a natural weak equivalence $\psi \colon |{-}| \circ S_e \weq \rmD$ of functors $\Cart \to \Top$ (cf.~Lemma~\ref{st:nat weqs |-| S_e sim D on Cart}).
Since each of the functors $F \colon \Cart^\opp \to \sSet$ and $\rmD, |{-}| \circ S_e \colon \Cart \to \Top$ takes values in cofibrant objects, \cite[Cor.~5.2.5]{Riehl:Cat_HoThy} implies that $\psi$ induces a weak equivalence
\begin{equation}
	B^\Top(-, \Cart, \psi) \colon B^\Top (-, \Cart, |{-}| \circ S_e) \weq B^\Top(-, \Cart, \rmD) = Re \circ Q^p
\end{equation}
of functors $\scH \to \Top$.

On the other hand, since both $|{-}|$ and $S_e$ are left adjoints, we have a natural isomorphism
\begin{align}
	B^\Top (F, \Cart, |{-}| \circ S_e)
	&\cong |{-}| \circ S_e \big( B^{\scH}(F,\Cart,\scY) \big)
	\\
	&= |{-}| \circ S_e \circ Q^p (F)\,.
\end{align}
Now, the morphism $q^p \colon Q^p \weq 1_{\scH}$, together with the fact that both $|{-}|$ and $S_e$ preserve weak equivalences, yield the claim.
\qen
\end{remark}

\begin{remark}
\label{rmk:S_e neq Re for Dfg spaces}
Recall the embedding $\iota \colon \Dfg \hookrightarrow \scH$ of diffeological spaces into simplicial presheaves.
By Lemma~\ref{st:Re = D on Dfg}, the composition $Re \circ \iota$ agrees with the functor $\rmD \colon \Dfg \to \Top$ that sends a diffeological space to its underlying topological space, whose topology is induced by its plots.
It is interesting to ask whether the homotopy type of $\rmD X$ agrees with that of the smooth singular complex $S_e \iota(X)$ of $X$, for any diffeological space $X \in \Dfg$.
So far, however, we only see that the homotopy type of $S_e \iota X$ agrees with the cobar construction
\begin{align}
	S_e \iota(X) &\simeq B^\Top(X,\Cart, \rmD)
	\\*
	&= \int^n |\Delta^n| \times \Big( \coprod_{c_0, \ldots, c_n \in \Cart} \rmD \ul{c}_0 \times \Cart(c_0, c_1) \times \Cart(c_{n-1}, c_n) \times X(c_n) \Big)
\end{align}
rather than with the underlying topological space $\rmD X$ of $X$.
This is in accordance with---and maybe provides some further insight to---results from~\cite{CSW:D_Topology, OT:Smooth_approx_of_Diff} that the smooth singular complex of a diffeological space $X$ is not in general equivalent to the smooth singular complex of $\rmD X$.
\qen
\end{remark}

\subsection{Homotopy colimits of smooth spaces}
\label{sec:S_e as hocolim}

We can interpret the functor $Re$---and because of Proposition~\ref{st:|-| S sim Re Q zig-zag} also the functor $S_e$---in the context of the cohesive $\infty$-topos $\bH$ of presheaves of spaces on $\Cart$ as follows (see~\cite{Schreiber:DCCT} for more background and~\cite{Bunk:String} for a brief introduction).
From the proof of Proposition~\ref{st:|-| S sim Re Q zig-zag} we see that there are canonical weak equivalences
\begin{align}
	Re \circ Q^p (F) &\cong B^\Top (F, \Cart, \rmD)
	\simeq B^\Top (F, \Cart, *)
	\cong \big| B^\sSet (F, \Cart, *) \big|\,.
\end{align}
Using the fact that the topological realisation of a bisimplicial set is independent of which simplicial direction one realises first (up to canonical isomorphism), we obtain canonical weak equivalences
\begin{align}
	\big| B^\sSet (F, \Cart, *) \big|
	&\cong \big| B^\sSet (*, \Cart^\opp, F) \big|
	\\
	&\simeq \hocolim^\Top \big( |{-}| \circ F \colon \Cart^\opp \longrightarrow \Top \big)\,.
\end{align}
Combining this with Proposition~\ref{st:|-| S sim Re Q zig-zag}, we obtain

\begin{theorem}
Each of the functors
\begin{equation}
	Re \circ Q^p \simeq |{-}| \circ S_e \circ Q^p \simeq |{-}| \circ S_e
\end{equation}
models the homotopy colimit of diagrams $\Cart^\opp \to \Top$.
That is, there is a natural isomorphism between each of these functors and $\hocolim_{\Cart^\opp}$ as functors
\begin{equation}
	\Fun(\Cart^\opp, \Top) \longrightarrow \rmh \Top\,.
\end{equation}
It follows that $S_e$ models the homotopy colimit of diagrams $\Cart^\opp \longrightarrow \sSet$.
\end{theorem}

Therefore, each of these functors presents the $\infty$-categorical colimit functor for diagrams of spaces indexed by $\Cart^\opp$.
Consequently, on the level of the underlying $\infty$-categories they each present left-adjoints to the functor that sends a space to the constant presheaf whose value is that space.
This means that both $Re$ and $S_e$ provide explicit presentations for the left-adjoint $\Pi$ in the three-fold adjunction which implements the cohesive structure on $\bH$ (see~\cite{Schreiber:DCCT}).
A different argument for the case of $S_e$ has been given in~\cite{BEBdBP:Class_sp_of_oo-sheaves} (see the bottom of p.~2); here we establish a presentation of the $\infty$-categorical adjunction $\Pi \dashv \tilde{\sfc}$ as a Quillen adjunction $\scH^{p/i} \rightleftarrows \sSet$ and a Quillen equivalence $\scH^{p/i\,I} \rightleftarrows \sSet$.

\subsection{Recovering homotopy types of manifolds}
\label{sec:Approx Thm for mfds}

Recall the fully faithful embedding $\Mfd \hookrightarrow \scH$, $M \mapsto \ul{M}$.
In this section, we show that, for any manifold $M$, the smooth singular complex $S_e \ul{M}$ and the ordinary singular complex $\Sing (\rmD \ul{M})$ of $M$ are canonically weakly equivalent.
In fact, this is a well-known and classical result in geometric topology.
We nevertheless present this statement and a short proof in a language suited to this paper as a consistency check for the properties of the functor $S_e$.
Classical proofs of this statement can be found, for instance, in~\cite[Thm.~18.7]{Lee:Smooth_Mfds} and \cite[Secs.~5.31, 5.32]{Warner:Mfds_and_LieGrps} (a version with cubes instead of simplices is in~\cite{Fulton:Algebraic_Topolpogy}).

\begin{theorem}
\label{st:|S_e M| sim DM}
The natural transformations
\begin{equation}
\begin{tikzcd}[column sep=2cm]
	\Mfd
	\ar[r, bend left=25, "\Sing \circ \rmD \circ \ul{(-)}", ""{name=U, inner sep=0pt, below, pos=.5}]
	\ar[r, bend right=25, "S_e \circ \ul{(-)}"', ""{name=V, inner sep=0pt, above, pos=.5}]
	& \sSet\,.
	\arrow[Rightarrow, from=V, to=U, "\varphi", shorten <= 2, shorten >= 2]
\end{tikzcd}
\qquad
\begin{tikzcd}[column sep=2cm]
	\Mfd
	\ar[r, bend left=25, "\rmD \circ \ul{(-)}", ""{name=U, inner sep=0pt, below, pos=.5}]
	\ar[r, bend right=25, "{|{-}| \circ S_e \circ \ul{(-)}}"', ""{name=V, inner sep=0pt, above, pos=.5}]
	& \Top\,.
	\arrow[Rightarrow, from=V, to=U, "\psi", shorten <= 2, shorten >= 2]
\end{tikzcd}
\end{equation}
introduced in Lemma~\ref{st:mps S_e -> Sing D, |-| S_e -> D} are natural weak equivalences.
In particular, the smooth singular complex of $\ul{M}$ has the same homotopy type as $\Sing M$.
\end{theorem}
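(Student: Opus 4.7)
I will reduce both statements to the case of cartesian spaces (already handled by Lemma~\ref{st:nat weqs |-| S_e sim D on Cart}) by passing to a differentiably good open cover. Fix a manifold $M$ and choose a differentiably good open cover $\scU = \{c_i \hookrightarrow M\}_{i \in \Lambda}$; such covers exist. Form the \v{C}ech nerve $\cC\scU \in \scH_\infty$, whose value at $c \in \Cart$ has $n$-simplices $\coprod_{i_0,\ldots,i_n} \Cart(c, C_{i_0\ldots i_n})$ with $C_{i_0\ldots i_n}$ as in~\eqref{eq:def C_(i_0...i_n)}. The canonical augmentation $\cC\scU \to \ul{M}$ is a weak equivalence in $\scH_\infty^{p\ell}$ by construction of the \v{C}ech model structure, hence also a weak equivalence in $\scH_\infty^{p I}$ by Corollary~\ref{st:coincidence of model structures}. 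Since $S_e$ is homotopical (Corollary~\ref{st:S_e pres and reflects weqs}), applying it yields a weak equivalence $S_e \cC\scU \weq S_e \ul{M}$.

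The first key step is to compare $S_e \cC\scU$ with a topological analogue. Consider the bisimplicial sets $T, \tilde T$ with
\begin{equation}
    T_{n,k} = \coprod_{i_0,\ldots,i_n} \Cart(\Delta_e^k, C_{i_0\ldots i_n})\,,
    \qquad
    \tilde T_{n,k} = \coprod_{i_0,\ldots,i_n} \Sing(\rmD C_{i_0\ldots i_n})_k\,.
\end{equation}
By construction $\delta^* T = S_e \cC\scU$, and the natural transformation $\varphi$ from Lemma~\ref{st:mps S_e -> Sing D, |-| S_e -> D} (applied componentwise) provides a map $\Phi \colon T \to \tilde T$ of bisimplicial sets. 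For each $n$, the restriction $\Phi_{\bullet,n} \colon T_{\bullet,n} \to \tilde T_{\bullet,n}$ is a coproduct over tuples $(i_0,\ldots,i_n)$ of the morphisms $\varphi_{C_{i_0\ldots i_n}} \colon S_e \scY_{C_{i_0\ldots i_n}} \to \Sing(\rmD C_{i_0\ldots i_n})$; since each nonempty $C_{i_0\ldots i_n}$ is cartesian, Lemma~\ref{st:nat weqs |-| S_e sim D on Cart} shows these are weak equivalences between contractible simplicial sets. As the diagonal functor preserves weak equivalences of bisimplicial sets, we obtain a weak equivalence $\delta^*\Phi \colon S_e \cC\scU \weq \delta^* \tilde T$.

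The second key step identifies $\delta^* \tilde T$ with $\Sing \rmD M$. Because $\rmD$ preserves embeddings of open subsets and finite products on manifolds (Proposition~\ref{st:D on mfds and products}), the collection $\{\rmD c_i \hookrightarrow \rmD M\}$ is an open cover of the topological space $\rmD M$ with the property that every nonempty finite intersection is contractible (being diffeomorphic to a cartesian space). Hence the Seifert--van Kampen theorem \cite[Thm.~A.3.1]{Lurie:HA} (or equivalently \cite[Thm.~1.1]{DI:Top_hypercovers_and_A1}) yields that the natural augmentation from the homotopy colimit of the simplicial diagram $[n] \mapsto \coprod \Sing(\rmD C_{i_0\ldots i_n})$ to $\Sing \rmD M$ is a weak equivalence. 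This homotopy colimit is computed, up to weak equivalence, by the diagonal $\delta^* \tilde T$ (via Reedy cofibrancy of the simplicial diagram of coproducts of contractible Kan complexes, or directly via the modified two-sided bar construction of Appendix~\ref{app:B_Ex construction}, where this identification is the main technical point). We thus obtain a weak equivalence $\delta^* \tilde T \weq \Sing \rmD M$. Naturality of $\varphi$ shows that the square with vertical arrows $\delta^*\Phi$ and $\varphi_M$ and horizontal arrows $S_e \cC\scU \to S_e \ul{M}$ and $\delta^* \tilde T \to \Sing \rmD M$ commutes, so by two-out-of-three $\varphi_M$ is a weak equivalence. Finally, the claim for $\psi_M$ follows by applying $|{-}|$ to $\varphi_M$ (a left Quillen functor between objects which are cofibrant) and using that the evaluation $|\Sing \rmD M| \weq \rmD M$ of the Quillen equivalence $|{-}| \dashv \Sing$ is a weak equivalence; cf.~\eqref{eq:psi_M}.

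\textbf{Main obstacle.} The delicate step is the second one: carefully identifying $\delta^* \tilde T$ with the homotopy colimit of the diagram of singular complexes of the cover, and confirming that the natural augmentation to $\Sing \rmD M$ arising from this identification matches the one to which \cite[Thm.~A.3.1]{Lurie:HA} applies. The modified two-sided bar construction of Appendix~\ref{app:B_Ex construction} is designed precisely to package this comparison in a way compatible with the simplicial presheaf side, avoiding any Reedy-cofibrancy or hypercover verification by hand, and enabling the naturality check that closes the commutative square above. Once this identification is in place, no smooth approximation of continuous maps is needed anywhere.
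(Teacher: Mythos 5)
Your argument is correct, and it takes a genuinely different route through the key technical step. The paper also starts from a differentiably good covering and also reduces to Lemma~\ref{st:nat weqs |-| S_e sim D on Cart} plus a topological descent theorem, but it first replaces the \v{C}ech nerve by a homotopy colimit indexed by the \emph{poset} $sA$ of finite subsets of the index set: it passes to the intersection-closed covering $\scU^{cl}$ and uses the modified bar construction of Appendix~\ref{app:B_Ex construction} (Propositions~\ref{st:hcl of cov via B_Ex'} and~\ref{st:CU sim B_Ex'(U)}) to identify $\cC\scU$ with $\hocolim_{sA^\opp}\ul{U}_{(-)}$, precisely so that Lurie's Seifert--van Kampen theorem can be applied in its literal form to a poset-indexed diagram of open subsets of $\rmD M$; it then proves that $\psi_M$ is a weak equivalence and $\varphi_M$ follows. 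You instead work directly with the $\bbDelta^\opp$-indexed \v{C}ech diagram, compare the two bisimplicial sets levelwise via $\varphi$, use that the diagonal computes the homotopy colimit, and invoke the Dugger--Isaksen hypercover theorem \cite[Thm.~1.1]{DI:Top_hypercovers_and_A1} (which applies verbatim to the \v{C}ech nerve of an open cover, and which the paper itself uses in exactly this form in the proof of Corollary~\ref{st:coincidence of model structures}); you then deduce $\psi_M$ from $\varphi_M$ by realisation. This bypasses the entire $\scU^{cl}$/$B_\Ex$ machinery, so your ``main obstacle'' paragraph somewhat overstates the role of Appendix~\ref{app:B_Ex construction}: in your argument it is not needed at all, while in the paper it is the price paid for using \cite[Thm.~A.3.1]{Lurie:HA} rather than the hypercover statement. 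What the paper's detour buys is an argument that leans only on the cited Seifert--van Kampen theorem for categories of opens (and the bar-construction technology is reused elsewhere); what your route buys is brevity, at the cost of resting on the topological hypercover theorem. Two small points to tighten: the claim that $\cC\scU \to \ul{M}$ is a \v{C}ech-local weak equivalence is not ``by construction'' of $\scH_\infty^{p\ell}$, since $M$ is not an object of the site $\Cart$; it is Lemma~\ref{st:CNerves of opcovs are Cweqs}, which rests on \cite[Cor.~A.3]{DHI:Hypercovers_and_sPShs}. Also your index notation $\Phi_{\bullet,n}$ should read $\Phi_{n,\bullet}$ given your convention $T_{n,k}$ (fix the \v{C}ech index, vary the $\Delta_e$-index); the intended levelwise statement is the right one.
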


\begin{proof}
It suffices to prove the claim for $\varphi$, since $\psi$ is the composition of $|\varphi|$ with a weak equivalence in $\Top$, and $|{-}|$ preserves and detects weak equivalences.
Let $\scU = \{U_a\}_{a \in A}$ be a differentiably good open covering of $M$.
Then, the canonical morphism $\pi \colon \cC\scU \to \ul{M}$ from the \v{C}ech nerve of $\scU$ to $\ul{M}$ is a weak equivalence in $\scH^{p/i\, \ell}$:
the morphism $\cC \scU_0 = \coprod_{a \in A} \ul{U}_a \longrightarrow \ul{M}$ is a local epimorphism, or generalised cover (in the sense of~\cite[p.~7]{DHI:Hypercovers_and_sPShs}) with respect to the Grothendieck topology of differentiably good open coverings on $\Cart$~\cite{FSS:Cech_diff_char_classes_via_L_infty}.
Hence, the claim follows by~\cite[Cor.~A.3]{DHI:Hypercovers_and_sPShs}.
Therefore, $\pi$ becomes a weak equivalence under $S_e$ by Corollaries~\ref{st:coincidence of model structures} and~\ref{st:S_e pres and reflects weqs}.

We can equivalently write
\begin{equation}
	S_e \cC\scU = \underset{\bbDelta^\opp}{\hocolim}^\sSet (S_e \cC_\bullet \scU)\,,
\end{equation}
where $\cC_n \scU = \coprod_{a_0, \ldots, a_n \in A} U_{a_0 \cdots a_n}$ is the $n$-th level of the \v{C}ech nerve of $\scU$, and where we view the simplicial presheaf $\cC \scU$ as a diagram
\begin{equation}
	\bbDelta^\opp \longrightarrow \scH\,,
	\quad
	[n] \longmapsto \sfc_\bbDelta (\cC_n \scU)\,,
\end{equation}
of simplicially constant presheaves on $\Cart$.
We thus arrive at a commutative diagram
\begin{equation}
\begin{tikzcd}[column sep=2cm, row sep=1cm]
	S_e \cC \scU = \underset{\bbDelta^\opp}{\hocolim}^\sSet (S_e \cC_\bullet \scU)
	\ar[r, "\hocolim (\varphi)"] \ar[d, "S_e \pi"']
	& \underset{\bbDelta^\opp}{\hocolim}^\sSet \big( \Sing \circ \rmD (\cC_\bullet \scU) \big)
	\ar[d]
	\\
	S_e \ul{M}
	\ar[r, "\varphi_M"']
	& \Sing \circ \rmD (\ul{M})
\end{tikzcd}
\end{equation}
in $\sSet$.
We have already argued that the left-hand vertical morphism is a weak equivalence.
The top horizontal morphism is a weak equivalence as well:
for each $[n] \in \bbDelta$, the morphism $\varphi$ induces a morphism between simplicial sets which are a disjoint union of contractible components.
On the level of connected components, $\varphi$ induces a bijection, and hence is a weak equivalence.
Finally, the classical Nerve Theorem (see e.g.~\cite{Borsuk:Systems_of_compacta, Leray:Homol_of_loc_const_sp, Segal:Cats_and_Coho_Thys, DI:Top_hypercovers_and_A1}) implies that the right-hand vertical morphism is an equivalence as well.
\end{proof}

\section{Local fibrant replacement, concordance and mapping spaces}
\label{sec:Concordance}

In this section, we present a fibrant replacement functor in the model structures $\scH^{p/i\, I}$.
Its construction is motivated by the \emph{concordance sheaf} construction from~\cite{BEBdBP:Class_sp_of_oo-sheaves}.
This functor allows us to compute mapping spaces in $\scH^{p/i\, I}$ in Theorem~\ref{st:map spaces via concordance}.
We start by presenting the fibrant replacement functor:

\begin{lemma}
\label{st:exp pres smooth hos}
Suppose that $F_0, F_1 \in \scH$ and that $h \colon F_0 \times \ul{\RN} \to F_1$ is a smooth homotopy between morphisms $f,g \colon F_0 \to F_1$.
Then, for any $G \in \scH$, there is a smooth homotopy $\tilde{h} \colon G^{F_1} \times \ul{\RN} \to G^{F_0}$ from $G^f$ to $G^g \colon G^{F_1} \to G^{F_0}$.
\end{lemma}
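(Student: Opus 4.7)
The plan is to exploit the cartesian closed structure on the presheaf category $\scH_\infty = \Cat(\Cart^\opp, \sSet)$, where for any $F, G \in \scH_\infty$ the internal hom $G^F$ is characterised by the natural bijection $\scH_\infty(H \times F, G) \cong \scH_\infty(H, G^F)$. Write $\ev_{F_1} \colon G^{F_1} \times F_1 \to G$ for the counit of the adjunction $(-) \times F_1 \dashv (-)^{F_1}$. Given the smooth homotopy $h \colon F_0 \times \ul{\RN} \to F_1$, I would define $\tilde h$ to be the morphism adjoint (under $(-) \times F_0 \dashv (-)^{F_0}$) to the composite
\[
	G^{F_1} \times \ul{\RN} \times F_0 \xrightarrow{1 \times \tau} G^{F_1} \times F_0 \times \ul{\RN} \xrightarrow{1 \times h} G^{F_1} \times F_1 \xrightarrow{\ev_{F_1}} G\,,
\]
where $\tau$ is the symmetry isomorphism. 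This produces a morphism $\tilde h \colon G^{F_1} \times \ul{\RN} \to G^{F_0}$ as required.

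The remaining task is to verify the two boundary conditions in the diagram~\eqref{eq:smooth homotopy} defining a smooth homotopy. Precomposing $\tilde h$ with $1 \times \iota_i \colon G^{F_1} \times \Delta^{\{i\}} \hookrightarrow G^{F_1} \times \ul{\RN}$ for $i \in \{0,1\}$ is adjoint to precomposing the above composite with $1 \times \iota_i \times 1_{F_0}$. By the hypothesis on $h$, this precomposition yields $\ev_{F_1} \circ (1_{G^{F_1}} \times f)$ for $i=0$ and $\ev_{F_1} \circ (1_{G^{F_1}} \times g)$ for $i=1$. By the naturality of the adjunction, these are the mates defining $G^f$ and $G^g$, respectively. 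Hence $\tilde h$ restricts to $G^f$ on $\Delta^{\{0\}}$ and to $G^g$ on $\Delta^{\{1\}}$, as desired.

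I do not anticipate any serious obstacle: the construction is entirely formal, amounting to a diagram chase in the cartesian closed category $\scH_\infty$, using only the universal property of internal homs and the fact that products commute with the inclusions $\Delta^{\{0\}}, \Delta^{\{1\}} \hookrightarrow \ul{\RN}$.
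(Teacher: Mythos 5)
Your construction is correct and is essentially the paper's argument unwound: the paper simply applies $G^{(-)}$ to $h$ to get $G^{F_1} \to G^{F_0 \times \ul{\RN}} \cong (G^{F_0})^{\ul{\RN}}$ and transposes, which yields exactly the morphism you define via the evaluation map and symmetry. Your explicit check of the two endpoint conditions (which the paper leaves implicit in "as desired") is a welcome addition but does not change the approach.
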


\begin{proof}
Applying the exponential functor $G^{(-)}$ to the morphism $h$, we obtain a morphism
\begin{equation}
	G^h \in \scH(G^{F_1}, G^{F_0 \times \ul{\RN}}) \cong \scH(G^{F_1}, (G^{F_0}){}^{\ul{\RN}})\,.
\end{equation}
Using the internal-hom adjunction of $\scH$ then yields a morphism $\tilde{h} = (G^h)^\dashv \in \scH(G^{F_1} \times \ul{\RN}, G^{F_0})$ with the desired properties.
\end{proof}

We consider the following construction:
let $\ul{\Delta}_e^k \in \scH$ denote the simplicial presheaf represented by the extended affine simplex $\Delta_e^k \in \Cart$.
This provides a functor
\begin{equation}
	\ul{\Delta}_e \colon \bbDelta \to \scH\,,
	\qquad
	[k] \longmapsto \ul{\Delta}_e^k = \scY_{\Delta_e^k}\,.
\end{equation}
Given an object $F \in \scH$, we can compose this functor by the functor $F^{(-)} \colon \scH \to \scH$, obtaining an object $F^{\ul{\Delta}_e} \in \Cat(\bbDelta^\opp, \scH)$.
Equivalently, we can view this as a bisimplicial presheaf
\begin{equation}
	F^{\ul{\Delta}_e} \colon \Cart^\opp \longrightarrow s\sSet\,,
	\qquad
	c \longmapsto F(\Delta_e \times c)\,,
\end{equation}
which we can now compose by the diagonal functor $\delta^* \colon s\sSet \to \sSet$ to obtain a new simplicial presheaf on $\Cart$.
Putting everything together, this defines a functor
\begin{equation}
	\delta^* \circ (-)^{\ul{\Delta}_e} \colon \scH \to \scH\,,
	\qquad
	F \longmapsto \delta^* \circ F^{\ul{\Delta}_e}\,.
\end{equation}
The collapse morphisms $\Delta_e^k \to *$ induce a natural transformation $\ul{\Delta}_e \to *$ of functors $\bbDelta \to \scH$ (this even consists of $I$-local equivalences by Proposition~\ref{st:tensor and ev weqs in H^(p/i I)}).
From this we obtain a natural transformation
\begin{equation}
	\gamma \colon 1_{\scH} \longrightarrow \delta^* \circ (-)^{\ul{\Delta}_e}\,.
\end{equation}

Now, let $R^{p/i} \colon \scH \to \scH$ be a fibrant replacement functor for the projective (resp.~injective) model structure, with natural objectwise weak equivalence $r^{p/i} \colon 1_{\scH} \weq R^{p/i}$.
(Observe, in particular, that a fibrant replacement functor $R^p$ for the projective model structure can be obtained by postcomposition with a fibrant replacement functor in $\sSet$, i.e.~we can use $R^p(F) = R^\sSet \circ F$ for $F \in \scH$.
An explicit model for an injective fibrant replacement functor is given in Appendix~\ref{app:Injective fib rep}.)
We define functors
\begin{equation}
	\Cc^{p/i} \colon \scH \to \scH\,,
	\qquad
	F \longmapsto R^{p/i} \big( \delta^* \circ F^{\ul{\Delta}_e} \big)\,,
\end{equation}
for the projective and the injective model structure, respectively.
We define the natural transformation
\begin{equation}
	\cc^{p/i}_{|F} \colon F \xrightarrow{\gamma_{|F}} \delta^* \circ F^{\ul{\Delta}_e} \xrightarrow[\sim]{r^{p/i}} \Cc^{p/i} F\,.
\end{equation}

\begin{proposition}
\label{st:fib rep via concordance}
The functors $\Cc^{p/i}$, together with the natural morphisms $\cc^{p/i}$ provide a functorial fibrant replacement in $\scH^{p/i\, I}$.
\end{proposition}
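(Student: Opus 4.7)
The plan is to verify the two defining conditions of a fibrant replacement, namely (a) $\Cc^{p/i}(F)$ is fibrant in $\scH_\infty^{p/i\, I}$, and (b) $\cc^{p/i}_{|F}$ is an $I$-local weak equivalence. The key computation underlying both parts is the identification
\begin{equation}
	(\delta^* F^{\ul{\Delta}_e})(c)_n \cong F(c \times \Delta_e^n)_n \cong (S_e F^{\scY_c})_n\,,
\end{equation}
which yields a natural isomorphism $(\delta^* F^{\ul{\Delta}_e})(c) \cong S_e(F^{\scY_c})$; specialising to $c = *$ gives $(\delta^* F^{\ul{\Delta}_e})(*) \cong S_e F$.

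For (a), $\Cc^{p/i}(F)$ is fibrant in $\scH_\infty^{p/i}$ by construction of $R^{p/i}$, and by Proposition~\ref{st:tensor and ev weqs in H^(p/i I)}(2) it remains to verify the $I$-local condition, i.e.\ that the canonical map $\Cc^{p/i}(F)(*) \to \Cc^{p/i}(F)(c)$ is a weak equivalence in $\sSet$ for every $c \in \Cart$. Since $r^{p/i}$ is an objectwise weak equivalence, this reduces to showing that $S_e F \to S_e(F^{\scY_c})$ is a weak equivalence. The smooth contraction of $c \cong \RN^n$ to a chosen basepoint exhibits $\scY_c \to *$ as a smooth homotopy equivalence; Lemma~\ref{st:exp pres smooth hos} transports this into a smooth homotopy equivalence $F \simeq F^{\scY_c}$, and Lemma~\ref{st:S_e: smooth and spl homotopies} then sends it to a simplicial homotopy equivalence.

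For (b), the first step would be to show the auxiliary claim that $\Cc^{p/i}$ preserves $I$-local weak equivalences. Given such a morphism $f \colon F \to G$, both $\Cc^{p/i}(F)$ and $\Cc^{p/i}(G)$ are fibrant in $\scH_\infty^{p/i\, I}$ by (a), so rigidity (Proposition~\ref{st:rigidity}) reduces the claim to showing that the evaluation of $\Cc^{p/i}(f)$ at $* \in \Cart$ is a weak equivalence of simplicial sets. Using that $r^{p/i}$ is objectwise a weak equivalence together with the identification $(\delta^* F^{\ul{\Delta}_e})(*) \cong S_e F$ above, this evaluation is weakly equivalent to $S_e(f)$, which is a weak equivalence by Corollary~\ref{st:S_e pres and reflects weqs}.

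To conclude (b) for arbitrary $F$, pick a fibrant replacement $\rho \colon F \weq \tilde{F}$ in $\scH_\infty^{p/i\, I}$ and compare $\cc^{p/i}_{|F}$ with $\cc^{p/i}_{|\tilde{F}}$ via the naturality square of $\cc^{p/i}$. The map $\Cc^{p/i}(\rho)$ is an $I$-local weak equivalence by the claim just proved, and I would show that $\cc^{p/i}_{|\tilde{F}}$ is even an objectwise weak equivalence: the factor $r^{p/i}$ is so by construction, while at each $c \in \Cart$ the map $\gamma_{|\tilde{F}}(c) \colon \tilde{F}(c) \to S_e(\tilde{F}^{\scY_c})$ coincides, under the identification above, with the natural transformation of Lemma~\ref{st:nat trafo ev_* -> S_e} evaluated on $\tilde{F}^{\scY_c}$. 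Since $\scH_\infty^{p/i\, I}$ is symmetric monoidal (Proposition~\ref{st:basic props of H^pi and its locs}) and $\scY_c$ is cofibrant, $\tilde{F}^{\scY_c}$ is again fibrant in $\scH_\infty^{p/i\, I}$, so Lemma~\ref{st:nat trafo ev_* -> S_e}(2) provides the required weak equivalence. Applying two-out-of-three to the naturality square then yields that $\cc^{p/i}_{|F}$ is an $I$-local weak equivalence. The subtle point throughout is the bookkeeping needed to recognise $\gamma_{|F}$ at $c$ as the $\ev_* \to S_e$ comparison applied to $F^{\scY_c}$, and to exploit the symmetric-monoidal structure of $\scH_\infty^{p/i\, I}$ to preserve fibrancy under the exponential $(-)^{\scY_c}$.
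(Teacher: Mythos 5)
Your proposal is correct, and part (a) coincides with the paper's argument: both identify $(\delta^*F^{\ul{\Delta}_e})(c)\cong S_e(F^{\scY_c})$ and use Lemma~\ref{st:exp pres smooth hos} together with the homotopy invariance of $S_e$ to verify the locality criterion of Proposition~\ref{st:tensor and ev weqs in H^(p/i I)}. For part (b), however, you take a genuinely different route. The paper proves that $\gamma_{|F}$ is an $I$-local weak equivalence for \emph{every} $F$ at once: since $S_e$ reflects weak equivalences (Corollary~\ref{st:S_e pres and reflects weqs}), it suffices to show that $S_e(\gamma_{|F})$ is a weak equivalence, and this is done by a purely bisimplicial computation -- part (a) applied with $c=\Delta_e^k$ produces a levelwise weak equivalence $\Delta^0\boxtimes\tilde{\delta}^*(F(\tilde{\Delta}_e))\to(\tilde{\delta}^*\circ F^{\tilde{\Delta}_e})(\Delta_e)$, which the homotopical diagonal $\delta^*$ sends to $S_e(\gamma_{|F})$. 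You instead reduce to the fibrant case: you first show that $\Cc^{p/i}$ preserves $I$-local weak equivalences via rigidity (Proposition~\ref{st:rigidity}) and the identification $\Cc^{p/i}(f)(*)\simeq S_ef$, then observe that for fibrant $\tilde F$ the map $\gamma_{|\tilde F}(c)$ is exactly the comparison $\ev_*\to S_e$ of Lemma~\ref{st:nat trafo ev_* -> S_e} evaluated at $\tilde F^{\scY_c}$, which is fibrant because $(-)^{\scY_c}$ is right Quillen in the symmetric monoidal model structure, and finally transfer the result to arbitrary $F$ by two-out-of-three around the naturality square of a fibrant replacement $F\weq\tilde F$. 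Both arguments are sound and non-circular (all the inputs you use are established before Section~\ref{sec:Concordance}, and fibrant replacements in the Bousfield localisation exist by tractability). What the paper's route buys is uniformity: it handles all $F$ directly with no appeal to an abstract fibrant replacement in $\scH_\infty^{p/i\,I}$, at the cost of the bisimplicial bookkeeping with two copies of $\Delta_e$; your route trades that bookkeeping for heavier use of previously established structural results (rigidity, the $\ev_*\to S_e$ comparison, monoidality), which makes the individual steps more formal but lengthens the chain of dependencies.
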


\begin{proof}
First, we show that $\Cc^{p/i} F$ is indeed fibrant in the $I$-local model structure $\scH^{p/i\, I}$, for every $F \in \scH$.
By construction, $\Cc^{p/i} F$ is a fibrant object in $\scH^{p/i}$.
It thus remains to show that it is $\RN$-local.
For any $c \in \Cart$, we have that
\begin{equation}
	\big( \delta^* \circ F^{\ul{\Delta}_e} \big)(c)
	= \delta^* \big( F(c \times \Delta_e) \big)
	= \delta^* \big( F^{\scY_c}(\Delta_e) \big)
	= S_e(F^{\scY_c})\,.
\end{equation}
By Lemma~\ref{st:exp pres smooth hos}, the smooth homotopy equivalence $\scY_c \to *$ induces a smooth homotopy equivalence $F \to F^{\scY_c}$, which is a weak equivalence in $\scH^{p/i\, I}$ by Corollary~\ref{st:smooth hoeqs are weqs}.
Since $r^{p/i}$ is a natural weak equivalence of functors valued in $\scH^{p/i}$, its component
\begin{equation}
	\delta^* \circ F^{\ul{\Delta}_e} \xrightarrow[\sim]{r^{p/i}} R^{p/i} \big( \delta^* \circ F^{\ul{\Delta}_e} \big)
\end{equation}
is an objectwise weak equivalence.
Consequently, for every $c \in \Cart$, we have a commutative square
\begin{equation}
\begin{tikzcd}[column sep=1.5cm, row sep=1cm]
	\big( \delta^* \circ F^{\ul{\Delta}_e} \big)(c) \ar[r, "r^{p/i}", "\sim"']
	& R^{p/i} \big( \delta^* \circ F^{\ul{\Delta}_e} \big)(c)
	\\
	\big( \delta^* \circ F^{\ul{\Delta}_e} \big)(*) \ar[r, "r^{p/i}"', "\sim"] \ar[u, "\sim"]
	& R^{p/i} \big( \delta^* \circ F^{\ul{\Delta}_e} \big)(*) \ar[u]
\end{tikzcd}
\end{equation}
whose vertical morphisms are induced from the collapse morphism $c \to *$.
It follows that $\Cc^{p/i}F$ is $\RN$-local and hence a fibrant object in $\scH^{p/i\, I}$.

Finally, we need to show that the morphism \smash{$\gamma_{|F} \colon F \longrightarrow \delta^* \circ F^{\ul{\Delta}_e}$}, induced by the collapse $\Delta_e \to *$, is a weak equivalence in $\scH^{p/i\, I}$.
Since the functor $S_e \colon \scH^{p/i\, I} \to \sSet$ preserves as well as reflects weak equivalences, that is equivalent to showing that the induced morphism
\begin{equation}
	S_e (\gamma_{|F}) \colon S_e F \longrightarrow S_e \big( \delta^* \circ F^{\ul{\Delta}_e} \big)
\end{equation}
is a weak equivalence of simplicial sets.
More explicitly, $S_e(\gamma_{|F})$ is the morphism
\begin{equation}
	\tilde{\delta}^* \big( F(\tilde{\Delta}_e) \big) \longrightarrow \tilde{\delta}^* \big( \delta^* F(\tilde{\Delta}_e \times \Delta_e) \big)
\end{equation}
induced by collapsing the extended simplices $\Delta_e$ without the tilde.
Note that we have only added the tilde in order to keep track of which diagonal functor refers to which copy of $\Delta_e$.
We have canonical isomorphisms
\begin{equation}
	S_e \big( \delta^* \circ F^{\ul{\Delta}_e} \big)
	\cong \tilde{\delta}^* \big( \delta^* F(\tilde{\Delta}_e \times \Delta_e) \big)
	\cong \delta^* \big( (\tilde{\delta}^* \circ F^{\tilde{\Delta}_e}) (\Delta_e) \big)
\end{equation}
and we know from the first part of this proof that the morphism
\begin{equation}
	\tilde{\delta}^* \big( F(\tilde{\Delta}_e) \big)
	= (\tilde{\delta}^* \circ F^{\tilde{\Delta}_e}) (*)
	\longrightarrow (\tilde{\delta}^* \circ F^{\tilde{\Delta}_e}) (\Delta_e^k)
\end{equation}
is a weak equivalence, for every $k \in \NN_0$.
This induces a (levelwise) weak equivalence of bisimplicial sets
\begin{equation}
	\Delta^0 \boxtimes \tilde{\delta}^* \big( F(\tilde{\Delta}_e) \big)
	\weq (\tilde{\delta}^* \circ F^{\tilde{\Delta}_e}) (\Delta_e)\,,
\end{equation}
which under $\delta^*$ maps to the morphism $S_e(\gamma_{|F})$.
Since the diagonal functor $\delta^* \colon s\sSet \to \sSet$ is homotopical, we obtain that $S_e(\gamma_{|F})$, and thus also $\gamma_{|F}$, is indeed a weak equivalence.
\end{proof}

\begin{corollary}
\label{st:mapping spaces via spl hom}
Mapping spaces in $\scH^{p/i\, I}$ can be computed (up to isomorphism in $\rmh \sSet$) as the simplicially enriched hom spaces
\begin{align}
\label{eq:derived sections of concordance sheaves}
	\Map_{\scH^{i I}}(F,G) &\simeq \ul{\scH} \big( F, R^i(\delta^* \circ G^{\ul{\Delta}_e}) \big)\,,
	\\
	\Map_{\scH^{p I}}(F,G) &\simeq \ul{\scH} \big( Q^p F, R^\sSet \circ \delta^* \circ G^{\ul{\Delta}_e} \big)\,,
\end{align}
where $Q^p$ is a cofibrant replacement functor for the projective model structure $\scH^p$.
\end{corollary}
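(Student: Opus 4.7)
The plan is to derive both formulas directly from the standard computation of derived mapping spaces in a simplicial model category, applied to the explicit fibrant replacement functors $\Cc^{p/i}$ constructed in Proposition~\ref{st:fib rep via concordance}. Recall that in any simplicial model category $\scM$, the homotopy mapping space between $X$ and $Y$ is represented (up to weak equivalence in $\sSet$) by the enriched hom $\ul{\scM}(QX, RY)$, where $QX \weq X$ is a cofibrant replacement and $Y \weq RY$ is a fibrant replacement. This applies to $\scH_\infty^{p/i\, I}$ since these are simplicial model categories by Proposition~\ref{st:basic props of H^pi and its locs}.

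For the injective case, first I would observe that every object of $\scH_\infty^{i I}$ is cofibrant, since cofibrations in the localisation coincide with injective cofibrations (Proposition~\ref{st:basic props of H^pi and its locs} and the general theory of left Bousfield localisations), and every object is injectively cofibrant because every object of $\sSet$ is cofibrant. Hence no cofibrant replacement on the source is needed. By Proposition~\ref{st:fib rep via concordance}, the morphism $\cc^i_{|G} \colon G \weq \Cc^i G = R^i(\delta^* \circ G^{\ul{\Delta}_e})$ is a fibrant replacement in $\scH_\infty^{i I}$, which gives the first formula immediately.

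For the projective case, one proceeds analogously but now needs a cofibrant replacement $Q^p F \weq F$ in $\scH_\infty^p$ (which is automatically still cofibrant and still weakly equivalent to $F$ in the localisation $\scH_\infty^{p I}$, since localisations preserve cofibrations and only enlarge the class of weak equivalences). The fibrant replacement is $\cc^p_{|G} \colon G \weq \Cc^p G = R^p(\delta^* \circ G^{\ul{\Delta}_e})$ by Proposition~\ref{st:fib rep via concordance}; here one uses that a fibrant replacement functor for the projective model structure $\scH_\infty^p$ is obtained by postcomposing with a fibrant replacement in $\sSet$, so we may take $R^p = R^\sSet \circ (-)$, as noted in the parenthetical remark preceding Proposition~\ref{st:fib rep via concordance}. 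Substituting yields the second formula.

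There is essentially no obstacle here beyond verifying that each ingredient indeed plays its required role: the statement is a direct application of the general derived mapping space formula to the explicit fibrant and cofibrant replacements already at hand. The only subtlety worth spelling out in the proof is that in the projective case one must invoke that $Q^p F$ remains cofibrant in the localisation $\scH_\infty^{p I}$ and that $\Cc^p G$ remains a fibrant replacement there (both of which follow from the construction of left Bousfield localisations), so that the enriched hom $\ul{\scH}_\infty(Q^p F, \Cc^p G)$ correctly computes the derived mapping space.
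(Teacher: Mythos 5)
Your proposal is correct and is essentially the argument the paper intends: the corollary is an immediate consequence of Proposition~\ref{st:fib rep via concordance} together with the standard formula $\Map_\scM(X,Y) \simeq \ul{\scM}(QX,RY)$ in a simplicial model category, using that every object of $\scH_\infty^{i I}$ is cofibrant, that $Q^p$ still serves as a cofibrant replacement after localisation (same cofibrations, larger class of weak equivalences), and that $R^p$ may be taken to be postcomposition with $R^\sSet$.
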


\begin{definition}
\label{def:derived concordance spaces}
Given an object $F \in \scH$, we call $\Cc^{p/i} F$ its \emph{(projective/injective) derived concordance sheaf}.
For $G \in \scH$ we refer to the spaces in~\eqref{eq:derived sections of concordance sheaves} as the spaces of \emph{derived concordances of morphisms} from $F$ to $G$.
\end{definition}

We can apply these insights to describe the mapping spaces in the model categories $\scH^{p/i\, I}$.
In particular, given any $G \in \scH$, part~(3) of the following theorem shows that the derived sections of the derived concordance sheaf of $G$ (in the sense of Definition~\ref{def:derived concordance spaces}) on manifolds are represented by maps from the space underlying $M$ to the smooth singular complex of $G$.

\begin{theorem}
\label{st:map spaces via concordance}
There are natural isomorphisms in $\rmh \sSet$ as follows:
\begin{myenumerate}
\item For $F,G \in \scH$, we have
\begin{equation}
	\Map_\sSet(S_e F, S_e G) \cong \Map_{\scH^{p/i\, I}}(F,G)\,.
\end{equation}

\item For $F,G \in \scH$, we have
\begin{equation}
	\Map_\Top(|S_e F|, |S_e G|) \cong \Map_{\scH^{p/i\, I}}(F,G)\,.
\end{equation}

\item For any manifold $M$ and $G \in \scH$, we have
\begin{equation}
	\Map_\Top(M, |S_e G|) \cong \Map_{\scH^{p/i\, I}}(\ul{M},G)\,.
\end{equation}
\end{myenumerate}
\end{theorem}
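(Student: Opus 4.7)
The overall strategy is to leverage the fact that $S_e \colon \scH_\infty^{p/i\, I} \to \sSet$ is a simplicial left Quillen equivalence (Theorem~\ref{st:S_e -| R_e is QEq}), together with the explicit fibrant replacement $\Cc^{p/i}$ from Proposition~\ref{st:fib rep via concordance}. Part~(1) expresses the standard principle that a simplicial Quillen equivalence induces isomorphisms of derived mapping spaces in $\rmh\sSet$; parts~(2) and~(3) then follow mechanically from the Quillen equivalence $|{-}| \dashv \Sing$ and the Whitehead Approximation Theorem~\ref{st:|S_e M| sim DM}.

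I would begin with the injective case of~(1), which is cleanest because every object of $\scH_\infty^{iI}$ is cofibrant. By Corollary~\ref{st:mapping spaces via spl hom}, the derived mapping space is modelled by $\ul{\scH}_\infty(F, \Cc^i G)$. Applying $S_e$ on hom-objects and using that $S_e$ is a simplicial functor yields a natural morphism
\begin{equation}
	\ul{\scH}_\infty(F, \Cc^i G) \longrightarrow \ul{\sSet}(S_e F, S_e \Cc^i G).
\end{equation}
The plan is to show that this is a weak equivalence and that $S_e \Cc^i G$ is a fibrant model of $S_e G$, so that its target computes $\Map_\sSet(S_e F, S_e G)$. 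For the first claim, since $\Cc^i G$ is fibrant in $\scH_\infty^{iI}$ and $F$ is cofibrant, one invokes the fact that a simplicial left Quillen equivalence preserves the derived enriched hom; concretely, by Lemma~\ref{st:nat trafo ev_* -> S_e} there is a natural weak equivalence $\ev_*(\Cc^i G) \weq S_e(\Cc^i G)$ on the fibrant object $\Cc^i G$, so this reduces to the corresponding statement for the Quillen equivalence $\tilde{\sfc} \dashv \ev_*$ (Theorem~\ref{st:tc -| ev_* is QEq}), which is immediate by adjunction. For the second claim, note that by construction $(\delta^* \circ G^{\ul{\Delta}_e})(\ast) = S_e G$, and $\Cc^i G$ is obtained from $\delta^* \circ G^{\ul{\Delta}_e}$ by objectwise fibrant replacement, so $(\Cc^i G)(\ast)$ is a Kan replacement of $S_e G$, which by Lemma~\ref{st:nat trafo ev_* -> S_e} and Proposition~\ref{st:tensor and ev weqs in H^(p/i I)} is equivalent to $S_e(\Cc^i G)$.

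For the projective case of~(1), I would transport the injective result along the Quillen equivalence $Q' \colon \scH_\infty^i \rightleftarrows \scH_\infty^p \colon R'$ constructed in Appendix~\ref{app:Injective fib rep}. After verifying that this descends to a Quillen equivalence $\scH_\infty^{iI} \rightleftarrows \scH_\infty^{pI}$ (which follows from the fact that weak equivalences coincide on the two sides by Proposition~\ref{st:same weqs}) and that $S_e$ is compatible with it, the identification of projective derived mapping spaces with simplicial-set mapping spaces follows from the injective case. Parts~(2) and~(3) are then immediate: $|{-}| \dashv \Sing$ is a simplicial Quillen equivalence, so $\Map_\sSet(S_e F, S_e G) \cong \Map_\Top(|S_e F|, |S_e G|)$ in $\rmh\sSet$, giving~(2). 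Specialising to $F = \ul M$ and using the Whitehead Approximation Theorem, which provides a weak equivalence $|S_e \ul M| \weq \rmD \ul M$ in $\Top$, together with the identification $\rmD \ul M = M$ from Proposition~\ref{st:D on mfds and products}(1), yields~(3).

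The main obstacle is the first step of paragraph two: genuinely controlling the simplicial hom-space $\ul{\scH}_\infty(F, \Cc^i G)$ and showing that $S_e$ induces a weak equivalence on it. Once that is in hand the rest of the argument is a zig-zag of simplicial Quillen equivalences and an application of Theorem~\ref{st:|S_e M| sim DM}, all of which are essentially bookkeeping.
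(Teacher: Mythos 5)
Your skeleton matches the paper's: replace $G$ fibrantly by $\Cc^{i}G$ (Proposition~\ref{st:fib rep via concordance}), use the comparison $\ev_*\Cc^i G \weq S_e\Cc^i G$ on fibrant objects (Lemma~\ref{st:nat trafo ev_* -> S_e}), and deduce (2) from $|{-}|\dashv\Sing$ and (3) from (2) plus Theorem~\ref{st:|S_e M| sim DM}; parts (2) and (3) are fine as you state them, and your identification of $\ev_*\Cc^i G=(\Cc^iG)(*)$ as a Kan model of $S_eG$ is correct. The genuine gap is exactly the step you yourself flag as the ``main obstacle'': the claim that $\ul{\scH}_\infty(F,\Cc^i G)\to\ul{\sSet}(S_eF,S_e\Cc^iG)$ is a weak equivalence is not justified by your reduction. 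After replacing $S_e\Cc^iG$ by $\ev_*\Cc^iG$, the adjunction $\tilde{\sfc}\dashv\ev_*$ turns the target into $\ul{\scH}_\infty(\tilde{\sfc}S_eF,\Cc^iG)$, so what you still need is that $F$ and $\tilde{\sfc}S_eF$ are (naturally) isomorphic in $\rmh\scH_\infty^{i I}$. That is not an adjunction formality for $\tilde{\sfc}\dashv\ev_*$; it is precisely the Quillen-equivalence content of the situation (it does hold, because $S_e\circ\tilde{\sfc}\cong 1_\sSet$ and both $\tilde{\sfc}$ and $S_e$ are left Quillen equivalences, but ``immediate by adjunction'' is not a proof, and there is no natural morphism between $F$ and $\tilde{\sfc}S_eF$ to exhibit the equivalence directly).

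There are two clean ways to close this. The paper's route avoids enriched-hom functoriality of $S_e$ altogether: transpose across the simplicial adjunction $S_e\dashv R_e$ to get $\ul{\sSet}(S_eF,R^\sSet S_eG)\cong\ul{\scH}_\infty(F,R_eR^\sSet S_eG)$, replace $G$ by $\Cc^iG$ using that $S_e$ and $R^\sSet$ are homotopical and $R_e$ preserves fibrant objects, and then use that the derived unit $\Cc^iG\to R_eR^\sSet S_e\Cc^iG$ is a weak equivalence between fibrant objects because $S_e\dashv R_e$ is a Quillen equivalence (Theorem~\ref{st:S_e -| R_e is QEq}) and every object of $\scH_\infty^{iI}$ is cofibrant; $\ul{\scH}_\infty(F,-)$ preserves this. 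Alternatively, within your framing, invoke that $S_e$ is also right Quillen (Theorem~\ref{st:L_e -| S_e is QEq}), so $S_e\Cc^iG$ is already a Kan complex; then the composite $\ul{\scH}_\infty(F,\Cc^iG)\to\ul{\sSet}(S_eF,S_e\Cc^iG)\cong\ul{\scH}_\infty(F,R_eS_e\Cc^iG)$ is induced by the unit $\Cc^iG\to R_eS_e\Cc^iG$, which coincides with the derived unit at the cofibrant object $\Cc^iG$ and is therefore a weak equivalence between fibrant objects. One more small caveat: for the projective case, ``the weak equivalences coincide'' (Proposition~\ref{st:same weqs}) does not by itself give a Quillen equivalence $Q':\scH_\infty^{iI}\rightleftarrows\scH_\infty^{pI}:R'$; either check separately that $Q'$ preserves $I$-local equivalences (it does, being naturally objectwise weakly equivalent to the identity), or simply argue, as the paper does, that projective and injective mapping spaces agree via Corollary~\ref{st:mapping spaces in p and i}.
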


\begin{proof}
It suffices to consider one of the model structures by Proposition~\ref{st:same weqs} and the fact that the mapping spaces in a model category depend only on the underlying relative category (see before Corollary~\ref{st:mapping spaces in p and i}).
For claim~(1), we consider the injective model structure.
Using that every object in $\scH^{i I}$ is cofibrant, we have the following isomorphisms in $\rmh \sSet$:
\begin{align}
	\Map_\sSet(S_e F, S_e G)
	&\cong \ul{\sSet}(S_e F, R^\sSet S_e G)
	\\
	&\cong \ul{\scH} \big( F, R_e R^\sSet S_e(G) \big)
	\\
	&\cong \ul{\scH} \big( F, R_e R^\sSet S_e \Cc^i (G) \big)
	\\
	&\cong \ul{\scH} \big(F, \Cc^i (G) \big)
	\\
	&\cong \Map_{\scH^{i I}} (F,G)\,.
\end{align}
The first isomorphism is merely the fact that $\sSet$ is a simplicial model category in which every object is cofibrant.
In the second isomorphism, we use the fact that $S_e \dashv R_e$ is a simplicial adjunction.
To see the third isomorphism, we use the weak equivalence \smash{$\cc^i_{|G} \colon G \weq \Cc^i(G)$} from Proposition~\ref{st:fib rep via concordance}.
Since both $S_e$ and $R^\sSet$ are homotopical functors, and since $R_e$ preserves fibrant objects, the morphism \smash{$R_e R^\sSet S_e (\cc^i_{|G})$} is a weak equivalence between fibrant objects in $\scH^{i I}$, which is thus preserved by $\ul{\scH}(F, -)$.
The fourth isomorphism stems from the fact that $S_e \dashv R_e$ is a Quillen equivalence, so that the canonical natural transformation $1_{\scH} \longrightarrow R_e R^\sSet S_e$ is a weak equivalence in $\scH^{i I}$ on every cofibrant object---that is, it is a weak equivalence on every object, since all objects in $\scH^{i I}$ are cofibrant.
Further, its component at the object $\Cc^i(G)$ is a weak equivalence between fibrant objects in $\scH^{i I}$, which is again preserved by $\ul{\scH}(F, -)$.
The final isomorphism directly follows from the insight that $\Cc^i$ is a fibrant replacement functor for $\scH^{i I}$ (Proposition~\ref{st:fib rep via concordance}).

Claim~(2) then follows from the fact that every object in $\Top$ is fibrant and that every object of $\sSet$ is cofibrant:
if $K,L \in \sSet$, then there are canonical isomorphisms in $\rmh \sSet$
\begin{align}
	\Map_\Top \big( |K|, |L| \big)
	&\cong \ul{\Top} \big( |K|, |L| \big)
	\\*
	&\cong \ul{\sSet} \big( K, \Sing |L| \big)
	\\
	&\cong \ul{\sSet} \big( K, \Sing |R^\sSet(L)| \big)
	\\
	&\cong \ul{\sSet} \big( K, R^\sSet(L) \big)
	\\*
	&\cong \Map_\sSet(K,L)\,.
\end{align}

Claim~(3) then follows from combining part~(2) with Theorem~\ref{st:|S_e M| sim DM}.
\end{proof}

We can now give a direct proof of the relation between model categories announced in Remark~\ref{rmk:HoThy induced by S_e}:
we characterise the homotopy theory induced on $\scH$ by the smooth singular complex functor $S_e$:

\begin{theorem}
\label{st:HoThy induced by S_e}
Let $W_\sSet$ denote the class of weak equivalences in $\sSet$, and let $S_e^{-1}(W_\sSet)$ denote the class of morphisms in $\scH$ whose image under $S_e$ is in $W_\sSet$.
There is an identity of model categories
\begin{equation}
	\scH^{p/i\, I} = L_{S_e^{-1}(W_\sSet)} \scH^{p/i}\,.
\end{equation}
\end{theorem}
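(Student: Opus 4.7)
The plan is to apply Joyal's criterion for coincidence of model categories (Theorem~\ref{st:criterion for coincidence of MoStrs}): I need only verify that the two model structures $\scH_\infty^{p/i\, I}$ and $L_{S_e^{-1}(W_\sSet)} \scH_\infty^{p/i}$ have the same cofibrations and the same fibrant objects. Since both are left Bousfield localisations of $\scH_\infty^{p/i}$, they have identical cofibrations by construction, so the whole argument reduces to matching fibrant objects.

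For the easy inclusion, I would observe that every morphism in $I$ is sent by $S_e$ to a weak equivalence in $\sSet$ (this is exactly Corollary~\ref{st:ev_Delta adjunction descends}; more concretely the collapse $S_e(\scY_\RN) \to *$ is a weak equivalence by Proposition~\ref{st:S_e(c to *) is weq}). Hence $I \subset S_e^{-1}(W_\sSet)$, and any object which is $S_e^{-1}(W_\sSet)$-local is automatically $I$-local. Combined with fibrancy in $\scH_\infty^{p/i}$, this shows that every fibrant object of $L_{S_e^{-1}(W_\sSet)} \scH_\infty^{p/i}$ is fibrant in $\scH_\infty^{p/i\, I}$.

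For the converse direction, let $F$ be fibrant in $\scH_\infty^{p/i\, I}$. I need to show that for every $\psi \colon A \to B$ with $S_e\psi \in W_\sSet$, the induced map on derived mapping spaces $\Map^h_{\scH_\infty^{p/i}}(B,F) \to \Map^h_{\scH_\infty^{p/i}}(A,F)$ is a weak equivalence in $\sSet$. Here I invoke Theorem~\ref{st:map spaces via concordance}(1): since $F$ is fibrant in $\scH_\infty^{p/i\, I}$, for any $X$ there is a natural isomorphism in $\rmh\sSet$
\begin{equation}
  \Map^h_{\scH_\infty^{p/i}}(X,F) \cong \Map_{\scH_\infty^{p/i\, I}}(X,F) \cong \Map_\sSet(S_eX, S_eF)\,.
\end{equation}
Applying this naturally to $\psi$, the induced map on derived mapping spaces is identified (in $\rmh\sSet$) with the map $\Map_\sSet(S_e B, S_e F) \to \Map_\sSet(S_e A, S_e F)$ induced by $S_e\psi$. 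Since $S_e\psi$ is a weak equivalence in $\sSet$ and $S_e F$ is a Kan complex (indeed $S_e$ preserves fibrant objects, or we can replace $S_e F$ by a Kan complex without changing the argument), this map is a weak equivalence. Hence $F$ is $S_e^{-1}(W_\sSet)$-local.

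The main obstacle is really just book-keeping: making sure that the natural identification of Theorem~\ref{st:map spaces via concordance}(1) is natural enough in the first argument to convert the abstract left-Bousfield localisation condition (a condition on enriched hom spaces into a fibrant replacement of $F$) into a condition on mapping spaces into $S_e F$. Since Theorem~\ref{st:map spaces via concordance} was proved via the concordance fibrant replacement $\Cc^{p/i}$ whose value at $*$ is precisely $R^\sSet S_e F$, the naturality is transparent, and no further computation is required.
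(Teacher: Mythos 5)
Your proposal is correct and follows essentially the same route as the paper's own proof: Joyal's criterion reduces everything to matching fibrant objects, the inclusion $I \subset S_e^{-1}(W_\sSet)$ handles one direction, and Theorem~\ref{st:map spaces via concordance}(1) applied naturally in the source variable gives the converse locality statement. The only cosmetic difference is in which earlier results you cite for $I \subset S_e^{-1}(W_\sSet)$, which changes nothing.
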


\begin{proof}
We set
\begin{equation}
	\scM^{p/i} \coloneqq L_{S_e^{-1}(W_\sSet)} \scH^{p/i}\,.
\end{equation}
The model categories $\scH^{p/i\, I}$ and $\scM^{p/i}$ have the same cofibrations, since they are left Bousfield localisations of the same model category.
(Here we use either the projective or the injective model structure on \emph{both} sides.)
By Theorem~\ref{st:criterion for coincidence of MoStrs} it now suffices to show that they have the same fibrant objects.
Corollary~\ref{st:S_e pres and reflects weqs} implies that $I \subset S_e^{-1}(W_\sSet)$; thus, any fibrant object in $\scM^{p/i}$ is also fibrant in $\scH^{p/i\, I}$.

To see that any fibrant object of $\scH^{p/i\, I}$ is also fibrant in $\scM^{p/i}$, consider a fibrant object $G \in \scH^{p/i\, I}$ and a morphism $f \colon F_0 \to F_1$ in $S_e^{-1}(W_\sSet)$.
By Theorem~\ref{st:map spaces via concordance}, we have a commutative diagram
\begin{equation}
\begin{tikzcd}[column sep=1.5cm, row sep=1cm]
	\Map_{\scH^{p/i\, I}}(F_1,G) \ar[r, "f^*"] \ar[d, "\cong"']
	& \Map_{\scH^{p/i\, I}}(F_0,G) \ar[d, "\cong"]
	\\
	\Map_\sSet (S_e F_1, S_e G) \ar[r, "{(S_e f)^*}"']
	& \Map_\sSet (S_e F_0, S_e G)
\end{tikzcd}
\end{equation}
in $\rmh \sSet$.
By assumption on $f$, the morphism $S_e f$ is a weak equivalence in $\sSet$.
Hence, it induces a weak equivalence on mapping spaces; that is, the bottom morphism in the diagram is an isomorphism in $\rmh \sSet$.
From that, it follows that also the top morphism is an isomorphism in $\rmh \sSet$, which implies that $G$ is $S_e^{-1}(W_\sSet)$-local, and therefore fibrant in $\scM^{p/i}$.
\end{proof}

\begin{remark}
\label{rmk:Rel to BEBdBP}
Part~(3) of Theorem~\ref{st:map spaces via concordance} is related to recent results by Berwick-Evans, Boavida de Brito and Pavlov from~\cite{BEBdBP:Class_sp_of_oo-sheaves}:
in that paper, the authors work with the category $\widetilde{\scH}$ of simplicial presheaves on $\Mfd$.
Given $\widetilde{G} \in \widetilde{\scH}$, they consider the concordance sheaf
\begin{equation}
\label{eq:BEBdBP conc sh}
	\rmB \widetilde{G} = \delta^* \widetilde{G}^{\ul{\Delta}_e}\,,
\end{equation}
where the exponential is taken in $\widetilde{\scH}$.
The inclusion functor $\iota \colon \Cart \hookrightarrow \Mfd$ induces a homotopy right Kan extension $\hoRan_\iota' \colon \scH \to \widetilde{\scH}$ (see Appendix~\ref{app:H^(pl) simeq wtH^(pl)} for details).
Explicitly, working with projective model structures, we can write
\begin{equation}
	\hoRan'_\iota(F)(M)
	\cong \ul{\scH}(Q' M, F)
\end{equation}
for any $M \in \Mfd$, where $Q' \colon \scH^p \to \scH^p$ is the cofibrant replacement functor introduced in Appendix~\ref{app:Injective fib rep}.
The functor $\hoRan_\iota' \colon \scH^{p \ell} \to \widetilde{\scH}^{p \ell}$ is a right Quillen functor, where on the target side we consider the \v{C}ech localisation with respect to open coverings---this is a consequence of~\cite[Prop.~3.16, Thm.~3.18]{Bunk:Higher_Sheaves}.
It provides a concrete way of comparing sheaves on $\Cart$ to sheaves on $\Mfd$:
the functor
\begin{equation}
	\hoRan_\iota' \colon \scH^{p \ell} \to \widetilde{\scH}^{p \ell}
\end{equation}
is a right Quillen equivalence by Theorem~\ref{st:H^(pl) simeq wtH^(pl)}.

We can now compare the derived concordance sheaves from Definition~\ref{def:derived concordance spaces} to the concordance sheaf construction~\eqref{eq:BEBdBP conc sh} from~\cite{BEBdBP:Class_sp_of_oo-sheaves}:
let $F \in \scH$ be any simplicial presheaf on $\Cart$.
On the one hand, we have
\begin{equation}
	\hoRan_\iota'(\Cc^p F)(M)
	= \ul{\scH}(Q'M, \Cc^p F)\,,
\end{equation}
and on the other hand, we have
\begin{equation}
	\big( \rmB \hoRan_\iota'(F) \big) (M)
	= \delta^* \big( \ul{\scH}(Q'(M \times \Delta_e), F) \big)\,.
\end{equation}
Given that $\hoRan_\iota'(F)$ is a sheaf on $\Mfd$, it now follows from~\cite[Thm.~1.1, Thm.~1.2]{BEBdBP:Class_sp_of_oo-sheaves} that $\rmB \hoRan_\iota'(F)$ is a sheaf on $\Mfd$ as well.
Then, Proposition~\ref{st:iota^* reflects weqs between fibrants} implies that $\hoRan_\iota'(\Cc^p F)$ and $\rmB \hoRan_\iota'(F)$ are isomorphic in $\rmh \widetilde{\scH}^{p \ell}$ if and only if their images under the restriction functor $\iota^* \colon \widetilde{\scH}^{p \ell} \to \scH^{p \ell}$ are isomorphic in $\rmh \scH^{p \ell}$.

If $M = c$ is a cartesian space, we readily obtain a natural weak equivalence
\begin{equation}
	\hoRan_\iota'(\Cc^p F)(c)
	= \ul{\scH}(Q'c, \Cc^p F)
	\overset{\sim}{\longleftarrow} \Cc^p(F)(c)\,,
\end{equation}
and we further obtain natural weak equivalences
\begin{align}
	\big( \rmB \hoRan_\iota'(F) \big) (M)
	&= \delta^* \big( \ul{\scH}(Q'(c \times \Delta_e), F) \big)
	\\
	&\overset{\sim}{\longleftarrow} \delta^* \big( \ul{\scH}(c \times \Delta_e, F) \big)
	\\
	&\cong \delta^* F(c \times \Delta_e)
	\\
	&\weq \Cc^p(F)(c)\,.
\end{align}
The first weak equivalence uses that $\Delta_e^k$ is a cartesian space, for each $k \in \NN_0$, so that $c \times \Delta_e^k$ is representable in $\scH$, and the last morphism arises from postcomposing with a fibrant replacement functor in $\sSet$.
This establishes a natural zig-zag of weak equivalences between $\hoRan_\iota'(\Cc^p F)$ and $\rmB \hoRan_\iota'(F)$.
Note that the results \cite[Thm.~1.1, Thm.~1.2]{BEBdBP:Class_sp_of_oo-sheaves} enter crucially in the construction of this zig-zag because we use the fact that $\rmB \hoRan_\iota'(F)$ is a sheaf on $\Mfd$.
\qen
\end{remark}

\begin{remark}
We outline an alternative proof of Theorem~\ref{st:map spaces via concordance}, based on Proposition~\ref{st:fib rep via concordance} and Theorem~\ref{st:tc -| ev_* is QEq} (which we recall goes back to~\cite{Dugger:HoSheaves}).
Let $F, G \in \scH$, and let $Q \colon \scH^p \to \scH^p$ denote a cofibrant replacement functor.
In the homotopy category $\rmh \sSet$ of spaces, we have natural isomorphisms
\begin{align}
	\Map_{\scH^{p I}}(F,G)
	&\cong \ul{\scH}(Q F, \Cc^p G)
	\\*
	&\cong \ul{\scH}(Q F, \tilde{\sfc} R^\sSet S_e G)
	\\
	&\cong \ul{\sSet}(\underset{\Cart^\opp}{\colim}\, Q F, R^\sSet S_e G)
	\\
	&\cong \ul{\sSet} (\underset{\Cart^\opp}{\hocolim}\, F, R^\sSet S_e G)
	\\
	&\cong \ul{\sSet} (S_e F, R^\sSet S_e G)
	\\*
	&\cong \Map_\sSet (S_e F, S_e G)\,.
\end{align}
In the second isomorphism, we have used that $\Cc^p G$ is essentially constant and that $\Cc^p G(*) = R^\sSet \circ S_e G$.
The third isomorphism arises from the Quillen adjunction $\colim : \scH^p \rightleftarrows \sSet : \tilde{\sfc}$, and the fourth isomorphism stems from fact that $\colim \circ Q$ models the homotopy colimit.
Finally, the fifth isomorphism arises from our observation at the end of Section~\ref{sec:comparison of spaces from presheaves} that $S_e F$ is a model for the homotopy colimit of the diagram $F \colon \Cart^\opp \to \sSet$.
Parts~(2) and~(3) of Theorem~\ref{st:map spaces via concordance} then follow as in our proof above.
\qen
\end{remark}

\begin{appendix}

\section{An injective fibrant replacement of simplicial presheaves}
\label{app:Injective fib rep}

From the definition of the projective and the injective model structure on $\scH$, it follows directly that there is a Quillen equivalence
\begin{equation}
\begin{tikzcd}
	\scH^p \ar[r, shift left=0.14cm, "\perp"' yshift=0.05cm]
	& \scH^i\,. \ar[l, shift left=0.14cm]
\end{tikzcd}
\end{equation}
Both of the functors in this adjunction are the identity on $\scH$.
Here, we will construct a Quillen equivalence in the opposite direction, i.e.
\begin{equation}
\begin{tikzcd}
	Q' : \scH^i \ar[r, shift left=0.14cm, "\perp"' yshift=0.05cm]
	& \scH^p : R'\,. \ar[l, shift left=0.14cm]
\end{tikzcd}
\end{equation}
We start by defining the functor $Q'$.
Its construction is not specific to simplicial presheaves on $\Cart$, but works for simplicial presheaves over any small category.
Thus, let $\scC$ be a small category, let $\scK$ denote the category of simplicial presheaves on $\scC$, and let $\scY \colon \scC \to \scK$ denote the Yoneda embedding.
We denote the projective and the injective model structures on $\scK$ by $\scK^p$ and by $\scK^i$, respectively.
The conventional two-sided simplicial bar construction provides a functor~\cite[Sec.~4.2]{Riehl:Cat_HoThy}
\begin{equation}
	B_\bullet \big( (-), \scC, \scY \big) \colon \scK \longrightarrow (\scK)^{\bbDelta^\opp}\,,
	\qquad
	F \longmapsto  B_\bullet(F,\scC,\scY)\,.
\end{equation}

\begin{lemma}
\label{st:B_bullet and cofibs}
The functor $B_\bullet ((-), \scC, \scY)$ sends injective cofibrations in $\scK$ to injective cofibrations in $(\scK^p)^{\bbDelta^\opp}$.
That is, if $f \colon F \to G$ is an objectwise cofibration of simplicial presheaves on $\scC$, then $B_n (f, \scC, \scY)$ is a projective cofibration of simplicial presheaves, for each $n \in \NN_0$.
\end{lemma}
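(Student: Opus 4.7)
The plan is to unwind the definition of $B_\bullet((-), \scC, \scY)$ and reduce the claim to a standard fact about tensoring with representables in the projective model structure. Explicitly, the bar construction takes the form
\begin{equation}
    B_n(F, \scC, \scY) \cong \coprod_{c_0, \ldots, c_n \in \scC}\ \coprod_{\sigma} F(c_n) \otimes \scY_{c_0}\,,
\end{equation}
where the inner coproduct runs over all composable chains $\sigma = (\sigma_1, \ldots, \sigma_n)$ with $\sigma_i \in \scC(c_{i-1}, c_i)$. The induced morphism $B_n(f, \scC, \scY)$ is then the coproduct of morphisms of the shape $f_{c_n} \otimes 1_{\scY_{c_0}}$, one for each such chain.

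First I would observe that an injective cofibration in $\scK_\infty$ is precisely an objectwise monomorphism of simplicial sets, so each $f_c \colon F(c) \to G(c)$ is a cofibration in the Kan-Quillen model structure. Second, I would appeal to the simplicial adjunction
\begin{equation}
    (-) \otimes \scY_{c_0} : \sSet \rightleftarrows \scK_\infty : \ev_{c_0}\,,
\end{equation}
which is Quillen for the projective model structure on $\scK_\infty$ because the right adjoint $\ev_{c_0}$ detects and preserves fibrations and trivial fibrations (both are defined objectwise in $\scK_\infty^p$). Consequently $(-) \otimes \scY_{c_0}$ sends cofibrations in $\sSet$ to projective cofibrations in $\scK_\infty^p$, and in particular each summand $f_{c_n} \otimes 1_{\scY_{c_0}}$ of $B_n(f, \scC, \scY)$ is a projective cofibration.

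Finally I would invoke the general fact that coproducts of projective cofibrations are projective cofibrations (the class of cofibrations in any model category is closed under coproducts, being the class of maps with the left lifting property against trivial fibrations), which gives the claim. There is no serious obstacle here: the only substantive input is the Quillen adjunction $(-) \otimes \scY_c \dashv \ev_c$, and everything else is an unwinding of the bar construction into a coproduct of external tensor products of the form $(\text{simplicial set}) \otimes (\text{representable})$.
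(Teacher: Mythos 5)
Your proof is correct and follows essentially the same route as the paper: unwind $B_n(F,\scC,\scY)$ into a coproduct of objects of the form $(\text{simplicial set}) \otimes \scY_{c_0}$, check that each summand of $B_n(f,\scC,\scY)$ is a projective cofibration, and conclude by closure of cofibrations under coproducts. The only (minor) difference is the justification of the key step: you split the hom-set factors into coproduct indices and deduce that $(-) \otimes \scY_{c_0}$ preserves cofibrations from the Quillen adjunction $(-) \otimes \scY_{c_0} \dashv \ev_{c_0}$ for the projective structure, whereas the paper keeps the factors $\scC(c_{i-1},c_i)$ as discrete tensor factors and instead invokes the cofibrancy of $\scY_{c_0}$ in $\scK_\infty^p$ together with the simplicial model-category axiom.
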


\begin{proof}
We have that
\begin{equation}
	B_n(F,\scC, \scY)
	= \coprod_{c_0, \ldots, c_n \in \scC} F(c_n) \otimes \scC(c_{n-1}, c_n) \otimes \cdots \otimes \scC(c_0,c_1) \otimes \scY_{c_0}\,.
\end{equation}
Observing that $\scY_c \in \scK^p$ is cofibrant for every $c \in \scC$ and recalling that $\scK^p$ is a simplicial model category, we see that, in each part of the coproduct, the functor
\begin{equation}
	(-) \otimes \scC(c_{n-1}, c_n) \otimes \cdots \otimes \scC(c_0,c_1) \otimes \scY_{c_0}
	\colon \sSet \to \scK^p
\end{equation}
is a left Quillen functor.
\end{proof}

In order to obtain Dugger's cofibrant replacement functor $Q$, one needs to take the diagonal (objectwise) of the bisimplicial presheaf $B_\bullet((-),\scC, \scY)$:
\begin{equation}
\label{eq:Duggers Q}
	(QF)_n = \big( B_n(F,\scC, \scY) \big)_n
	= \coprod_{c_0, \ldots, c_n \in \scC} F_n(c_n) \otimes \scC(c_{n-1}, c_n) \otimes \cdots \otimes \scC(c_0,c_1) \otimes \scY_{c_0}\,.
\end{equation}
It is not evident to us whether taking the diagonal of a bisimplicial presheaf maps injective cofibrations in $(\scK^p){}^{\bbDelta^\opp}$ to projective cofibrations in $\scK$.
That is the motivation for constructing the functor $Q'$ below.

Let $\scI$ be a small category, $\scV$ a symmetric monoidal category, and $\scM$ a model category category enriched, tensored and cotensored over $\scV$ (i.e.~a model $\scV$-category in the terminology of~\cite{Barwick:Enriched_B-Loc}).
Following the notation in~\cite[Sec.~4.1]{Riehl:Cat_HoThy}, we let
\begin{equation}
	(-) \underset{\scI}{\otimes} (-) \colon \scV^{\scI^\opp} \times \scM^\scI \longrightarrow \scM
	\qqandqq
	\{-,-\}^\scI \colon \scV^\scI \times \scM^\scI \longrightarrow \scM
\end{equation}
denote the functor tensor product and the functor hom, respectively.
Let $\Cat_s$ denote the (1-)category of small categories, and let $\bbDelta_{/(-)} \colon \bbDelta \to \Cat_s$ denote the functor that sends $[k] \in \bbDelta$ to the slice category $\bbDelta_{/[k]}$, and let $N$ denote the nerve functor.
We now define the functor
\begin{equation}
\label{eq:def Q'}
	Q' \colon \scK \longrightarrow \scK\,,
	\qquad
	Q' \coloneqq N(\bbDelta_{/(-)})^\opp \underset{\bbDelta^\opp}{\otimes} B_\bullet \big( (-), \scC, \scY \big)\,.
\end{equation}

\begin{lemma}
\label{st:Q' and cofibs}
The functor $Q' \colon \scK^i \to \scK^p$ preserves cofibrations.
\end{lemma}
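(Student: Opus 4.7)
The plan is to express $Q'$ as a composition $\text{Real} \circ B_\bullet(-, \scC, \scY)$, where $\text{Real}(Y_\bullet) \coloneqq N(\bbDelta_{/(-)})^\opp \otimes_{\bbDelta^\opp} Y_\bullet$ is a Bousfield-Kan-style realization functor, and then show that each factor preserves the appropriate cofibrancy. Lemma~\ref{st:B_bullet and cofibs} handles the first factor on the nose at the levelwise level. The goal of the proof is then to upgrade this to the statement that the realization functor, applied to the bar construction, yields a projective cofibration in $\scK_\infty^p$. I will do this via the standard pushout-product (Reedy) machinery for functor tensor products valued in a simplicial model category.

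First I would strengthen Lemma~\ref{st:B_bullet and cofibs} from levelwise cofibrations to Reedy cofibrations in $(\scK_\infty^p)^{\bbDelta^\opp}$. For each $n$, $B_n(F, \scC, \scY)$ splits canonically as a coproduct of a ``degenerate'' part (indexed by chains $c_0 \to \cdots \to c_n$ containing at least one identity) and a ``non-degenerate'' part (strictly composable chains); the latching object $L_n B_\bullet(F, \scC, \scY)$ is precisely the degenerate part, so the relative latching map of $B_\bullet(f, \scC, \scY) : B_\bullet(F, \scC, \scY) \to B_\bullet(G, \scC, \scY)$ at level $n$ reduces to the identity on the degenerate part coproducted with the non-degenerate part of $B_n(f, \scC, \scY)$. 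Since each summand of the non-degenerate part has the form $(F(c_n) \to G(c_n)) \otimes \scC(c_{n-1},c_n) \otimes \cdots \otimes \scY_{c_0}$ and is a projective cofibration by the same tensor-with-a-representable argument as in Lemma~\ref{st:B_bullet and cofibs}, the relative latching map is a projective cofibration as desired.

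Second, I would verify that the cosimplicial simplicial set $N(\bbDelta_{/(-)})^\opp$ is Reedy cofibrant in $\sSet^{\bbDelta}$ (equipped with its Reedy model structure). The latching object $L^n N(\bbDelta_{/(-)})^\opp$ corresponds to simplices arising from the codegeneracies, i.e.~those chains in $\bbDelta_{/[n]}^\opp$ whose terminal object factors through a codegeneracy $[n] \twoheadrightarrow [n-1]$, and its inclusion into $N(\bbDelta_{/[n]})^\opp$ is visibly a monomorphism of simplicial sets, hence a cofibration in $\sSet$.

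Finally, I would invoke the left-Quillen-bifunctor property of the functor tensor product
\begin{equation}
	\otimes_{\bbDelta^\opp} \colon \sSet^{\bbDelta} \times (\scK_\infty^p)^{\bbDelta^\opp} \longrightarrow \scK_\infty^p
\end{equation}
with Reedy model structures on both source categories: the pushout-product axiom in the simplicial model category $\scK_\infty^p$ implies that tensoring a Reedy-cofibrant cosimplicial simplicial set with a Reedy cofibration of simplicial objects in $\scK_\infty^p$ produces a projective cofibration in $\scK_\infty^p$ (applied to $\emptyset \to N(\bbDelta_{/(-)})^\opp$ and to $B_\bullet(f, \scC, \scY)$, this is exactly $Q'f$). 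The main obstacle is the Reedy-cofibrancy upgrade in the first step: the proof of Lemma~\ref{st:B_bullet and cofibs} only controls each level separately, and one needs to check that the splitting of $B_n(-, \scC, \scY)$ into degenerate and non-degenerate parts is compatible with the map $f$ so that the relative latching map really decomposes as claimed---this uses that $f$ is a levelwise monomorphism, so that the inclusion of the non-degenerate part on $F$ into $B_n(G,\scC,\scY)$ genuinely lands in the non-degenerate part.
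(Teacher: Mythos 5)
Your route is viable, but it does strictly more work than the paper's, and one sub-step is misstated. The paper factors $Q'$ the same way, as $N(\bbDelta_{/(-)})^\opp \otimes_{\bbDelta^\opp} B_\bullet((-),\scC,\scY)$, but then uses that the weight $N(\bbDelta_{/(-)})^\opp$ is \emph{projectively} cofibrant in $\sSet^{\bbDelta}$ \cite[Prop.~14.8.8]{Hirschhorn:MoCats} together with the left-Quillen-bifunctor property of $\otimes_{\bbDelta^\opp}$ for the pairing (projective, injective): tensoring with a projectively cofibrant weight is then left Quillen for the \emph{injective} model structure on $(\scK_\infty^p)^{\bbDelta^\opp}$, so the purely levelwise statement of Lemma~\ref{st:B_bullet and cofibs} already suffices and no Reedy upgrade of the bar construction is needed. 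Your (Reedy, Reedy) variant also works: the splitting of $B_n$ by simplices of $N\scC$ identifies the latching object with the degenerate summands, and the relative latching map of $B_\bullet(f,\scC,\scY)$ is the identity on the degenerate part of $B_n(G,\scC,\scY)$ coproducted with the nondegenerate summand maps, each a projective cofibration; note, though, that the compatibility you worry about is automatic, since $B_n(f,\scC,\scY)$ preserves the chain index $\vec{c}$ --- monicity of $f$ is not what is used there, only that $f$ is an objectwise cofibration so that each summand map $\big(F(c_n) \to G(c_n)\big) \otimes \scC(c_{n-1},c_n) \otimes \cdots \otimes \scY_{c_0}$ is one. What your approach buys is a stronger intermediate statement (the bar construction sends injective cofibrations to Reedy cofibrations); what it costs is exactly that extra verification, which the paper's choice of pairing makes unnecessary.

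The one genuine flaw is your verification that $N(\bbDelta_{/(-)})^\opp$ is Reedy cofibrant: for a \emph{cosimplicial} object the latching object at $[n]$ is a colimit over the proper monomorphisms (cofaces) $[m] \rightarrowtail [n]$, not over the codegeneracies $[n] \twoheadrightarrow [n-1]$ --- the latter govern the matching object. In addition, ``visibly a monomorphism'' glosses over the point that the latching object is a colimit which must first be identified with the union of the coface images before its comparison map to $N(\bbDelta_{/[n]})^\opp$ can be read off as an inclusion. The clean repair is the paper's: projective cofibrancy of the weight (Hirschhorn) implies Reedy cofibrancy, after which your pushout-product argument goes through.
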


\begin{proof}
We view the functor tensor product in the definition of $Q'$ as
\begin{equation}
	(-) \underset{\bbDelta^\opp}{\otimes} (-)
	\colon (\sSet)^\bbDelta \times (\scK^p)^{\bbDelta^\opp} \longrightarrow \scK^p\,.
\end{equation}
The functor tensor product is a left Quillen bifunctor when endowing the two source categories with any of the pairs of model structures (projective, injective), (Reedy, Reedy), or (injective, projective)~\cite[Thms.~11.5.9, 14.3.1]{Riehl:Cat_HoThy}.
Further, the functor $N(\bbDelta_{/(-)})^\opp \colon \bbDelta \longrightarrow \sSet$ is projectively cofibrant~\cite[Prop.~14.8.8]{Hirschhorn:MoCats}.
(Note that it is then also Reedy cofibrant.)
Consequently, the functor
\begin{equation}
	N(\bbDelta_{/(-)})^\opp \underset{\bbDelta^\opp}{\otimes} (-)
	\colon (\scK^p)^{\bbDelta^\opp} \longrightarrow \scK^p
\end{equation}
is left Quillen with respect to the injective model structure on $(\scK^p)^{\bbDelta^\opp}$ (and hence also with respect to the Reedy model structure).
We can write $Q'$ as the composition
\begin{equation}
\begin{tikzcd}[column sep=3cm]
	\scK^i \ar[r, "{B_\bullet((-), \scC, \scY)}"]
	& \big( (\scK^p)^{\bbDelta^\opp} \big)_{inj} \ar[r, "{N(\bbDelta_{/(-)})^\opp \otimes_{\bbDelta^\opp} (-)}"]
	& \scK^p\,.
\end{tikzcd}
\end{equation}
The model category of simplicial diagrams in the middle carries the injective model structure.
We have shown in Lemma~\ref{st:B_bullet and cofibs} that the first functor preserves cofibrations, and it follows from our arguments above that the second functor preserves cofibrations as well.
\end{proof}

Next, we are going to employ the Bousfield-Kan map to show that $Q'$ provides a cofibrant replacement functor on $\scK^p$.
The Bousfield-Kan map is a morphism
\begin{equation}
	bk \colon N(\bbDelta_{/(-)})^\opp \longrightarrow \Delta^\bullet
\end{equation}
of cosimplicial simplicial sets.
We will use the following two statements:

\begin{proposition}
\label{st:BK map is R-weq}
\emph{\cite[Prop.~18.7.2]{Hirschhorn:MoCats}}
The Bousfield-Kan map is a Reedy weak equivalence between Reedy cofibrant cosimplicial simplicial sets.
\end{proposition}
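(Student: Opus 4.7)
The plan is to split the claim into two independent verifications: (i) for each $n \in \NN_0$ the component $bk^n \colon N(\bbDelta_{/[n]})^\opp \to \Delta^n$ is a weak equivalence in $\sSet$, and (ii) both cosimplicial simplicial sets $N(\bbDelta_{/(-)})^\opp$ and $\Delta^\bullet$ are Reedy cofibrant. Since a levelwise weak equivalence between Reedy cofibrant cosimplicial objects in $\sSet$ is automatically a Reedy weak equivalence, these two ingredients combine to yield the proposition.

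For the weak equivalence, the key observation is that the slice category $\bbDelta_{/[n]}$ has a terminal object, namely $\id_{[n]}$. Passing to the opposite category turns this into an initial object, so the simplicial set $N((\bbDelta_{/[n]})^\opp)$ is weakly contractible. The target $\Delta^n$ is representable and is also contractible. Hence any morphism between them---in particular $bk^n$---is automatically a weak equivalence in $\sSet$. Note that this argument does not use the explicit formula defining the Bousfield--Kan map; it suffices that $bk^n$ exists as a morphism between these two contractible objects.

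For Reedy cofibrancy of $\Delta^\bullet$, the latching object at level $n$ is the boundary $\partial \Delta^n$, and the latching map is the standard inclusion $\partial \Delta^n \hookrightarrow \Delta^n$, which is a monomorphism and hence a cofibration in $\sSet$. For $N(\bbDelta_{/(-)})^\opp$, I would identify the latching object at level $n$ with the sub-simplicial-set of $N((\bbDelta_{/[n]})^\opp)$ consisting of those chains of morphisms over $[n]$ in which no vertex is $\id_{[n]}$; equivalently, the nerve of the full subcategory of $\bbDelta_{/[n]}$ obtained by removing the terminal object. The latching map is then the inclusion of this subcomplex, which is again a monomorphism and hence a cofibration.

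The main technical obstacle will be carrying out the identification of the latching object $L^n N(\bbDelta_{/(-)})^\opp$ described above. This requires tracking how post-composition along coface and codegeneracy maps in $\bbDelta$ induces functors on the slice categories $\bbDelta_{/[k]}$, and then unfolding the colimit definition of $L^n$ to identify it with the claimed subcomplex. Once this identification is in hand, Reedy cofibrancy follows from the fact that inclusions of sub-simplicial-sets are always cofibrations, and the proposition drops out of combining the two verifications---matching Hirschhorn's treatment.
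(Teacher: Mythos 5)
The paper does not actually prove this statement---it is quoted verbatim from Hirschhorn \cite[Prop.~18.7.2]{Hirschhorn:MoCats}---so the only comparison available is with the standard argument, and your overall strategy is exactly that: since Reedy weak equivalences of cosimplicial simplicial sets are the levelwise ones (no cofibrancy hypothesis is even needed for that step), it suffices to note that each $N(\bbDelta_{/[n]})^\opp$ and $\Delta^n$ is weakly contractible, and then to check that both cosimplicial objects have monomorphic latching maps. Your treatment of the levelwise equivalence and of $\Delta^\bullet$ (latching object $\partial\Delta^n$) is fine.

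There is, however, a genuine error in the step you yourself single out as the main technical point: the identification of the latching object $L^n N(\bbDelta_{/(-)})^\opp$. This latching object is the colimit of $N(\bbDelta_{/[m]})^\opp$ over the \emph{non-identity injections} $[m]\hookrightarrow[n]$. Postcomposition with an injection $d\colon[m]\hookrightarrow[n]$ is fully faithful and injective on objects, so each term embeds as the subcomplex of chains all of whose vertices $\alpha\colon[k]\to[n]$ factor through $d$; indexing these by the poset of proper non-empty subsets $S\subsetneq[n]$, the colimit is the union of the subcomplexes $N(\bbDelta_{/S})^\opp$, i.e.\ the nerve of the full subcategory of $\bbDelta_{/[n]}$ on the objects $\alpha\colon[k]\to[n]$ that are \emph{not surjective}. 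This is strictly smaller than the subcategory obtained by deleting only the terminal object $\id_{[n]}$: for instance, for $n=1$ the codegeneracy $s^0\colon[2]\to[1]$ is a non-identity object of $\bbDelta_{/[1]}$ that factors through no proper face $[0]\hookrightarrow[1]$, so it lies in your proposed latching object but not in the actual one. The conclusion you want still holds---because the indexing poset is closed under the non-empty intersections $S\cap T$ that arise, the colimit really is the union inside $N(\bbDelta_{/[n]})^\opp$, so the latching map is the inclusion of a subcomplex and hence a cofibration---but the identification must be corrected as above for the Reedy cofibrancy argument to go through as you intend.
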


\begin{proposition}
\label{st:B_bullet is R-cof}
For any $F \in \scK$, the simplicial object in $\scK$ given as $B_\bullet(F,\scC, \scY)$ is Reedy cofibrant.
\end{proposition}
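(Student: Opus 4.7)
The plan is to identify, for each $n \in \NN_0$, the latching map $L_n B_\bullet(F, \scC, \scY) \to B_n(F, \scC, \scY)$ explicitly as the inclusion of a sub-coproduct between coproducts of projectively cofibrant simplicial presheaves on $\scC$, from which Reedy cofibrancy over $\scK_\infty^p$ is immediate.

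First I would rewrite
\begin{equation*}
	B_n(F, \scC, \scY) \ \cong \ \coprod_{\vec{c} \in N(\scC)_n} F(c_n) \otimes \scY_{c_0},
\end{equation*}
with the coproduct indexed by $n$-simplices $\vec{c} = (c_0 \to c_1 \to \cdots \to c_n)$ of the nerve $N(\scC)$, obtained by viewing each hom-set $\scC(c_{i-1}, c_i)$ as a discrete simplicial set and absorbing the tensor product against it into a disjoint union of indices. Inner face maps compose adjacent arrows, outer faces act on the factors $F$ and $\scY$ at the endpoints, and degeneracies insert identity morphisms. Crucially, every degeneracy operator on $N(\scC)$ preserves the pair of endpoints $(c_0, c_n)$, so the summand assigned to a simplex is preserved (as an object) under degeneracy operators; only the indexing simplex changes.

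Next I would invoke the Eilenberg-Zilber lemma for $N(\scC)$: writing $D_n \subset N(\scC)_n$ for the subset of degenerate $n$-simplices, uniqueness of Eilenberg-Zilber factorisations furnishes a canonical isomorphism
\begin{equation*}
	L_n B_\bullet(F, \scC, \scY) \ \cong \ \coprod_{\vec{c} \in D_n} F(c_n) \otimes \scY_{c_0},
\end{equation*}
under which the latching map becomes the evident inclusion of the sub-coproduct $D_n \hookrightarrow N(\scC)_n$. This is the step that will require the most care: one must verify that the coequaliser presentation of $L_n B_\bullet$ (over all iterated degeneracies out of $[n]$) collapses to this coproduct, which boils down to the fact that two iterated degeneracy operators produce the same $n$-simplex in $N(\scC)$ only when they agree, together with the endpoint-preservation observation above to ensure that no summands are forced to be identified across different indices.

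Finally I would observe that each summand $F(c_n) \otimes \scY_{c_0}$ is projectively cofibrant in $\scK_\infty$, since $\scY_{c_0}$ is cofibrant and the simplicial tensoring $K \otimes (-) \colon \scK_\infty^p \to \scK_\infty^p$ preserves cofibrations for every $K \in \sSet$. The sub-coproduct inclusion of cofibrant summands is then a projective cofibration, being expressible as a coproduct over $\vec{c} \in N(\scC)_n$ of the identity on $F(c_n) \otimes \scY_{c_0}$ for $\vec{c} \in D_n$ and of the initial maps $\emptyset \to F(c_n) \otimes \scY_{c_0}$ for $\vec{c} \in N(\scC)_n \setminus D_n$, each of which is a cofibration.
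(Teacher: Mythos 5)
Your argument is correct and is, in substance, the same as the paper's: the paper's proof is a one-line appeal to \cite[Rmk.~5.2.2]{Riehl:Cat_HoThy} applied to $\scY \colon \scC \to \scK_\infty^p$, and what you spell out---the latching map is the inclusion of the sub-coproduct indexed by the degenerate simplices of $N(\scC)$ (using Eilenberg--Zilber uniqueness and the fact that degeneracies insert identities without changing the endpoints $(c_0,c_n)$), with each summand $F(c_n)\otimes\scY_{c_0}$ projectively cofibrant because $\scY_{c_0}$ is cofibrant and $K\otimes(-)$ is left Quillen for every $K \in \sSet$---is exactly the standard argument that the cited remark encapsulates. So this is the same approach, merely unpacked rather than cited.
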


\begin{proof}
This follows directly from~\cite[Rmk.~5.2.2]{Riehl:Cat_HoThy}, applied to the functor $\scY \colon \scC \to \scK^p$.
\end{proof}

We now consider the induced natural transformation
\begin{equation}
\label{eq:BK otimes B_bullet}
	bk \underset{\bbDelta^\opp}{\otimes} B_\bullet \big( (-), \scC, \scY \big)
	\colon Q' \longrightarrow \Delta^\bullet \underset{\bbDelta^\opp}{\otimes} B_\bullet \big( (-), \scC, \scY \big)
	= B \big( (-), \scC, \scY \big)\,.
\end{equation}
By Proposition~\ref{st:B_bullet is R-cof}, the functor
\begin{equation}
	(-) \underset{\bbDelta^\opp}{\otimes} B_\bullet \big( F, \scC, \scY \big)
	\colon \big( (\sSet)^\bbDelta \big)^{Reedy} \longrightarrow \scK^p
\end{equation}
is a left Quillen functor for every $F \in \scK$.
It then follows from Proposition~\ref{st:BK map is R-weq} and the arguments in the proof of Proposition~\ref{st:Q' and cofibs} that the natural transformation~\eqref{eq:BK otimes B_bullet} is a natural weak equivalence of functors $\scK \to \scK^p$.
Finally, we use that the functor $B((-),\scC,\scY)$ agrees with Dugger's cofibrant replacement functor $Q^p$ for $\scK^p$ from~\cite{Dugger:Universal_HoThys}.
In particular, it comes with a natural weak equivalence $q^p \colon Q^p \to 1_{\scK}$.
Composing $q^p$ with the morphism~\eqref{eq:BK otimes B_bullet}, we obtain a natural weak equivalence $q' \colon Q' \to 1_{\scK}$.
Putting everything together, we have proven

\begin{proposition}
\label{st:Q' is cof rep fctr}
The functor $Q'$ from~\eqref{eq:def Q'}, together with the natural weak equivalence $q'$, provides a cofibrant replacement functor for $\scK^p$.
In particular, $Q'$ preserves objectwise weak equivalences.
\end{proposition}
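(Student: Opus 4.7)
The plan is to unpack three claims in sequence: (i) $Q'(F)$ is projectively cofibrant for every $F \in \scK_\infty$; (ii) the natural transformation $q' \colon Q' \to 1_{\scK_\infty}$ is an objectwise weak equivalence; and (iii) $Q'$ preserves objectwise weak equivalences. Claim~(iii) will follow formally from (i) and (ii) by the two-out-of-three property, so the real work is (i) and (ii).

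For (i), I will observe that $Q'$ preserves the initial presheaf. This is because the functor tensor product $(-) \otimes_{\bbDelta^\opp}(-)$ is a colimit in each variable, and $B_\bullet((-),\scC,\scY)$ is a levelwise coproduct indexed by tuples in $\scC$ with one factor $F(c_n)$; hence $Q'(\emptyset) = \emptyset$. The unique morphism $\emptyset \to F$ is a monomorphism, therefore an injective cofibration in $\scK_\infty^i$, so Lemma~\ref{st:Q' and cofibs} gives that $\emptyset = Q'(\emptyset) \to Q'(F)$ is a projective cofibration, i.e.\ $Q'(F)$ is projectively cofibrant.

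For (ii), I will factor $q'$ through the standard bar-construction cofibrant replacement $Q^p = B((-),\scC,\scY)$ of Dugger, by means of the Bousfield--Kan map:
\begin{equation}
q' \colon Q' = N(\bbDelta_{/(-)})^\opp \underset{\bbDelta^\opp}{\otimes} B_\bullet((-),\scC,\scY) \xrightarrow{bk \,\otimes\, 1} \Delta^\bullet \underset{\bbDelta^\opp}{\otimes} B_\bullet((-),\scC,\scY) = Q^p \xrightarrow{q^p} 1_{\scK_\infty}.
\end{equation}
The second arrow is an objectwise weak equivalence by Dugger's construction. For the first arrow, I will invoke that the functor tensor product is a left Quillen bifunctor for the Reedy--Reedy pair of model structures~\cite[Thm.~14.3.1]{Riehl:Cat_HoThy}; since $bk$ is a Reedy weak equivalence between Reedy cofibrant cosimplicial simplicial sets (Proposition~\ref{st:BK map is R-weq}) and $B_\bullet(F,\scC,\scY)$ is Reedy cofibrant as a simplicial object of $\scK_\infty^p$ (Proposition~\ref{st:B_bullet is R-cof}), tensoring yields a weak equivalence in $\scK_\infty^p$ componentwise. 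Composing, $q'_F$ is an objectwise weak equivalence.

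Finally, (iii) is immediate: for any morphism $f \colon F \to G$, naturality of $q'$ yields a commutative square whose vertical legs are the weak equivalences $q'_F$ and $q'_G$ from (ii), so $Q'f$ is a weak equivalence iff $f$ is. I do not anticipate a serious obstacle---the construction was engineered to make the Reedy cofibrancy of $B_\bullet$ and the Bousfield--Kan comparison do the work. The main technical point one must not overlook is confirming that the functor tensor product, when paired with the specific Reedy cofibrancy statements, lands in left Quillen bifunctor territory so that $bk \otimes 1$ is genuinely a weak equivalence and not merely a levelwise comparison.
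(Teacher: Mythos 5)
Your proposal is correct and follows essentially the same route as the paper: the weak equivalence $q'$ is obtained by tensoring the Bousfield--Kan map with the Reedy cofibrant simplicial object $B_\bullet(F,\scC,\scY)$ (using the left Quillen bifunctor property of the functor tensor product together with Propositions~\ref{st:BK map is R-weq} and~\ref{st:B_bullet is R-cof}) and then composing with Dugger's $q^p$, while the "in particular" clause follows by two-out-of-three. Your step~(i), deducing cofibrancy of $Q'(F)$ from $Q'(\emptyset)=\emptyset$ and Lemma~\ref{st:Q' and cofibs}, is a welcome explicit spelling-out of a point the paper leaves implicit.
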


Finally, we observe that $Q'$ has a right adjoint, which is explicitly given by
\begin{equation}
\label{eq:def R'}
	R' \colon \scK \longrightarrow \scK\,,
	\qquad
	R'(G)
	= \int^n \prod_{c_0, \ldots, c_n} G(c_0)^{N(\bbDelta_{/[n]})^\opp \otimes \scC(c_0, c_1) \otimes \cdots \otimes \scC(c_{n-1}, c_n) \otimes \scY^{c_n}}
\end{equation}
where $\scY^{(-)} \colon \scC^\opp \to \Cat(\scC, \sSet)$ denotes the co-Yoneda embedding of $\scC$.

\begin{theorem}
\label{st:Q' -| R'}
The functors $Q'$ and $R'$ satisfy the following properties:
\begin{myenumerate}
\item The adjunction $Q' \dashv R'$ is a Quillen equivalence
\begin{equation}
\begin{tikzcd}
	Q' : \scH^i \ar[r, shift left=0.14cm, "\perp"' yshift=0.05cm]
	& \scH^p : R'\,. \ar[l, shift left=0.14cm]
\end{tikzcd}
\end{equation}

\item There is a natural transformation $r' \colon 1_{\scK} \to R'$ such that $r'_{|G} \colon G \to R'G$ is a weak equivalence in $\scK^i$ for every projectively fibrant $G \in \scK$.

\item Let $R^\sSet$ be a fibrant replacement functor for simplicial sets.
Then, $G \mapsto R' (R^\sSet \circ G)$ is a fibrant replacement functor on $\scK^i$.
\end{myenumerate}
\end{theorem}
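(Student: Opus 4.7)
The plan is to first establish that $Q' \dashv R'$ is a Quillen adjunction, then leverage the fact that $Q'$ is a cofibrant replacement functor for $\scH_\infty^p$ to deduce the Quillen equivalence, and finally construct $r'$ by adjointness so that (2) and (3) follow as formal consequences.

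First I would verify (1) that $Q' \dashv R'$ is Quillen. By Lemma~\ref{st:Q' and cofibs}, $Q'$ sends injective cofibrations to projective cofibrations; by Proposition~\ref{st:Q' is cof rep fctr}, $Q'$ preserves all objectwise weak equivalences; and since weak equivalences of both $\scH_\infty^i$ and $\scH_\infty^p$ are precisely the objectwise ones (Proposition~\ref{st:same weqs}), $Q'$ sends injective trivial cofibrations to projective trivial cofibrations, so it is left Quillen. For the Quillen equivalence I would argue at the level of derived functors: since $Q'$ is homotopical, $\bbL Q' = Q'$ on $\rmh \scH_\infty^i$, and the natural weak equivalence $q' \colon Q' \weq 1_{\scK_\infty}$ exhibits $\bbL Q'$ as naturally isomorphic to the identity functor $\rmh \scH_\infty^i \to \rmh \scH_\infty^p$, which is an equivalence by Proposition~\ref{st:p/i Quillen equivalences}. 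Proposition~\ref{st:Quillen equiv} then yields the desired Quillen equivalence.

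For (2), I would define $r' \colon 1_{\scK_\infty} \to R'$ so that $r'_G$ is the adjunct of $q'_G \colon Q'G \to G$ under the adjunction bijection $\scK_\infty(Q'G, G) \cong \scK_\infty(G, R'G)$. When $G$ is projectively fibrant, $G$ is cofibrant in $\scH_\infty^i$ (every object is) and fibrant in $\scH_\infty^p$, and $q'_G$ is an objectwise weak equivalence. Invoking the Quillen equivalence property established above---namely that the adjunction bijection sends weak equivalences to weak equivalences between a cofibrant object of the source and a fibrant object of the target---the map $r'_G$ is a weak equivalence in $\scH_\infty^i$.

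Finally for (3), $R^\sSet \circ G$ is projectively fibrant because its value at every $c \in \scC$ is a Kan complex, so $R'(R^\sSet \circ G)$ is fibrant in $\scH_\infty^i$ since $R'$ is right Quillen. The canonical natural transformation $G \to R^\sSet \circ G$ is an objectwise weak equivalence, hence a weak equivalence in $\scH_\infty^i$; composing it with $r'_{R^\sSet G}$, which is a weak equivalence by (2), yields the required fibrant replacement. The only genuinely delicate step in this plan is the passage from $q'_G$ being a weak equivalence to $r'_G$ being one; this rests squarely on having established the Quillen equivalence in (1), and everything else is a formal consequence of the adjunction together with the fact that $\scH_\infty^p$ and $\scH_\infty^i$ share the same weak equivalences.
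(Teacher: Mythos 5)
Your proposal is correct and follows essentially the same route as the paper: left Quillen-ness from cofibration preservation plus $Q'$ being homotopical, the Quillen equivalence via the natural weak equivalence $q'\colon Q'\weq 1$ identifying the derived functor with the identity, $r'$ defined as the adjunct of $q'$ with the standard cofibrant-source/fibrant-target criterion for Quillen equivalences giving (2), and (3) as the composite $G \to R^\sSet\circ G \to R'(R^\sSet\circ G)$. Your explicit remark that $R'(R^\sSet\circ G)$ is injectively fibrant because $R'$ is right Quillen is a detail the paper leaves implicit, and is a welcome addition.
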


\begin{proof}
Ad~(1):
Proposition~\ref{st:Q' is cof rep fctr}, together with the observation that $Q'$ is a left adjoint, readily implies that $Q'$ is a left Quillen functor.
The fact that this is a Quillen equivalence follows from the existence of the natural weak equivalence $q' \colon Q' \weq 1_{\scK}$.
Formally, this implies that the composition of $Q'$ by the left Quillen equivalence $1_{\scK} \colon \scK^p \to \scK^i$ is weakly equivalent to the identity functor on $\scK$.
Thus, the statement follows from the two-out-of-three property of Quillen equivalences~\cite{Hovey:MoCats}.

Ad~(2):
Let
\begin{equation}
	\tau_{F,G} \colon \scK(Q'F,G) \longrightarrow \scK(F, R'G)
\end{equation}
be the natural isomorphism that establishes the adjunction $Q' \dashv R'$.
We define $r' \colon 1_{\scK} \to R'$ to be the image under $\tau$ of the natural transformation $q'$.
Now consider the weak equivalence \smash{$q'_{|G} \colon Q'G \to G$}, for $G \in \scK^p$ fibrant.
By part~(1) and since each object in $\scK^i$ is cofibrant, the morphism \smash{$r'_{|G} \colon G \to R'G$} is a weak equivalence. 

Ad~(3):
Let $R^\sSet$ be a fibrant replacement functor in $\sSet$, with associated natural weak equivalence $r^\sSet \colon 1_\sSet \weq R^\sSet$.
Let $G \in \scK$ be arbitrary and consider the composition
\begin{equation}
\begin{tikzcd}[column sep=2cm]
	G \ar[r, "r^\sSet \circ 1_G"]
	& R^\sSet \circ G \ar[r, "r'{|R^\sSet G}"]
	& R'(R^\sSet \circ G)\,.
\end{tikzcd}
\end{equation}
The first morphism is an objectwise weak equivalence by definition.
Since $R^\sSet \circ G$ is projectively fibrant, the second morphism is a weak equivalence as well by part~(2).
\end{proof}

Finally, we recall that the mapping spaces in a model category depend only on the weak equivalences, i.e.~their homotopy types are determined completely by the underlying \textit{relative} category.
This was famously proven in~\cite[Prop.~4.4]{DK:Function_complexes}; the simpler case of simplicial model categories---which applies directly to the present situation---is in~\cite[Cor.~4.7]{DK:Function_complexes}.
In particular, we record the following direct consequence for reference:

\begin{proposition}
\label{st:mapping spaces in p and i}
Let $F,G$ be any two objects in $\scK$.
There is a canonical isomorphism in the homotopy category $\rmh \sSet$ of spaces between the mapping spaces of the projective and the injective model structures
\begin{equation}
	\Map_{\scK^p}(F,G) \cong \Map_{\scK^i}(F,G)\,.
\end{equation}
\end{proposition}

\section{Sheaves on manifolds and sheaves on cartesian spaces}
\label{app:H^(pl) simeq wtH^(pl)}

This appendix is devoted to the comparison of two localisations:
on the one hand, we consider the localisation $\scH^{p \ell}$ of the projective model structure $\scH^p$ of simplicial presheaves on cartesian spaces at the \v{C}ech nerves of differentiably good open coverings (see Section~\ref{sec:R-local MoStrs}).
On the other hand, we have the localisation $\widetilde{\scH}^{p \ell}$ of the projective model structure $\widetilde{\scH}^p$ of simplicial presheaves on manifolds at the \v{C}ech nerves of open coverings.
(The same arguments also apply to the localisation of $\widetilde{\scH}^p$ at the \v{C}ech nerves of surjective submersions.)

We will compare these localisations by means of the functors
\begin{align}
	&\hoRan_\iota \colon \scH \longrightarrow \widetilde{\scH}\,,
	\qquad
	F \mapsto \ul{\scH} \big( Q(-), F \big)\,,
	\\
	&\hoRan'_\iota \colon \scH \longrightarrow \widetilde{\scH}\,,
	\qquad
	F \mapsto \ul{\scH} \big( Q'(-), F \big)\,,
\end{align}
where $\iota \colon \Cart \hookrightarrow \Mfd$ is the canonical inclusion, and where $Q \colon \scH^p \to \scH^p$ is Dugger's cofibrant replacement functor for the projective model structure on simplicial presheaves~\eqref{eq:Duggers Q}.
Further, $Q' \dashv R'$ is the Quillen equivalence from Theorem~\ref{st:Q' -| R'}.
The natural transformation~\eqref{eq:BK otimes B_bullet} induces a natural transformation $\eta \colon \hoRan_\iota \longrightarrow \hoRan_\iota'$ whose component $\eta_{|F}$ on every fibrant object $F \in \scH^p$ is a projective weak equivalence.

The functor $\hoRan_\iota \colon \scH^p \to \widetilde{\scH}^p$ is a right Quillen functor with left adjoint $Q \circ \iota^*$, and this Quillen adjunction descends to a Quillen adjunction on \v{C}ech localisations,
\begin{equation}
\begin{tikzcd}
	Q \circ \iota^* : \widetilde{\scH}^{p \ell} \ar[r, shift left=0.14cm, "\perp"' yshift=0.05cm]
	& \scH^{p \ell} : \hoRan_\iota\,, \ar[l, shift left=0.14cm]
\end{tikzcd}
\end{equation}
by the universal property of left Bousfield localisations.
Analogously, $\hoRan_\iota' \colon \scH^p \to \widetilde{\scH}^p$ is a right Quillen functor (since $Q'$ is homotopical and valued in projectively cofibrant simplicial presheaves).
Further, the natural transformation $\eta \colon \hoRan_\iota \longrightarrow \hoRan_\iota'$ establishes that $\hoRan_\iota'$ maps local objects in $\scH^{p \ell}$ to local objects in $\widetilde{\scH}^{p \ell}$, and hence that we also have a Quillen adjunction
\begin{equation}
\begin{tikzcd}
	Q' \circ \iota^* : \widetilde{\scH}^{p \ell} \ar[r, shift left=0.14cm, "\perp"' yshift=0.05cm]
	& \scH^{p \ell} : \hoRan_\iota'\,. \ar[l, shift left=0.14cm]
\end{tikzcd}
\end{equation}
Note that we can also write the right adjoint as
\begin{equation}
	\hoRan_\iota' \cong \iota_* \circ R'\,,
\end{equation}
with $R' \colon \scH \to \scH$ defined as in~\eqref{eq:def R'}, and where $\iota_*$ is the right Kan extension along $\iota$.

\begin{lemma}
\label{st:counit of RhoRan'}
The derived counit of the Quillen adjunction $Q' \circ \iota^* \dashv \hoRan_\iota'$ is a projective weak equivalence on every fibrant $F \in \scH^p$.
\end{lemma}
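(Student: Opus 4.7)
The plan is to reduce the derived counit of $Q' \circ \iota^* \dashv \hoRan_\iota'$ at a projectively fibrant $F$ to the counit of the Quillen equivalence $Q' \dashv R'$ from Theorem~\ref{st:Q' -| R'}, where the latter can be controlled by a direct triangle-identity argument.

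The first step is to identify $\iota^* \circ \hoRan_\iota'$ with $R'$ as endofunctors of $\scH_\infty$. Since $\iota \colon \Cart \hookrightarrow \Mfd$ is fully faithful, the presheaf on $\Cart$ obtained by restricting the $\Mfd$-representable at $c \in \Cart$ is the cartesian representable $\scY_c$. Unwinding the definition of $\hoRan_\iota'$ thus gives
\begin{equation}
	\big( \iota^*\hoRan_\iota'(F) \big)(c)
	= \ul{\scH}_\infty(Q'\scY_c, F)\,,
\end{equation}
while the simplicial enrichment of the adjunction $Q' \dashv R'$ yields $R'F(c) \cong \ul{\scH}_\infty(\scY_c, R'F) \cong \ul{\scH}_\infty(Q'\scY_c, F)$. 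Combined with the fact that $\iota^*\iota_* \cong 1_{\scH_\infty}$, this shows that under the natural isomorphism $\iota^*\hoRan_\iota' \cong R'$ the counit of the composite adjunction $Q' \circ \iota^* \dashv \hoRan_\iota'$ at $F$ is identified with the counit $\epsilon^{Q'}_F \colon Q'R'F \to F$ of $Q' \dashv R'$.

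Next, I would note that $Q' \circ \iota^*$ is homotopical: $\iota^*$ preserves projective weak equivalences since these are defined objectwise, and $Q'$ is homotopical by Proposition~\ref{st:Q' is cof rep fctr}. Consequently, a cofibrant replacement of $\hoRan_\iota'(F)$ in $\widetilde{\scH}_\infty^p$ is unnecessary for computing the derived counit at fibrant $F$, so it suffices to prove that the ordinary counit $\epsilon^{Q'}_F \colon Q'R'F \to F$ is a projective weak equivalence whenever $F$ is projectively fibrant.

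Finally, the triangle identity for $Q' \dashv R'$ reads $q'_F = \epsilon^{Q'}_F \circ Q'(r'_F)$, where $q' \colon Q' \to 1_{\scH_\infty}$ and $r' \colon 1_{\scH_\infty} \to R'$ are the natural transformations from Proposition~\ref{st:Q' is cof rep fctr} and Theorem~\ref{st:Q' -| R'}(2). The morphism $q'_F$ is an objectwise weak equivalence for every $F$, the morphism $r'_F$ is an objectwise weak equivalence for projectively fibrant $F$ by Theorem~\ref{st:Q' -| R'}(2), and hence $Q'(r'_F)$ is a projective weak equivalence since $Q'$ is homotopical. Two-out-of-three then forces $\epsilon^{Q'}_F$ to be a projective weak equivalence, completing the proof via the identification from the first step. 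The only subtle point is verifying that the counit of the composite adjunction really coincides with $\epsilon^{Q'}_F$ up to the canonical natural isomorphism $\iota^*\hoRan_\iota' \cong R'$; once this bookkeeping is done, everything else is a clean application of the triangle identity and the two-out-of-three property.
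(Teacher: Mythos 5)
Your proposal is correct and follows essentially the same route as the paper: reduce the derived counit to the ordinary counit using that $Q'\circ\iota^*$ is homotopical, identify $\iota^*\circ\hoRan_\iota'\cong R'$ (equivalently $\hoRan_\iota'\cong\iota_*\circ R'$ with $\iota^*\iota_*\cong 1$ by full faithfulness of $\iota$), and then show $Q'R'F\to F$ is a projective weak equivalence for fibrant $F$. The only (harmless) divergence is the last step: you deduce it from the transposition relation $q'_F=\epsilon^{Q'}_F\circ Q'(r'_F)$ (which holds by the definition of $r'$ as the adjunct of $q'$, not literally a triangle identity) together with Theorem~\ref{st:Q' -| R'}(2) and two-out-of-three, whereas the paper invokes directly that $Q'\dashv R'$ is a Quillen equivalence and every object of $\scH_\infty^i$ is cofibrant; these justifications are interchangeable.
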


\begin{proof}
Let $\widetilde{Q} \colon \widetilde{\scH}^p \to \widetilde{\scH}^p$ be a cofibrant replacement functor with associated natural weak equivalence $\widetilde{q} \colon \widetilde{Q} \weq 1_{\widetilde{\scH}}$.
Consider a fibrant object $F \in \scH^p$ and the composition
\begin{equation}
\begin{tikzcd}[column sep=1cm, row sep=1.25cm]
	(Q' \circ \iota^*) \circ \widetilde{Q} \circ (\iota_* \circ R')(F)
	\ar[r]
	&
	(Q' \circ \iota^*) \circ (\iota_* \circ R')(F)
	\ar[r, "\cong"]
	& (Q' \circ R')(F) \ar[r]
	& F\,,
\end{tikzcd}
\end{equation}
which is the derived counit.
The first morphism is given by \smash{$(Q' \circ \iota^*)\widetilde{q}_{|\iota_* \circ R'(F)}$}.
It is a weak equivalence in $\scH^p$ because \smash{$\widetilde{q}_{|\iota_* \circ R'(F)}$} is a projective weak equivalence and $\iota^* \colon \widetilde{\scH}^p \to \scH^p$ is homotopical, as is $Q'$.
The second morphism is the counit of the adjunction $\iota^* \dashv \iota_*$, which is an isomorphism by the Yoneda Lemma.
The third morphism is a weak equivalence by Theorem~\ref{st:Q' -| R'} and the fact that every object in $\scH^i$ is cofibrant.
\end{proof}

It follows that the total derived functor $\RN \hoRan_\iota' \colon \rmh \scH^p \to \rmh \widetilde{\scH}^p$ is fully faithful, and hence that also $\RN \hoRan_\iota' \colon \rmh \scH^{p \ell} \to \rmh \widetilde{\scH}^{p \ell}$ is fully faithful.

Next, we show that $\RN \hoRan_\iota'$ is essentially surjective.
We start by recalling that, for any open covering $\scU = \{U_a\}_{a \in A}$ of a manifold $M$, the induced \v{C}ech nerve $\cC \scU \to \ul{M}$ is a weak equivalence in both $\scH^{p \ell}$ and $\widetilde{\scH}^{p \ell}$ (this follows directly from~\cite[Cor.~A.3]{DHI:Hypercovers_and_sPShs}).
If $\scU$ is even a differentiably good open covering of $M$ (i.e.~each finite intersection of patches is either empty or a cartesian space), then $\cC \scU$ is levelwise a coproduct of representables in $\scH$.
Hence, it is cofibrant in $\scH^{p \ell}$.

Suppose that $G \in \widetilde{\scH}^{p \ell}$ is fibrant.
Given a manifold $M$ with a differentiably good open covering $\scU$, we have weak equivalences
\begin{equation}
\label{eq:Sh(Mfd) via Cart I}
	G(M) \cong \ul{\widetilde{\scH}}(\ul{M}, G)
	\weq \ul{\widetilde{\scH}}(\cC \scU, G)
	\cong \ul{\scH}(\cC \scU, \iota^*G)\,.
\end{equation}
In this sense, the value of a sheaf $G$ on manifolds is determined by its values on cartesian spaces.
However, the above weak equivalence is not functorial in $M$, so we need to improve on~\eqref{eq:Sh(Mfd) via Cart I}.

For $M \in \Mfd$, let $\Cov(M)$ denote the category whose objects are open covering $\scU = \{U_a\}_{a \in A}$ of $M$.
Given another open covering $\scV$ of $M$ there is a unique morphism $\scV \to \scU$ precisely if $\scV$ refines $\scU$.
Let $\GCov(M)$ denote the full subcategory of $\Cov(M)$ on the differentiably good open coverings.
Note that $\Cov(M)$ is cofiltered since every pair $(\scU,\scV)$ of open coverings has a common refinement $\scU \times_M \scV$.
Moreover, any open covering of $M$ has a differentiably good refinement~\cite[App.~A]{FSS:Cech_diff_char_classes_via_L_infty}, which implies that $\GCov(M)$ is cofiltered and that the canonical inclusion $j_M \colon \GCov(M) \hookrightarrow \Cov(M)$ is homotopy final, since for any $\scU \in \Cov(M)$ the slice category $j/\scU$ is cofiltered as well.

Given $G \in \widetilde{\scH}$, we consider the diagrams in simplicial sets
\begin{equation}
	\Cov(M)^\opp \longrightarrow \sSet\,,
	\qquad \scU \longmapsto \ul{\widetilde{\scH}} (\cC \scU, G)
	\qandq
	\scU \longmapsto G(M)\,.
\end{equation}
Since $\Cov(M)^\opp$ is filtered and $\sSet$ is combinatorial, the ordinary colimits of these diagrams model their homotopy colimits~\cite[Prop.~7.3]{Dugger:Comb_MoCats_Pres}, so that we obtain a morphism
\begin{equation}
\label{eq:plus construction for Mfd}
	G(M) \longrightarrow
	\underset{\scU \in \Cov(M)^\opp}{\colim^\sSet} \Big( \ul{\widetilde{\scH}} (\cC \scU, G) \Big)
	\simeq \underset{\scU \in \Cov(M)^\opp}{\hocolim^\sSet} \Big( \ul{\widetilde{\scH}} (\cC \scU, G) \Big)\,.
\end{equation}
(The right-hand side of~\eqref{eq:plus construction for Mfd} is Grothendieck's Plus Construction, applied to $M$ and $G$.)
If $G \in \widetilde{\scH}^{p \ell}$ is fibrant, then the morphism in~\eqref{eq:plus construction for Mfd} is a weak equivalence, for each manifold $M \in \Mfd$.
Since $j_M^\opp \colon \GCov(M)^\opp \hookrightarrow \Cov(M)^\opp$ is cofinal, we further obtain a canonical isomorphism
\begin{equation}
	\underset{\scU \in \Cov(M)^\opp}{\colim^\sSet} \Big( \ul{\widetilde{\scH}} (\cC \scU, G) \Big)
	\cong \underset{\scU \in \GCov(M)^\opp}{\colim^\sSet} \Big( \ul{\widetilde{\scH}} (\cC \scU, G) \Big)\,.
\end{equation}
If $\scU \in \GCov(M)$, then $\cC\scU$ is levelwise a coproduct of presheaves represented by cartesian spaces.
Hence, there is a further canonical isomorphism
\begin{equation}
\label{eq:(G)Cov and Cart}
	\underset{\scU \in \GCov(M)^\opp}{\colim^\sSet} \Big( \ul{\widetilde{\scH}} (\cC \scU, G) \Big)
	\cong \underset{\scU \in \GCov(M)^\opp}{\colim^\sSet} \Big( \ul{\scH} (\cC \scU, \iota^*G) \Big)\,,
\end{equation}
where $\iota \colon \Cart \hookrightarrow \Mfd$ is the inclusion.
Thus, for any $G \in \widetilde{\scH}$ and any $M \in \Mfd$ we obtain a morphism
\begin{equation}
	\gamma_{G,M} \colon G(M) \longrightarrow \underset{\scU \in \GCov(M)^\opp}{\colim^\sSet} \Big( \ul{\scH} (\cC \scU, \iota^*G) \Big)
\end{equation}
which is natural in $G$ and which is a weak equivalence whenever $G \in \widetilde{\scH}^{p \ell}$ is fibrant.

Next we show that $\gamma_{G,M}$ is also natural in $M$.
Let $f \colon M \to N$ be a smooth map.
This induces a functor $f^* \colon \Cov(N) \to \Cov(M)$, acting via $f^*(\{U_a\}_{a \in A}) = \{f^{-1}(U_a)\}_{a \in A}$.
This establishes the assignment $M \mapsto \Cov(M)$ as a (strict) functor $\Cov \colon \Mfd^\opp \to \Cat_s$ to the (large) category of small categories.
Given an open covering $\scU$ of $N$, the map $f$ induces a canonical morphism $\widehat{f} \colon \cC(f^*\scU) \to \cC \scU$ in $\widetilde{\scH}$.
This induces a natural transformation
\begin{equation}
\begin{tikzcd}[column sep=0.5cm]
	{\Cov(N)^\opp} \ar[rr, "f^*"] \ar[rd, "{\widetilde{\scH}(\cC(-), G)}"', ""{name=V, inner sep=0pt, above, pos=.5}]
	& & |[alias=U]| {\Cov(M)^\opp} \ar[ld, "{\widetilde{\scH}(\cC(-), G)}"]
	\\
	& \sSet &
	\arrow[Rightarrow, from=V, to=U, shorten <= 3, shorten >= 3, "\varphi" description]
\end{tikzcd}
\end{equation}
Further, recall that for any composition of functors $\scI \overset{F}{\longrightarrow} \scJ \overset{D}{\longrightarrow} \scC$ there is a canonical morphism $\colim(D \circ F) \to \colim(D)$.
Thus, we have a canonical morphism
\begin{equation}
	\underset{\scU \in \Cov(N)^\opp}{\colim^\sSet} \Big( \ul{\widetilde{\scH}} \big( \cC (f^*\scU), G \big) \Big)
	\longrightarrow \underset{\scU \in \Cov(M)^\opp}{\colim^\sSet} \Big( \ul{\widetilde{\scH}} (\cC \scU, G) \Big)\,.
\end{equation}
Combining this morphism with the natural transformation $\varphi$ yields a morphism
\begin{equation}
	\underset{\scU \in \Cov(N)^\opp}{\colim^\sSet} \Big( \ul{\widetilde{\scH}} \big( \cC \scU, G \big) \Big)
	\longrightarrow
	\underset{\scV \in \Cov(M)^\opp}{\colim^\sSet} \Big( \ul{\widetilde{\scH}} (\cC \scV, G) \Big)
\end{equation}
in $\sSet$ which is compatible with composition of smooth maps of manifolds.
Finally, we use the isomorphisms~\eqref{eq:(G)Cov and Cart} to define the top morphism in the diagram
\begin{equation}
\begin{tikzcd}[column sep=1.5cm, row sep=1cm]
	\underset{\scU \in \GCov(N)^\opp}{\colim^\sSet} \Big( \ul{\scH} \big( \cC \scU, \iota^*G \big) \Big)
	\ar[r, dashed] \ar[d, "\cong"]
	&\underset{\scV \in \GCov(M)^\opp}{\colim^\sSet} \Big( \ul{\scH} (\cC \scV, \iota^*G) \Big)
	\ar[d, "\cong"]
	\\
	\underset{\scU \in \Cov(N)^\opp}{\colim^\sSet} \Big( \ul{\widetilde{\scH}} \big( \cC \scU, G \big) \Big)
	\ar[r]
	& \underset{\scV \in \Cov(M)^\opp}{\colim^\sSet} \Big( \ul{\widetilde{\scH}} (\cC \scV, G) \Big)
\end{tikzcd}
\end{equation}
It follows that the assignment
\begin{equation}
	M \longmapsto \underset{\scV \in \GCov(M)^\opp}{\colim^\sSet} \Big( \ul{\scH} (\cC \scV, \iota^*G) \Big)
\end{equation}
defines a functor $\Mfd^\opp \to \sSet$.
That is, we obtain a functor
\begin{equation}
\label{eq:def Pl}
	\Pl \colon \widetilde{\scH} \to \widetilde{\scH}\,,
	\qquad
	\Pl G(M) = \underset{\scV \in \GCov(M)^\opp}{\colim^\sSet} \Big( \ul{\scH} (\cC \scV, \iota^*G) \Big)\,,
\end{equation}
coming with a natural transformation \smash{$\gamma \colon 1_{\widetilde{\scH}} \to \Pl$} which is an objectwise weak equivalence on every fibrant $G \in \widetilde{\scH}^{p \ell}$.
In particular, we have shown:

\begin{proposition}
\label{st:iota^* reflects weqs between fibrants}
Let $G,G' \in \widetilde{\scH}^{p \ell}$ be fibrant.
A morphism $g \colon G \to G'$ is a weak equivalence in $\widetilde{\scH}^{p \ell}$ if and only if $\iota^*g \colon \iota^*G \to \iota^*G'$ is a weak equivalence in $\scH^{p \ell}$.
\end{proposition}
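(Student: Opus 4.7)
The plan is to use the functor $\Pl$ from~\eqref{eq:def Pl} together with the natural transformation $\gamma \colon 1_{\widetilde{\scH}_\infty} \to \Pl$ to reduce the claim to a statement about the restrictions $\iota^*G$ and $\iota^*G'$. The ``only if'' direction is immediate: $G$ and $G'$ are fibrant in the Bousfield localisation $\widetilde{\scH}_\infty^{p\ell}$, so any weak equivalence $g \colon G \to G'$ between them is already a projective (i.e.~objectwise) weak equivalence, and restricting from $\Mfd$ to $\Cart$ preserves this property; the restriction is then a fortiori a weak equivalence in the localisation $\scH_\infty^{p\ell}$.

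For the non-trivial ``if'' direction, I would first verify that $\iota^*$ preserves fibrant objects between the \v{C}ech localisations. The functor $\iota^* \colon \widetilde{\scH}_\infty^p \to \scH_\infty^p$ is right Quillen (it is right adjoint to the homotopy left Kan extension along $\iota$), and since every differentiably good open covering of a cartesian space is also an open covering in $\Mfd$, restriction of a homotopy sheaf on $\Mfd$ is a homotopy sheaf on $\Cart$. Hence $\iota^*G$ and $\iota^*G'$ are fibrant in $\scH_\infty^{p\ell}$, so the assumption that $\iota^*g$ is a weak equivalence in $\scH_\infty^{p\ell}$ upgrades to the statement that $\iota^*g$ is an objectwise weak equivalence of simplicial presheaves on $\Cart$ (weak equivalences between fibrant objects in a left Bousfield localisation coincide with those in the ambient model structure).

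Next, I would show that $\Pl g \colon \Pl G \to \Pl G'$ is an objectwise weak equivalence. For every $M \in \Mfd$ and every $\scV \in \GCov(M)$, the \v{C}ech nerve $\cC \scV$ is levelwise a coproduct of representables on $\Cart$, hence is projectively cofibrant in $\scH_\infty^p$. Consequently the functor $\ul{\scH}_\infty(\cC \scV, -)$ is right Quillen on $\scH_\infty^p$, so it sends the objectwise weak equivalence $\iota^*g$ between projectively fibrant objects to a weak equivalence of simplicial sets. Taking the filtered colimit over the cofiltered category $\GCov(M)^\opp$, and using that filtered colimits in $\sSet$ preserve weak equivalences, yields that $(\Pl g)_M$ is a weak equivalence for each $M$.

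Finally, I combine this with the commutative square
\begin{equation}
\begin{tikzcd}[column sep=1.25cm, row sep=1cm]
G(M) \ar[r, "g_M"] \ar[d, "\gamma_{G,M}"', "\sim"]
& G'(M) \ar[d, "\gamma_{G',M}", "\sim"]
\\
\Pl G(M) \ar[r, "(\Pl g)_M"']
& \Pl G'(M)
\end{tikzcd}
\end{equation}
whose vertical morphisms are weak equivalences because $G$ and $G'$ are fibrant in $\widetilde{\scH}_\infty^{p\ell}$. The two-out-of-three property then forces $g_M$ to be a weak equivalence for every $M \in \Mfd$, so $g$ is an objectwise weak equivalence and hence a weak equivalence in $\widetilde{\scH}_\infty^{p\ell}$. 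The only subtle point I anticipate is ensuring that $\iota^*$ indeed preserves the fibrancy condition for the \v{C}ech localisations; everything else is essentially a bookkeeping application of the natural transformation $\gamma$ and the cofibrancy of \v{C}ech nerves of differentiably good open coverings.
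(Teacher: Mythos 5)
Your proof is correct and follows essentially the same route as the paper: the proposition is stated there as an immediate consequence of the construction of $\Pl$ and the natural objectwise weak equivalence $\gamma \colon 1_{\widetilde{\scH}_\infty} \to \Pl$ on \v{C}ech-fibrant objects, using exactly the ingredients you spell out (fibrancy preservation by $\iota^*$, cofibrancy of $\cC\scV$ for differentiably good coverings, filtered colimits preserving weak equivalences, and two-out-of-three in the naturality square). Your write-up merely makes explicit the details the paper leaves implicit in the phrase ``In particular, we have shown''.
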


Note that for $G,G' \in \widetilde{\scH}^{p \ell}$ fibrant, a morphism $g \colon G \to G'$ is a weak equivalence in $\widetilde{\scH}^{p \ell}$ precisely if it is an objectwise equivalence (since both $G$ and $G'$ are local objects), and analogously $\iota^*g \colon \iota^*G \to \iota^*G'$ is a weak equivalence if and only if it is an objectwise equivalence (since $\iota^* \colon \widetilde{\scH}^{p \ell} \to \scH^{p \ell}$ preserves fibrant objects).

\begin{lemma}
\label{st:R hoRan ff}
The functor $\RN \hoRan_\iota' \colon \rmh \scH^{p \ell} \longrightarrow \rmh \widetilde{\scH}^{p \ell}$ is essentially surjective.
\end{lemma}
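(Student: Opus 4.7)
The plan is to show that for every fibrant $G \in \widetilde{\scH}_\infty^{p \ell}$, the restriction $F := \iota^* G$ satisfies $\hoRan_\iota'(F) \simeq G$ in $\rmh \widetilde{\scH}_\infty^{p \ell}$, from which essential surjectivity follows at once.

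First I would verify that $\iota^* G$ is fibrant in $\scH_\infty^{p \ell}$: it is projectively fibrant because $\iota^*$ is just restriction and $G(c)$ is a Kan complex for each $c \in \Cart$; and it satisfies descent with respect to differentiably good open coverings of cartesian spaces because every such covering is, in particular, an open covering of a manifold, for which $G$ satisfies descent. Consequently $\hoRan_\iota'(\iota^* G)$ already models $\RN \hoRan_\iota'(\iota^* G)$ in $\rmh \widetilde{\scH}_\infty^{p \ell}$, and it is itself fibrant in $\widetilde{\scH}_\infty^{p \ell}$ as the image of a fibrant object under the right Quillen functor $\hoRan_\iota' = \iota_* \circ R'$ recalled earlier in the appendix.

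The second step is to construct a canonical morphism $\alpha \colon G \to \hoRan_\iota'(\iota^* G)$ in $\widetilde{\scH}_\infty$ and verify it is a weak equivalence in $\widetilde{\scH}_\infty^{p \ell}$. I would take $\alpha$ to be the composite
\[
\alpha \colon G \xrightarrow{\eta_G} \iota_* \iota^* G \xrightarrow{\iota_*(r'_{\iota^* G})} \iota_* R'(\iota^* G) = \hoRan_\iota'(\iota^* G)\,,
\]
where $\eta$ is the unit of $\iota^* \dashv \iota_*$ and $r'$ is the natural transformation from Theorem~\ref{st:Q' -| R'}(2). Applying $\iota^*$ and using that $\iota$ is fully faithful (so the counit $\varepsilon \colon \iota^* \iota_* \to 1$ is a natural isomorphism), the triangle identity gives $\iota^*(\eta_G) = \varepsilon_{\iota^* G}^{-1}$; naturality of $\varepsilon$ further identifies $\iota^*(\iota_*(r'_{\iota^* G}))$ with $r'_{\iota^* G}$ up to the canonical isomorphisms $\varepsilon_{\iota^* G}$ and $\varepsilon_{R'(\iota^* G)}$. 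Hence $\iota^*(\alpha)$ coincides, up to these isomorphisms, with $r'_{\iota^* G} \colon \iota^* G \to R'(\iota^* G)$, which by Theorem~\ref{st:Q' -| R'}(2) is a weak equivalence in $\scH_\infty^i$ since $\iota^* G$ is projectively fibrant. Because the projective and injective model structures on $\scH_\infty$ share the same (objectwise) weak equivalences and these are also weak equivalences after \v{C}ech localisation, $\iota^*(\alpha)$ is a weak equivalence in $\scH_\infty^{p \ell}$.

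Finally, since $G$ and $\hoRan_\iota'(\iota^* G)$ are both fibrant in $\widetilde{\scH}_\infty^{p \ell}$ and $\iota^*(\alpha)$ is a weak equivalence in $\scH_\infty^{p \ell}$, Proposition~\ref{st:iota^* reflects weqs between fibrants} implies that $\alpha$ itself is a weak equivalence in $\widetilde{\scH}_\infty^{p \ell}$. This yields $G \simeq \hoRan_\iota'(\iota^* G) \simeq \RN \hoRan_\iota'(\iota^* G)$ in $\rmh \widetilde{\scH}_\infty^{p \ell}$, as required. The step I expect to require the most care is the triangle-identity bookkeeping in the second paragraph: one has to make sure that the composite $\iota^*(\alpha)$ really does reduce to $r'_{\iota^* G}$ up to the fully-faithfulness isomorphisms, and that the passage between the projective, injective, and \v{C}ech-localised classes of weak equivalences is applied consistently.
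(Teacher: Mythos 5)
Your proof is correct, and all the inputs you invoke are available at this point of the appendix: $\iota^*G$ is fibrant in $\scH_\infty^{p \ell}$ (the paper itself uses that $\iota^*$ preserves fibrant objects of the localised structures), $\hoRan_\iota'$ preserves fibrancy as a right Quillen functor between the \v{C}ech localisations, the triangle-identity bookkeeping identifying $\iota^*(\alpha)$ with $r'_{|\iota^*G}$ up to the counit isomorphisms of $\iota^*\dashv\iota_*$ is sound (the counit is invertible because $\Cart\subset\Mfd$ is full), and Theorem~\ref{st:Q' -| R'}(2) applies since $\iota^*G$ is projectively fibrant. Your route does differ from the paper's in its packaging: the paper never constructs a direct comparison morphism $G \to \hoRan_\iota'(\iota^*G)$; instead it builds an explicit zig-zag $G \weq \Pl G \weq \Pl\big(\hoRan_\iota'(\iota^*G)\big) \overset{\sim}{\longleftarrow} \hoRan_\iota'(\iota^*G)$ by hand, using the plus-construction functor $\Pl$ of~\eqref{eq:def Pl}, the natural weak equivalence $Q' \weq 1_{\scH_\infty}$ (the mate of your $r'$), and filteredness of $\GCov(M)^\opp$. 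You replace that zig-zag by the single unit-type map $\alpha = \iota_*(r'_{|\iota^*G})\circ\eta_G$ together with an appeal to Proposition~\ref{st:iota^* reflects weqs between fibrants}; since that proposition is itself proved via $\Pl$, both arguments rest on the same underlying machinery, but yours is the more economical bookkeeping (a genuine morphism rather than a zig-zag, with the Cart-to-$\Mfd$ upgrade cited rather than re-derived), at the price of relying on the identification $\hoRan_\iota' \cong \iota_*\circ R'$ and the adjunction formalities you rightly flag as the delicate step.
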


\begin{proof}
Let $G \in \widetilde{\scH}^{p \ell}$ be fibrant.
We show that there is a zig-zag of weak equivalences in $\widetilde{\scH}^{p \ell}$ linking $G$ to $\hoRan_\iota' \circ \iota^*(G)$.
First, since $G$ and $\hoRan_\iota' \circ \iota^*(G)$ are both sheaves on $\Mfd$, there are local weak equivalences
\begin{equation}
	G \weq \Pl G
	\qqandqq
	\hoRan_\iota' \circ \iota^*(G) \weq \Pl \big( \hoRan_\iota' \circ \iota^*(G) \big)\,,
\end{equation}
where $\Pl \colon \widetilde{\scH}^{p \ell} \to \widetilde{\scH}^{p \ell}$ is the functor from~\eqref{eq:def Pl}.
Further, for $c \in \Cart$ we have that
\begin{equation}
	\iota^* \big( \hoRan_\iota' \circ \iota^*(G) \big)(c)
	= \ul{\scH} \big( Q'(c), \iota^*(G) \big)\,,
\end{equation}
so the natural weak equivalence $Q' \weq 1_{\scH}$ induces a projective weak equivalence
\begin{equation}
	\iota^*(G) \weq \iota^* \big( \hoRan_\iota' \circ \iota^*(G) \big)\,.
\end{equation}
For any manifold $M$ and any differentiably good open covering $\scU$ of $M$, this induces a weak equivalence
\begin{equation}
	\ul{\scH}(\cC \scU, \iota^*G) \weq \ul{\scH} \big( \cC \scU, \iota^*(\hoRan_\iota' \circ \iota^*(G)) \big)\,.
\end{equation}
Since $\GCov(M)^\opp$ is filtered, this yields a weak equivalence $\Pl G \weq \Pl (\hoRan_\iota' \circ \iota^*(G))$, thus establishing the desired zig-zag of weak equivalences.
\end{proof}

By Lemmas~\ref{st:counit of RhoRan'} and~\ref{st:R hoRan ff}, the total derived functor $\RN \hoRan_\iota' \colon \rmh \scH^{p \ell} \to \rmh \widetilde{\scH}^{p \ell}$ is an equivalence.

\begin{theorem}
\label{st:H^(pl) simeq wtH^(pl)}
The following adjunction is a Quillen equivalence:
\begin{equation}
\begin{tikzcd}
	Q' \circ \iota^* : \widetilde{\scH}^{p \ell} \ar[r, shift left=0.14cm, "\perp"' yshift=0.05cm]
	& \scH^{p \ell} : \hoRan_\iota'\,. \ar[l, shift left=0.14cm]
\end{tikzcd}
\end{equation}
\end{theorem}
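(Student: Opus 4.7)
The plan is to reduce the theorem to two facts that have already been established in the appendix: Lemma~\ref{st:counit of RhoRan'} (the derived counit of $Q' \circ \iota^* \dashv \hoRan_\iota'$ is a weak equivalence on fibrant objects) and Lemma~\ref{st:R hoRan ff} (the total right derived functor $\RN \hoRan_\iota'$ is essentially surjective on homotopy categories). Since we already know from the discussion preceding the theorem that $Q' \circ \iota^* \dashv \hoRan_\iota'$ is a Quillen adjunction (using that $Q'$ is homotopical and valued in projectively cofibrant simplicial presheaves, and using the natural transformation $\eta$ to transfer local fibrancy), all that remains is the equivalence of homotopy categories.

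First I would invoke the standard criterion that a Quillen adjunction is a Quillen equivalence if and only if its total derived adjunction induces an equivalence of homotopy categories; this is precisely the characterisation used in Corollary~\ref{st:transfer of QEq property} / Proposition~\ref{st:Quillen equiv} in Appendix~\ref{app:Quillen equivalences}. Thus the proof reduces to showing that
\begin{equation}
	\RN \hoRan_\iota' \colon \rmh \scH_\infty^{p \ell} \longrightarrow \rmh \widetilde{\scH}_\infty^{p \ell}
\end{equation}
is fully faithful and essentially surjective.

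For fully faithfulness, I would use that fully faithfulness of a total right derived functor is equivalent to the derived counit being an isomorphism, i.e.\ a weak equivalence on fibrant objects. This is exactly the content of Lemma~\ref{st:counit of RhoRan'}: the derived counit $(Q' \circ \iota^*)\widetilde{Q} \hoRan_\iota'(F) \to F$ is a projective weak equivalence for every fibrant $F$, and so it remains a weak equivalence in the \v{C}ech-local model structure $\scH_\infty^{p \ell}$. Essential surjectivity is then immediate from Lemma~\ref{st:R hoRan ff}, which produces, for any fibrant $G \in \widetilde{\scH}_\infty^{p \ell}$, a zig-zag of local weak equivalences between $G$ and $\hoRan_\iota' \circ \iota^*(G)$ via the plus-construction functor $\Pl$.

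The main conceptual obstacle has already been overcome in the preceding development: namely, the technical work of comparing sheaves on $\Cart$ with sheaves on $\Mfd$ required showing that every manifold admits differentiably good open coverings refining any open covering, so that the cofinality argument $j_M \colon \GCov(M) \hookrightarrow \Cov(M)$ lets one replace \v{C}ech nerves of arbitrary open coverings with \v{C}ech nerves of coverings by cartesian spaces. Given Lemmas~\ref{st:counit of RhoRan'} and~\ref{st:R hoRan ff}, the proof of the theorem itself is a one-line deduction: $\RN \hoRan_\iota'$ is an equivalence of homotopy categories, hence by the standard criterion the Quillen adjunction is a Quillen equivalence.
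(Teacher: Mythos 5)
Your proposal is correct and follows essentially the same route as the paper: it combines Lemma~\ref{st:counit of RhoRan'} (derived counit a weak equivalence on fibrant objects, giving full faithfulness of $\RN \hoRan_\iota'$) with Lemma~\ref{st:R hoRan ff} (essential surjectivity), and concludes via the criterion of Proposition~\ref{st:Quillen equiv}. This is exactly how the paper deduces Theorem~\ref{st:H^(pl) simeq wtH^(pl)}.
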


\end{appendix}

\vspace{0cm}
\begin{small}

\makeatletter

\interlinepenalty=10000

\makeatother

\bibliographystyle{alphaurl}
\addcontentsline{toc}{section}{References}
\setlength{\bibsep}{0.0pt}
\bibliography{R-Loc_HoThy_Bib}

\vspace{0.2cm}

\noindent
Mathematical Institute, The University of Oxford, Andrew Wiles Building.
\\
severin.bunk@maths.ox.ac.uk

\end{small}

\end{document}